\newtheorem{lemma}{Lemma}[section]
\newtheorem{corollary}[lemma]{Corollary}
\newtheorem{proposition}[lemma]{Proposition}
\newtheorem*{theorem*}{Theorem}
\newtheorem*{conjecture_main}{Conjecture (Boyer-Gordon-Watson)}
\theoremstyle{definition}
\newtheorem{definition}[lemma]{Definition}
\newtheorem{fact}[lemma]{Fact}
\theoremstyle{remark}
\newtheorem*{remark}{Remark}
\numberwithin{equation}{section}
\DeclarePairedDelimiterX\Set[2]{\lbrace}{\rbrace}%
 { #1 \mid{} #2 }
\DeclareRobustCommand{\numToLet}[1]{\ifstrequal{#1}{1}{a}
{\ifstrequal{#1}{2}{b}
{\ifstrequal{#1}{3}{c}
{\ifstrequal{#1}{4}{d}
{\ifstrequal{#1}{5}{e}
{\ifstrequal{#1}{6}{f}
{\ifstrequal{#1}{7}{g}
{\ifstrequal{#1}{8}{h}{out of bounds}
}}}}}}}}
\DeclareRobustCommand{\case}[3]{\ifstrempty{#3}{{#1}.{\sc{\numToLet{#2}}}}{{#1}.{\sc{\numToLet{#2}}}.{{\sc{\romannumeral #3}}}}}
\DeclareRobustCommand{\Case}[3]{\ifstrempty{#3}{Case {#1}.{\sc{\numToLet{#2}}}}{Case {#1}.{\sc{\numToLet{#2}}}.{{\sc{\romannumeral #3}}}}}
\DeclareRobustCommand{\Cases}[3]{\ifstrempty{#3}{Cases {#1}.{\sc{\numToLet{#2}}}}{Cases {#1}.{\sc{\numToLet{#2}}}.{{\sc{\romannumeral #3}}}}}
\title[Left-orderability and Kanenobu's knot]{On left-orderability and double branched covers of Kanenobu's knots}
\author[{F. Doria Medina
\and
M. Jackson
\and
J. Ruales
\and
H. Zeilberger}]{Fabian Doria Medina
\and
Michael Jackson
\and
Joaqu\'{i}n Ruales
\and
Hadas Zeilberger}
\begin{document}

\maketitle

\begin{abstract}
We show that the fundamental group of the double branched cover of an infinite family of homologically thin, non-quasi-alternating knots is not left-orderable, giving further support for a conjecture of Boyer, Gordon, and Watson that an irreducible rational homology 3-sphere is an L-space if and only if its fundamental group is not left-orderable.
\end{abstract}
\section{Introduction}

Heegaard Floer homology is an invariant of 3-manifolds introduced by Ozsv\'{a}th and Szab\'{o} \cite{OzsvathSzabo}. In its simplest form, it associates to a closed 3-manifold $Y$ a graded $\mathbb{F}_2$ vector space, denoted $\widehat{HF}(Y)$. It was shown by Ozsv\'{a}th and Szab\'{o} \cite[Proposition 5.1]{OS2} that if $Y$ is a rational homology 3-sphere then
\begin{align*}
\textup{rk }\widehat{HF}(Y)\geq{}|H_1(Y ; \mathbb{Z})|.
\end{align*}

\begin{definition}An \emph{L-space} is a rational homology sphere $Y$ with simplest possible Heegaard Floer homology, that is, with
\begin{align*}
\textup{rk } \widehat{HF}(Y) = |H_1(Y ; \mathbb{Z})|.
\end{align*}
\end{definition}

Lens spaces are L-spaces, motivating the name. It is interesting to ask whether there exist alternative characterizations of L-spaces that do not depend on Heegaard Floer homology \cite[Question 11]{OS4}. We know that if $Y$ is an L-space then $Y$ does not admit a $C^{2}$ co-orientable, taut foliation \cite{OS3}. The non-existence of a co-orientable, taut foliation has been proposed by Ozsv\'{a}th and Szab\'{o} as a possible characterization of L-spaces. Along similar lines, a conjecture has been proposed \cite{BoyerGordonWatson} that attempts to characterize L-spaces through a property of their fundamental group.

\begin{conjecture_main}
An irreducible rational homology 3-sphere is an L-space if and only if its fundamental group is not left-orderable.
\end{conjecture_main}

Recall that a left-orderable group is a group which admits a left-invariant total order.

The conjectured relationship between L-spaces and left-orderability is already known for 3-manifolds that are double branched covers of non-split alternating links. It has been shown that for a non-split alternating link $K\subset S^3$, the fundamental group of $\Sigma{}(K)$ is not left-orderable \cite{BoyerGordonWatson} (cf. \cite{GreeneJE}, \cite{Ito}), where $\Sigma{}(K)$ denotes the double branched cover of $K\subset{}S^3$. Furthermore, Manolescu and Ozsv\'{a}th \cite{ManolescuOzsvath} showed that alternating links, and more generally, quasi-alternating knots, are homologically thin. In turn, Ozsv\'{a}th and Szab\'{o} showed in \cite{OzsvathandSzabo} that for a homologically thin link $K$, its double branched cover is an L-space. Therefore, if a 3-manifold $M$ is the double branched cover of some non-split alternating link, then $M$ is an L-space and $\pi_1(M)$ is not left-orderable.

We will verify that a specific class of L-spaces arising from the double branched covers of Kanenobu's knot (see Figure~\ref{figure:kanenobu}) have fundamental groups which are not left-orderable. We will consider the knots $K_n$ for $n\geq{}0$, defined as
\begin{align*}
K_n=K_{-10n,10n+3}.
\end{align*}
\noindent{}It was shown by Greene and Watson that $K_n$ is homologically thin (but not quasi-alternating), and so $\Sigma{}(K_n)$ is an L-space for $n\geq{}0$ \cite[Proposition 11]{GreeneWatson}. Generically, $K_n$ is non-alternating and $\Sigma{}(K_n)$ is hyperbolic and can not be obtained by surgery on a knot in $S^{3}$ \cite{HoffmanWalsh}, so these manifolds fall outside of the classes considered in \cite{BoyerGordonWatson}. The fundamental group, $G_n$, of the double branched cover of $K_n$ was computed by Greene and Watson \cite{GreeneWatson}, and has the following presentation:

\begin{align*}
G_n=\pi{}_1(\Sigma{}(K_n))=\langle a,\; b,\; c,\; d \mid{} &(a^{-1}b)^{10n}d^{-1}a^{2},\; b^{-2}c(b^{-1}a)^{10n},\\
&(d^{-1}c)^{10n+3}c^{-1}bc^{-2},\; d^{2}a^{-1}d(c^{-1}d)^{10n+3} \rangle
\end{align*}
where we have renamed the four generators $v_1$, $v_2$, $v_3$, and $v_4$ from the original paper as $a$, $b$, $c$, and $d$, respectively.

\begin{figure}[ht]
\includegraphics[scale=.37]{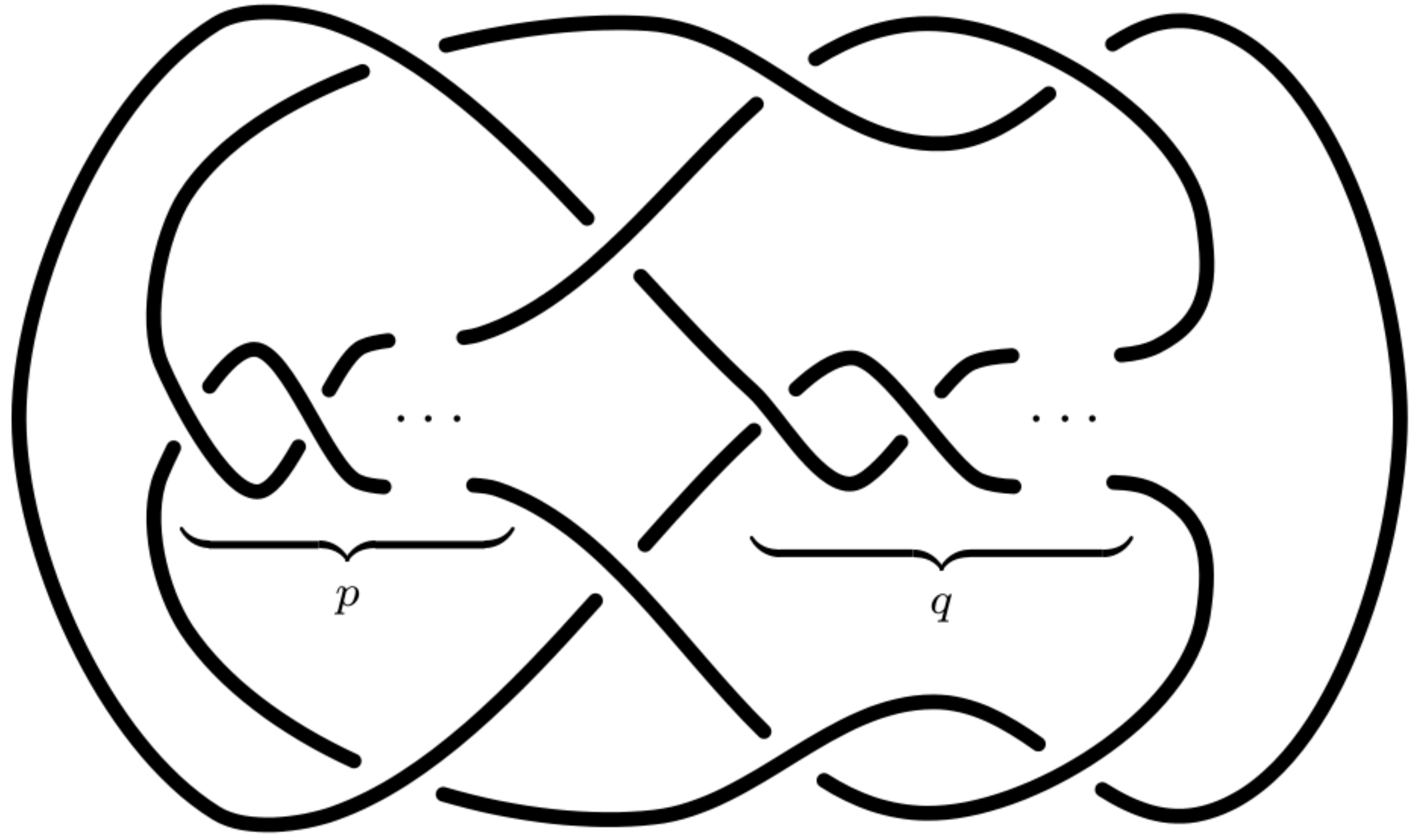}
\caption{Kanenobu's knot $K_{p,q}$. Image due to \cite{GreeneWatson}.}
\label{figure:kanenobu}
\end{figure}

\begin{theorem*} The fundamental group $G_n$ of the double branched cover of $K_n$ is not left-orderable.
\label{MAINTHEOREM}
\end{theorem*}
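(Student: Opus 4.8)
The plan is to argue by contradiction. Suppose $G_n$ carries a left-invariant total order, and let $P=\{g\in G_n\mid g>1\}$ be its positive cone. I will use only the following consequences of left-invariance: $P\cdot P\subseteq P$; $G_n=P\sqcup\{1\}\sqcup P^{-1}$; $g>1$ if and only if $g^{-1}<1$; and any inequality may be multiplied \emph{on the left} by an arbitrary element (so $g>1$ gives $hg>h$ for all $h$, but conjugation need not preserve $P$).

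First I would rewrite the four relators in the normalized form
\begin{align*}
(a^{-1}b)^{10n}=a^{-2}d,\qquad (a^{-1}b)^{10n}=b^{-2}c,\\
(d^{-1}c)^{10n+3}=c^{2}b^{-1}c,\qquad (d^{-1}c)^{10n+3}=d^{2}a^{-1}d,
\end{align*}
the second and fourth obtained from the second and fourth original relators by inversion, using $b^{-1}a=(a^{-1}b)^{-1}$ and $c^{-1}d=(d^{-1}c)^{-1}$. Writing $u=a^{-1}b$ and $s=d^{-1}c$, these say that the sign of $u$ equals the common sign of $a^{-2}d$ and $b^{-2}c$, that the sign of $s$ equals the common sign of $c^{2}b^{-1}c$ and $d^{2}a^{-1}d$, and that $a^{-2}d=b^{-2}c$ and $c^{2}b^{-1}c=d^{2}a^{-1}d$ hold in $G_n$. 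Reading the second pair as $c=b^{2}u^{10n}$ and $d=a^{2}u^{10n}$ also lets me eliminate $c$ and $d$, presenting $G_n$ on $a,b$ alone should I wish to shrink the case tree. Finally, since reversing the order interchanges $P$ and $P^{-1}$ (i.e.\ flips every sign at once), I may assume without loss of generality that $u=a^{-1}b>1$, treating the degenerate possibility $a=b$ separately.

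The heart of the argument is a case analysis on the signs of the generators $a,b,c,d$, and hence of the subwords (such as $a^{-2}d$, $b^{-2}c$, $b^{-1}c$, $a^{-1}d$) appearing in the normalized relations. In each branch I would propagate signs mechanically: for instance $u>1$ forces $a^{-2}d>1$, hence $d>a^{2}$ after left-multiplying by $a^{2}$; and a word like $c^{2}b^{-1}c=(c^{2})(b^{-1}c)$ is seen to be positive as soon as both displayed factors are, whereupon the $s$-relations pin down the sign of $s$. Iterating, most branches close because some relation exhibits one element as simultaneously $>1$ and $<1$; the surviving branches spawn the sub- and sub-sub-cases that the nested case labels are set up to record.

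The main obstacle is that left-invariance is weak: because we cannot conjugate $P$ into itself, pure sign propagation does not close every branch---in particular the branch in which $a,b,c,d$ all lie on the same side of $1$. There the crux is to use the relations globally rather than letter by letter: I would combine the two ``$10n$'' relations with the two ``$10n+3$'' relations through the bridging identities $a^{-2}d=b^{-2}c$ and $c^{2}b^{-1}c=d^{2}a^{-1}d$, expressing a single element in two incompatible ways and extracting a contradictory pair of inequalities by a carefully ordered sequence of left-multiplications. Notably, the large exponents enter only through the fact that a positive power has the same sign as its base, so once the right manipulations are found the reasoning is essentially uniform in $n\ge 0$; finding those manipulations in the residual same-sign cases is the delicate part of the proof.
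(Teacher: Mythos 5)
Your overall strategy---positive cone, normalization of the relators, a without-loss-of-generality sign for $a^{-1}b$, and a case tree on the signs of $a,b,c,d$ with sub-cases for the branches that sign propagation cannot close---is exactly the paper's strategy for $n\geq 1$. But there are two genuine gaps. First, your closing claim that ``the reasoning is essentially uniform in $n\ge 0$'' is false, and the paper is explicit about this: at $n=0$ the word $u^{10n}$ is the identity, so the relation $(a^{-1}b)^{10n}=a^{-2}d$ degenerates to $a^{-2}d=1$ and carries no sign information about $u$ at all---your sign-propagation engine never starts. Moreover, the manipulations that close the residual cases for $n\geq 1$ involve exponents such as $10n-1$ and $10n-2$ (these appear throughout the paper's treatment of Cases 1 and 16), which are nonnegative only when $n\geq 1$. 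The paper therefore handles $G_0$ by a completely different argument: it reduces to the two-generator presentation $\langle x,y \mid (x^{-2}y^{2})^{3}=x^{5}=y^{5}\rangle$, observes that $x^{5}$ is \emph{central}, deduces that every conjugate $wxw^{-1}$ has the same sign as $x$, and reaches a contradiction from that conjugation-invariance---an idea absent from your proposal.

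Second, for $n\geq 1$ your proposal stops precisely where the real work begins. You correctly identify that the same-sign branches (all four generators positive, or all negative) cannot be closed by letterwise sign propagation, but you then only promise ``a carefully ordered sequence of left-multiplications'' and concede that finding it ``is the delicate part of the proof.'' That delicate part is the bulk of the paper: Case 1 is split into eight sub-cases on the signs of $a^{-1}d$, $b^{-1}d$, $b^{-1}c$, one of which is split again into eight sub-sub-cases on the signs of $ca^{-1}$, $da^{-1}$, $cb^{-1}$, each closed by a bespoke rewriting of a relator; Case 16 first requires a concordance proposition showing that $dc^{-1}$, $ad^{-1}$, $ac^{-1}$, $bc^{-1}$, $bc^{-3}$, $ad^{-3}$ all share one sign, and then two sub-cases each needing roughly ten auxiliary inequalities. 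These arguments also rest on derived identities beyond the two bridging ones you list---notably $ad^{-1}a=bc^{-1}b$ and $c^{-3}d^{3}=b^{-1}a$, obtained by recombining the relator pairs differently---without which the same-sign branches do not close. As it stands, your text is a correct plan for the easy eleven cases plus an accurate description of where the difficulty lies, not a proof of the theorem.
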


\noindent{} Next, we introduce various definitions and give background on left-orderability.

\subsection{Left-orderability}



\begin{definition}
A group $G$ is {\it left-orderable} if its elements can be given a left-invariant total order. That is, a total order $<$ such that $g<h$ implies $fg<fh$ for all $f, g, h\in{}G$.
\end{definition}

\begin{remark} By convention the trivial group is not left-orderable.
\end{remark}

We recall some facts on left-orderable groups from \cite{ClayRolfsen}.

\begin{fact} For some left-orderable group $(G, <)$ we can define {\it a} corresponding relation $>$ in the following way: for $g,h\in{}G$, $g>h$ if and only if $h<g$. This notational convenience will be used frequently.
\end{fact}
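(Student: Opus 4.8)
The plan is to treat this statement as what it essentially is --- a definition --- and to verify only that the relation it introduces is well-defined and internally consistent. Since $(G,<)$ is given together with a fixed left-invariant total order $<$, the prescription ``$g>h$ if and only if $h<g$'' assigns to every ordered pair $(g,h)\in G\times G$ a definite truth value. Thus $>$ is a genuine binary relation on $G$, and no existence argument is required beyond exhibiting this prescription. First I would make exactly this observation explicit, so that the symbol $>$ is anchored to the already-given order $<$ rather than to any new structure.

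Next I would record the elementary properties that make $>$ a bona fide order dual to $<$, since these are what justify using the notation freely later in the paper. The relation $>$ is itself a total order: comparability, antisymmetry, and transitivity for $>$ follow immediately from the corresponding properties of $<$ by interchanging the two arguments. For example, given distinct $g,h\in G$, comparability of $<$ gives exactly one of $g<h$ or $h<g$, which translates to exactly one of $h>g$ or $g>h$; and if $g>h$ and $h>k$, then $k<h$ and $h<g$, so $k<g$ by transitivity of $<$, whence $g>k$.

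Finally I would check that $>$ inherits left-invariance, so that $<$ and $>$ may be used interchangeably throughout. Suppose $g>h$, that is, $h<g$. By left-invariance of $<$ we have $fh<fg$ for every $f\in G$, which by definition means $fg>fh$. Hence $g>h$ implies $fg>fh$ for all $f\in G$, as desired, and $(G,>)$ is again a left-invariant total order.

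Since every claim reduces to a direct rewriting of the defining equivalence, I expect no genuine obstacle here; the only point requiring care is the bookkeeping of the direction of each inequality as the two arguments are swapped. The entire argument is therefore routine, and its purpose is simply to license the notational convention announced in the statement.
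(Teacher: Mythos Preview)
Your proposal is correct, and indeed more thorough than the paper's own treatment: the paper gives no proof at all for this fact, since it is purely a notational convention defining the symbol $>$ in terms of the given order $<$. Your verification that $>$ is itself a left-invariant total order is accurate and harmless, though strictly speaking unnecessary for the statement as written.
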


\begin{fact} In a left-orderable group G, $1<g$ (``$g$ is positive") if and only if $g^{-1}<1$ (``$g^{-1}$ is negative").
\label{fact:inverses}
\end{fact}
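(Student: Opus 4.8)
The plan is to prove the biconditional directly from the defining property of a left-invariant order, namely that $a<b$ implies $ca<cb$ for every $c\in G$. The key observation is that left multiplication by a fixed element is an order-preserving operation, so I can transport the inequality $1<g$ to the inequality $g^{-1}<1$ (and back) simply by multiplying both sides by the appropriate element and simplifying with the group axioms $g^{-1}g=1$ and $g\cdot 1=g$.

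For the forward direction, I would assume $1<g$ and multiply both sides on the left by $g^{-1}$. Left-invariance gives $g^{-1}\cdot 1<g^{-1}\cdot g$, and simplifying the two sides yields $g^{-1}<1$, as desired. For the reverse direction, I would assume $g^{-1}<1$ and multiply both sides on the left by $g$; left-invariance gives $g\cdot g^{-1}<g\cdot 1$, which simplifies to $1<g$. Since each direction is obtained from the other by the same one-step argument, the two statements are equivalent.

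There is essentially no obstacle here: the only tool required is the left-invariance clause in the definition of a left-orderable group, together with the elementary identities $g^{-1}g=1$ and $g\cdot 1=g$. The single point worth stating carefully is that left multiplication by $g^{-1}$ and by $g$ are mutually inverse order-preserving maps, so applying one and then the other returns the original inequality; this is what guarantees that the implication runs in both directions rather than only one. Accordingly, I expect the formal write-up to consist of two short displayed chains of inequalities, one for each implication, with a sentence noting the use of left-invariance at each step.
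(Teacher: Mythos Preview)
Your proof is correct and is the standard argument. Note, however, that the paper does not actually prove this statement: it is listed as a \emph{Fact} recalled from \cite{ClayRolfsen} with no proof given. Your direct application of left-invariance (multiply $1<g$ on the left by $g^{-1}$, and conversely multiply $g^{-1}<1$ on the left by $g$) is exactly the expected justification and there is nothing to compare.
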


\begin{fact} Transitivity implies that in a left-orderable group products of positive elements are positive and products of negative elements are negative.
\label{1.3LO}
\end{fact}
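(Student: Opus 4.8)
The plan is to argue directly from the two defining features of a left-order---left-invariance of $<$ and transitivity---exactly as the wording of the fact suggests. First I would treat the case of a product of two positive elements. Suppose $1<g$ and $1<h$. Left-multiplying the inequality $1<h$ by $g$ gives $g<gh$ by left-invariance, and combining this with $1<g$ through transitivity yields $1<g<gh$, so $1<gh$ and hence $gh$ is positive.

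Next I would extend this to an arbitrary finite product $g_1g_2\cdots g_k$ of positive elements by induction on $k$. The base case $k=1$ is immediate and $k=2$ is the computation above; for the inductive step I would write $g_1\cdots g_{k+1}=(g_1\cdots g_k)\,g_{k+1}$, note that $g_1\cdots g_k$ is positive by the inductive hypothesis, and apply the two-element case to conclude that the whole product is positive.

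For negative elements I would argue symmetrically. Suppose $g<1$ and $h<1$. Left-multiplying $h<1$ by $g$ gives $gh<g$, and transitivity with $g<1$ gives $gh<g<1$, so $gh$ is negative; the same induction then handles arbitrary finite products. Alternatively, I could deduce the negative case from the positive one via Fact~\ref{fact:inverses}: if each $g_i$ is negative then each $g_i^{-1}$ is positive, so $g_k^{-1}\cdots g_1^{-1}=(g_1\cdots g_k)^{-1}$ is positive by the first part, whence $g_1\cdots g_k$ is negative, again by Fact~\ref{fact:inverses}.

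I do not expect any genuine obstacle, since the statement is foundational. The only point requiring care is the \emph{direction} of multiplication: because the order is left-invariant, one must multiply the hypothesized inequalities on the left by the appropriate element, and one should resist right-multiplying, which is not justified by the given axioms.
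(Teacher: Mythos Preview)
Your argument is correct and is exactly the standard proof of this foundational fact. The paper itself does not supply a proof here---it simply records the statement as a fact recalled from \cite{ClayRolfsen}---so your proposal is actually more detailed than what appears in the paper; there is nothing to compare against beyond noting that your left-multiply-then-apply-transitivity approach is precisely the intended justification behind the word ``Transitivity'' in the statement.
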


\begin{proposition} In a left-orderable group $G$, $g\in{}G$ has the same sign as $g^{n}$ for any $n>1$.
\label{proposition:pospowers}
\end{proposition}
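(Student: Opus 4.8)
The plan is to argue by cases on the sign of $g$ and then apply Fact~\ref{1.3LO}, which guarantees that products of positive elements are positive and products of negative elements are negative. First I would dispose of the degenerate possibility $g = 1$: here $g^n = 1$ as well, so $g$ and $g^n$ trivially agree (both being the identity, neither positive nor negative), and there is nothing to prove. Since $G$ carries a total order, the only remaining possibilities are $1 < g$ and $g < 1$, and I would treat these two cases in turn.

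Suppose $g$ is positive, that is $1 < g$. I would then show by induction on $n$ that $g^n$ is positive for all $n \geq 1$. The base case $n = 1$ is the hypothesis. For the inductive step, I would write $g^{n} = g^{n-1} \cdot g$, observe that $g^{n-1}$ is positive by the inductive hypothesis and $g$ is positive by assumption, and conclude from Fact~\ref{1.3LO} that the product $g^{n}$ is positive. Thus $g^n$ has the same sign as $g$ for every $n > 1$.

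The case where $g$ is negative, $g < 1$, is entirely symmetric: the same induction, now invoking the ``products of negative elements are negative'' half of Fact~\ref{1.3LO}, shows that $g^n$ is negative for all $n \geq 1$, hence again shares the sign of $g$. Combining the three cases completes the proof.

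I do not expect any genuine obstacle here; the statement is an immediate consequence of transitivity of the order as packaged in Fact~\ref{1.3LO}. The only point requiring a modicum of care is to state explicitly what ``same sign'' means and to confirm that the trichotomy $g < 1$, $g = 1$, $1 < g$ is exhaustive, which follows because the order is total.
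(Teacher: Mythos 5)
Your proof is correct and is essentially the paper's own argument: the paper proves this proposition with the single line ``Consequence of Fact~\ref{1.3LO},'' and your case split plus induction is exactly the unpacking of that fact (products of positives are positive, products of negatives are negative) applied to $g^{n}=g\cdot g\cdots g$. No discrepancy worth noting.
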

\begin{proof} Consequence of Fact \ref{1.3LO}.
\end{proof}

\begin{fact} A left-orderable group has no torsion.
\label{fact:torsion}
\end{fact}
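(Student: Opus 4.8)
The plan is to argue by contradiction, assuming that $G$ contains a nontrivial torsion element and deriving a violation of the sign behaviour recorded in Proposition \ref{proposition:pospowers}. Suppose $g \in G$ satisfies $g \neq 1$ and $g^n = 1$ for some integer $n > 1$; by replacing $n$ with the order of $g$ one may take $n$ minimal, though minimality plays no role in the argument.

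First I would use that the order $<$ on $G$ is total: since $g \neq 1$, exactly one of $g > 1$ or $g < 1$ holds, so $g$ is either positive or negative. Proposition \ref{proposition:pospowers} then asserts that $g$ and $g^n$ carry the same sign. If $g$ is positive, this forces $g^n > 1$; if $g$ is negative, it forces $g^n < 1$. In either case $g^n \neq 1$, contradicting the hypothesis $g^n = 1$. Hence no nontrivial element of $G$ has finite order, and $G$ is torsion-free.

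I expect no genuine obstacle here: the entire content is already packaged in Proposition \ref{proposition:pospowers}, which is itself a consequence of Fact \ref{1.3LO} (products of positive elements are positive and products of negative elements are negative). The only observation the proof really supplies is that the identity $1$ is neither positive nor negative, whereas any power $g^n$ of a nontrivial element must, by the proposition, inherit the definite sign of $g$ — and these two facts cannot coexist for a torsion element.
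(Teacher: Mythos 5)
Your proof is correct, and there is no circularity: Proposition~\ref{proposition:pospowers} appears before Fact~\ref{fact:torsion} in the paper and is itself derived only from Fact~\ref{1.3LO}, so you may legitimately invoke it. Note, however, that the paper offers no proof of this statement at all---it is recorded as a Fact recalled from the Clay--Rolfsen notes---so there is no argument of the paper's to compare against; your derivation (totality forces a nontrivial $g$ to have a definite sign, Proposition~\ref{proposition:pospowers} forces $g^n$ to inherit that sign, hence $g^n \neq 1$) is the standard one and fills in exactly the expected reasoning.
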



\begin{fact} Let $G$ be a non-trivial group and let $g\in{}G$. There exists a left-ordering $<$ on $G$ such that $g<1$ if and only if there exists a left-ordering $<^\prime$ on $G$ such that $1<^\prime g$.
\label{fact:WLOG}
\end{fact}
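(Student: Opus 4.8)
The plan is to exploit a single construction: the \emph{reversal} of a left-ordering is again a left-ordering. Given any left-ordering $<$ on $G$, I would define the reversed relation $<'$ by declaring $x <' y$ precisely when $y < x$. Since reversing a total order yields a total order, the only thing that needs checking is left-invariance, and the symmetry of the situation makes both directions of the biconditional fall out of this one observation.

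First I would verify that $<'$ is left-invariant. Suppose $x <' y$, that is, $y < x$. By left-invariance of $<$, multiplying on the left by any $f\in G$ gives $fy < fx$, which by the definition of $<'$ says exactly $fx <' fy$. Hence $<'$ is a left-ordering. This is the one computation the argument requires, and it is short precisely because left-invariance is preserved under reversal.

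With this lemma in hand, both implications are immediate. For the forward direction, assume $<$ is a left-ordering with $g < 1$; passing to the reversed order $<'$, the defining equivalence $1 <' g \iff g < 1$ shows $1 <' g$, so $<'$ is the desired ordering. For the converse, assume $<'$ is a left-ordering with $1 <' g$; its reversal (again a left-ordering, by the same lemma applied to $<'$) is an ordering $<$ satisfying $g < 1 \iff 1 <' g$, and hence $g < 1$. The two directions are genuinely the same statement read in opposite directions, so no separate work is needed for each.

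I do not anticipate a serious obstacle here; the only point requiring care is confirming that reversal preserves \emph{left}-invariance rather than right-invariance, which is exactly what the direct check above settles. The non-triviality hypothesis on $G$ plays no essential role in the argument beyond keeping the statement from being vacuous, and in the degenerate case $g=1$ both sides of the equivalence simply fail, so the biconditional still holds.
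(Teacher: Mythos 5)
Your proof is correct and complete. One thing to be aware of: the paper itself gives no proof of this statement---it is listed among the facts recalled from Clay and Rolfsen's notes on ordered groups---so there is no argument in the paper to compare against; yours supplies the standard justification. The order-reversal construction is the canonical one, and the single point of substance, checking that reversal preserves \emph{left}-invariance ($y<x$ implies $fy<fx$ implies $fx<'fy$), is handled correctly. In the language the paper develops in Proposition~\ref{proposition:poscone}, your construction is equivalent to observing that if $P$ is a positive cone then so is $P^{-1}$, and that $g$ lies in $P^{-1}$ exactly when $g<1$ in the original ordering; either formulation yields the same short proof, and the cone version is the one implicitly used later when the authors assume $b^{-1}a\in P$ ``without loss of generality.'' Your closing remarks are also accurate: when $g=1$ both sides of the biconditional are false, and the non-triviality hypothesis matters only because the paper's convention declares the trivial group non-left-orderable, so the statement would otherwise be vacuous rather than false.
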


\noindent{}We can also define left-orderability in a different way:

\begin{proposition} A group $G$ is left-orderable if and only if there exists a subset $P\subset{}G$ such that:
\begin{enumerate}
\item $P\cdot{}P\subset{}P$
\item $P\cap{}P^{-1}=\emptyset{}$
\item $G=P\cup{}P^{-1}\cup{}\{ 1 \}$
\end{enumerate}
\label{proposition:poscone}
\end{proposition}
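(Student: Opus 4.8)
The plan is to prove both directions through the standard correspondence between a left-ordering and its positive cone, so the bulk of the argument is bookkeeping rather than ingenuity.

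For the forward direction I would assume $G$ carries a left-invariant total order $<$ and set $P=\Set{g\in G}{1<g}$, the set of positive elements. Property (1) is then immediate from Fact \ref{1.3LO}, since a product of positive elements is positive. Property (2) follows from Fact \ref{fact:inverses}: if $g\in P$ then $g^{-1}$ is negative, so no element can lie in both $P$ and $P^{-1}$. Property (3) is just trichotomy of the total order, combined with Fact \ref{fact:inverses} to sort the negative elements into $P^{-1}$.

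For the reverse direction, given such a $P$ I would define a relation by declaring $g<h$ exactly when $g^{-1}h\in P$, and the work is to verify that this is a left-invariant total order. Left-invariance is a one-line computation, since $(fg)^{-1}(fh)=g^{-1}h$, so $g<h$ forces $fg<fh$. Transitivity comes straight from property (1): if $g^{-1}h\in P$ and $h^{-1}k\in P$, then $g^{-1}k=(g^{-1}h)(h^{-1}k)\in P$, whence $g<k$.

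The step that needs the most care -- and the one I would flag as the main obstacle -- is showing that $<$ is a genuine strict total order, i.e.\ that for each pair exactly one of $g<h$, $g=h$, $h<g$ holds. Here I would first observe that $1\notin P$ (otherwise $1=1^{-1}\in P\cap P^{-1}$, contradicting property (2)), which yields irreflexivity. Existence of a comparison then follows from applying property (3) to the element $g^{-1}h$, and mutual exclusivity follows from property (2), which prevents $g^{-1}h$ from lying in both $P$ and $P^{-1}$; in the $P^{-1}$ case one uses $(g^{-1}h)^{-1}=h^{-1}g$ to rewrite the condition as $h<g$. Once these three cases are shown to be exhaustive and pairwise exclusive, $<$ is a left-invariant total order and $G$ is left-orderable.
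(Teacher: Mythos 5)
Your proof is correct and follows essentially the same route as the paper: the same positive cone $P=\Set{g\in G}{1<g}$ in the forward direction, and the same order $g<h\Leftrightarrow g^{-1}h\in P$ in the reverse direction. The only difference is that you carefully verify left-invariance, transitivity, and trichotomy where the paper simply states ``it is easy to check this defines a left-ordering,'' so your write-up supplies exactly the routine details the paper elides.
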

\begin{proof} Suppose $G$ is left-orderable. Define
\begin{align*}
P=\Set*{g\in{}G}{1<g}.
\end{align*}
Then $P\cdot{}P\subset{}P$ since if $g>1$ and $h>1$ then $gh>g>1$. Therefore, $P$ satisfies the first condition. If $g>1$ then $g^{-1}<1$ and so $P\cap{}P^{-1}=\emptyset{}$. Therefore, $P$ satisfies the second condition. Finally, by the totality of a total ordering, all non-trivial elements in $G$ must be either positive or negative, thus $P\cup{}P^{-1}\cup{}\{ 1 \}$. Therefore, $P$ satisfies the third condition, completing one direction of the proof.\\
\\
Conversely, suppose there exists a subset $P\subset{}G$ satisfying the three conditions of the proposition. Define a left ordering in the following way:
\begin{align*}
g<h\Leftrightarrow{}g^{-1}h\in{}P.
\end{align*}
It is easy to check this defines a left-ordering.
\end{proof}

\begin{definition} For a group $G$, a subset $P\subset{}G$ satisfying the three conditions of Proposition~\ref{proposition:poscone} is called a {\it positive cone}.
\end{definition}

\subsection{Automated Proofs}
Several of the proofs for lemmas and propositions in this paper were generated by a computer program we created for the task. We will now briefly describe the algorithm our program employs, as it could be useful for future work in disproving the left-orderability of certain groups. Our program is similar to the program described in \cite[Section 8]{CalegariDunfield}.

For the proof that $G_n$ is not left-orderable, we argue by contradiction. That is, we assume that $G_n$ is left-orderable, thus for any left-ordering on $G_n$, there must exist a positive cone $P\subset{}G_n$. Based on Fact~\ref{fact:WLOG}, we can proceed under the assumption that $b^{-1}a\in{}P$, and then find additional elements of $G_n$ that must be contained in such a positive cone. With the addition of enough elements, we can in many cases reach a contradiction.

In order to accomplish this, the program takes two inputs:
\begin{enumerate}
\item A set $Q\subset{}P$ of elements that have been proven (either in previous iterations of the program, or by hand) to be contained in $P$, including $b^{-1}a$. This set will grow during the execution of the program, but we will ensure that it always is a subset of $P$, so $Q$ has the property inherited from $P$ that $1\not\in Q^{*}$, where $Q^*$ is the semigroup generated by $Q$. 
\item A subset $I$ of all words that we know are equal to the identity based on the group relations of $G_n$, closed under inversion and cyclic permutation. See (\ref{intro:cyclic}) for an example of what is meant by cyclic permutation.
\end{enumerate}
\begin{remark}
Words that are equal to the identity are henceforth referred to as identities.
\end{remark}
The four group relations of $G_n$ are obvious examples of identities. To give another example, Lemma~\ref{lemma:eq5} shows that $d^{-1}a^{2}b^{-2}c$ is also an identity. The cyclic permutations of this identity would be:
\begin{align}
\{d^{-1}a^{2}b^{-2}c,\; cd^{-1}a^{2}b^{-2},\; b^{-1}cd^{-1}a^{2}b^{-1},\nonumber{}\\
b^{-2}cd^{-1}a^{2},\; ab^{-2}cd^{-1}a,\; a^{2}b^{-2}cd^{-1}\}. \label{intro:cyclic}
\end{align}

Pseudocode for a simplified version of the program follows, where $A^*$ denotes the semigroup generated by the elements of $A$.\\

\begin{algorithmic}
\LOOP
\STATE $x\gets$ next nontrivial element of unknown sign
  \IF {$I\cap{}(Q\cup{}\{x\})^*=\emptyset{}$ \AND $I\cap{}(Q\cup{}\{x^{-1}\})^*\neq{\emptyset{}}$}
    \STATE {\textbf{add} $x$ \textbf{to} $Q$}
    \PRINT {$x$ added to positive list}
  \ELSIF {$I\cap{}(Q\cup{}\{x\})^*\neq{}\emptyset{}$ \AND $I\cap{}(Q\cup{}\{x^{-1}\})^*={\emptyset{}}$}
    \STATE {\textbf{add} $x^{-1}$ \textbf{to} $Q$}
    \PRINT {$x^{-1}$ added to positive list}
  \ELSIF {$I\cap{}(Q\cup{}\{x\})^*\neq{}\emptyset{}$ \AND $I\cap{}(Q\cup{}\{x^{-1}\})^*\neq{}{\emptyset{}}$}
    \PRINT {$x$ causes a contradiction}
    \STATE {\textbf{program halts}}
  \ENDIF
\ENDLOOP
\end{algorithmic}
$\;$\\
The ``next nontrivial element of unknown sign" from line 2 can either be user-input or computer-generated. Since there are infinite elements of unknown sign, we (or the computer) give preference to those elements with lowest word length, e.g. $c^{-1}d$ before $c^{-1}d^2$.

Within the program's \textbf{if} statements, we compute the intersection between the finite set $I$ and infinite semigroups generated by $Q\cup{}\{x\}$ or $Q\cup{}\{x^{-1}\}$. This is possible in finitely many operations because $I$ is finite and the semigroup is finitely generated. We use a method similar to using a deterministic finite automaton with the finitely generated semigroup as a language to check elements in $I$.

\subsection{Outline}
The paper is organized as follows. In Section~\ref{section:G_0}, we provide a proof that $G_0$ is not left-orderable. The case when $n=0$ is addressed separately because the proof for $n>0$ does not hold when $n=0$. The remainder of the paper is then devoted to a proof for the cases $n>0$. To facilitate the proof, we consider sixteen cases (see Table~\ref{table:casesAll}) based on the signs of the four generators of $G_n$ and disprove left-orderability in each. In Section~\ref{section:identityProofs} we show that the four generators of $G_n$ are non-trivial and distinct, justifying the totality of the sixteen cases we will address. In Section~\ref{section:generalLemmas} we prove lemmas that hold in all cases and that will be useful for later proofs. With these tools, left-orderability is straightforward to disprove in eleven of the sixteen cases, and we address these in Section~\ref{section:manyCases}. We disprove left-orderability in Cases 3, 4, 8, 1, and 16  in Sections~\ref{section:case3},~\ref{section:case4},~\ref{section:case8},~\ref{section:case1}, and~\ref{section:case16} respectively.

\subsection{Acknowledgements} We would like to thank Jennifer Hom and Kristen Hendricks for their generous advice throughout the project, Columbia University's REU Summer Program for providing us the opportunity to work together, Adam Clay and Dale Rolfsen for sharing their notes on ``Ordered Groups and Topology," and Tye Lidman and Liam Watson for suggesting this problem. We would also like to thank Tye Lidman and the anonymous reviewers at the Journal of Knot Theory and Its Ramifications for comments on earlier drafts of this paper. Finally, we would like to thank the National Science Foundation---Fabian Doria Medina was partly supported by NSF grant DMS-1149800.

\section{Proof that $G_0$ is not left-orderable}
\label{section:G_0}
We start by proving that $G_0$ is not left-orderable, as the proof uses a different approach than the general case $n\geq1$
\begin{lemma} $G_0$ is isomorphic to $\langle x,\; y\mid{}(x^{-2}y^{2})^{3}=x^{5}=y^{5} \rangle$.
\end{lemma}
\begin{proof} When $n=0$, we have:
\begin{align*}
G_0=\langle a,\; b,\; c,\; d \mid{} d^{-1}a^{2},\; b^{-2}c,\; (d^{-1}c)^{3}c^{-1}bc^{-2},\; d^{2}a^{-1}d(c^{-1}d)^{3} \rangle.
\end{align*}
The first two relations show that $a^{2}=d$ and $b^{2}=c$, thus we can rewrite the presentation using only $a$ and $b$ as generators:
\begin{align*}
G_0&=\langle a,\;b\mid{} (a^{-2}b^{2})^{3}b^{-2}bb^{-4},\; a^{4}a^{-1}a^{2}(b^{-2}a^{2})^{3} \rangle\\
G_0&=\langle a,\;b\mid{} (b^{-2}a^{2})^{3}=b^{-5},\; a^{5}(b^{-2}a^{2})^{3} \rangle\\
G_0&=\langle a,\;b\mid{} (b^{-2}a^{2})^{3}=b^{-5},\; a^{5}b^{-5} \rangle\\
G_0&=\langle a,\;b\mid{} (a^{-2}b^{2})^{3}=a^{5}=b^{5} \rangle.\qedhere
\end{align*}
\end{proof}

\begin{lemma} Both $x^{5}$ and $y^{5}$ commute with all elements in $G_0$.
\label{lemma:G_0:center}
\end{lemma}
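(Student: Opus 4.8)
The plan is to exploit the defining relation $x^{5}=y^{5}$ to treat these two powers as a single element and then observe that this element commutes with each generator by inspection. First I would set $z=x^{5}=y^{5}$, which is legitimate because the presentation of $G_0$ established in the previous lemma contains the relation $x^{5}=y^{5}$. Since $z=x^{5}$ is a power of $x$, it commutes with $x$; since $z=y^{5}$ is a power of $y$, it commutes with $y$.

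The key step is then to pass from commuting with the generators to commuting with the whole group. I would note that the centralizer $C_{G_0}(z)=\Set*{g\in G_0}{gz=zg}$ is a subgroup of $G_0$, and by the previous paragraph it contains both $x$ and $y$. Because $\{x,y\}$ generates $G_0$, this forces $C_{G_0}(z)=G_0$, so $z$ commutes with every element of $G_0$. As $x^{5}=z=y^{5}$, both $x^{5}$ and $y^{5}$ equal this central element, and hence both commute with all of $G_0$.

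I do not expect a genuine obstacle here: the whole argument turns on the single observation that the relation $x^{5}=y^{5}$ makes each of these powers equal to an element that is simultaneously a power of $x$ and a power of $y$, and such an element automatically commutes with both generators. The only point requiring a word of care is the elementary fact that commuting with a generating set is enough to commute with the entire group, which is exactly the statement that the centralizer is a subgroup.
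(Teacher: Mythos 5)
Your proposal is correct and is essentially the paper's own argument: the relation $x^{5}=y^{5}$ makes this single element a power of both generators, hence it commutes with $x$ and $y$ (and their inverses), and therefore with every word in them. The only cosmetic difference is that you package the passage from generators to the whole group via the centralizer subgroup, while the paper states it directly.
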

\begin{proof} Since we can change $x^5$ to $y^5$ and back as necessary, it is clear that both $x^5$ and $y^5$ commute with $x$, $y$, $x^{-1}$, and $y^{-1}$ and therefore with any element of $G_0$.

\end{proof}

\begin{lemma} If $G_0$ is left-orderable, then $wx^{n}w^{-1}$ has the same sign as $x$ for any $w\in{}G_0$ and for any $n\geq{}1$. Similarly, $wy^{n}w^{-1}$ has the same sign as $y$ for any $w\in{}G_0$ and for any $n\geq{}1$.
\label{lemma:G_0:conjugates}
\end{lemma}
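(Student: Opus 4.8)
The plan is to exploit the fact, established in Lemma~\ref{lemma:G_0:center}, that $x^5 = y^5$ is central in $G_0$, together with Proposition~\ref{proposition:pospowers}, which guarantees that an element and its positive powers share the same sign. The central observation is that although $wx^nw^{-1}$ is a complicated conjugate whose sign is not immediately accessible, its fifth power collapses to a central element whose sign we can read off directly.

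Concretely, I would fix $w\in{}G_0$ and $n\geq{}1$, assume $G_0$ is left-ordered, and first compute the fifth power of the conjugate. Since conjugation by $w$ is a group homomorphism,
\[
(wx^{n}w^{-1})^{5}=wx^{5n}w^{-1}=w(x^{5})^{n}w^{-1}.
\]
Because $x^{5}$ is central and the center of a group is a subgroup, the power $(x^{5})^{n}=x^{5n}$ is again central, so it commutes with $w$ and the conjugation is trivial, giving $(wx^{n}w^{-1})^{5}=x^{5n}$.

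With this identity in hand, I would apply Proposition~\ref{proposition:pospowers} twice. On the one hand, $wx^{n}w^{-1}$ has the same sign as its fifth power $(wx^{n}w^{-1})^{5}=x^{5n}$. On the other hand, since $5n>1$ whenever $n\geq{}1$, the element $x^{5n}$ has the same sign as $x$. Chaining these two equalities of sign yields that $wx^{n}w^{-1}$ has the same sign as $x$, which is exactly the claim. The assertion for $y$ is proved verbatim, replacing $x$ by $y$ throughout and invoking the centrality of $y^{5}$ in place of that of $x^{5}$.

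I do not anticipate a genuine obstacle here; the argument is short once the right maneuver is spotted. The only points requiring care are to invoke centrality for the full power $x^{5n}$ rather than merely for $x^{5}$ (legitimate because the center is closed under taking powers), and to verify the hypothesis $5n>1$ so that Proposition~\ref{proposition:pospowers} applies to $x^{5n}$; both hold for every $n\geq{}1$.
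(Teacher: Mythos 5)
Your proof is correct and takes essentially the same route as the paper: both exploit the centrality of $x^{5}$ (Lemma~\ref{lemma:G_0:center}) to identify a fifth power of the conjugate with a power of $x$, and then apply Proposition~\ref{proposition:pospowers} twice to transfer the sign. The only difference is cosmetic---you treat general $n\geq 1$ in a single computation via $(wx^{n}w^{-1})^{5}=x^{5n}$, whereas the paper argues explicitly for $wxw^{-1}$ and leaves the passage to $wx^{n}w^{-1}=(wxw^{-1})^{n}$ implicit---so your version is, if anything, marginally more complete.
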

\begin{proof} Suppose $G_0$ is left-orderable. We know by Lemma~\ref{lemma:G_0:center} that for any $w\in{}G_0$:
\begin{align*}
wx^{5}w^{-1}=ww^{-1}x^{5}=x^{5}.
\end{align*}
By Proposition~\ref{proposition:pospowers}, $x^5$ has the same sign as $x$, and thus $wx^{5}w^{-1}$ has the same sign as $x$. But $wx^{5}w^{-1}=(wxw^{-1})^{5}$, and so by Proposition~\ref{proposition:pospowers} $wxw^{-1}$ has the same sign as $wx^{5}w^{-1}$ and therefore has the same sign as $x$. A similar proof works for $y$.
\end{proof}

\begin{lemma} If $G_0$ is left-orderable and $x>1$, then $x^{-2}y^{2}>1$.
\label{lemma:G_0:XXyy}
\end{lemma}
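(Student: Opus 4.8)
The plan is to read the positivity of $x^{-2}y^2$ straight off the defining relation $(x^{-2}y^2)^3 = x^5$, using only the sign facts already established. First I would assume $G_0$ is left-orderable with $x>1$. Since $1<x$, repeated application of Fact~\ref{1.3LO} (products of positive elements are positive) gives $1<x^5$; equivalently, Proposition~\ref{proposition:pospowers} tells us directly that $x^5$ has the same sign as $x$, so $x^5$ is positive. This is the only place the hypothesis $x>1$ is used.

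Next I would invoke the relation in the presentation of $G_0$. Because $(x^{-2}y^2)^3 = x^5$ as elements of $G_0$, the positivity of $x^5$ transfers immediately to the cube: $1<(x^{-2}y^2)^3$. Writing $g = x^{-2}y^2$, we have shown $1<g^3$.

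Finally I would apply Proposition~\ref{proposition:pospowers} once more, now to the element $g=x^{-2}y^2$ with exponent $n=3$: the element $g$ has the same sign as $g^3$. Since $g^3$ is positive, $g = x^{-2}y^2$ must be positive as well, i.e. $x^{-2}y^2>1$, which is the claim.

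I do not expect a genuine obstacle here: the argument is a two-line consequence of the relation $(x^{-2}y^2)^3=x^5$ together with the sign-preservation of positive powers. The only point requiring care is to apply Proposition~\ref{proposition:pospowers} in both directions—first to conclude $x^5>1$ from $x>1$, and then to pass from $(x^{-2}y^2)^3>1$ back down to $x^{-2}y^2>1$—rather than attempting to analyze the signs of $x^{-2}$ and $y^2$ separately, which would be harder since $x^{-2}$ is negative when $x>1$. In particular, neither Lemma~\ref{lemma:G_0:center} nor Lemma~\ref{lemma:G_0:conjugates} is needed for this statement.
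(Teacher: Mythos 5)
Your proposal is correct and matches the paper's own proof essentially verbatim: both deduce $x^5>1$ from $x>1$, transfer positivity across the relation $(x^{-2}y^2)^3=x^5$, and then apply Proposition~\ref{proposition:pospowers} to pass from the cube back to $x^{-2}y^2$ itself. Your closing remark about not analyzing the signs of $x^{-2}$ and $y^2$ separately is exactly the right instinct, and indeed Lemmas~\ref{lemma:G_0:center} and~\ref{lemma:G_0:conjugates} play no role here.
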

\begin{proof}
Suppose $G_0$ is left-orderable and suppose $x>1$. Then $x^5>1$ and $(x^{-2}y^2)^{3}>1$ since $(x^{-2}y^2)^{3}=x^5$. By Proposition~\ref{proposition:pospowers} this shows that $x^{-2}y^2>1$.
\end{proof}

\begin{proposition}
The group $G_0=\langle x,\; y \mid{} (x^{-2}y^{2})^{3}=x^{5}=y^{5} \rangle$ is not left-orderable.
\label{propG0}
\end{proposition}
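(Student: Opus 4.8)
The plan is to argue by contradiction in the style prepared for the main theorem. Assume $G_0$ is left-orderable and fix a positive cone $P$ (Proposition~\ref{proposition:poscone}). First I would dispose of the degenerate possibility: since a left-orderable group is nontrivial, and $x=1$ would force $y^5=1$ and hence $y=1$ because left-orderable groups are torsion-free (Fact~\ref{fact:torsion}), making $G_0$ trivial, we must have $x\neq 1$. Thus by Fact~\ref{fact:WLOG} I may assume without loss of generality that $x>1$. The remainder of the setup is purely formal: by Proposition~\ref{proposition:pospowers}, $x>1$ gives $x^5>1$, and since $x^5=y^5=(x^{-2}y^2)^3=:\zeta$ is central (Lemma~\ref{lemma:G_0:center}) and now positive, the same proposition forces $y>1$, while Lemma~\ref{lemma:G_0:XXyy} gives $z:=x^{-2}y^2>1$.

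Next I would assemble a large supply of elements known to be positive. Lemma~\ref{lemma:G_0:conjugates} makes every conjugate of $x$ and of $y$ positive, and its proof applies verbatim to $z$ because $z^3=\zeta$ is central; hence every conjugate of $z$ is positive as well. In particular $y^2x^{-2}=x^2zx^{-2}>1$, and rewriting the relation as $y^2=x^2z$ lets me expand the central element in mixed ways, for example $\zeta=y^5=(x^2z)^2y=x^2zx^2zy$, which after cancelling $x^2$ on the left yields the identity $zx^2zyx^{-3}=1$. Identities of this shape, together with their cyclic permutations and conjugates, are the raw material for a contradiction with Fact~\ref{1.3LO}, which says a product of positive elements is positive.

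The crux, and the step I expect to be the main obstacle, is converting these relations into an actual positive word equal to the identity. The difficulty is twofold: every relation that presents itself naturally has mixed sign, carrying a leftover negative factor such as $x^{-3}$; and every conjugate of $z$ collapses, order-theoretically, to the single comparison $y^2>x^2$, so $\{x,y,z\}$ and their conjugates alone cannot produce a contradiction. What must ultimately be exploited is the mismatch between the exponents $5$ and $3$ in $x^5=z^3$. Concretely, I would introduce a short auxiliary element of as-yet-unknown sign, the natural candidate being $x^{-1}y$ (equivalently, the comparison of $x$ with $y$), and show that each of the two possibilities $x^{-1}y\in P$ and $y^{-1}x\in P$, when combined with the positive elements above and the identities coming from the defining relation, forces a nontrivial product of positive elements to equal $1$. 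This is exactly the configuration flagged by the ``both intersections nonempty'' branch of the automated search described earlier; pinning down the precise positive word in each of the two sign cases is where the genuine work lies.

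Once both sign choices for the auxiliary element are ruled out, totality of the order is violated, so no positive cone can exist and $G_0$ is not left-orderable. As a consistency check with the Boyer--Gordon--Watson picture, $G_0$ is the fundamental group of a Seifert fibered L-space, for which non-left-orderability is exactly what one expects; the point of this proposition is to establish it by the elementary, self-contained route above rather than by appeal to the Seifert structure.
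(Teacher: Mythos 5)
Your setup reproduces the paper's own preliminaries almost exactly: the handling of the degenerate case $x=1$, the reduction to $x>1$ via Fact~\ref{fact:WLOG}, the deductions $y>1$ and $x^{-2}y^{2}>1$, and the positivity of conjugates of $x$, $y$ (and of $z=x^{-2}y^{2}$, using centrality of $z^{3}$) are precisely Lemmas~\ref{lemma:G_0:center}, \ref{lemma:G_0:conjugates}, \ref{lemma:G_0:XXyy} and the opening of the paper's proof of Proposition~\ref{propG0}. But your proof stops exactly where the proposition's content begins: you never exhibit a contradiction, and you say so yourself (``pinning down the precise positive word in each of the two sign cases is where the genuine work lies''). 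What you have written is a plan with the central step deferred, not a proof.

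Moreover, the plan is steered by a false claim. You assert that $\{x,y,z\}$ and their conjugates ``cannot produce a contradiction,'' concluding that an auxiliary element such as $x^{-1}y$ must be introduced and both of its sign cases eliminated separately. In a left-ordered group, positivity of $wzw^{-1}$ for arbitrary $w$ does not collapse to the single comparison $y^{2}>x^{2}$ (left-orders are not conjugation-invariant), and the paper derives its contradiction from exactly the elements you already have, with no case analysis. Starting from $(x^{-2}y^{2})^{3}=y^{5}$ and repeatedly using centrality of $x^{5}=y^{5}$, the paper arrives at
\begin{align*}
[x^{-2}y^{3}x^{-3}y^{-2}]\; x^{-2}\; [x^{-2}y^{3}x^{-3}y^{-2}]^{-1} \;=\; [x^{-2}y^{2}]\; y\; [x^{-3}yx^{3}]\; x ,
\end{align*}
whose left side is a conjugate of $x^{-2}$ and hence negative by Lemma~\ref{lemma:G_0:conjugates}, while the right side is a product of the known positives $x^{-2}y^{2}$, $y$, $x^{-3}yx^{3}$, and $x$ --- an immediate contradiction. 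If you want to finish along your own lines, the missing work is exactly a manipulation of this kind: rearrange the relation so that the leftover negative factor (your $x^{-3}$) is absorbed into a lone conjugate of a power of $x^{-1}$ on one side of an equation, with only elements already known to be positive on the other.
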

\begin{proof} Suppose (for contradiction) that $G_0$ is left-orderable. First note that if $x=1$, then $y^5=1$. Then either $y=1$ as well and $G_0$ is trivial, or $y\neq{}1$ and $G_0$ has torsion and is therefore not left-orderable by Fact~\ref{fact:torsion}, a contradiction. Thus by Fact~\ref{fact:WLOG} we can assume without loss of generality that $x>1$. Note that $x>1$ implies $x^5=y^5>1$ which implies $y>1$ by Proposition~\ref{proposition:pospowers}. Starting with the group relation, we have:
\begin{align}
x^{-2}y^{2}x^{-2}y^{2}x^{-2}y^{2}&=y^{5}\nonumber{}\\
x^{-2}y^{2}x^{-2}y^{2}x^{-2}&=y^{3}\nonumber{}\\
x^{-2}y^{2}x^{-2}y^{2}x^{2}&=y^{3}x^{4}\nonumber{}\\
x^{-2}y^{2}x^{-2}y^{2}x^{2}&=x^{5}y^{3}x^{-1}
\label{x5center}\\
y^{2}x^{-2}y^{2}x^{2} & =x^{7}y^{3}x^{-1}\nonumber{}\\
y^{-2}x^{-2}y^{2}x^{2} & =y^{-4}x^{7}y^{3}x^{-1}\nonumber{}\\
[x^{-2}y^{-2}]x^{-2}[y^{2}x^{2}] & =x^{-2}y^{-4}x^{5}x^{2}y^{3}x^{-1}\nonumber{}\\
& =x^{-2}yx^{2}y^{3}x^{-1}
\label{x5=y5}\\
[(x^{-1})x^{-2}y^{-2}]x^{-2}[y^{2}x^{2}(x)] & =(x^{-1})x^{-2}yx^{2}y^{3}x^{-1}(x)\nonumber{}\\
[(y^{3})x^{-3}y^{-2}]x^{-2}[y^{2}x^{3}(y^{-3})] & =(y^{3})x^{-3}yx^{2}(y^{3}y^{-3})\nonumber{}\\
[(x^{-2})y^{3}x^{-3}y^{-2}]x^{-2}[y^{2}x^{3}y^{-3}(x^{2})] & =(x^{-2})y^{3}x^{-3}yx^{2}(x^{2})\nonumber{}\\
[x^{-2}y^{3}x^{-3}y^{-2}] x^{-2} [x^{-2}y^{3}x^{-3}y^{-2}]^{-1} & =[x^{-2}y^{2}]y[(x^{-3})y(x^{3})]x.
\label{PFcontraG0}
\end{align}
Where for (\ref{x5center}) we have used the fact (shown in Lemma~\ref{lemma:G_0:center}) that $x^5$ commutes with any element of $G_0$, for (\ref{x5=y5}) we have used the group relation $x^{5}=y^{5}$. Now in (\ref{PFcontraG0}), the right expression must be positive since $x>1$, $y>1$, $x^{-2}y^{2}>1$ by Lemma~\ref{lemma:G_0:XXyy}, and $(x^{-3})y(x^{3})>1$ by Lemma~\ref{lemma:G_0:conjugates}. However, the expression on the left is negative by Lemma~\ref{lemma:G_0:conjugates} since it is of the form $(wx^{-1}w^{-1})^{2}$ for some $w\in{}G_0$. This is a contradiction.
\end{proof}

\begin{remark}
An alternative proof of Proposition \ref{propG0} follows from the fact that $K_0$ is a Montesino knot, and hence $\Sigma{}(K_0)$ is a Seifert fibered space. Proposition \ref{propG0} then follows from \cite[Theorem 4]{BoyerGordonWatson}.
\end{remark}

\section{Non-triviality and distinctness of the four generators}
\label{section:identityProofs}

\noindent{}Recall that
\begin{align*}
G_n=\langle a,\; b,\; c,\; d \mid{} &(a^{-1}b)^{10n}d^{-1}a^{2},\; b^{-2}c(b^{-1}a)^{10n},\\
&(d^{-1}c)^{10n+3}c^{-1}bc^{-2},\; d^{2}a^{-1}d(c^{-1}d)^{10n+3} \rangle
\end{align*}
for integers $n\geq{}0$.

\vspace{10pt}

\begin{proposition}
The four generators of $G_n$ are all distinct and non-trivial.
\label{proposition:non-trivial}
\end{proposition}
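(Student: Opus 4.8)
The statement is purely about the group $G_n$ and is independent of any ordering, so the natural strategy is to pass to an abelian quotient in which the four generators can be told apart by inspection. I would work with the abelianization $G_n^{\mathrm{ab}} = H_1(\Sigma(K_n);\mathbb{Z})$, or more concretely with a single homomorphism $\phi\colon G_n \to \mathbb{Z}/25\mathbb{Z}$. Since a homomorphism can only identify elements that are already equal and can only kill elements that are already trivial, it suffices to produce such a $\phi$ under which $a,b,c,d$ have four distinct, nonzero images.

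First I would abelianize the four relators, writing each one additively. Setting the abelianized relators to zero gives four $\mathbb{Z}$-linear relations among $a,b,c,d$. The first two relators, $(a^{-1}b)^{10n}d^{-1}a^{2}$ and $b^{-2}c(b^{-1}a)^{10n}$, solve immediately for $d$ and $c$ in terms of $a$ and $b$, namely $d\equiv(2-10n)a+10n\,b$ and $c\equiv -10n\,a+(2+10n)b$. Substituting these into the abelianizations of the remaining two relators yields two relations purely in $a$ and $b$; I would then compute the Smith normal form of the resulting $2\times 2$ integer matrix. Its determinant works out to $25$ for every $n$, so $G_n^{\mathrm{ab}}\cong\mathbb{Z}/25\mathbb{Z}$, and I would record the images of all four generators in terms of a single generator.

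The key simplification, and the point I would check most carefully, is that the apparent $n$-dependence disappears modulo $25$: in each relation the terms involving $n$ collect into a multiple of $50$ (typically $\pm 50n$ or $\pm 100n$), which is $\equiv 0 \pmod{25}$. Taking the image of $b$ as generator, one finds uniformly $a\equiv -4b$, $c\equiv 2b$, and $d\equiv -8b$, independent of $n$. Equivalently, and perhaps most cleanly, one can bypass the Smith normal form entirely by simply \emph{defining} $\phi\colon G_n\to\mathbb{Z}/25\mathbb{Z}$ on generators by $a\mapsto 21$, $b\mapsto 1$, $c\mapsto 2$, $d\mapsto 17$ and checking directly that each of the four relators maps to $0$; in every verification the $n$-dependent part reduces to $\pm 50n$ or $\pm 100n$, which vanishes mod $25$.

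Finally, the four residues $21,1,2,17$ are pairwise distinct and all nonzero in $\mathbb{Z}/25\mathbb{Z}$, so $a,b,c,d$ have distinct, nonzero images under $\phi$ and are therefore pairwise distinct and nontrivial in $G_n$. The only real obstacle is the bookkeeping in abelianizing the four relators and reducing the relation matrix; once one observes that $50\equiv 0\pmod{25}$ annihilates all of the $n$-dependence, the argument is uniform in $n$ and entirely computational.
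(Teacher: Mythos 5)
Your proposal is correct and takes essentially the same approach as the paper: the paper also distinguishes the generators by their images under the abelianization homomorphism $\phi\colon G_n\to\mathbb{Z}_{25}$, except that it simply cites Greene--Watson for the computation, with values $\phi(a)=13$, $\phi(b)=3$, $\phi(c)=6$, $\phi(d)=1$, which are exactly your values $21,1,2,17$ multiplied by the unit $3$, so the two homomorphisms agree up to an automorphism of $\mathbb{Z}_{25}$. The only difference is that you carry out the abelianization computation yourself (correctly, including the determinant $25$ and the vanishing of the $n$-dependence modulo $25$), making the argument self-contained rather than a citation.
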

\begin{proof}
Greene and Watson \cite[Section 4.2]{GreeneWatson} show that the abelianization of $G_n$ provides a homomorphism:
\begin{align*}
\phi:G_n\rightarrow \mathbb{Z}_{25}
\end{align*}
with $\phi(a)=13$, $\phi(b)=3$, $\phi(c)=6$, and $\phi(d)=1$. Because each of the generators maps to a distinct, non-identity element of $\mathbb{Z}_{25}$, it follows that the four generators are distinct and nontrivial.
\end{proof}

\begin{corollary}
Both $bc^{-3}$ and $ad^{-3}$ are non-trivial.
\label{corollary:non-trivial2}
\end{corollary}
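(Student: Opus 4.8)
The plan is to reuse the abelianization homomorphism $\phi: G_n \to \mathbb{Z}_{25}$ from the proof of Proposition~\ref{proposition:non-trivial}. Recall that a homomorphism sends the identity to the identity, so to show that an element $g \in G_n$ is non-trivial it suffices to exhibit a homomorphism out of $G_n$ under which the image of $g$ is non-trivial. Since $\phi$ is already in hand, the only work is to compute $\phi(bc^{-3})$ and $\phi(ad^{-3})$ and check that neither lands at $0$ in $\mathbb{Z}_{25}$.

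First I would compute, using $\phi(b)=3$ and $\phi(c)=6$,
\begin{align*}
\phi(bc^{-3}) = \phi(b) - 3\phi(c) = 3 - 18 = -15 \equiv 10 \pmod{25}.
\end{align*}
Next, using $\phi(a)=13$ and $\phi(d)=1$,
\begin{align*}
\phi(ad^{-3}) = \phi(a) - 3\phi(d) = 13 - 3 = 10 \pmod{25}.
\end{align*}
In both cases the image is $10 \neq 0$ in $\mathbb{Z}_{25}$, and since $\phi$ is a homomorphism this forces $bc^{-3} \neq 1$ and $ad^{-3} \neq 1$ in $G_n$.

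There is no real obstacle here; the corollary is an immediate numerical consequence of Proposition~\ref{proposition:non-trivial}. The only point worth a moment's care is bookkeeping the additive notation in $\mathbb{Z}_{25}$ (so that inverses become negation and the cube becomes a factor of $3$), but this is entirely routine.
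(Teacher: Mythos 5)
Your proof is correct, and it takes a genuinely different route from the paper's. The paper manipulates the third and fourth group relations to obtain $bc^{-3}=(dc^{-1})^{10n+3}$ and $ad^{-3}=(dc^{-1})^{10n+3}$, and then invokes Proposition~\ref{proposition:non-trivial} (which gives $dc^{-1}\neq 1$) to conclude non-triviality; you instead push both elements through the abelianization $\phi:G_n\rightarrow\mathbb{Z}_{25}$ and check that each lands at $10\neq 0$, and your arithmetic is right. Each approach buys something. The paper's computation produces the identity $bc^{-3}=ad^{-3}=(dc^{-1})^{10n+3}$, structural information that is effectively reused later (the same manipulation reappears in Lemma~\ref{lemma:iffs:5}, where the signs of $bc^{-3}$ and $ad^{-3}$ are tied to that of $dc^{-1}$). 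Your argument, on the other hand, is more self-contained and strictly more airtight: the paper's final step passes from $dc^{-1}\neq 1$ to $(dc^{-1})^{10n+3}\neq 1$, which is not automatic in an arbitrary group, since a non-trivial element can have a trivial power. That step is justified here either by torsion-freeness in the left-orderable setting where the corollary is applied (Fact~\ref{fact:torsion}), or by noting that $\phi(dc^{-1})=-5$ has order $5$ in $\mathbb{Z}_{25}$ while $10n+3$ is prime to $5$ --- which is exactly the kind of computation you carry out directly. Your version short-circuits that subtlety entirely.
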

\begin{proof}
By the third group relation, we have:
\begin{align*}
(d^{-1}c)^{10n+3}c^{-1}bc^{-2}&=1\\
\Rightarrow{}(cd^{-1})^{10n+3}bc^{-3}&=1\\
\Rightarrow{}bc^{-3}&=(dc^{-1})^{10n+3}.
\end{align*}
By Proposition~\ref{proposition:non-trivial}, $dc^{-1}\ne1$, therefore $bc^{-3}\ne1$.\\

\noindent{}By the fourth group relation, we have:
\begin{align*}
(d^{-1}c)^{10n+3}d^{-1}ad^{-2}&=1\\
\Rightarrow{}(cd^{-1})^{10n+3}ad^{-3}&=1\\
\Rightarrow{}ad^{-3}&=(dc^{-1})^{10n+3}.
\end{align*}
By Proposition~\ref{proposition:non-trivial}, $dc^{-1}\ne1$, therefore $ad^{-3}\ne1$.
\end{proof}

\noindent{}We will assume $G_n$ is left-orderable and then reach a contradiction. By Fact~\ref{fact:WLOG}, we can assume without loss of generality that $b^{-1}a\geq1$. Because of Proposition~\ref{proposition:non-trivial}, we can assume (without loss of generality) that $b^{-1}a>1$.\\

\noindent{}With four (non-trivial) group generators there are 16 possible cases for the signs of each of the generators that must be considered (see Table~\ref{table:casesAll}). We will disprove each of these cases.

\begin{table}[ht]
\begin{center}
\begin{tabular}{l | l | l | l | l}
Case\hspace{10 pt} & $a$\hspace{10 pt} & $b$\hspace{10 pt} & $c$\hspace{10 pt} & $d$\hspace{10 pt} \\\hline\hline
1 & $+$ & $+$ & $+$ & $+$ \\\hline
2 & $+$ & $+$ & $+$ & $-$ \\\hline
3 & $+$ & $+$ & $-$ & $+$ \\\hline
4 & $+$ & $+$ & $-$ & $-$ \\\hline
5 & $+$ & $-$ & $+$ & $+$ \\\hline
6 & $+$ & $-$ & $+$ & $-$ \\\hline
7 & $+$ & $-$ & $-$ & $+$ \\\hline
8 & $+$ & $-$ & $-$ & $-$ \\\hline
9 & $-$ & $+$ & $+$ & $+$ \\\hline
10 & $-$ & $+$ & $+$ & $-$ \\\hline
11 & $-$ & $+$ & $-$ & $+$ \\\hline
12 & $-$ & $+$ & $-$ & $-$ \\\hline
13 & $-$ & $-$ & $+$ & $+$ \\\hline
14 & $-$ & $-$ & $+$ & $-$ \\\hline
15 & $-$ & $-$ & $-$ & $+$ \\\hline
16 & $-$ & $-$ & $-$ & $-$ 
\end{tabular}
\end{center}
\caption{The 16 possible cases with 4 generators}
\label{table:casesAll}
\end{table}

\section{General Lemmas}
\label{section:generalLemmas}

\noindent{}First we prove some lemmas that will be used later. These lemmas are true for all cases listed in Table~\ref{table:casesAll}.

\begin{lemma} In $G_n$, $d^{-1}a^{2}=c^{-1}b^{2}$.
\label{lemma:eq5}
\end{lemma}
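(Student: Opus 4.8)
The plan is to show that both sides of the claimed equality coincide with the common expression $(b^{-1}a)^{10n}$, using only the first two defining relations of $G_n$. The third and fourth relations should play no role here, so the argument reduces to two short manipulations followed by a transitivity step.

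First I would work with the first relation, $(a^{-1}b)^{10n}d^{-1}a^{2}=1$. Multiplying on the left by the inverse of $(a^{-1}b)^{10n}$ isolates $d^{-1}a^{2}$, and since $\left((a^{-1}b)^{10n}\right)^{-1}=(a^{-1}b)^{-10n}=(b^{-1}a)^{10n}$, this yields
\begin{align*}
d^{-1}a^{2}=(b^{-1}a)^{10n}.
\end{align*}
Next I would do the analogous thing with the second relation, $b^{-2}c(b^{-1}a)^{10n}=1$. Solving for $(b^{-1}a)^{10n}$ by multiplying on the left by $c^{-1}b^{2}$ gives
\begin{align*}
(b^{-1}a)^{10n}=c^{-1}b^{2}.
\end{align*}
Chaining the two displayed equalities then produces $d^{-1}a^{2}=c^{-1}b^{2}$, which is exactly the statement of the lemma.

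I do not expect any genuine obstacle: the result is a direct consequence of the relator presentation and holds for every $n\geq 0$, with no case analysis and no appeal to left-orderability. The only point requiring care is the bookkeeping with inverses and the exponent $10n$, namely recognizing that inverting $(a^{-1}b)^{10n}$ flips it to $(b^{-1}a)^{10n}$ (and likewise that the power $10n$ passes through the inversion cleanly because it is a single repeated factor). Once that identification is made, the two relations deliver the two halves of the equality and transitivity finishes the proof.
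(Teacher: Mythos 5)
Your proposal is correct and matches the paper's own proof: both arguments rearrange the first relation to get $d^{-1}a^{2}=(b^{-1}a)^{10n}$, rearrange the second to get $c^{-1}b^{2}=(b^{-1}a)^{10n}$, and conclude by transitivity. The inverse bookkeeping $\left((a^{-1}b)^{10n}\right)^{-1}=(b^{-1}a)^{10n}$ that you flag is handled the same way in the paper.
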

\begin{proof} The first group relation for $G_n$ can be rearranged as follows:
\begin{align}
(a^{-1}b)^{10n}d^{-1}a^{2}&=1\nonumber{}\\
\Rightarrow{}(b^{-1}a)^{10n}&=d^{-1}a^{2},\label{eq1:2}
\end{align}
and the second group relation can be rearranged as follows:
\begin{align}
b^{-2}c(b^{-1}a)^{10n}&=1\nonumber{}\\
\Rightarrow{}(b^{-1}a)^{10n}&=c^{-1}b^{2}.\label{eq2:2}
\end{align}
This shows that:
\begin{align*}
d^{-1}a^{2}&=c^{-1}b^{2}.\qedhere
\end{align*}
\end{proof}

\begin{corollary} In $G_n$, $a^{2}b^{-2}=dc^{-1}$.
\label{corollary:eq6}
\end{corollary}
\begin{proof}By Lemma~\ref{lemma:eq5} we know:
\begin{align*}
d^{-1}a^{2}&=c^{-1}b^{2}\\
\Rightarrow{}a^{2}b^{-2}&=dc^{-1}.\qedhere
\end{align*}
\end{proof}

\begin{lemma} In $G_n$, $d^{2}a^{-1}d=c^{2}b^{-1}c$.
\label{lemma:eq7}
\end{lemma}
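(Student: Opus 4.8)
The plan is to mimic the structure of Lemma~\ref{lemma:eq5} and its corollary, using the remaining two group relations (the third and fourth) in place of the first two. The claim $d^{2}a^{-1}d=c^{2}b^{-1}c$ is a symmetric partner to $d^{-1}a^{2}=c^{-1}b^{2}$, and I expect the same ``rearrange each relation into the form $(\cdot)^{10n+3}=(\text{something})$ and then equate the two somethings'' trick to work. So first I would take the fourth relation, $d^{2}a^{-1}d(c^{-1}d)^{10n+3}=1$, and solve it for the power $(c^{-1}d)^{10n+3}$, obtaining
\begin{align}
(c^{-1}d)^{10n+3}&=\bigl(d^{2}a^{-1}d\bigr)^{-1}=d^{-1}ad^{-2}.\label{plan:eq7a}
\end{align}

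Next I would take the third relation, $(d^{-1}c)^{10n+3}c^{-1}bc^{-2}=1$, and likewise isolate a power of $c^{-1}d$. Since $(d^{-1}c)^{10n+3}=\bigl((c^{-1}d)^{-1}\bigr)^{10n+3}=\bigl((c^{-1}d)^{10n+3}\bigr)^{-1}$, inverting the third relation should express $(c^{-1}d)^{10n+3}$ in terms of $b$ and $c$:
\begin{align}
(c^{-1}d)^{10n+3}&=c^{-1}bc^{-2}=\bigl(c^{2}b^{-1}c\bigr)^{-1}.\label{plan:eq7b}
\end{align}
Equating the left-hand sides of (\ref{plan:eq7a}) and (\ref{plan:eq7b}) gives $d^{-1}ad^{-2}=c^{-1}bc^{-2}$, i.e.\ $\bigl(d^{2}a^{-1}d\bigr)^{-1}=\bigl(c^{2}b^{-1}c\bigr)^{-1}$, and inverting yields the desired $d^{2}a^{-1}d=c^{2}b^{-1}c$.

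The only real bookkeeping obstacle is getting the exponent arithmetic and the direction of inversion exactly right, since the third and fourth relations are written with the power on the left rather than the right (unlike the first two). Concretely, I need to confirm that inverting $(d^{-1}c)^{10n+3}c^{-1}bc^{-2}=1$ really does produce $(c^{-1}d)^{10n+3}$ and not some shifted or conjugated variant; the key identity is $(d^{-1}c)^{-1}=c^{-1}d$, so taking inverses of the whole word reverses the order and inverts each factor cleanly. Because $10n+3$ is just a positive integer exponent, $\bigl((d^{-1}c)^{10n+3}\bigr)^{-1}=(c^{-1}d)^{10n+3}$ without any conjugation subtlety. Provided that step is done carefully, the matching powers cancel and the identity drops out with no further input, exactly paralleling the proof of Lemma~\ref{lemma:eq5}.
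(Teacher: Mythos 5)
Your proposal is correct and takes essentially the same approach as the paper: both isolate the repeated power from the third and fourth group relations and equate the two resulting expressions. The only cosmetic difference is that you solve for $(c^{-1}d)^{10n+3}$ where the paper solves for $(d^{-1}c)^{10n+3}=c^{2}b^{-1}c=d^{2}a^{-1}d$ directly, so your version requires one final inversion step that the paper's does not.
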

\begin{proof}
The third group relation for $G_n$ can be rearranged as follows:
\begin{align*}
(d^{-1}c)^{10n+3}c^{-1}bc^{-2}&=1\\
\Rightarrow{}(d^{-1}c)^{10n+3}&=c^{2}b^{-1}c,
\end{align*}
and the fourth group relation can be rearranged as follows:
\begin{align*}
d^{2}a^{-1}d(c^{-1}d)^{10n+3}&=1\\
\Rightarrow{}(d^{-1}c)^{10n+3}&=d^{2}a^{-1}d.
\end{align*}
Showing that:
\begin{align}
\label{eq7}d^{2}a^{-1}d&=c^{2}b^{-1}c.
\end{align}
\end{proof}

\begin{lemma} In $G_n$, $d^{2}a=c^{2}b$.
\label{lemma:eq8}
\end{lemma}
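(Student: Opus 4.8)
The plan is to obtain this identity purely algebraically, by multiplying together the two previous results, Lemma~\ref{lemma:eq5} and Lemma~\ref{lemma:eq7}; no appeal to left-orderability is needed, since the statement is simply an identity in $G_n$. The key observation is that each of the target words $d^2a$ and $c^2b$ factors naturally as a product of the left-hand side of Lemma~\ref{lemma:eq7} and the left-hand side of Lemma~\ref{lemma:eq5}.

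First I would record the two factorizations
\begin{align*}
d^2a &= (d^2a^{-1}d)(d^{-1}a^2),\\
c^2b &= (c^2b^{-1}c)(c^{-1}b^2),
\end{align*}
each of which is immediate upon cancelling the adjacent $dd^{-1}$ (respectively $cc^{-1}$) and simplifying $a^{-1}a^2=a$ (respectively $b^{-1}b^2=b$).

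Next I would substitute the two known identities into the first factorization: by Lemma~\ref{lemma:eq7} the first factor $d^2a^{-1}d$ equals $c^2b^{-1}c$, and by Lemma~\ref{lemma:eq5} the second factor $d^{-1}a^2$ equals $c^{-1}b^2$. Replacing both factors and then using the second factorization in reverse gives
\begin{align*}
d^2a = (d^2a^{-1}d)(d^{-1}a^2) = (c^2b^{-1}c)(c^{-1}b^2) = c^2b,
\end{align*}
which is exactly the desired conclusion.

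I do not anticipate any genuine obstacle here; the computation collapses entirely by free cancellation once the substitutions are made. The only mild subtlety is spotting the correct factorization, so that the left-hand sides of the two earlier lemmas appear as consecutive factors and the intermediate $d$ and $c$ terms cancel cleanly.
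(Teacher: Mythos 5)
Your proof is correct and uses exactly the same ingredients as the paper: it combines Lemma~\ref{lemma:eq5} and Lemma~\ref{lemma:eq7} by elementary cancellation, the only (cosmetic) difference being that you multiply the two identities side by side via the factorizations $d^2a=(d^2a^{-1}d)(d^{-1}a^2)$ and $c^2b=(c^2b^{-1}c)(c^{-1}b^2)$, whereas the paper solves Lemma~\ref{lemma:eq5} for $c=b^2a^{-2}d$ and substitutes it into the right-hand side of Lemma~\ref{lemma:eq7}. The two computations are algebraically equivalent, and yours is, if anything, slightly more symmetric.
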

\begin{proof}
Starting from Lemma~\ref{lemma:eq5}, we have:
\begin{align}
d^{-1}a^{2}&=c^{-1}b^{2}\nonumber{}\\
\Rightarrow{}c&=b^{2}a^{-2}d.\label{eq5:2}
\end{align}
Using (\ref{eq5:2}), we can substitute for the right-most $c$ in (\ref{eq7}). We find:
\begin{align*}
d^{2}a^{-1}d&=c^{2}b^{-1}b^{2}a^{-2}d\\
\Rightarrow{}d^{2}a^{-1}d&=c^{2}ba^{-2}d\\
\Rightarrow{}d^{2}a&=c^{2}b.\qedhere
\end{align*}
\end{proof}

\begin{lemma} In $G_n$, $ad^{-1}a=bc^{-1}b$.
\label{lemma:eq16}
\end{lemma}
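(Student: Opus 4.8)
The plan is to reduce the identity to the trivial fact that an element commutes with its own power. First I would observe, exactly as in the proof of Lemma~\ref{lemma:eq5}, that the first group relation gives $(b^{-1}a)^{10n}=d^{-1}a^{2}$ (equation (\ref{eq1:2})) and the second gives $(b^{-1}a)^{10n}=c^{-1}b^{2}$ (equation (\ref{eq2:2})). Thus the common element $w:=d^{-1}a^{2}=c^{-1}b^{2}$ is in fact a \emph{power} of $b^{-1}a$, namely $w=(b^{-1}a)^{10n}$. This is the observation that makes everything work, so I would foreground it rather than just citing the equality $d^{-1}a^{2}=c^{-1}b^{2}$.

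Next I would rewrite both sides of the target identity as conjugates of $w$. Since $w=d^{-1}a^{2}$ we have $awa^{-1}=ad^{-1}a^{2}a^{-1}=ad^{-1}a$, and since $w=c^{-1}b^{2}$ we have $bwb^{-1}=bc^{-1}b^{2}b^{-1}=bc^{-1}b$. Hence the claim $ad^{-1}a=bc^{-1}b$ is equivalent to $awa^{-1}=bwb^{-1}$, which in turn is equivalent (multiply on the left by $b^{-1}$ and on the right by $b$) to the statement $(b^{-1}a)\,w\,(b^{-1}a)^{-1}=w$; that is, $b^{-1}a$ commutes with $w$.

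Finally I would close the argument by invoking the first step: because $w=(b^{-1}a)^{10n}$ is a power of $b^{-1}a$, it commutes with $b^{-1}a$ automatically. Reassembling, $(b^{-1}a)\,w\,(a^{-1}b)=w$ yields $awa^{-1}=bwb^{-1}$ after clearing the outer $b^{\pm1}$, and substituting $w=d^{-1}a^{2}$ on the left and $w=c^{-1}b^{2}$ on the right gives $ad^{-1}a=bc^{-1}b$.

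The main obstacle is spotting the right route rather than executing it. The naive strategy---substituting among Lemma~\ref{lemma:eq5}, Lemma~\ref{lemma:eq7}, and Lemma~\ref{lemma:eq8} in the style used to derive Lemma~\ref{lemma:eq8}---keeps collapsing back to Lemma~\ref{lemma:eq8} (or to Lemma~\ref{lemma:eq5}) and never produces the single-power identity $ad^{-1}a=bc^{-1}b$. The real content is recognizing that $d^{-1}a^{2}$ and $c^{-1}b^{2}$ are not merely equal but equal to a common power of $b^{-1}a$, so that the identity is secretly just commutativity of an element with its own power. Once that is noticed, no computation involving $c$, $d$, or the exponent $10n+3$ is required at all.
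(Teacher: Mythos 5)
Your proof is correct and is essentially the paper's own argument in a more conceptual package: the paper also uses only the first two relations, rewriting them as (\ref{eq1:3}) and (\ref{eq2:3}), which say precisely that $ad^{-1}a=(ab^{-1})^{10n}$ and $bc^{-1}b=(ab^{-1})^{10n}$, and then substituting one into the other. Your reduction to the fact that $w=(b^{-1}a)^{10n}$ commutes with $b^{-1}a$ (so that $awa^{-1}=bwb^{-1}$) is exactly the mechanism implicit in the paper's word manipulations, made explicit.
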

\begin{proof} By the first group relation, we have:
\begin{align}
(a^{-1}b)^{10n}d^{-1}a^{2}&=1\nonumber{}\\
\Rightarrow{}(a^{-1})(ba^{-1})^{10n}ad^{-1}a^{2}&=1\nonumber{}\\
\Rightarrow{}(ba^{-1})^{10n}ad^{-1}a&=1.\label{eq1:3}
\end{align}
By the second group relation, we have:
\begin{align}
b^{-2}c(b^{-1}a)^{10n}&=1\nonumber{}\\
\Rightarrow{} b^{-2}cb^{-1}(ab^{-1})^{10n}b&=1\nonumber{}\\
\Rightarrow{}b^{-1}cb^{-1}(ab^{-1})^{10n}&=1\nonumber{}\\
\Rightarrow{}b^{-1}cb^{-1}&=(ba^{-1})^{10n}.\label{eq2:3}
\end{align}
Substituting (\ref{eq2:3}) into (\ref{eq1:3}), we see:
\begin{align*}
b^{-1}cb^{-1}ad^{-1}a&=1\\
\Rightarrow{}ad^{-1}a&=bc^{-1}b.\qedhere
\end{align*}
\end{proof}

\begin{lemma} In $G_n$, $c^{-3}d^{3}=b^{-1}a$.
\label{lemma:eq9}
\end{lemma}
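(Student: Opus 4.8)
The goal is to prove the identity $c^{-3}d^{3}=b^{-1}a$ in $G_n$. The plan is to work purely algebraically from the identities already established in Section~\ref{section:generalLemmas}, treating them as a toolbox of substitution rules rather than going back to the four defining relations. The key observation is that Corollary~\ref{corollary:non-trivial2} already expresses both $bc^{-3}$ and $ad^{-3}$ in terms of the \emph{same} element, namely $bc^{-3}=(dc^{-1})^{10n+3}=ad^{-3}$. This common value is exactly the kind of object that should let me relate the $c,d$-side to the $a,b$-side of the desired identity.

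First I would record the consequence of Corollary~\ref{corollary:non-trivial2}: since $bc^{-3}=(dc^{-1})^{10n+3}$ and $ad^{-3}=(dc^{-1})^{10n+3}$, we immediately get
\begin{align}
bc^{-3}=ad^{-3}. \label{plan:common}
\end{align}
This is a clean, powerful relation, but it is phrased in terms of $bc^{-3}$ and $ad^{-3}$, whereas the target $c^{-3}d^{3}=b^{-1}a$ mixes the generators differently. So the main task is a rearrangement: starting from \eqref{plan:common}, I would try to isolate $c^{-3}d^{3}$ on one side and $b^{-1}a$ on the other. Rewriting $bc^{-3}=ad^{-3}$ as $c^{-3}=b^{-1}ad^{-3}$ and then right-multiplying by $d^{3}$ gives $c^{-3}d^{3}=b^{-1}a\,d^{-3}d^{3}=b^{-1}a$ — which is precisely the claim. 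The bulk of the work is thus checking that this one-line manipulation is valid and that no hidden noncommutativity spoils it; here it does not, because the cancellation $d^{-3}d^{3}=1$ and the left factor $b^{-1}a$ survive intact.

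The step I expect to require the most care is establishing \eqref{plan:common} cleanly, i.e.\ making sure the two expressions from Corollary~\ref{corollary:non-trivial2} really are literally equal rather than merely conjugate or equal up to reordering: both computations in that corollary end with $(dc^{-1})^{10n+3}$ in the \emph{same} position, so equating them is legitimate. If instead one wanted an approach that does not route through Corollary~\ref{corollary:non-trivial2}, the alternative would be to combine Lemma~\ref{lemma:eq8} ($d^{2}a=c^{2}b$) with Lemma~\ref{lemma:eq16} ($ad^{-1}a=bc^{-1}b$) or Lemma~\ref{lemma:eq5}, eliminating generators until only the relation among $a,b,c,d$ encoded in $c^{-3}d^{3}=b^{-1}a$ remains; but this is strictly more elaborate. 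I would therefore present the short derivation via \eqref{plan:common} as the proof, noting that it is essentially an immediate corollary of the two identities already proved in Corollary~\ref{corollary:non-trivial2}.
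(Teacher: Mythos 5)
Your proposal is correct and is essentially the paper's own argument: the paper also combines the third and fourth group relations by eliminating the common factor $(cd^{-1})^{10n+3}$ (its displayed identities (\ref{eq3:33}) and (\ref{eq4:33}) are exactly the two equations $bc^{-3}=(dc^{-1})^{10n+3}$ and $ad^{-3}=(dc^{-1})^{10n+3}$ that you quote from Corollary~\ref{corollary:non-trivial2}), then rearranges to get $c^{-3}d^{3}=b^{-1}a$. The only difference is presentational: you cite Corollary~\ref{corollary:non-trivial2}, which precedes this lemma and involves no circularity, instead of re-deriving those two identities from the relations inside the proof.
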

\begin{proof} By the third group relation, we have:
\begin{align}
(d^{-1}c)^{10n+3}c^{-1}bc^{-2}&=1\nonumber{}\\
\Rightarrow{}c^{-1}(cd^{-1})^{10n+3}bc^{-2}&=1\nonumber{}\\
\Rightarrow{}(cd^{-1})^{10n+3}bc^{-3}&=1.\label{eq3:33}
\end{align}
By the fourth group relation, we have:
\begin{align}
d^{2}a^{-1}d(c^{-1}d)^{10n+3}&=1\nonumber{}\\
\Rightarrow{}d^{2}a^{-1}(dc^{-1})^{10n+3}d&=1\nonumber{}\\
\Rightarrow{}d^{3}a^{-1}(dc^{-1})^{10n+3}&=1\nonumber{}\\
\Rightarrow{}d^{3}a^{-1}&=(cd^{-1})^{10n+3}.\label{eq4:33}
\end{align}
Substituting (\ref{eq4:33}) into (\ref{eq3:33}), we see:
\begin{align*}
d^{3}a^{-1}bc^{-3}&=1\\
\Rightarrow{}c^{-3}d^{3}&=b^{-1}a.\qedhere
\end{align*}
\end{proof}

\begin{corollary} If $G_n$ is left-orderable, then $c^{-3}d^{3}>1$.
\label{corollary:inEq5}
\end{corollary}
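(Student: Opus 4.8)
The statement to prove is Corollary~\ref{corollary:inEq5}: if $G_n$ is left-orderable, then $c^{-3}d^3 > 1$.

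The plan is to combine the just-established Lemma~\ref{lemma:eq9} with the global normalizing assumption $b^{-1}a > 1$. First I would recall that by Fact~\ref{fact:WLOG} and Proposition~\ref{proposition:non-trivial} we have arranged, without loss of generality, that $b^{-1}a > 1$; this is the running hypothesis under which all the case analysis proceeds. Lemma~\ref{lemma:eq9} asserts the identity $c^{-3}d^3 = b^{-1}a$ in $G_n$. Since $c^{-3}d^3$ and $b^{-1}a$ are literally the same element of the group, they have the same sign under any left-ordering.

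The argument is then essentially immediate: assuming $G_n$ is left-orderable and working with the normalized ordering in which $b^{-1}a > 1$, the equality $c^{-3}d^3 = b^{-1}a > 1$ forces $c^{-3}d^3 > 1$.

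There is no real obstacle here; the corollary is a one-line consequence of Lemma~\ref{lemma:eq9} once the sign convention $b^{-1}a > 1$ is in force. The only thing worth stating carefully is that this is the same left-ordering assumption used throughout Section~\ref{section:identityProofs}, so that the positivity of $b^{-1}a$ may be invoked directly.

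\begin{proof}
Assume $G_n$ is left-orderable. As established prior to the case analysis, we may assume without loss of generality (by Fact~\ref{fact:WLOG} and Proposition~\ref{proposition:non-trivial}) that $b^{-1}a > 1$. By Lemma~\ref{lemma:eq9}, $c^{-3}d^3 = b^{-1}a$ in $G_n$, so these two elements have the same sign. Therefore $c^{-3}d^3 > 1$.
\end{proof}
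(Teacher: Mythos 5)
Your proof is correct and is exactly the paper's argument: both invoke Lemma~\ref{lemma:eq9} ($c^{-3}d^{3}=b^{-1}a$) together with the standing normalization $b^{-1}a>1$ to conclude immediately. Your version merely spells out the provenance of that normalization (Fact~\ref{fact:WLOG} and Proposition~\ref{proposition:non-trivial}), which the paper leaves implicit.
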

\begin{proof}This is an immediate consequence of Lemma~\ref{lemma:eq9} and the general assumption that $b^{-1}a>1$.
\end{proof}

\begin{lemma} If $G_n$ is left-orderable, then $d^{-1}a^{2}>1$.
\label{lemma:inEq3}
\end{lemma}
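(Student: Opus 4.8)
The plan is to reduce the claim entirely to the standing hypothesis $b^{-1}a>1$ together with Proposition~\ref{proposition:pospowers}, using an identity that has already been established. The key observation is that $d^{-1}a^{2}$ has already been written as a power of $b^{-1}a$: rearranging the first group relation gave equation~(\ref{eq1:2}), namely $(b^{-1}a)^{10n}=d^{-1}a^{2}$. So no new manipulation of relations is required; I only need to read off the sign of the right-hand side.

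First I would invoke~(\ref{eq1:2}) to replace $d^{-1}a^{2}$ by $(b^{-1}a)^{10n}$. Since we are working in the regime $n\geq1$ (the case $n=0$ having been dispatched separately in Section~\ref{section:G_0}), the exponent $10n$ is a positive integer, at least $10$, so $(b^{-1}a)^{10n}$ is a genuine positive power of $b^{-1}a$ rather than the empty word.

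Next, I would apply the standing assumption $b^{-1}a>1$ together with Proposition~\ref{proposition:pospowers}, which guarantees that an element has the same sign as each of its powers with exponent greater than $1$. This forces $(b^{-1}a)^{10n}>1$, and combining with the identity from the first step yields
\begin{align*}
d^{-1}a^{2}=(b^{-1}a)^{10n}>1,
\end{align*}
as desired.

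I expect essentially no obstacle here beyond bookkeeping. The single point that must be respected is that the argument genuinely relies on $n\geq1$, so that $10n\geq1$ and Proposition~\ref{proposition:pospowers} is applicable; were $n=0$, the right-hand side of~(\ref{eq1:2}) would collapse to the identity and the conclusion would fail, which is precisely why that case was treated on its own.
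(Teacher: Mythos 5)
Your proof is correct and is essentially the paper's own argument: both rest entirely on the identity $(b^{-1}a)^{10n}=d^{-1}a^{2}$ from (\ref{eq1:2}) together with the standing assumption $b^{-1}a>1$. The paper merely leaves implicit the power-sign step (Proposition~\ref{proposition:pospowers}) and the $n\geq1$ caveat that you spell out.
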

\begin{proof}By (\ref{eq1:2}) we have:
\begin{align*}
(b^{-1}a)^{10n}&=d^{-1}a^{2},
\end{align*}
and since we are assuming $b^{-1}a>1$, this shows that $d^{-1}a^{2}>1$.
\end{proof}

\begin{lemma} If $G_n$ is left-orderable, then $d^{-2}c^{2}b^{-1}cd^{-1}b<1$.
\label{lemma:inEq7}
\end{lemma}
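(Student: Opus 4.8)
The plan is to show that the word $d^{-2}c^{2}b^{-1}cd^{-1}b$ collapses, via the general identities already established, to $a^{-1}b$, whose sign is then forced by our standing assumption $b^{-1}a>1$. The key observation is that the subword $c^{2}b^{-1}c$ sits in the middle of the target word, flanked by powers of $d$, and that Lemma~\ref{lemma:eq7} rewrites exactly this block as $c^{2}b^{-1}c=d^{2}a^{-1}d$.

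Concretely, I would first isolate the central block and substitute using Lemma~\ref{lemma:eq7}:
\begin{align*}
d^{-2}c^{2}b^{-1}cd^{-1}b = d^{-2}\bigl(c^{2}b^{-1}c\bigr)d^{-1}b = d^{-2}\bigl(d^{2}a^{-1}d\bigr)d^{-1}b.
\end{align*}
The powers of $d$ then telescope, since $d^{-2}d^{2}=1$ and $dd^{-1}=1$, leaving
\begin{align*}
d^{-2}c^{2}b^{-1}cd^{-1}b = a^{-1}b.
\end{align*}

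Finally, I would invoke the running assumption $b^{-1}a>1$ (justified following Proposition~\ref{proposition:non-trivial}). Because $a^{-1}b=(b^{-1}a)^{-1}$, Fact~\ref{fact:inverses} gives $a^{-1}b<1$, which is precisely the claimed inequality.

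As for the main obstacle: there is essentially no computational difficulty once the correct substitution is spotted. The only ``trick'' is recognizing that the word has been engineered so that its central factor $c^{2}b^{-1}c$ equals $d^{2}a^{-1}d$ and cancels cleanly against the surrounding $d^{-2}$ and $d^{-1}$. I do not expect to need any positive-cone machinery or case analysis here; the statement holds uniformly (in all sixteen cases) precisely because it reduces to the inverse of the globally-assumed positive element $b^{-1}a$.
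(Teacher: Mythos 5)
Your proof is correct, and it takes a genuinely more direct route than the paper's. Your key step --- substituting $c^{2}b^{-1}c=d^{2}a^{-1}d$ (Lemma~\ref{lemma:eq7}) for the central block so that the surrounding powers of $d$ telescope --- establishes the exact identity $d^{-2}c^{2}b^{-1}cd^{-1}b=a^{-1}b$ in one line, after which the standing assumption $b^{-1}a>1$ finishes the argument. The paper reaches the same conclusion by a longer path: it rearranges Lemma~\ref{lemma:eq7} into $d^{2}=c^{2}b^{-1}cd^{-1}a$, substitutes that into $c^{3}=d^{3}a^{-1}b$ (a rearrangement of Lemma~\ref{lemma:eq9}), and after further manipulation arrives at the relation $(d^{-3}c^{3})(b^{-1}dc^{-1}bc^{-2}d^{2})=1$; it then invokes Corollary~\ref{corollary:inEq5} (that $c^{-3}d^{3}>1$, itself a consequence of Lemma~\ref{lemma:eq9} and $b^{-1}a>1$) to force the sign of the second factor. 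The two arguments rest on the same two ingredients --- Lemma~\ref{lemma:eq7} and the positivity of $b^{-1}a$ --- and are equivalent at the level of group identities, since by Lemma~\ref{lemma:eq9} the paper's relation says precisely that $d^{-2}c^{2}b^{-1}cd^{-1}b=(c^{-3}d^{3})^{-1}=(b^{-1}a)^{-1}$. What your version buys is economy: it bypasses Lemma~\ref{lemma:eq9} and Corollary~\ref{corollary:inEq5} entirely, and it proves something slightly stronger than the stated inequality, namely the exact value of the word, not merely its sign.
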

\begin{proof} By  Lemma~\ref{lemma:eq7} we have:
\begin{align}
d^{2}a^{-1}d&=c^{2}b^{-1}c\nonumber{}\\
\Rightarrow{}d^{2}&=c^{2}b^{-1}cd^{-1}a.\label{eq7:2}
\end{align}
By rearranging the result of Lemma~\ref{lemma:eq9} we have:
\begin{align*}
c^{3}=d^{3}a^{-1}b=d(c^{2}b^{-1}cd^{-1}a)a^{-1}b.
\end{align*}
Note that the last equality follows by substituting for $d^{2}$ using (\ref{eq7:2}). This expression can be rearranged to give:
\begin{align}
d&=c^{3}b^{-1}dc^{-1}bc^{-2}\nonumber{}\\
\Rightarrow{}c^{-1}d&=c^2b^{-1}dc^{-1}bc^{-2}.\label{eq13}
\end{align}
Now (\ref{eq13}) can be rearranged to yield:
\begin{align}
d^{-1}c^3b^{-1}dc^{-1}bc^{-2}&=1\nonumber{}\\
\Rightarrow{}(d^{-3}c^3)(b^{-1}dc^{-1}bc^{-2}d^{2})&=1.\label{lemma:inEq7:contradiction}
\end{align}
By Corollary~\ref{corollary:inEq5}, $d^{-3}c^3<1$, therefore (\ref{lemma:inEq7:contradiction}) shows that:
\begin{align*}
b^{-1}dc^{-1}bc^{-2}d^{2}&>1\\
\Rightarrow{}d^{-2}c^{2}b^{-1}cd^{-1}b&<1.\qedhere
\end{align*}
\end{proof}

\begin{lemma} If $G_n$ is left-orderable, then $c^{-1}ba^{-1}d>1$.
\label{lemma:inEq:CbAd}
\end{lemma}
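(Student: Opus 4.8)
The plan is to prove $c^{-1}ba^{-1}d > 1$ by finding an identity that exhibits this word as a product of factors already known to be positive, or as the inverse of a word known to be negative. Since this lemma sits immediately after Lemma~\ref{lemma:inEq7}, I expect the intended route is to manipulate that lemma's conclusion together with the basic equalities established earlier in Section~\ref{section:generalLemmas}. The natural starting points are the positive elements I already have in hand: $b^{-1}a > 1$ (our standing assumption), $d^{-1}a^{2} > 1$ (Lemma~\ref{lemma:inEq3}), $c^{-3}d^{3} > 1$ (Corollary~\ref{corollary:inEq5}), and the negative element $d^{-2}c^{2}b^{-1}cd^{-1}b < 1$ (Lemma~\ref{lemma:inEq7}).

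First I would look for an algebraic identity relating $c^{-1}ba^{-1}d$ to these known quantities. The equalities $d^{-1}a^{2} = c^{-1}b^{2}$ (Lemma~\ref{lemma:eq5}) and $ad^{-1}a = bc^{-1}b$ (Lemma~\ref{lemma:eq16}) both involve the letters $a,b,c,d$ in a balanced way and look promising. For instance, Lemma~\ref{lemma:eq16} rearranges to $c^{-1}b = a^{-1}bd^{-1}a$ after left-multiplying by $b^{-1}$ and manipulating, which begins to produce the prefix $c^{-1}b$ appearing in the target word. The idea would be to isolate $c^{-1}ba^{-1}d$ as a conjugate or a product of conjugates of the generators, or to write it directly as a product of two or more of the positive elements listed above, invoking Fact~\ref{1.3LO} to conclude positivity. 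Alternatively, I would take the inverse, $d^{-1}ab^{-1}c$, and try to realize it as a product of negative elements, which would force $c^{-1}ba^{-1}d > 1$ via Fact~\ref{fact:inverses}.

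The main obstacle is finding the precise factorization: the target word $c^{-1}ba^{-1}d$ does not obviously decompose into the available positive factors without first substituting one of the relator-derived equalities to introduce cancelling letters. I anticipate that the cleanest path uses Lemma~\ref{lemma:inEq7}; taking the inverse of its conclusion gives $b^{-1}dc^{-1}bc^{-2}d^{2} > 1$, and I would try to split or conjugate this so that one factor is a known positive element (such as $c^{-3}d^{3}$ or $d^{-1}a^{2}$) while the remaining factor simplifies, via Lemmas~\ref{lemma:eq5}, \ref{lemma:eq8}, or \ref{lemma:eq16}, to (a conjugate of) $c^{-1}ba^{-1}d$. Because conjugation does not preserve sign in a general left-orderable group, I must be careful to use only products of positives or to peel off factors whose positivity is independently established, rather than relying on conjugation-invariance.

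Concretely, I would carry out the steps in this order: (1) record the inverse form of Lemma~\ref{lemma:inEq7} as a positive element; (2) use Corollary~\ref{corollary:inEq5} (equivalently $d^{3}c^{-3} > 1$, or its inverse $c^{-3}d^{3} > 1$) to strip off a $d$-$c$ block from that positive word; (3) apply one of the equalities from Lemmas~\ref{lemma:eq5}, \ref{lemma:eq8}, and \ref{lemma:eq16} to rewrite the residual word as $c^{-1}ba^{-1}d$; and (4) conclude by Fact~\ref{1.3LO}. If the direct factorization resists me, the fallback is to assume $c^{-1}ba^{-1}d \le 1$ and derive a contradiction with Lemma~\ref{lemma:inEq7} by substituting the resulting inequality into one of the identity relations, thereby forcing a product of positives to equal a negative element.
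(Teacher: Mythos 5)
There is a genuine gap: your proposal never actually produces a factorization, and the single concrete algebraic step you attempt is wrong. Left-multiplying the identity of Lemma~\ref{lemma:eq16}, $ad^{-1}a=bc^{-1}b$, by $b^{-1}$ gives $c^{-1}b=b^{-1}ad^{-1}a$, \emph{not} $c^{-1}b=a^{-1}bd^{-1}a$ as you wrote. This is not a cosmetic slip, because the correct rearrangement finishes the proof instantly:
\begin{align*}
c^{-1}ba^{-1}d=(b^{-1}ad^{-1}a)(a^{-1}d)=b^{-1}ad^{-1}d=b^{-1}a>1,
\end{align*}
i.e.\ $c^{-1}ba^{-1}d$ is literally \emph{equal} to $b^{-1}a$ in $G_n$, so positivity is immediate from the standing assumption. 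With your version of the identity the product does not telescope and no conclusion follows. This one-line computation is exactly the content of the paper's proof, which cyclically permutes $a^{-1}da^{-1}bc^{-1}b=1$ (from Lemma~\ref{lemma:eq16}) into $(b^{-1}a)(d^{-1}ab^{-1}c)=1$, concludes $d^{-1}ab^{-1}c<1$ since $b^{-1}a>1$, and inverts.

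The remainder of your proposal is a search plan rather than a proof, and its primary route is a detour that you neither carry out nor need: Lemma~\ref{lemma:inEq7}, Corollary~\ref{corollary:inEq5}, and Lemma~\ref{lemma:inEq3} play no role in this lemma. Your steps (1)--(3) --- taking the inverse of Lemma~\ref{lemma:inEq7}'s conclusion, stripping off a $d$--$c$ block, and hoping the residue rewrites to $c^{-1}ba^{-1}d$ --- are pure speculation; you give no identity realizing them, and there is no evident one. So as written the proposal stands or falls on the eq16 manipulation, which is incorrect. The fix is small (replace $a^{-1}bd^{-1}a$ by $b^{-1}ad^{-1}a$ and multiply by $a^{-1}d$ on the right), but as submitted the argument does not go through.
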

\begin{proof} By Lemma~\ref{lemma:eq16}, we have:
\begin{align}
a^{-1}da^{-1}bc^{-1}b&= 1\nonumber{}\\
\Rightarrow{}c^{-1}ba^{-1}da^{-1}b &=1\nonumber{}\\
\Rightarrow{}(b^{-1}a)(d^{-1}ab^{-1}c) &= 1.\label{lemma:inEq:CbAd:contradiction}
\end{align}
We know $b^{-1}a>1$, so (\ref{lemma:inEq:CbAd:contradiction}) shows that:
\begin{align*}
d^{-1}ab^{-1}c&<1\\
\Rightarrow{}c^{-1}ba^{-1}d&>1.\qedhere
\end{align*}
\end{proof}

\begin{lemma} If $G_n$ is left-orderable, then $b<1$ implies $c<1$.
\label{genImp1}
\end{lemma}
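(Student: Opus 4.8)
The plan is to tie the signs of $b$ and $c$ together through a single identity whose sign is already known. The key observation is that Lemma~\ref{lemma:eq5} gives $d^{-1}a^{2}=c^{-1}b^{2}$, while Lemma~\ref{lemma:inEq3} tells us that $d^{-1}a^{2}>1$. Combining these, I would first record the inequality $c^{-1}b^{2}>1$. This single fact is the engine of the whole argument: it constrains how the signs of $c$ and $b$ are allowed to interact, since it forces the product $c^{-1}b^{2}$ to be positive.

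With this in hand, I would argue by contradiction. Assume $b<1$, and suppose toward a contradiction that $c>1$; this dichotomy is legitimate because Proposition~\ref{proposition:non-trivial} guarantees $c$ is nontrivial, so it is either positive or negative. From $b<1$ I would conclude $b^{2}<1$ by Proposition~\ref{proposition:pospowers} (a power inherits the sign of its base), and from $c>1$ I would conclude $c^{-1}<1$ by Fact~\ref{fact:inverses}. Then $c^{-1}b^{2}$ is a product of two negative elements, so by Fact~\ref{1.3LO} it is itself negative, i.e.\ $c^{-1}b^{2}<1$. This directly contradicts $c^{-1}b^{2}>1$, so the supposition $c>1$ is untenable and we must have $c<1$, as desired.

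I do not expect a serious obstacle here once the correct identity is isolated; the only delicate point is the bookkeeping of signs, namely that $b^{2}$ inherits the sign of $b$, that inversion reverses sign, and that a product of two negatives is negative. In that sense the ``hard part'' is recognizing at the outset that Lemmas~\ref{lemma:eq5} and~\ref{lemma:inEq3} together pin down the sign of $c^{-1}b^{2}$, after which the implication $b<1 \Rightarrow c<1$ is essentially forced.
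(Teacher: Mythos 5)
Your proof is correct and takes essentially the same approach as the paper: both arguments combine Lemma~\ref{lemma:eq5} with Lemma~\ref{lemma:inEq3} to fix the sign of $c^{-1}b^{2}=d^{-1}a^{2}$, and the sign bookkeeping is identical. The only difference is presentational --- the paper rearranges to $c=b^{2}a^{-2}d$ and concludes directly that $c$ is a product of negatives, whereas you run the same computation as a proof by contradiction on the sign of $c$.
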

\begin{proof}By Lemma~\ref{lemma:inEq3}, $d^{-1}a^{2}>1$, and hence $a^{-2}d<1$. By Lemma~\ref{lemma:eq5}, we have:
\begin{align*}
d^{-1}a^{2}&=c^{-1}b^{2}\\
\Rightarrow{}c&=b^{2}a^{-2}d,
\end{align*}
and it is now easy to see that:
\begin{align*}
b<1\Rightarrow{}b^{2}<1\Rightarrow{}b^{2}a^{-2}d&<1\Rightarrow{}c<1.\qedhere
\end{align*}
\end{proof}

\section{Cases 2, 5, 6, 7, 9, 10, 11, 12, 13, 14, and 15}
\label{section:manyCases}

\begin{proposition} If $G_n$ is left-orderable, then Cases 9, 10, 11, and 12 are impossible.\label{proposition:case9,10,11,12}
\end{proposition}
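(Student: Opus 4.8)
The goal is to prove that Cases 9 through 12 are impossible, where in all four cases the generator $a$ is negative (so $a<1$), while $b$ is positive (so $b>1$). The plan is to exploit the general assumption $b^{-1}a>1$, which holds throughout the proof, together with the sign hypotheses on $a$ and $b$, to derive a contradiction with Fact~\ref{1.3LO} (products of like-signed elements keep their sign).

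First I would observe that in every one of Cases 9--12 we have $a<1$ and $b>1$. From $b>1$ we get $b^{-1}<1$ by Fact~\ref{fact:inverses}, and combined with $a<1$ this gives $b^{-1}a<1$ as a product of two negative elements, by Fact~\ref{1.3LO}. But our standing assumption (justified after Corollary~\ref{corollary:non-trivial2} using Fact~\ref{fact:WLOG} and Proposition~\ref{proposition:non-trivial}) is that $b^{-1}a>1$. These two statements are contradictory, so none of Cases 9, 10, 11, or 12 can occur. The key point is that the four cases 9--12 are precisely the cases with the sign pattern $a=-$, $b=+$, regardless of the signs of $c$ and $d$; this is exactly why the single argument dispatches all four at once.

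The main step, then, is simply to note that the sign of $b^{-1}a$ is forced once we know the signs of $a$ and $b$ individually. I would write this cleanly as: since $a<1$ and $b>1$, we have $a<1<b$, so $a<b$, and by left-invariance (multiplying on the left by $b^{-1}$) this yields $b^{-1}a<b^{-1}b=1$, i.e.\ $b^{-1}a<1$. This directly contradicts $b^{-1}a>1$.

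I do not anticipate any genuine obstacle here: this is one of the eleven ``easy'' cases alluded to in the outline, and the entire content is that the sign hypotheses of these four cases are logically incompatible with the normalization $b^{-1}a>1$. The only thing to be careful about is invoking the correct basic facts (Fact~\ref{fact:inverses} and Fact~\ref{1.3LO}, or equivalently left-invariance of the order) rather than leaving the sign manipulation unjustified, and making explicit that the argument is insensitive to the signs of $c$ and $d$ so that it covers all four cases simultaneously.
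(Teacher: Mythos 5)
Your proposal is correct and is essentially identical to the paper's proof: both observe that in Cases 9--12 the hypotheses $a<1<b$ force $b^{-1}a<1$ (equivalently $a<b$), contradicting the standing normalization $b^{-1}a>1$, with the signs of $c$ and $d$ playing no role. The only difference is cosmetic---the paper phrases the contradiction as $a<b$ versus $a>b$, while you phrase it as $b^{-1}a<1$ versus $b^{-1}a>1$.
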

\begin{proof} Suppose $G_n$ is left-orderable. In Cases 9, 10, 11, and 12, $a<1$ and $b>1$ and so $a<b$, but we have taken $b^{-1}a$ to be positive, telling us that $a>b$. Therefore, Cases 9, 10, 11, and 12 are not possible.
\end{proof}

\begin{proposition} If $G_n$ is left-orderable, then Cases 2 and 15 are impossible.\label{proposition:case2,15}
\end{proposition}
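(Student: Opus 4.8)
The plan is to exploit the two general inequalities already established for all cases --- Corollary~\ref{corollary:inEq5} (that $c^{-3}d^3>1$) and Lemma~\ref{lemma:inEq3} (that $d^{-1}a^2>1$) --- and to show that each of Cases 2 and 15 forces the \emph{opposite} sign on one of these expressions. The only tools needed are Fact~\ref{1.3LO} (a product of elements of the same sign keeps that sign), Proposition~\ref{proposition:pospowers} (a positive power has the sign of its base), and Fact~\ref{fact:inverses}. Although Cases 2 and 15 are exact sign-reversals of one another, which might suggest a single unified argument, each is in fact eliminated by a \emph{different} one of the two inequalities.

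First I would dispatch Case 2, in which $a>1$, $b>1$, $c>1$, and $d<1$. From $c>1$ I get $c^{-1}<1$ by Fact~\ref{fact:inverses}, hence $c^{-3}<1$ by Proposition~\ref{proposition:pospowers}; from $d<1$ I get $d^3<1$. Then $c^{-3}d^3$ is a product of two negative elements, so $c^{-3}d^3<1$ by Fact~\ref{1.3LO}, directly contradicting Corollary~\ref{corollary:inEq5}.

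Next I would treat Case 15, in which $a<1$, $b<1$, $c<1$, and $d>1$. From $a<1$ I get $a^2<1$ by Proposition~\ref{proposition:pospowers}, and from $d>1$ I get $d^{-1}<1$ by Fact~\ref{fact:inverses}. Then $d^{-1}a^2$ is again a product of two negative elements, so $d^{-1}a^2<1$ by Fact~\ref{1.3LO}, contradicting Lemma~\ref{lemma:inEq3}.

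I expect no serious obstacle here; the one point requiring care is the pairing of each case with the correct inequality. Corollary~\ref{corollary:inEq5} alone does not settle Case 15 (there $c^{-3}>1$ and $d^3>1$, consistent with $c^{-3}d^3>1$), and symmetrically Lemma~\ref{lemma:inEq3} alone does not settle Case 2 (there $d^{-1}>1$ and $a^2>1$, consistent with $d^{-1}a^2>1$), so one must match each case to the inequality that actually produces the sign clash. Everything else reduces to a one-line sign computation.
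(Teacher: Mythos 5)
Your proof is correct, but it takes a different route from the paper's. The paper eliminates both cases with a single, symmetric argument straight from the fourth group relation: in Case 2 the quantities $d$, $a^{-1}$, $c^{-1}$ are all negative, so $d^{2}a^{-1}d(c^{-1}d)^{10n+3}$ is a product of negatives and cannot equal $1$; in Case 15 all three are positive and the same expression is a product of positives, again a contradiction. Notably, the paper's argument never invokes the standing normalization $b^{-1}a>1$ --- it needs only the sign assumptions and one relation, and the two cases fall together because they are exact sign-reversals. Your argument instead routes through two of the paper's general lemmas, Corollary~\ref{corollary:inEq5} ($c^{-3}d^{3}>1$) and Lemma~\ref{lemma:inEq3} ($d^{-1}a^{2}>1$), both of which \emph{do} depend on the assumption $b^{-1}a>1$; that dependence is legitimate here, since the whole sixteen-case analysis takes place under that normalization, but it makes your proof conditional on it where the paper's is not. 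What your approach buys is economy once Section~\ref{section:generalLemmas} is in hand --- each case dies by a one-line sign count --- and your closing observation about the necessary pairing (Corollary~\ref{corollary:inEq5} kills only Case 2, Lemma~\ref{lemma:inEq3} kills only Case 15) is accurate and worth making, since it explains why the apparent symmetry of the two cases does not yield a single unified argument along your route, whereas the paper's choice of the fourth relation does.
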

\begin{proof} Suppose $G_n$ is left-orderable. The fourth group relation for $G_n$ tells us that:
\begin{align}
d^{2}a^{-1}d(c^{-1}d)^{10n+3}&=1.\label{eq4}
\end{align}
Suppose (for contradiction) that the signs of the generators are as in Case 2. Then, $d<1$, while $a>1$ implies $a^{-1}<1$ and $c>1$ implies $c^{-1}<1$. These statements show that:
\begin{align*}
d^{2}a^{-1}d(c^{-1}d)^{10n+3}<1.
\end{align*}
This contradicts (\ref{eq4}); therefore, Case 2 is impossible.\\
\noindent{}Suppose now (for contradiction) that the signs of the generators are as in Case 15. Then, $d>1$, while $a<1$ implies $a^{-1}>1$, and $c<1$ implies $c^{-1}>1$. These statements show that:
\begin{align*}
d^{2}a^{-1}d(c^{-1}d)^{10n+3}>1.
\end{align*}
This contradicts (\ref{eq4}); therefore, Case 15 is impossible.
\end{proof}

\begin{proposition} If $G_n$ is left-orderable, then Case 7 is impossible.
\label{proposition:case7}
\end{proposition}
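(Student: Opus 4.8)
The plan is to locate one of the identities derived in Section~\ref{section:generalLemmas} in which, under the Case 7 sign assignment $a>1$, $b<1$, $c<1$, $d>1$, every generator that appears has a determined sign, so that the two sides of the identity are forced to have opposite signs. The cleanest candidate is Lemma~\ref{lemma:eq8}, which asserts that $d^2a=c^2b$ in $G_n$.

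First I would record the signs forced by Case 7: we have $a>1$ and $d>1$, while $b<1$ and $c<1$. By Proposition~\ref{proposition:pospowers}, $d^2$ is positive and $c^2$ is negative. Then, using Fact~\ref{1.3LO}, the left-hand side $d^2a$ is a product of positive elements and is therefore positive, whereas the right-hand side $c^2b$ is a product of negative elements and is therefore negative. This directly contradicts the equality $d^2a=c^2b$ supplied by Lemma~\ref{lemma:eq8}, since no element of a left-ordered group can be simultaneously greater than and less than $1$. Hence Case 7 is impossible.

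The only genuine step here is selecting the right identity. Most of the general identities are not usable directly: for instance Lemma~\ref{lemma:eq5} ($d^{-1}a^2=c^{-1}b^2$) and Lemma~\ref{lemma:eq7} ($d^2a^{-1}d=c^2b^{-1}c$) each mix a positive and a negative generator on at least one side, so neither side has a determined sign. Lemma~\ref{lemma:eq8} works precisely because the two positive generators $a,d$ are collected on one side and the two negative generators $b,c$ on the other, making each side sign-definite. I expect no computational obstacle at all; once Lemma~\ref{lemma:eq8} is invoked the argument is a one-line sign count, and it does not even require the standing hypothesis $b^{-1}a>1$ beyond its role in fixing the case.
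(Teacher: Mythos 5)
Your proof is correct and follows exactly the paper's own argument: the paper also disposes of Case 7 by invoking Lemma~\ref{lemma:eq8} ($d^2a=c^2b$) and noting that with $a,d>1$ and $b,c<1$ the left side is positive while the right side is negative. Your write-up merely spells out the sign count (via Proposition~\ref{proposition:pospowers} and Fact~\ref{1.3LO}) that the paper leaves implicit.
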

\begin{proof} Suppose that $G_n$ is left-orderable. By Lemma~\ref{lemma:eq8} we have:
\begin{align*}
d^{2}a&=c^{2}b.
\end{align*}
This shows that if $a$ and $d$ are both positive then $b$ and $c$ cannot both be negative, eliminating Case 7 as a possibility.
\end{proof}

\begin{proposition} If $G_n$ is left-orderable, then Cases 5, 6, 13, and 14 are impossible.\label{proposition:case5,6,13,14}
\end{proposition}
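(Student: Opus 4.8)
The plan is to notice that Cases 5, 6, 13, and 14 are precisely the four cases in Table~\ref{table:casesAll} in which $b$ is negative while $c$ is positive; they differ only in the signs assigned to $a$ and $d$. Concretely, each of these cases stipulates $b<1$ and $c>1$. Since Lemma~\ref{genImp1} has already been established for all cases, the entire argument reduces to a single implication together with the sign assumptions, with no further computation required.

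First I would assume, for contradiction, that $G_n$ is left-orderable and that the signs of the generators are as in one of Cases 5, 6, 13, or 14. In each of these cases the table dictates $b<1$. I would then invoke Lemma~\ref{genImp1}, which states that $b<1$ implies $c<1$. This directly contradicts the defining assumption of these cases that $c>1$. Handling all four cases at once is legitimate because the only hypotheses of Lemma~\ref{genImp1} that are used are left-orderability and $b<1$, both of which hold uniformly across Cases 5, 6, 13, and 14 regardless of the signs of $a$ and $d$.

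There is essentially no obstacle here: the difficulty, such as it was, is entirely absorbed into the prior proof of Lemma~\ref{genImp1} (which in turn rested on Lemma~\ref{lemma:inEq3} and the substitution $c=b^{2}a^{-2}d$ coming from Lemma~\ref{lemma:eq5}). The only point to be careful about is simply to confirm against the table that all four cases do share the pattern $b=-$, $c=+$, so that the single implication suffices to eliminate them simultaneously.
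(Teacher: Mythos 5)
Your proposal is correct and matches the paper's proof exactly: both arguments simply observe that Cases 5, 6, 13, and 14 all have $b<1$ and $c>1$, and then apply Lemma~\ref{genImp1} to obtain $c<1$, a contradiction. There is nothing further to add.
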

\begin{proof} Suppose that $G_n$ is left-orderable. By Lemma~\ref{genImp1}, Cases 5, 6, 13, and 14 are not possible, since in these cases $b<1$ but $c>1$.
\end{proof}
\vspace{20 pt}
\noindent{}To summarize, the remaining cases are 1, 3, 4, 8, and 16. They are shown in Table~\ref{table:casesPart1}.

\begin{table}[ht]
\begin{center}
\begin{tabular}{l | l | l | l | l}
Case\hspace{10 pt} & $a$\hspace{10 pt} & $b$\hspace{10 pt} & $c$\hspace{10 pt} & $d$\hspace{10 pt} \\\hline\hline
1 & $+$ & $+$ & $+$ & $+$ \\\hline
3 & $+$ & $+$ & $-$ & $+$ \\\hline
4 & $+$ & $+$ & $-$ & $-$ \\\hline
8 & $+$ & $-$ & $-$ & $-$ \\\hline
16 & $-$ & $-$ & $-$ & $-$
\end{tabular}
\end{center}
\caption{The five cases that remain after considering Propositions~\ref{proposition:case9,10,11,12},~\ref{proposition:case2,15},~\ref{proposition:case7}, and~\ref{proposition:case5,6,13,14}.}
\label{table:casesPart1}
\end{table}


\section{Case 3}
\label{section:case3}

\noindent{}We will now show that if $G_n$ is left-orderable then the four generators cannot have the signs shown in Case 3 (see Table~\ref{table:case3}). To accomplish this we will assume that $G_n$ is left-orderable and that the signs of the generators are as in Case 3 and reach a contradiction. 

\begin{table}[ht]
\begin{center}
\begin{tabular}{l | l | l | l | l}
Case\hspace{10 pt} & $a$\hspace{10 pt} & $b$\hspace{10 pt} & $c$\hspace{10 pt} & $d$\hspace{10 pt} \\\hline\hline
3 & $+$ & $+$ & $-$ & $+$ 
\end{tabular}
\end{center}
\caption{The signs of the four generators in Case 3}
\label{table:case3}
\end{table}

\begin{lemma} In Case 3, $ba^{-1}>1$.
\label{lemma:inEq3.1}
\end{lemma}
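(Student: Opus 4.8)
The plan is to express $ba^{-1}$ as a word in $c$ and $d$ alone, using one of the general relations, and then read off its sign directly from the Case~3 hypotheses $c<1$ and $d>1$. The natural tool is Lemma~\ref{lemma:eq8}, which gives $d^{2}a=c^{2}b$; this is the general identity that, after rearrangement, isolates $ba^{-1}$ as a word containing no occurrence of $a$ or $b$. (By contrast, Lemma~\ref{lemma:eq9} and Corollary~\ref{corollary:eq6} leave $a$ or $b$ entangled with $c$ and $d$, so they are less convenient here.)

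Concretely, starting from $d^{2}a=c^{2}b$ I would left-multiply both sides by $c^{-2}$ to obtain $c^{-2}d^{2}a=b$, and then right-multiply by $a^{-1}$ to get
\begin{align*}
ba^{-1}=c^{-2}d^{2}.
\end{align*}
It then remains only to check that the right-hand side is positive under the Case~3 sign assignment.

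In Case~3 we have $c<1$ and $d>1$. By Fact~\ref{fact:inverses}, $c<1$ gives $c^{-1}>1$, and by Proposition~\ref{proposition:pospowers} the element $c^{-2}=(c^{-1})^{2}$ has the same sign as $c^{-1}$, so $c^{-2}>1$. Likewise $d>1$ yields $d^{2}>1$. Since $c^{-2}d^{2}$ is a product of two positive elements, Fact~\ref{1.3LO} gives $c^{-2}d^{2}>1$, whence $ba^{-1}>1$, as desired.

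I do not anticipate a genuine obstacle: once Lemma~\ref{lemma:eq8} is invoked, the result reduces to a short sign computation that does not even require the standing assumption $b^{-1}a>1$. The only real ``choice'' is recognizing that Lemma~\ref{lemma:eq8} is precisely the relation that cleanly eliminates both $a$ and $b$ from the expression for $ba^{-1}$, leaving a product whose sign is forced by the Case~3 signs of $c$ and $d$.
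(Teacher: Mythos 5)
Your proposal is correct and matches the paper's proof in essence: both invoke Lemma~\ref{lemma:eq8} ($d^{2}a=c^{2}b$) and conclude from the Case~3 signs of $c$ and $d$ that $c^{-2}d^{2}>1$, forcing $ba^{-1}>1$. The only cosmetic difference is that you isolate the identity $ba^{-1}=c^{-2}d^{2}$ explicitly, while the paper writes the equivalent relation $(c^{-2}d^{2})(ab^{-1})=1$ and reads off the sign of $ab^{-1}$.
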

\begin{proof}By Lemma~\ref{lemma:eq8}, we have:
\begin{align}
c^{2}b&=d^{2}a\nonumber{}\\
\Rightarrow{}(c^{-2}d^{2})(ab^{-1})&=1.\label{eq8:2}
\end{align}
In Case 3, we assume that $c<1$ and $d>1$, thus $c^{-2}d^{2}>1$, and thus by (\ref{eq8:2}) we see that:
\begin{align*}
ab^{-1}&<1\\
\Rightarrow{}ba^{-1}&>1.\qedhere
\end{align*}
\end{proof}

\begin{proposition}
If $G_n$ is left-orderable, then Case 3 ($a>1$, $b>1$, $c<1$, and $d>1$) is impossible.
\end{proposition}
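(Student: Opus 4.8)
The plan is to assume, for contradiction, that $G_n$ is left-orderable with the generator signs of Case 3, and then to exhibit a nonempty product of elements \emph{each known to be positive} which nonetheless equals the identity. Since a product of positive elements is positive by Fact~\ref{1.3LO}, this will contradict its being trivial. The first task is therefore to enlarge the pool of elements whose sign is pinned down, since the Case 3 hypotheses resolve ambiguities left open by the general lemmas. For instance, Lemma~\ref{lemma:eq16} gives $ad^{-1}a = bc^{-1}b$, and because $b>1$ and $c^{-1}>1$ the right-hand side is a product of three positive elements, so $ad^{-1}a>1$. To this I adjoin the positive elements already available in this case: the standing assumption $b^{-1}a>1$, the case-specific $ba^{-1}>1$ of Lemma~\ref{lemma:inEq3.1}, together with $d^{-1}a^2>1$ (Lemma~\ref{lemma:inEq3}), $c^{-3}d^3>1$ (Corollary~\ref{corollary:inEq5}), $c^{-1}ba^{-1}d>1$ (Lemma~\ref{lemma:inEq:CbAd}), and $b^{-1}dc^{-1}bc^{-2}d^2>1$ (the inverse of Lemma~\ref{lemma:inEq7}), as well as the generators $a,b,d$ and $c^{-1}$.

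With this list $Q$ of positive elements in hand, I would run the procedure from the introduction. The four group relations, together with the identities extracted in Lemmas~\ref{lemma:eq5},~\ref{lemma:eq7},~\ref{lemma:eq8},~\ref{lemma:eq9}, and~\ref{lemma:eq16} (and their cyclic permutations and inverses), furnish a finite set $I$ of words equal to $1$ in $G_n$. I would then search the semigroup generated by $Q$ for a member of $I$; concretely, the goal is a factorization $p_1p_2\cdots p_k=1$ with each $p_i\in Q$. Producing even one such factorization finishes the argument, since the left-hand side is then positive by Fact~\ref{1.3LO} while the right-hand side is the identity.

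The main obstacle is locating that factorization, and it is genuinely the crux rather than a formality. Splitting a single relation into two blocks never suffices: a relation of the form $uv=1$ only certifies $v=u^{-1}$, so by Fact~\ref{fact:inverses} one block is forced negative as soon as the other is positive (this is exactly how Lemma~\ref{lemma:inEq3.1} arose from $(c^{-2}d^2)(ab^{-1})=1$, and it yields no contradiction on its own). A contradiction appears only after chaining several relations so that every factor can be routed to the positive side of $Q$, and identifying such a chain is precisely the semigroup-versus-$I$ intersection computed by the automated search described in the introduction. I expect that the preparatory positive elements above—especially $ba^{-1}>1$ and $ad^{-1}a>1$, whose signs the generic lemmas leave undetermined—are what allow the search to close here, so that the remaining work is to verify that this enlarged set $Q$ already forces $1$ into its generated semigroup.
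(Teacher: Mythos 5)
Your framework is the paper's framework---assume Case 3, exhibit a product of positive elements equal to the identity, and contradict Fact~\ref{1.3LO}---and your inventory of positives is correctly assembled; in particular $ba^{-1}>1$ from Lemma~\ref{lemma:inEq3.1} is the key case-specific input, exactly as in the paper. But there is a genuine gap: you never produce the factorization. You say yourself that locating it ``is genuinely the crux rather than a formality,'' and then leave that crux as ``remaining work'' to be done by an automated search. The proposition \emph{is} the exhibition of such a word; without one, what you have is a plan together with a correct list of hypotheses, not a proof.

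The frustrating part is that your own list closes the search in one step, by hand. Invert the second group relation and cyclically permute a single letter: from $b^{-2}c(b^{-1}a)^{10n}=1$ one gets $(a^{-1}b)^{10n}c^{-1}b^{2}=1$, and moving the final $b$ to the front gives
\begin{align*}
(ba^{-1})^{10n}(bc^{-1}b)&=1.
\end{align*}
Every factor on the left lies in your set $Q$: the factor $ba^{-1}$ is positive by Lemma~\ref{lemma:inEq3.1}, and $b>1$, $c^{-1}>1$ in Case 3, so $bc^{-1}b>1$. A product of positives equal to the identity contradicts Fact~\ref{1.3LO}; this is precisely the paper's argument. Note also that nearly all of your preparatory elements ($d^{-1}a^{2}$, $c^{-3}d^{3}$, $c^{-1}ba^{-1}d$, the inverse of the element in Lemma~\ref{lemma:inEq7}, and $ad^{-1}a=bc^{-1}b$ via Lemma~\ref{lemma:eq16}) are never needed: the contradiction uses only $ba^{-1}$, $b$, and $c^{-1}$. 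So your approach is the right one and would succeed, but as submitted it stops exactly where the proof begins.
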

\begin{proof}Suppose that $G_n$ is left-orderable, and suppose (for contradiction) that $a>1$, $b>1$, $c<1$, and $d>1$. By the second group relation, we have:
\begin{align}
b^{-2}c(b^{-1}a)^{10n}&=1\nonumber{}\\
\Rightarrow{}b(a^{-1}b)^{10n}c^{-1}b&=1\nonumber{}\\
\Rightarrow{}(ba^{-1})^{10n}(bc^{-1}b)&=1.\label{proposition:case3:contradiction}
\end{align}
Now by Lemma~\ref{lemma:inEq3.1}, $(ba^{-1})^{10n}>1$. Further, we are assuming that $b>1$ and $c<1$ so $bc^{-1}b>1$. Therefore, (\ref{proposition:case3:contradiction}) states that a product of positive elements equals the identity, a contradiction.
\end{proof}


\section{Case 4}
\label{section:case4}

\noindent{}We will now show that if $G_n$ is left-orderable then the four generators cannot have the signs shown in Table~\ref{table:case4}. To accomplish this we will assume that $G_n$ is left-orderable and that the signs of the generators are as in Case 4 and reach a contradiction.

\begin{table}[ht]
\begin{center}
\begin{tabular}{l | l | l | l | l}
Case\hspace{10 pt} & $a$\hspace{10 pt} & $b$\hspace{10 pt} & $c$\hspace{10 pt} & $d$\hspace{10 pt} \\\hline\hline
4 & $+$ & $+$ & $-$ & $-$ 
\end{tabular}
\end{center}
\caption{The signs of the four generators in Case 4.}
\label{table:case4}
\end{table}

\begin{lemma} In Case 4, $c^{-1}d>1$.
\label{inEq4.1}
\end{lemma}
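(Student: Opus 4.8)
The goal is to show $c^{-1}d>1$ in Case 4, where $a>1$, $b>1$, $c<1$, $d<1$. Since $c$ and $d$ are both negative, their signs alone don't immediately determine the sign of $c^{-1}d$, so I will need to extract additional relations from the group structure.

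The plan is to locate an identity of the form $(c^{-1}d)^k \cdot w = 1$ (or a rearrangement thereof) where $w$ is a product of elements whose signs are already pinned down by the Case 4 assumptions and by the general lemmas of Section~\ref{section:generalLemmas}. The most promising starting points are the relations already rearranged into useful form: Lemma~\ref{lemma:eq9} gives $c^{-3}d^{3}=b^{-1}a$, and Corollary~\ref{corollary:inEq5} tells us $c^{-3}d^{3}>1$. Since $c^{-3}d^3$ need not equal $(c^{-1}d)^3$ (the generators do not commute), I would instead look for a relation that isolates a single conjugate or power of $c^{-1}d$. A natural candidate is to manipulate the fourth group relation $d^{2}a^{-1}d(c^{-1}d)^{10n+3}=1$ or Lemma~\ref{lemma:eq7} ($d^{2}a^{-1}d=c^{2}b^{-1}c$) together with the Case 4 sign data.

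First I would rewrite the fourth relation to solve for $(c^{-1}d)^{10n+3}$, obtaining $(c^{-1}d)^{10n+3}=d^{-1}ad^{-2}$. Under Case 4, $d<1$ gives $d^{-1}>1$ and $d^{-2}>1$, and $a>1$, so the right-hand side $d^{-1}ad^{-2}$ is a product of positive elements and hence positive. This forces $(c^{-1}d)^{10n+3}>1$, and then by Proposition~\ref{proposition:pospowers} (same sign for positive powers) we conclude $c^{-1}d>1$, as desired. Alternatively, I could use $c^{-3}d^{3}=b^{-1}a>1$ from Corollary~\ref{corollary:inEq5} and argue that if $c^{-1}d<1$ held, one could derive a contradiction, but the direct computation via the fourth relation is cleaner since it produces a genuine power of $c^{-1}d$ whose sign transfers directly.

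The main obstacle will be ensuring that the rearrangement of the fourth relation genuinely isolates a power of $c^{-1}d$ while leaving a remainder that is unambiguously signed under the Case 4 hypotheses; non-commutativity means I must be careful that every conjugation or reordering is justified and that the leftover word $d^{-1}ad^{-2}$ (or whatever precise form emerges) contains only factors whose signs are already known to be positive. Once the remainder is confirmed positive, the conclusion follows immediately from Proposition~\ref{proposition:pospowers}, so the real work is purely in the algebraic rearrangement of the relation into the form $(c^{-1}d)^{10n+3}=(\text{positive word})$.
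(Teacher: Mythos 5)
Your proof is correct, and it takes a (mildly) different route from the paper's. The paper argues from the \emph{third} group relation: in $(d^{-1}c)^{10n+3}c^{-1}bc^{-2}=1$, the tail $c^{-1}bc^{-2}$ is a product of positives in Case 4 (using $b>1$ and $c<1$), so $(d^{-1}c)^{10n+3}<1$ and hence $c^{-1}d>1$. You argue instead from the \emph{fourth} relation, solving $d^{2}a^{-1}d(c^{-1}d)^{10n+3}=1$ for $(c^{-1}d)^{10n+3}=(d^{2}a^{-1}d)^{-1}=d^{-1}ad^{-2}$, which is a product of positives in Case 4 (using $a>1$ and $d<1$), so $(c^{-1}d)^{10n+3}>1$ and hence $c^{-1}d>1$ by Proposition~\ref{proposition:pospowers}. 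The two arguments are structurally identical---isolate the $(10n+3)$-rd power of $c^{-1}d$ (or of its inverse) against a word of known sign, then transfer the sign through the power---but they consume complementary halves of the Case 4 hypotheses: the paper's uses only the signs of $b$ and $c$, yours only the signs of $a$ and $d$. The obstacle you flagged (non-commutativity spoiling the rearrangement) does not materialize: inverting $d^{2}a^{-1}d$ is exact and requires no reordering, so $(c^{-1}d)^{10n+3}=d^{-1}ad^{-2}$ holds verbatim, and since $10n+3>1$ for every $n\geq 0$, Proposition~\ref{proposition:pospowers} applies. Both proofs are equally short, and neither needs the general lemmas of Section~\ref{section:generalLemmas}.
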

\begin{proof} By the third group relation, we have:
\begin{align}
(d^{-1}c)^{10n+3}(c^{-1}bc^{-2})&=1.\label{lemma:inEq4.1:contradiction}
\end{align}
In Case 4, $b>1$, and $c^{-1}>1$, thus (\ref{lemma:inEq4.1:contradiction}) shows that $d^{-1}c<1$ or equivalently $c^{-1}d>1$.
\end{proof}

\begin{lemma} In Case 4, $ab^{-1}>1$.
\label{inEq4.2}
\end{lemma}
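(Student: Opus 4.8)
The plan is to mimic the structure of Lemma~\ref{lemma:inEq3.1}, which in Case 3 extracted the sign of $ab^{-1}$ from the relation $d^2a=c^2b$ of Lemma~\ref{lemma:eq8}. Rearranging that relation exactly as in~(\ref{eq8:2}) gives
\begin{align*}
(c^{-2}d^{2})(ab^{-1})=1.
\end{align*}
So the whole question reduces to determining the sign of the conjugation-free factor $c^{-2}d^2$: if I can show $c^{-2}d^2<1$ in Case 4, then $ab^{-1}>1$ follows immediately. The difference from Case 3 is that here $d<1$ rather than $d>1$, so the naive sign count no longer forces $c^{-2}d^2$ one way, and this is where the real work lies.

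First I would try to pin down the sign of $c^{-1}d$, which I already have from Lemma~\ref{inEq4.1}: in Case 4, $c^{-1}d>1$. The natural next step is to relate $c^{-2}d^2$ to $c^{-1}d$. Writing $c^{-2}d^2=c^{-1}(c^{-1}d)d=(c^{-1}d)\bigl(d^{-1}c^{-1}dd\bigr)$ shows that $c^{-2}d^2$ is a product of $c^{-1}d>1$ with a conjugate-type factor, but the sign of that second factor is not obviously positive, so this factorization alone does not close the argument. Instead I expect the cleaner route is to use Corollary~\ref{corollary:inEq5}, which gives $c^{-3}d^3>1$ in full generality, together with the already-established $c^{-1}d>1$; combining $c^{-3}d^3>1$ with information on $c^{-1}d$ should let me isolate the sign of the remaining quadratic piece. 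Alternatively, I would feed the relation $(c^{-2}d^2)(ab^{-1})=1$ into an auxiliary identity involving $a$, $b$, $c$, $d$ whose other factors are already known to be positive in Case 4, and read off $ab^{-1}>1$ by the same positive-product-equals-identity contradiction used in the Case~3 proof.

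The main obstacle will be controlling $c^{-2}d^2$ (equivalently $d^{-2}c^2$) without a clean standalone sign: unlike Case 3, a one-line sign tally fails, so I anticipate needing to chain one of the general inequalities from Section~\ref{section:generalLemmas}---most likely Corollary~\ref{corollary:inEq5} or Lemma~\ref{lemma:inEq7}, both of which package together powers of $c^{-1}d$ and $d^{-1}c$ with conjugate factors---to force the sign. Once the sign of $c^{-2}d^2$ is settled, the conclusion $ab^{-1}>1$ is a routine rearrangement of~(\ref{eq8:2}), exactly parallel to Lemma~\ref{lemma:inEq3.1}.
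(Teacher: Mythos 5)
There is a genuine gap: your reduction runs the logic of Case 4 backwards, and the sign you hope to establish for $c^{-2}d^{2}$ is the wrong one. The paper proves this lemma directly from the \emph{first} group relation, which you never invoke: rearranging $(a^{-1}b)^{10n}d^{-1}a^{2}=1$ by conjugating with $a$ gives $(ba^{-1})^{10n}\,a\,d^{-1}\,a=1$, and since $a>1$ and $d^{-1}>1$ in Case 4, the factor $ad^{-1}a$ is positive, forcing $(ba^{-1})^{10n}<1$, hence $ba^{-1}<1$, i.e.\ $ab^{-1}>1$. By contrast, your plan is to show $c^{-2}d^{2}<1$ and then read $ab^{-1}>1$ off the identity $(c^{-2}d^{2})(ab^{-1})=1$ of Lemma~\ref{lemma:eq8}. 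But under the Case 4 hypotheses what is actually derivable is the \emph{opposite} sign: the very next lemma in the paper (Lemma~\ref{inEq4.3}) proves $c^{-2}d^{2}>1$ from the third group relation, and the concluding proposition of Section~\ref{section:case4} obtains its contradiction precisely by combining $ab^{-1}>1$ and $c^{-2}d^{2}>1$ against $(c^{-2}d^{2})(ab^{-1})=1$. In other words, the identity from Lemma~\ref{lemma:eq8} is where the contradiction \emph{lands}, not where $ab^{-1}>1$ can come from; the only natural route to $c^{-2}d^{2}<1$ is via $ab^{-1}>1$ itself, which makes your reduction circular.

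The technical tools you cite also cannot deliver the sign you want. From $c^{-1}d>1$ (Lemma~\ref{inEq4.1}) and $c^{-3}d^{3}>1$ (Corollary~\ref{corollary:inEq5}) one cannot isolate the sign of $c^{-2}d^{2}$: any decomposition such as $c^{-2}d^{2}=\bigl(c^{-1}(c^{-1}d)c\bigr)(c^{-1}d)$ involves conjugates, and conjugation does not preserve sign in a group that is merely left-orderable (only left multiplication is order-preserving). Note, moreover, that if such sign-propagation did work it would yield $c^{-2}d^{2}>1$, the reverse of what you need. Your fallback --- feeding $(c^{-2}d^{2})(ab^{-1})=1$ into an unspecified auxiliary identity with positive factors --- is not an argument. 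The fix is simply to abandon Lemma~\ref{lemma:eq8} as the source and work from the first group relation as above.
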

\begin{proof} By the first group relation, we have:
\begin{align}
(a^{-1}b)^{10n}d^{-1}a^2&=1\nonumber{}\\
\Rightarrow a^{-1}(ba^{-1})^{10n}ad^{-1}a^{2}&=1\nonumber{}\\
\Rightarrow(ba^{-1})^{10n}(a)(d^{-1})(a)&=1.\label{eq1:4}
\end{align}
In Case 4, $a>1$ and $d^{-1}>1$, thus (\ref{eq1:4}) shows that $ba^{-1}<1$, or equivalently $ab^{-1}>1$.
\end{proof}

\begin{lemma} In Case 4, $c^{-2}d^{2}>1$.\label{inEq4.3}
\end{lemma}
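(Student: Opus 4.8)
The plan is to treat this lemma as the crux inequality for Case 4: once $c^{-2}d^2>1$ is established, it will clash with the generally valid identity of Lemma~\ref{lemma:eq8}, and that clash is what closes the case. So before searching for the proof I would record what is already available under the Case 4 hypotheses $a>1$, $b>1$, $c<1$, $d<1$: the freshly proven inequalities $c^{-1}d>1$ (Lemma~\ref{inEq4.1}) and $ab^{-1}>1$ (Lemma~\ref{inEq4.2}), together with the generally positive element $c^{-3}d^3>1$ (Corollary~\ref{corollary:inEq5}). The goal is to exhibit $c^{-2}d^2$ as a word whose sign is forced positive by these inputs and by the four generator signs.

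The key observation is that I should \emph{not} rewrite $c^{-2}d^2$ through the relation $d^2a=c^2b$, since left-multiplying that by $c^{-2}$ gives $c^{-2}d^2=ba^{-1}$, and in Case 4 Lemma~\ref{inEq4.2} makes this negative; that route produces the opposite inequality and will instead be the ingredient for the final contradiction. Rather, I would obtain a second, independent expression for $c^{-2}d^2$ from a different relation. Starting from Lemma~\ref{lemma:eq7}, the identity $d^2a^{-1}d=c^2b^{-1}c$ rearranges (after left-multiplication by $c^{-2}$ and cancellation of the trailing $d$) to $c^{-2}d^2=b^{-1}cd^{-1}a$, and one can then substitute for $d^{-1}a$ or for $cd^{-1}$ using Lemmas~\ref{lemma:eq16} and~\ref{lemma:eq9}, in the iterative style of the proof of Lemma~\ref{lemma:inEq7}, until $c^{-2}d^2$ is written as a product each of whose factors is one of the already-positive elements $b^{-1}a$, $c^{-1}d$, $c^{-3}d^3$, or a power of a single generator whose Case 4 sign is definite.

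The main obstacle is precisely this sign-pinning of a mixed word. The naive grouping $b^{-1}cd^{-1}a=(b^{-1}c)(d^{-1}a)$ pairs a negative factor with a positive one and leaves the sign undetermined, while regrouping as $b^{-1}(cd^{-1})a$ produces a conjugate of $cd^{-1}$, whose sign I cannot read off because conjugation need not preserve sign in a left-orderable group. The real content of the proof is therefore finding the sequence of substitutions (or the single factorization) that leaves no conjugated factor of unknown sign—this is exactly the kind of step that the authors' search program is designed to locate. Once $c^{-2}d^2>1$ is in hand, I would finish Case 4 by combining it with Lemma~\ref{lemma:eq8}, which gives $c^{-2}d^2=ba^{-1}$, and Lemma~\ref{inEq4.2}, which forces $ba^{-1}<1$; the resulting contradiction $1<c^{-2}d^2=ba^{-1}<1$ rules out Case 4.
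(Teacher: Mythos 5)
Your setup is sound, and your reading of the case structure is exactly right: Lemma~\ref{lemma:eq8} must be held in reserve (it gives $c^{-2}d^{2}=ba^{-1}$, which together with Lemma~\ref{inEq4.2} supplies the final contradiction), so the inequality $c^{-2}d^{2}>1$ has to come from a different relation. But your proposal stops precisely where the proof has to start. After deriving $c^{-2}d^{2}=b^{-1}cd^{-1}a$ from Lemma~\ref{lemma:eq7}, you observe (correctly) that neither grouping $(b^{-1}c)(d^{-1}a)$ nor $b^{-1}(cd^{-1})a$ pins the sign, and you then defer the remaining work to ``the sequence of substitutions\dots{} that the authors' search program is designed to locate.'' That is an acknowledgment that the core step is missing, not a proof of it; as written, nothing in the proposal establishes $c^{-2}d^{2}>1$. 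Worse, your chosen starting identity looks structurally incapable of doing the job: written out it reads $(c^{-2}d^{2})(a^{-1}d)(c^{-1}b)=1$, and in Case 4 the factor $a^{-1}d$ is negative while $c^{-1}b$ is positive, so this identity by itself cannot force the sign of $c^{-2}d^{2}$, and the substitutions you suggest keep producing mixed-sign words (for instance, substituting $a=bc^{-3}d^{3}$ from Lemma~\ref{lemma:eq9} gives $c^{-2}d^{2}=(b^{-1}cd^{-1}b)(c^{-3}d^{3})$, where the conjugate $b^{-1}cd^{-1}b$ has unknown sign).

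The paper's actual proof uses a different mechanism, and it is the idea your sketch is missing. One does not need to exhibit $c^{-2}d^{2}$ itself as a product of positives; it suffices to produce an \emph{identity} in which the unknown element (or its inverse) appears, possibly repeatedly, with every other factor known to be positive. The paper gets this from the third group relation: cyclically permute it to $(bc^{-2})(d^{-1}c)^{10n+3}c^{-1}=1$, then insert $dd^{-1}$ and $cc^{-1}$ so that the power block becomes a conjugated power,
\begin{align*}
(d^{-1}c)^{10n+3}=c\bigl([c^{-1}d][d^{-2}c^{2}]\bigr)^{10n+3}c^{-1},
\end{align*}
which turns the relation into $(b)(c^{-1})\bigl([c^{-1}d][d^{-2}c^{2}]\bigr)^{10n+3}c^{-2}=1$. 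In Case 4 the factors $b$, $c^{-1}$, $c^{-1}d$ (your Lemma~\ref{inEq4.1}), and $c^{-2}$ are all positive, so if $d^{-2}c^{2}\geq1$ the left-hand side would be a product of positive elements and hence positive, contradicting the identity; therefore $d^{-2}c^{2}<1$, i.e.\ $c^{-2}d^{2}>1$. Note that the exponent $10n+3$ is harmless because the unknown enters only inside the block $[c^{-1}d][d^{-2}c^{2}]$, paired with a positive element, so repetition does not spoil the sign-pinning argument. Without this conjugation-of-the-power-block trick (or some substitute for it), your proposal does not prove the lemma.
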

\begin{proof} By the third group relation, we have:
\begin{align}
(d^{-1}c)^{10n+3}c^{-1}bc^{-2}&=1\nonumber{}\\
\Rightarrow(bc^{-2})(d^{-1}c)^{10n+3}(c^{-1})&=1\nonumber{}\\
\Rightarrow(bc^{-2})(cc^{-1})((dd^{-1})d^{-1}c)^{10n+3}(cc^{-1})(c^{-1})&=1\nonumber{}\\
\Rightarrow(bc^{-2})c(c^{-1}dd^{-2}cc)^{10n+3}c^{-1}(c^{-1})&=1\nonumber{}\\
\Rightarrow(b)(c^{-1})([c^{-1}d][d^{-2}c^2])^{10n+3}c^{-2}&=1.\label{lemma:inEq4.3:contradiction}
\end{align}
In Case 4, $b>1$, $c^{-1}>1$, and $c^{-1}d>1$ by Lemma~\ref{inEq4.1}. Therefore, (\ref{lemma:inEq4.3:contradiction}) shows that $d^{-2}c^2<1$, or equivalently, $c^{-2}d^2>1$.
\end{proof}

\begin{proposition} If $G_n$ is left-orderable, then Case 4 ($a>1$, $b>1$, $c<1$, and $d<1$) is impossible.
\end{proposition}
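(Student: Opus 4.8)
The plan is to reach a contradiction by producing two incompatible inequalities for the single element $ba^{-1}$. In Case 4 we have $a>1$, $b>1$, $c<1$, and $d<1$, and I have already recorded three consequences of these signs: $c^{-1}d>1$ (Lemma~\ref{inEq4.1}), $ab^{-1}>1$ (Lemma~\ref{inEq4.2}), and $c^{-2}d^{2}>1$ (Lemma~\ref{inEq4.3}). The key observation is that the universally valid relation of Lemma~\ref{lemma:eq8}, namely $d^{2}a=c^{2}b$, lets me re-express $ba^{-1}$ entirely in terms of $c$ and $d$, at which point the two Case-4 inequalities for $a$, $b$ versus $c$, $d$ can be played against each other.

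Concretely, starting from $d^{2}a=c^{2}b$ I would left-multiply by $c^{-2}$ and right-multiply by $a^{-1}$ to obtain $c^{-2}d^{2}=ba^{-1}$. By Lemma~\ref{inEq4.3} the left-hand side is positive, so $ba^{-1}>1$, which by Fact~\ref{fact:inverses} is equivalent to $ab^{-1}<1$. But Lemma~\ref{inEq4.2} asserts that $ab^{-1}>1$. A left-ordering cannot make $ab^{-1}$ simultaneously positive and negative, so the assumption that the generators have the signs of Case 4 is untenable, and Case 4 is impossible.

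The point worth emphasizing is that essentially all of the work has been front-loaded into the three Case-4 lemmas, so there is no genuine obstacle remaining in this final step. The substantive computation lives in Lemma~\ref{inEq4.3}, whose proof required carefully inserting $cc^{-1}$ and $dd^{-1}$ pairs into the third group relation to expose the positive block $[c^{-1}d][d^{-2}c^{2}]$, and which in turn depended on Lemma~\ref{inEq4.1}. Once $c^{-2}d^{2}>1$ and $ab^{-1}>1$ are available, the proposition follows from a one-line algebraic comparison through Lemma~\ref{lemma:eq8}; in particular there is no need to invoke the heavier general machinery such as Corollary~\ref{corollary:inEq5} or Lemma~\ref{lemma:inEq7}.
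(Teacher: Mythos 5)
Your proof is correct and is essentially identical to the paper's: both rest on Lemma~\ref{lemma:eq8} ($d^{2}a=c^{2}b$) combined with Lemmas~\ref{inEq4.2} and~\ref{inEq4.3}, the only cosmetic difference being that the paper phrases the contradiction as the identity $(c^{-2}d^{2})(ab^{-1})=1$ being a product of positives, while you equivalently rewrite it as $c^{-2}d^{2}=ba^{-1}$ and pit $ba^{-1}>1$ against $ab^{-1}>1$.
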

\begin{proof} Suppose $G_n$ is left-orderable. By Lemma~\ref{lemma:eq8}, we have:
\begin{align}
d^2a&=c^2b\nonumber{}\\
\Rightarrow b^{-1}c^{-2}d^2a&=1\nonumber{}\\
\Rightarrow (c^{-2}d^2)(ab^{-1})&=1.\label{proposition:case4:contradiction}
\end{align}
But $ab^{-1}>1$ by Lemma~\ref{inEq4.2} and $c^{-2}d^2>1$ by Lemma~\ref{inEq4.3}, thus (\ref{proposition:case4:contradiction}) is a contradiction.
\end{proof}


\section{Case 8}
\label{section:case8}

\noindent{}We will now show that if $G_n$ is left-orderable then the four generators cannot have the signs shown in Table~\ref{table:case8}. To accomplish this we will assume that $G_n$ is left-orderable and that the signs of the generators are as in Case 8 and reach a contradiction. 

\begin{table}[ht]
\begin{center}
\begin{tabular}{l | l | l | l | l}
Case\hspace{10 pt} & $a$\hspace{10 pt} & $b$\hspace{10 pt} & $c$\hspace{10 pt} & $d$\hspace{10 pt} \\\hline\hline
8 & $+$ & $-$ & $-$ & $-$ \\
\end{tabular}
\end{center}
\caption{The signs of the four generators in Case 8.}
\label{table:case8}
\end{table}

\begin{lemma} In Case 8, $bc^{-1}>1$.
\label{lemma:inEq8.2}
\end{lemma}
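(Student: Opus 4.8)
The goal is to show $bc^{-1}>1$ under the Case 8 sign assignment $a>1$, $b<1$, $c<1$, $d<1$. The natural strategy, consistent with the pattern of the earlier case lemmas, is to locate an already-established identity that expresses $bc^{-1}$ (after an elementary rearrangement) as a product of group elements whose signs are all forced by the Case 8 hypotheses. Once such an expression is found, the conclusion is immediate from Fact~\ref{1.3LO}, since a product of positive elements is positive.

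The identity to use is Lemma~\ref{lemma:eq16}, which states that $ad^{-1}a=bc^{-1}b$ in $G_n$. The plan is to rearrange this equation to isolate $bc^{-1}$ by multiplying on the right by $b^{-1}$, obtaining
\begin{align*}
bc^{-1}=ad^{-1}ab^{-1}.
\end{align*}
I would then read off the sign of each factor on the right under the Case 8 hypotheses: $a>1$, $d<1$ forces $d^{-1}>1$, the second $a>1$, and $b<1$ forces $b^{-1}>1$. Thus $ad^{-1}ab^{-1}$ is a product of four positive elements.

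Applying Fact~\ref{1.3LO} to conclude that $ad^{-1}ab^{-1}>1$ then yields $bc^{-1}>1$, completing the argument. I do not anticipate a genuine obstacle here: the only real step is recognizing that Lemma~\ref{lemma:eq16} is the correct identity to invoke, since it relates $bc^{-1}$ precisely to the generators ($a$, $d^{-1}$, $b^{-1}$) whose signs are all determined in Case 8, with no factor (such as $c^{-1}d$ or $ba^{-1}$) whose sign would require a separate preliminary lemma. The verification that the four factors are positive and the final appeal to positivity of products are both routine.
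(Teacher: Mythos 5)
Your proof is correct, but it takes a different route from the paper's. The paper works directly from the second group relation: cyclically permuting $b^{-2}c(b^{-1}a)^{10n}=1$ gives $(b^{-1})(cb^{-1})(ab^{-1})^{10n}=1$, and since $b^{-1}>1$ and $ab^{-1}>1$ are immediate in Case 8, the remaining factor $cb^{-1}$ is forced to be negative, i.e.\ $bc^{-1}>1$. You instead invoke Lemma~\ref{lemma:eq16} (which the paper proves in Section~\ref{section:generalLemmas}, before Case 8, so there is no circularity), rearranging $ad^{-1}a=bc^{-1}b$ to $bc^{-1}=ad^{-1}ab^{-1}$ and reading off that the right side is a product of four elements positive in Case 8. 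Your version is arguably cleaner: it expresses $bc^{-1}$ \emph{positively} as a product of known-sign factors rather than inferring negativity of $cb^{-1}$ from an identity-equals-product-of-positives contradiction, it avoids any dependence on the exponent $10n$, and it needs no auxiliary observation like $ab^{-1}>1$. What the paper's argument buys in exchange is economy of inputs---it uses only a single group relation, whereas Lemma~\ref{lemma:eq16} already encodes the combination of relations 1 and 2; the paper's style here also matches the template (permute a relation, force the sign of one factor) used throughout its case analysis, which is presumably why it was chosen.
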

\begin{proof} In Case 8, we have:
\begin{align}
ab^{-1}>1,\label{inEq8.1}
\end{align} 
since $a>1$ and $b^{-1}>1$. The second group relation of $G_n$ tells us that:
\begin{align}
b^{-2}c(b^{-1}a)^{10n}&=1\nonumber{}\\
\Rightarrow{}b^{-1}c(b^{-1}a)^{10n}b^{-1}&=1\nonumber{}\\
\Rightarrow{}(b^{-1})(cb^{-1})(ab^{-1})^{10n}&=1.\label{lemma:inEq8.2:contradiction}
\end{align}
In Case 8, $b^{-1}>1$ and $ab^{-1}>1$ by (\ref{inEq8.1}), therefore (\ref{lemma:inEq8.2:contradiction}) tells us that:
\begin{align*}
cb^{-1}&<1\\
\Rightarrow{}bc^{-1}&>1.\qedhere
\end{align*}
\end{proof}

\begin{lemma} In Case 8, $c^{-1}b>1$.
\label{lemma:inEq8.3}
\end{lemma}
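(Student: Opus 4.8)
The plan is to combine the general inequality $d^{-1}a^{2}>1$ (Lemma~\ref{lemma:inEq3}) with the general identity $d^{-1}a^{2}=c^{-1}b^{2}$ (Lemma~\ref{lemma:eq5}) to first obtain $c^{-1}b^{2}>1$, and then to peel off one factor of $b$ using the Case 8 hypothesis $b<1$. The virtue of this route is that it never leaves the realm of products of positive elements, so only Fact~\ref{1.3LO} is needed and no order-reversing phenomena can intervene.

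Concretely, I would first invoke Lemma~\ref{lemma:eq5} to rewrite $c^{-1}b^{2}=d^{-1}a^{2}$, and then Lemma~\ref{lemma:inEq3}, which holds whenever $G_n$ is left-orderable, to conclude that $c^{-1}b^{2}>1$. Next, in Case 8 we have $b<1$, so by Fact~\ref{fact:inverses} we get $b^{-1}>1$. Writing
\begin{align*}
c^{-1}b=(c^{-1}b^{2})(b^{-1})
\end{align*}
exhibits $c^{-1}b$ as a product of the two positive elements $c^{-1}b^{2}$ and $b^{-1}$, so by Fact~\ref{1.3LO} it is positive, giving $c^{-1}b>1$ as desired.

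The step that most needs care is resisting the tempting shortcut from the already-established inequality $bc^{-1}>1$ (Lemma~\ref{lemma:inEq8.2}). One might observe that $c^{-1}b=c^{-1}(bc^{-1})c$ is a conjugate of $bc^{-1}$ and try to transfer positivity; however, conjugation need not preserve sign in a merely left-orderable (as opposed to bi-orderable) group, so this deduction would be invalid. The argument through Lemma~\ref{lemma:eq5} and Lemma~\ref{lemma:inEq3} sidesteps this subtlety completely, and beyond selecting this correct route I expect no genuine obstacle.
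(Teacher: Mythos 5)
Your proof is correct and is essentially the paper's own argument in different packaging: the paper conjugates the second group relation into $(b^{-1}c)(b^{-1}a)^{10n}b^{-1}=1$ and concludes $b^{-1}c<1$ from positivity of the other two factors, which is exactly your decomposition $c^{-1}b=(c^{-1}b^{2})(b^{-1})$ read in inverse form, since $c^{-1}b^{2}=(b^{-1}a)^{10n}$ by (\ref{eq2:2}). Your detour through $d^{-1}a^{2}$ via Lemma~\ref{lemma:eq5} and Lemma~\ref{lemma:inEq3} needlessly imports the first group relation, but the underlying identity and the use of $b^{-1}a>1$ and $b^{-1}>1$ are the same.
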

\begin{proof} The second group relation of $G_n$ tells us that:
\begin{align}
b^{-2}c(b^{-1}a)^{10n}&=1\nonumber{}\\
\Rightarrow{}(b^{-1}c)(b^{-1}a)^{10n}b^{-1}&=1.\label{lemma:inEq8.3:contradiction}
\end{align}
In Case 8, $b<1\Rightarrow{}b^{-1}>1$ and we have assumed in general that $b^{-1}a>1$, thus (\ref{lemma:inEq8.3:contradiction}) tells us that:
\begin{align*}
b^{-1}c&<1\\
\Rightarrow{}c^{-1}b&>1.\qedhere
\end{align*}
\end{proof}

\begin{lemma} In Case 8, $c^{-1}d>1$.
\label{lemma:inEq8.4}
\end{lemma}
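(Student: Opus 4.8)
The plan is to read off the sign of $c^{-1}d$ directly from the fourth group relation, in the same spirit as the proof of Lemma~\ref{inEq4.1} in Case 4, but using the fourth relation in place of the third. Recall that the fourth group relation states
\[
d^{2}a^{-1}d(c^{-1}d)^{10n+3}=1.
\]
The first thing I would do is determine the sign of the prefix $d^{2}a^{-1}d$. In Case 8 we have $d<1$, so $d^{2}<1$ by Fact~\ref{1.3LO}, and $a>1$ gives $a^{-1}<1$; hence $d^{2}a^{-1}d$ is a product of three negative elements and is therefore negative, again by Fact~\ref{1.3LO}.

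Given this, the fourth relation forces $(c^{-1}d)^{10n+3}=(d^{2}a^{-1}d)^{-1}$, and the right-hand side is positive by Fact~\ref{fact:inverses}. Since the analysis in this section is carried out for $n\geq1$, the exponent $10n+3$ is at least $13$, so Proposition~\ref{proposition:pospowers} applies and shows that $c^{-1}d$ has the same sign as $(c^{-1}d)^{10n+3}$, namely positive. This yields $c^{-1}d>1$, which is the claim.

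I do not expect a genuine obstacle here: the argument is a single sign computation followed by an application of Proposition~\ref{proposition:pospowers}. The only points requiring care are confirming that every factor of $d^{2}a^{-1}d$ is negative in Case 8 before invoking Fact~\ref{1.3LO}, and noting that the appeal to Proposition~\ref{proposition:pospowers} is legitimate precisely because $10n+3>1$, which holds throughout the $n\geq1$ regime of this section. One might instead try to start from Corollary~\ref{corollary:eq6}, which in Case 8 gives $dc^{-1}=a^{2}b^{-2}>1$; however, passing from $dc^{-1}$ to $c^{-1}d$ requires a conjugation argument (and conjugates need not share a sign in a left-orderable group), so the direct use of the fourth relation is the cleaner route.
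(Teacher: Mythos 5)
Your proof is correct, but it takes a genuinely different route from the paper. The paper works from the \emph{third} group relation: it first needs $c^{-1}b>1$ (Lemma~\ref{lemma:inEq8.3}, which itself rests on the second relation and the standing assumption $b^{-1}a>1$), combines this with $c^{-2}>1$ to get $c^{-1}bc^{-2}>1$, and then reads off from $(d^{-1}c)^{10n+3}c^{-1}bc^{-2}=1$ that $(d^{-1}c)^{10n+3}<1$, hence $d^{-1}c<1$, i.e.\ $c^{-1}d>1$. You instead use the \emph{fourth} relation $d^{2}a^{-1}d(c^{-1}d)^{10n+3}=1$: in Case 8 the prefix $d^{2}a^{-1}d$ is a product of negatives (using only $a>1$ and $d<1$), so $(c^{-1}d)^{10n+3}=(d^{2}a^{-1}d)^{-1}>1$ by Facts~\ref{1.3LO} and~\ref{fact:inverses}, and Proposition~\ref{proposition:pospowers} then gives $c^{-1}d>1$. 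Both arguments are sound, and each step of yours checks out; what your route buys is self-containment---it bypasses Lemma~\ref{lemma:inEq8.3} and the assumption $b^{-1}a>1$ entirely, and is the same sign-counting trick the paper itself uses to eliminate Cases 2 and 15 (Proposition~\ref{proposition:case2,15}). Since the later Case 8 results (Lemma~\ref{lemma:inEq8.6} and Proposition~\ref{proposition:case8}) depend only on the \emph{statement} of this lemma and not on how it is proved, your proof could be substituted without disturbing anything downstream. Your closing caution is also well placed: Corollary~\ref{corollary:eq6} does give $dc^{-1}=a^{2}b^{-2}>1$ in Case 8, but $c^{-1}d=c^{-1}(dc^{-1})c$ is a conjugate of $dc^{-1}$, and conjugation need not preserve sign in a left-orderable group, so that shortcut would indeed be a gap.
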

\begin{proof} In Case 8, $c<1$ so $c^{-2}>1$, and Lemma~\ref{lemma:inEq8.3} states $c^{-1}b>1$. Therefore, we have:
\begin{align}
c^{-1}bc^{-2}>1.\label{inEq8.5}
\end{align}
The third group relation tells us that:
\begin{align}
(d^{-1}c)^{10n+3}c^{-1}bc^{-2}&=1\nonumber{}\\
\Rightarrow{}(d^{-1}c)^{10n+3}(c^{-1}bc^{-2})&=1.\label{lemma:inEq8.4:contradiction}
\end{align}
Together, (\ref{inEq8.5}) and (\ref{lemma:inEq8.4:contradiction}) show that:
\begin{align*}
d^{-1}c&<1\\
\Rightarrow{}c^{-1}d&>1.\qedhere
\end{align*}
\end{proof}

\begin{lemma} In Case 8, $d^{-2}c^{2}>1$.
\label{lemma:inEq8.6} 
\end{lemma}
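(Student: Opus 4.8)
The plan is to obtain $d^{-2}c^{2}>1$ directly from the general identity of Lemma~\ref{lemma:eq8}, which is the very same identity that drives the contradiction in Case 4; the difference is that here I read off the sign of the unknown factor from the one factor whose sign is already under control.

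First I would rewrite Lemma~\ref{lemma:eq8}, namely $d^{2}a=c^{2}b$, in the rearranged form $(c^{-2}d^{2})(ab^{-1})=1$. This is exactly the equation (\ref{proposition:case4:contradiction}) used in the treatment of Case 4, and since Lemma~\ref{lemma:eq8} is one of the general lemmas it holds in Case 8 as well. Next I would feed in the sign information already available: by (\ref{inEq8.1}) we have $ab^{-1}>1$, because $a>1$ and $b^{-1}>1$ in Case 8. Since the identity $(c^{-2}d^{2})(ab^{-1})=1$ exhibits $c^{-2}d^{2}$ as the inverse of the positive element $ab^{-1}$, Fact~\ref{fact:inverses} forces $c^{-2}d^{2}<1$, and applying Fact~\ref{fact:inverses} once more gives $d^{-2}c^{2}>1$, as required.

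I do not expect any serious obstacle in this argument; the only subtlety worth flagging is why one should route through Lemma~\ref{lemma:eq8} rather than through the third group relation. In Case 4 the sign of $c^{-2}d^{2}$ was extracted from the third relation (Lemma~\ref{inEq4.3}), but the analogous manipulation yields $(b)(c^{-1})([c^{-1}d][d^{-2}c^{2}])^{10n+3}c^{-2}=1$, and in Case 8 the leading factor $b$ is \emph{negative}, so that route is inconclusive about the sign of $d^{-2}c^{2}$. The clean path is therefore through Lemma~\ref{lemma:eq8} combined with $ab^{-1}>1$, which is precisely why establishing (\ref{inEq8.1}) earlier is the key input here.
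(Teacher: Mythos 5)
Your proof is correct and is essentially the paper's own argument: both rearrange Lemma~\ref{lemma:eq8} ($d^{2}a=c^{2}b$) into the identity $(ab^{-1})(c^{-2}d^{2})=1$ (your version is just a cyclic permutation of this), invoke $ab^{-1}>1$ from the Case 8 sign assumptions, and conclude $c^{-2}d^{2}<1$, hence $d^{-2}c^{2}>1$. Your closing remark correctly identifies why the third-relation route used in Case 4 fails here ($b<1$ in Case 8), which is indeed why the paper also routes through Lemma~\ref{lemma:eq8}.
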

\begin{proof} By Lemma~\ref{lemma:eq8}, we have:
\begin{align}
d^{2}a&=c^{2}b\nonumber{}\\
\Rightarrow{}(ab^{-1})(c^{-2}d^{2})&=1.\label{lemma:inEq8.6:contradiction}
\end{align}
In Case 8, $b<1$ and $a>1$ so $ab^{-1}>1$. Therefore, (\ref{lemma:inEq8.6:contradiction}) shows that:
\begin{align*}
c^{-2}d^{2}&<1\\
\Rightarrow{}d^{-2}c^{2}&>1.\qedhere
\end{align*}
\end{proof}

\begin{proposition} If $G_n$ is left-orderable, then Case 8 ($a>1$, $b<1$, $c<1$, and $d<1$) is impossible.
\label{proposition:case8}
\end{proposition}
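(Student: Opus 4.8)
The plan is to derive a contradiction in the same style as the earlier cases: exhibit one of the defining relations of $G_n$ as a product of elements each of which is positive under the Case 8 signs, so that a product of positive elements equals the identity. All the positivity facts I need are already available from Lemmas~\ref{lemma:inEq8.2},~\ref{lemma:inEq8.4}, and~\ref{lemma:inEq8.6}, together with the standing hypothesis $c<1$.

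The key step is to reuse the purely algebraic rearrangement of the third group relation carried out in the proof of Lemma~\ref{inEq4.3}; since that computation uses no sign assumptions, it is valid here as well. It rewrites the third relation in the form~(\ref{lemma:inEq4.3:contradiction}),
\begin{align*}
(b)(c^{-1})\bigl([c^{-1}d][d^{-2}c^2]\bigr)^{10n+3}c^{-2}=1 .
\end{align*}
I would then regroup the two leading factors into the single block $bc^{-1}$, obtaining
\begin{align*}
(bc^{-1})\bigl([c^{-1}d][d^{-2}c^2]\bigr)^{10n+3}(c^{-2})=1 .
\end{align*}

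Now I would verify that every factor on the left is positive. We have $bc^{-1}>1$ by Lemma~\ref{lemma:inEq8.2}; moreover $c^{-1}d>1$ by Lemma~\ref{lemma:inEq8.4} and $d^{-2}c^2>1$ by Lemma~\ref{lemma:inEq8.6}, so the bracketed element $[c^{-1}d][d^{-2}c^2]$ is a product of two positive elements and hence positive, whence its $(10n+3)$-th power is positive by Proposition~\ref{proposition:pospowers}; finally $c<1$ gives $c^{-2}>1$. By Fact~\ref{1.3LO} the left-hand side is therefore positive, contradicting that it equals $1$.

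The main obstacle, as throughout, is the $n$-dependent exponent: any argument must be uniform in $n$. This is exactly what the conjugation in Lemma~\ref{inEq4.3} buys us, since it converts the factor $d^{-1}c$ (whose sign alone only yields $(d^{-1}c)^{10n+3}<1$, giving no contradiction) into the manifestly positive product $[c^{-1}d][d^{-2}c^2]$, whose positivity survives raising to the power $10n+3$. The one genuinely new point beyond Case 4 is that here $b<1$, so $b$ cannot serve as a positive factor on its own; instead I absorb it into the positive block $bc^{-1}$, which is precisely why Lemma~\ref{lemma:inEq8.2} was established.
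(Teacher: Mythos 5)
Your proposal is correct and takes essentially the same approach as the paper: the paper's proof of Proposition~\ref{proposition:case8} re-derives inline exactly the identity $(bc^{-1})([c^{-1}d][d^{-2}c^{2}])^{10n+3}(c^{-2})=1$ (via the same insertion/conjugation trick as in Lemma~\ref{inEq4.3}) and then concludes using the same Lemmas~\ref{lemma:inEq8.2}, \ref{lemma:inEq8.4}, and \ref{lemma:inEq8.6} together with $c^{-2}>1$. The only cosmetic difference is that you import the sign-free rearrangement from Case 4 and cite it, whereas the paper repeats the computation.
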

\begin{proof} Suppose that $G_n$ is left-orderable, and suppose (for contradiction), that $a>1$, $b<1$, $c<1$, and $d<1$.
By the third group relation, we have: 
\begin{align}
(d^{-1}c)^{10n+3}c^{-1}bc^{-2}&=1\nonumber{}\\
\Rightarrow{}(bc^{-2})((d^{-1}c)^{10n+3})(c^{-1})&=1\nonumber{}\\
\Rightarrow{}(bc^{-2})(cc^{-1})(((dd^{-1})d^{-1}c)^{10n+3})(cc^{-1})(c^{-1})&=1\nonumber{}\\
\Rightarrow{}(bc^{-2}c)(c^{-1}dd^{-2}cc)^{10n+3}(c^{-1})(c^{-1})&=1\nonumber{}\\
\Rightarrow{}(bc^{-1})([c^{-1}d][d^{-2}c^{2}])^{10n+3}(c^{-2})&=1.\label{proposition:case8:contradiction}
\end{align}
By Lemma~\ref{lemma:inEq8.2}, $bc^{-1}>1$, by Lemma~\ref{lemma:inEq8.4}, $c^{-1}d>1$, and by Lemma~\ref{lemma:inEq8.6}, $d^{-2}c^{2}>1$. Furthermore, in Case 8 $c<1$ so $c^{-2}>1$. Therefore, (\ref{proposition:case8:contradiction}) states that a product of positives is the identity, a contradiction.
\end{proof}


\section{Case 1}
\label{section:case1}

\noindent{}It would be possible to disprove left-orderability in Cases 1 and 16 by finding a word $w$ that admits only positive occurrences of the generators $a$, $b$, $c$, and $d$ and that satisfies $w=1$ when considered as an element of $G_n$. We were not able to find such a word or disprove its existence. However, certain qualities of the group relations of $G_n$ suggest that no such word $w$ exists.\\

\noindent{}First, each group relation contains a pair of generators, one raised to a positive power and one raised to a negative power, that are repeated a number of times depending on $n$. For example, $(a^{-1}b)^{10n}$ as part of $(a^{-1}b)^{10n}d^{-1}a^{2}$.\\

\noindent{}Second, while these repeated pairs are easily eliminated through the combination of two relations, the resultant identities do not improve the situation. By combining relation 1 with relation 2 and relation 3 with relation 4, we get $a^{2}b^{-2}cd^{-1}$ and $dc^{-1}bc^{-2}d^{2}a^{-1}$ respectively. Each of these identities contains all 4 generators and alternating positive and negative powers.\\

\noindent{}Third, if we ignore ordering and simply count powers of generators as shown in Table~\ref{table:generator_powers}, there is no simple combination that allows the possibility of eliminating negative powers. In order to remove dependence on $n$, we must make the combinations described in the previous paragraph (In Table~\ref{table:generator_powers} the rows titled ``1 and 2 Subtotal," and ``3 and 4 Subtotal"). Now to make the total power of $b$ positive or $0$, we need at least $2$ instances of the row ``3 and 4 Subtotal," but this renders the total power of $c$ negative.\\

\begin{table}[ht]
\begin{center}
\begin{tabular}{l | l | l | l | l}
Relation \hspace{10 pt} & $a$\hspace{10 pt} & $b$\hspace{10 pt} & $c$\hspace{10 pt} & $d$\hspace{10 pt} \\\hline\hline
1: $(a^{-1}b)^{10n}d^{-1}a^{2}$ & $-10n+2$ & $10n$ & $0$ & $-1$ \\\hline
2: $b^{-2}c(b^{-1}a)^{10n}$ & $10n$ & $-10n-2$ & $1$ & $0$ \\\hline
3: $(d^{-1}c)^{10n+3}c^{-1}bc^{-2}$ & $0$ & $1$ & $10n$ & $-10n-3$ \\\hline
4: $d^{2}a^{-1}d(c^{-1}d)^{10n+3}$ & $-1$ & $0$ & $-10n-3$ & $10n+6$ \\\hline
1 and 2 Subtotal & $2$ & $-2$ & $1$ & $-1$ \\\hline
3 and 4 Subtotal & $-1$ & $+1$ & $-3$ & $3$ \\\hline
1, 2, 3, and 4 Total & $1$ & $-1$ & $-2$ & $2$
\end{tabular}
\end{center}
\caption{Total powers of generators in each group relation of $G_n$.}
\label{table:generator_powers}
\end{table}

\noindent{}With these challenges in mind, we present an alternative solution in Sections \ref{section:case1} and \ref{section:case16}.\\

\noindent{}In order to show Case 1 ($a,b,c,d>1$) is not possible if $G_n$ is left-orderable, we consider sub-cases (see Table~\ref{table:case1.-}). By Proposition~\ref{proposition:non-trivial}, we know that these sub-cases represent all possibilities if $G_n$ is left-orderable. We proceed by reaching a contradiction to left-orderability in each sub-case.\\

\begin{table}[ht]
\begin{center}
\begin{tabular}{l | l | l | l }
Case \hspace{10 pt} & $a^{-1}d$\hspace{10 pt} & $b^{-1}d$\hspace{10 pt} & $b^{-1}c$\hspace{10 pt} \\\hline\hline
\case{1}{1}{} & $+$ & $+$ & $+$ \\\hline
\case{1}{2}{} & $+$ & $+$ & $-$ \\\hline
\case{1}{3}{} & $+$ & $-$ & $+$ \\\hline
\case{1}{4}{} & $+$ & $-$ & $-$ \\\hline
\case{1}{5}{} & $-$ & $+$ & $+$ \\\hline
\case{1}{6}{} & $-$ & $+$ & $-$ \\\hline
\case{1}{7}{} & $-$ & $-$ & $+$ \\\hline
\case{1}{8}{} & $-$ & $-$ & $-$ 
\end{tabular}
\end{center}
\caption{Eight sub-cases of Case 1, considering the signs of $a^{-1}d$, $b^{-1}d$, and $b^{-1}c$.}
\label{table:case1.-}
\end{table}

$\;$
\subsection{\Case{1}{2}{}-\case{1}{7}{}}

We start by showing that if $G_n$ is left-orderable, then \Cases{1}{2}{}-\case{1}{7}{} are not possible.


\begin{proposition} If $G_n$ is left-orderable, then \Cases{1}{3}{} and \case{1}{4}{} are impossible.
\end{proposition}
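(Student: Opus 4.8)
The plan is to handle both sub-cases at once, since they share the two defining inequalities $a^{-1}d>1$ and $b^{-1}d<1$; the sign of $b^{-1}c$ (the only thing distinguishing \case{1}{3}{} from \case{1}{4}{}) never enters the argument. So I would open the proof by stating that it suffices to derive a contradiction from $a^{-1}d>1$ and $b^{-1}d<1$ together with the standing assumption that we are in Case 1.

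The key observation is that the contradiction is purely order-theoretic and requires none of the group relations. First I would rewrite the negative hypothesis $b^{-1}d<1$ as $d^{-1}b>1$ using Fact~\ref{fact:inverses}. Now $a^{-1}d$ and $d^{-1}b$ are both positive, so by Fact~\ref{1.3LO} their product is positive; but that product is exactly $(a^{-1}d)(d^{-1}b)=a^{-1}b$, with the intermediate generator $d$ telescoping away. Hence $a^{-1}b>1$. On the other hand, the general assumption fixed earlier in the paper is $b^{-1}a>1$, which by Fact~\ref{fact:inverses} is equivalent to $a^{-1}b<1$. These two statements contradict each other, finishing both sub-cases.

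There is no real obstacle here: the whole argument is a single telescoping cancellation, and the only thing to get right is the bookkeeping of sign translations (passing from $b^{-1}d<1$ to $d^{-1}b>1$, and recognizing $b^{-1}a>1$ as $a^{-1}b<1$). I would present it in just a few lines, invoking Facts~\ref{fact:inverses} and~\ref{1.3LO} explicitly so that the reader sees that positivity of $a^{-1}d$ and $d^{-1}b$ is incompatible with $a$ dominating $b$ in the order.
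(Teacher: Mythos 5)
Your proposal is correct and is essentially identical to the paper's own proof: both arguments observe that $a^{-1}d>1$ and $d^{-1}b>1$ force $(a^{-1}d)(d^{-1}b)=a^{-1}b>1$, hence $b^{-1}a<1$, contradicting the standing assumption $b^{-1}a>1$. The paper likewise treats \Cases{1}{3}{} and \case{1}{4}{} together without ever using the sign of $b^{-1}c$.
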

\begin{proof} $\;$ Suppose that $G_n$ is left-orderable, and suppose (for contradiction) that $a^{-1}d>1$ and $b^{-1}d<1$, then:
\begin{align*}
(a^{-1}d)(d^{-1}b)=a^{-1}b&>1\\
\Rightarrow{}b^{-1}a&<1,
\end{align*}
which contradicts the general assumption $b^{-1}a>1$.
\end{proof}

\begin{proposition} If $G_n$ is left-orderable, then \Cases{1}{5}{} and \case{1}{7}{} are impossible.
\end{proposition}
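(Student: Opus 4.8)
The plan is to exploit the single structural feature that these two cases share, together with Lemma~\ref{lemma:inEq:CbAd}, to produce an immediate sign contradiction. First I would read the relevant entries off Table~\ref{table:case1.-}: in both Case 1.e and Case 1.g one has $a^{-1}d<1$ and $b^{-1}c>1$ (the two rows differ only in the sign of $b^{-1}d$, which will play no role here). So it suffices to derive a contradiction from the two hypotheses $a^{-1}d<1$ and $b^{-1}c>1$ alone, handling both cases at once.

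Next, from $b^{-1}c>1$ I would pass to $c^{-1}b<1$ using Fact~\ref{fact:inverses}, so that both $c^{-1}b$ and $a^{-1}d$ are negative. Then the product $c^{-1}ba^{-1}d=(c^{-1}b)(a^{-1}d)$ is a product of two negative elements, hence negative by Fact~\ref{1.3LO}; that is, $c^{-1}ba^{-1}d<1$. On the other hand, Lemma~\ref{lemma:inEq:CbAd} guarantees that $c^{-1}ba^{-1}d>1$ whenever $G_n$ is left-orderable. These two conclusions about the sign of the same element $c^{-1}ba^{-1}d$ are incompatible, ruling out both cases.

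The argument is essentially frictionless, so there is no substantial obstacle to grind through. The only genuine step is recognizing that Lemma~\ref{lemma:inEq:CbAd} already delivers exactly the word $c^{-1}ba^{-1}d$, whose sign can then be read off independently from the case hypotheses via Fact~\ref{1.3LO}. This parallels the preceding proposition for Cases 1.c and 1.d, where a shared sign pattern was similarly combined with the standing assumption $b^{-1}a>1$; here that input is simply replaced by the general inequality of Lemma~\ref{lemma:inEq:CbAd}.
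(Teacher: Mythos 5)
Your proposal is correct and is essentially the paper's own argument: the paper derives the identity $(c^{-1}b)(a^{-1}d)=b^{-1}a$ directly from Lemma~\ref{lemma:eq16} and then contradicts the positivity of $b^{-1}a$ with the product-of-two-negatives observation, while you instead cite Lemma~\ref{lemma:inEq:CbAd}, whose proof is exactly that derivation, so the two routes rest on the same identity and the same contradiction. Routing through the already-established general lemma is a clean (arguably tidier) repackaging, but not a different proof.
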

\begin{proof} $\;$ Suppose that $G_n$ is left-orderable, and suppose (for contradiction) that $b^{-1}c>1$ and $a^{-1}d<1$, then:
\begin{align*}
(c^{-1}b)(a^{-1}d)=b^{-1}(bc^{-1}b)a^{-1}d=b^{-1}(bc^{-1}b)(a^{-1}da^{-1})a=b^{-1}a.
\end{align*}
Where the last equality follows from Lemma~\ref{lemma:eq16}. This shows that \Cases{1}{5}{} and \case{1}{7}{} are impossible since $b^{-1}a>1$ by assumption, but $c^{-1}b<1$ and $a^{-1}d<1$ in \Cases{1}{5}{} and \case{1}{7}{}.
\end{proof}

\begin{proposition} If $G_n$ is left-orderable, then \Case{1}{2}{} is impossible.
\end{proposition}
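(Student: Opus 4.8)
The plan is to argue by contradiction in the same spirit as Cases 3, 4, and 8: assuming $G_n$ is left-orderable with the signs of \Case{1}{2}{}, I would rewrite a single group relation (or a product of relations) as a product of factors that all carry the same sign, so that the relation forces a nontrivial positive element to equal $1$. First I would record the order information. In \Case{1}{2}{} we have $a,b,c,d>1$ together with $a^{-1}d>1$, $b^{-1}d>1$, and $b^{-1}c<1$ (equivalently $c^{-1}b>1$); combined with the standing assumption $b^{-1}a>1$, transitivity yields the total order $c<b<a<d$ on the generators. Consequently every ``ascending'' difference is positive -- $c^{-1}b$, $c^{-1}a=(c^{-1}b)(b^{-1}a)$, $c^{-1}d=(c^{-1}b)(b^{-1}d)$, $b^{-1}a$, $b^{-1}d$, $a^{-1}d$ are all $>1$ -- while each ``descending'' difference, in particular $d^{-1}c<1$, is negative. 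This already pins down the signs of the repeated bases appearing in the relations: by Proposition~\ref{proposition:pospowers} the factor $(b^{-1}a)^{10n}$ coming from relations 1 and 2 is positive, and the factor $(d^{-1}c)^{10n+3}$ coming from relations 3 and 4 is negative.

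I would then attempt the Case-8 template on the third relation, taken in the form $(bc^{-1})([c^{-1}d][d^{-2}c^{2}])^{10n+3}c^{-2}=1$: since the power factor is already sign-definite, a contradiction appears as soon as the remaining short factors can be shown to carry the matching sign. The crux -- and the step I expect to be the main obstacle -- is exactly the sign of these short ``tail'' words, such as $bc^{-1}$, $d^{-2}c^{2}$, and $c^{-1}bc^{-2}$. Unlike in Case 8, their signs are \emph{not} forced by the generator order alone: left-orderings are not conjugation-invariant, so $c<b$ says nothing directly about $bc^{-1}$, and $c<d$ does not control $d^{-2}c^{2}$. I therefore anticipate needing one or two preliminary sign lemmas (in the style of Lemmas~\ref{lemma:inEq8.2}--\ref{lemma:inEq8.6}) that pin these words down, drawing on the identities already available: Lemma~\ref{lemma:eq16}, Lemma~\ref{lemma:eq8}, and Lemma~\ref{lemma:eq9} together with Corollary~\ref{corollary:inEq5}, as well as the inequalities of Lemmas~\ref{lemma:inEq3}, \ref{lemma:inEq7}, and \ref{lemma:inEq:CbAd}.

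It is worth noting why \Case{1}{2}{} cannot be dispatched by the quick argument that killed \Cases{1}{5}{} and \case{1}{7}{}. That argument rests on the clean identity $(c^{-1}b)(a^{-1}d)=b^{-1}a$; in \Case{1}{2}{} both $c^{-1}b$ and $a^{-1}d$ are positive, so this identity merely asserts that a product of two positives is positive, which is consistent. This is precisely the reason \Case{1}{2}{} must be handled separately and must invoke the $n$-dependent relations rather than the order alone. Finally, once the relevant tail sign is fixed, I would assemble the chosen relation into a single monochromatic product equal to $1$ and read off the contradiction via Fact~\ref{1.3LO}. Because the successful grouping is located by the automated search of Section 1.2 rather than by inspection, I expect the decisive difficulty to lie in discovering which relation and which factorization give uniformly signed pieces; verifying such a grouping, once exhibited, should be a routine rearrangement using the lemmas above and Proposition~\ref{proposition:pospowers}.
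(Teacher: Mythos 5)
Your proposal stops short of a proof: everything after the (correct) sign bookkeeping is deferred to ``one or two preliminary sign lemmas'' and a factorization ``located by the automated search,'' and neither is ever produced. That deferred step is precisely the entire content of the argument, so the gap is genuine. Concretely, the template you propose to imitate from Case 8, namely $(bc^{-1})([c^{-1}d][d^{-2}c^{2}])^{10n+3}c^{-2}=1$, stalls in \Case{1}{2}{}: here $c^{-2}<1$ while $c^{-1}d=(c^{-1}b)(b^{-1}d)>1$, and the remaining factors $bc^{-1}$ and $d^{-2}c^{2}=ab^{-1}$ (by Lemma~\ref{lemma:eq8}) have signs that the data of \Case{1}{2}{} does not directly determine and that your outline never establishes. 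So the route you sketch halts exactly at the point you yourself flag as ``the main obstacle,'' and nothing in the proposal overcomes it.

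The missing idea is that one can sidestep the unknown signs entirely by conjugating the repeated block instead of trying to determine its sign. The paper takes the third relation in the form $c^{2}b^{-1}(dc^{-1})^{10n+2}d=1$ and inserts $bb^{-1}$ after the block of $10n+2$ copies of $dc^{-1}$, giving
\begin{align*}
c^{2}\bigl[b^{-1}(dc^{-1})b\bigr]^{10n+2}(b^{-1}d)&=1.
\end{align*}
Although $dc^{-1}$ itself has unknown sign (as you correctly observe, the order $c<d$ controls only $c^{-1}d$), its conjugate factors as $b^{-1}dc^{-1}b=(b^{-1}d)(c^{-1}b)$, and both of these are exactly the ``ascending differences'' your own table of signs provides: $b^{-1}d>1$ by hypothesis in \Case{1}{2}{} and $c^{-1}b>1$ since $b^{-1}c<1$ there. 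Together with $c^{2}>1$ and $b^{-1}d=(b^{-1}a)(a^{-1}d)>1$, the displayed identity is a product of positive elements equal to $1$, a contradiction by Fact~\ref{1.3LO}. This conjugation trick---choosing the conjugator $b$ so that the conjugated block decomposes into order-controlled words---is the one idea your plan needed; once it is stated, the verification is indeed the routine rearrangement you predicted, but without it there is no proof.
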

\begin{proof} $\;$ Suppose that $G_n$ is left-orderable. By the third group relation, we have:
\begin{align}
(c^{-1}d)^{10n+3}c^2b^{-1}c&=1\nonumber{}\\
\Rightarrow{}c^2b^{-1}c(c^{-1}d)^{10n+3}&=1\nonumber{}\\
\Rightarrow{}c^2b^{-1}(dc^{-1})^{10n+3}c&=1\label{eq3:2}\\
\Rightarrow{}c^2b^{-1}(dc^{-1})^{10n+2}d&=1\nonumber{}\\
\Rightarrow{}c^2[b^{-1}(dc^{-1})b]^{10n+2}(b^{-1}d)&=1\label{eq3:3}\\
\Rightarrow{}(c^2)([b^{-1}a][a^{-1}d][c^{-1}b])^{10n+2}(b^{-1}a)(a^{-1}d)&=1.\nonumber{}
\end{align}
This shows that \Case{1}{2}{} is impossible, since $a^{-1}d>1$, $b^{-1}d>1$, $b^{-1}c<1$, and $c>1$ in \Case{1}{2}{}.
\end{proof}

\begin{proposition} If $G_n$ is left-orderable, then \Case{1}{6}{} is impossible.
\end{proposition}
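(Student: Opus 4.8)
The plan is to reuse the identity (\ref{eq3:3}) that was extracted from the third group relation in the proof of \Case{1}{2}{}, namely
\[
c^{2}\,[b^{-1}(dc^{-1})b]^{10n+2}\,(b^{-1}d)=1,
\]
but to group its factors differently so as to exploit the signs that are available in \Case{1}{6}{}. First I would record those signs: in \Case{1}{6}{} we have $b^{-1}d>1$ and $b^{-1}c<1$, the latter giving $c^{-1}b>1$ by Fact~\ref{fact:inverses}, while $c>1$ from Case~1. Note that, compared with \Case{1}{2}{}, the sign of $a^{-1}d$ is reversed; the whole point of the argument will be that this sign is never used.

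The one nonroutine step is to rewrite the conjugate inside the bracket as a product of two factors whose signs we already control:
\[
b^{-1}(dc^{-1})b = b^{-1}dc^{-1}b = (b^{-1}d)(c^{-1}b).
\]
Substituting this into the identity yields
\[
c^{2}\,[(b^{-1}d)(c^{-1}b)]^{10n+2}\,(b^{-1}d)=1.
\]

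Finally I would read off the contradiction. Since $c>1$ we have $c^{2}>1$; since $b^{-1}d>1$ and $c^{-1}b>1$, their product $(b^{-1}d)(c^{-1}b)$ is positive by Fact~\ref{1.3LO}, hence so is its $(10n+2)$-th power; and $b^{-1}d>1$. Thus the left-hand side is a product of positive elements, so it is positive by Fact~\ref{1.3LO}, contradicting that it equals $1$. I do not expect a genuine obstacle here: the only idea is to choose the two-factor regrouping $(b^{-1}d)(c^{-1}b)$ in place of the three-factor regrouping $(b^{-1}a)(a^{-1}d)(c^{-1}b)$ used in \Case{1}{2}{}, which is precisely what lets the argument succeed using only $b^{-1}d>1$ and $c^{-1}b>1$ and without any information about the sign of $a^{-1}d$.
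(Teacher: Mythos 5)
Your proposal is correct and coincides with the paper's own proof: the paper likewise starts from (\ref{eq3:3}), regroups the conjugate as $b^{-1}(dc^{-1})b=(b^{-1}d)(c^{-1}b)$, and concludes that $c^2[(b^{-1}d)(c^{-1}b)]^{10n+2}(b^{-1}d)=1$ exhibits a product of positive elements equal to the identity, using only $c>1$, $b^{-1}d>1$, and $b^{-1}c<1$. Your observation that the sign of $a^{-1}d$ is never needed is exactly what distinguishes this case from \Case{1}{2}{} in the paper as well.
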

\begin{proof} $\;$ Assume that $G_n$ is left-orderable. By (\ref{eq3:3}) we have:
\begin{align*}
c^2[b^{-1}(dc^{-1})b]^{10n+2}(b^{-1}d)&=1\\
\Rightarrow{}c^2[(b^{-1}d)(c^{-1}b)]^{10n+2}(b^{-1}d)&=1.
\end{align*}
This shows that \Case{1}{6}{} is impossible, since $b^{-1}d>1$, $b^{-1}c<1$, and $c>1$ in \Case{1}{6}{}.
\end{proof}

\subsection{\Case{1}{1}{}}
\noindent{}We now show that if $G_n$ is left-orderable, then \Case{1}{1}{} is impossible. To accomplish this, we consider eight new sub-cases (see Table~\ref{table:case1.i.-}).

\begin{table}[ht]
\begin{center}
\begin{tabular}{l | l | l | l }
Case\hspace{10 pt} & $ca^{-1}$\hspace{10 pt} & $da^{-1}$\hspace{10 pt} & $cb^{-1}$\hspace{10 pt} \\\hline\hline
\case{1}{1}{1} & $+$ & $+$ & $+$ \\\hline
\case{1}{1}{2} & $+$ & $+$ & $-$ \\\hline
\case{1}{1}{3} & $+$ & $-$ & $+$ \\\hline
\case{1}{1}{4} & $+$ & $-$ & $-$ \\\hline
\case{1}{1}{5} & $-$ & $+$ & $+$ \\\hline
\case{1}{1}{6} & $-$ & $+$ & $-$ \\\hline
\case{1}{1}{7} & $-$ & $-$ & $+$ \\\hline
\case{1}{1}{8} & $-$ & $-$ & $-$ 
\end{tabular}
\end{center}
\caption{Eight sub-cases of \Case{1}{1}{}, considering the signs of $ca^{-1}$, $da^{-1}$, and $cb^{-1}$.}
\label{table:case1.i.-}
\end{table}

\noindent{}Since we are working in a sub-case of \Case{1}{1}{}, we also know $a>1$, $b>1$, $c>1$, $d>1$, and the following:
\begin{align} a^{-1}d>1\label{case1.i:inEq:Ad}\\
b^{-1}d>1\label{case1.i:inEq:Bd}\\
b^{-1}c>1
\end{align}

\noindent{}As before, Proposition~\ref{proposition:non-trivial} tells us that the cases shown in Table~\ref{table:case1.i.-} represent all possibilities if $G_n$ is left-orderable.

\subsubsection{\Case{1}{1}{2}-\case{1}{1}{7}}
\noindent{}We continue by disproving left-orderability in each case. We start by showing that if $G_n$ is left-orderable, then \Cases{1}{1}{2}-\case{1}{1}{7} are impossible.

\begin{proposition} If $G_n$ is left-orderable, then \Case{1}{1}{2} is impossible.
\end{proposition}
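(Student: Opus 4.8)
The plan is to reach a contradiction exactly as in the preceding sub-cases: I will exhibit a word that equals the identity in $G_n$ but that factors into elements each of which is positive in \Case{1}{1}{2}, so that Fact~\ref{1.3LO} forces a product of positives to equal $1$.

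First I would rearrange Lemma~\ref{lemma:eq16}, which states $ad^{-1}a=bc^{-1}b$. Right-multiplying by $a^{-1}$ gives $bc^{-1}ba^{-1}=ad^{-1}$, and then right-multiplying by $da^{-1}$ gives
\begin{align*}
bc^{-1}ba^{-1}da^{-1}=ad^{-1}da^{-1}=1.
\end{align*}
I will group this identity as $(bc^{-1})(ba^{-1})(da^{-1})=1$.

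Next I would collect the signs available in \Case{1}{1}{2}. Since $cb^{-1}<1$, Fact~\ref{fact:inverses} gives $bc^{-1}>1$; moreover $ca^{-1}>1$ and $da^{-1}>1$ by assumption. The one factor whose sign is not immediately on the list is $ba^{-1}$, and the key observation is that $ba^{-1}=(bc^{-1})(ca^{-1})$, a product of two positives, so $ba^{-1}>1$ by Fact~\ref{1.3LO}. With $bc^{-1}>1$, $ba^{-1}>1$, and $da^{-1}>1$ in hand, the identity $(bc^{-1})(ba^{-1})(da^{-1})=1$ exhibits the identity element as a product of positives, which is the desired contradiction.

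The routine part is the algebra rearranging Lemma~\ref{lemma:eq16}; the only genuine step is choosing the grouping $(bc^{-1})(ba^{-1})(da^{-1})$ and recovering the middle factor $ba^{-1}$ from the two sub-case positives $bc^{-1}$ and $ca^{-1}$, rather than expecting its sign to be supplied directly by the sub-case table. This is precisely the kind of factorization that the automated search described in the introduction is built to locate.
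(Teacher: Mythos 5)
Your proof is correct and takes essentially the same route as the paper: both arguments rest on Lemma~\ref{lemma:eq16} and exhibit the identity as a product of the positive elements $bc^{-1}$, $bc^{-1}$, $ca^{-1}$, $da^{-1}$ available in \Case{1}{1}{2}. Indeed, after expanding your middle factor $ba^{-1}=(bc^{-1})(ca^{-1})$, your word $(bc^{-1})(ba^{-1})(da^{-1})$ is just a cyclic permutation of the paper's product $(ca^{-1})(da^{-1})(bc^{-1})(bc^{-1})$.
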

\begin{proof} $\;$ Suppose that $G_n$ is left-orderable, and suppose (for contradiction) that $ca^{-1}>1$, $da^{-1}>1$ and $bc^{-1}>1$, then: 
\begin{align*}
(ca^{-1})(da^{-1})(bc^{-1})(bc^{-1})>1\\
\Rightarrow{}c(a^{-1}da^{-1})(bc^{-1}b)c^{-1}>1.
\end{align*}
But by Lemma~\ref{lemma:eq16}, we have:
\begin{align*}
c(a^{-1}da^{-1})(bc^{-1}b)c^{-1}=cc^{-1}=1,
\end{align*}
a contradiction.
\end{proof}

\begin{lemma} In \Case{1}{1}{}, $d^{-1}c>1$.\label{case1.1:Dc}
\end{lemma}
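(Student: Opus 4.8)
The plan is to read off the sign of $d^{-1}c$ directly from the fourth group relation
\[
d^{2}a^{-1}d(c^{-1}d)^{10n+3}=1,
\]
by showing that the prefix $d^{2}a^{-1}d$ is positive and then isolating the power of $c^{-1}d$ that remains.

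First I would rewrite the prefix as $d^{2}a^{-1}d=d^{2}(a^{-1}d)$. This regrouping is the crucial move: it exhibits the prefix as a product of the two factors $d^{2}$ and $a^{-1}d$, each of whose signs we control in \Case{1}{1}{}. Indeed, \Case{1}{1}{} is a sub-case of Case 1, so $d>1$, whence $d^{2}>1$ by Fact~\ref{1.3LO}; and $a^{-1}d>1$ is exactly the standing inequality (\ref{case1.i:inEq:Ad}). Hence $d^{2}a^{-1}d=d^{2}(a^{-1}d)>1$ as a product of positive elements.

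Next, since the entire left-hand side of the fourth relation equals $1$, the remaining factor $(c^{-1}d)^{10n+3}$ must equal $(d^{2}a^{-1}d)^{-1}$, which is negative by Fact~\ref{fact:inverses}. Because $10n+3>1$ for $n\geq 1$, Proposition~\ref{proposition:pospowers} guarantees that $c^{-1}d$ has the same sign as $(c^{-1}d)^{10n+3}$, so $c^{-1}d<1$, and therefore $d^{-1}c>1$ by Fact~\ref{fact:inverses}, as desired.

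I do not expect a genuine obstacle here; the argument is short and the only real choice is which regrouping of the prefix to use. The content is that $a^{-1}d$, rather than $d a^{-1}$, is the combination whose sign is already known in \Case{1}{1}{}, so writing $d^{2}a^{-1}d=d^{2}(a^{-1}d)$ is precisely what renders the prefix manifestly positive. It is worth noting that this step relies on the hypothesis $a^{-1}d>1$ specific to \Case{1}{1}{}: the same manipulation would not pin down the sign of $d^{-1}c$ in the sub-cases of Case 1 where $a^{-1}d<1$.
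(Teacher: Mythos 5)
Your proof is correct and is essentially identical to the paper's: both start from the fourth group relation, regroup the prefix as $(d^{2})(a^{-1}d)$ using $d>1$ from Case 1 and the standing inequality $a^{-1}d>1$ from \Case{1}{1}{}, and conclude $c^{-1}d<1$ via the sign of the power $(c^{-1}d)^{10n+3}$. The only difference is cosmetic: you invoke Proposition~\ref{proposition:pospowers} explicitly where the paper leaves that step implicit.
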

\begin{proof} By the fourth group relation, we have:
\begin{align}
d^{2}a^{-1}d(c^{-1}d)^{10n+3}=1\nonumber{}\\
\Rightarrow{}(d^{2})(a^{-1}d)(c^{-1}d)^{10n+3}=1.\label{case1:1:Dc}
\end{align}
Now, $d^{2}>1$ by Case 1, and $a^{-1}d>1$ by \Case{1}{1}{}. Therefore (\ref{case1:1:Dc}) shows that:
\begin{align*}
c^{-1}d&<1\\
\Rightarrow{}d^{-1}c&>1.\qedhere
\end{align*}
\end{proof}

\begin{proposition} If $G_n$ is left-orderable, then \Case{1}{1}{3} is impossible.
\end{proposition}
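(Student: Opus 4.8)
The plan is to reach a contradiction from the sign hypotheses of \Case{1}{1}{3}, namely $ca^{-1}>1$, $da^{-1}<1$ (equivalently $ad^{-1}>1$), and $cb^{-1}>1$, together with the standing assumptions of \Case{1}{1}{} that all four generators are positive and $a^{-1}d$, $b^{-1}d$, $b^{-1}c$ are positive. The target identity is the rearrangement of the first group relation recorded as (\ref{eq1:3}) in the proof of Lemma~\ref{lemma:eq16}, namely $(ba^{-1})^{10n}ad^{-1}a=1$. I aim to show that each factor on the left-hand side is positive, so that a product of positive elements is forced to equal the identity, which is impossible.

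First I would observe that $da^{-1}<1$ gives $ad^{-1}>1$, and since $a>1$ this yields $ad^{-1}a>1$ immediately. The only remaining sign I need is that of $ba^{-1}$, and the key step is to extract it from Lemma~\ref{lemma:eq16}. Writing that lemma as the identity $ad^{-1}ab^{-1}cb^{-1}=1$ and regrouping it as $(ad^{-1})(ab^{-1})(cb^{-1})=1$, the two outer factors $ad^{-1}$ and $cb^{-1}$ are positive by hypothesis, which forces $ab^{-1}<1$ and hence $ba^{-1}>1$.

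Finally I would combine these facts: since $ba^{-1}>1$ we get $(ba^{-1})^{10n}>1$ by Proposition~\ref{proposition:pospowers} (using $n\geq1$ so that the power is nontrivial), while $ad^{-1}a>1$ from the first step. Thus $(ba^{-1})^{10n}ad^{-1}a$ is a product of positive elements, contradicting (\ref{eq1:3}), which asserts that this product equals $1$. This rules out \Case{1}{1}{3}.

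I expect the main obstacle to be pinning down the sign of $ba^{-1}$ (equivalently $ab^{-1}$). Unlike the ``difference'' elements $a^{-1}d$, $b^{-1}c$, and so on, whose signs are supplied directly by the case hypotheses, the sign of $ab^{-1}$ is not determined by the signs of the generators alone: a left-order is only left-invariant, so the standing assumption $b^{-1}a>1$ does not transfer to $ab^{-1}$. The resolution is that Lemma~\ref{lemma:eq16}, via the grouping $(ad^{-1})(ab^{-1})(cb^{-1})$, pins down this sign exactly; recognizing that it is this lemma rather than a generator-sign argument that controls $ab^{-1}$ is the crux of the proof. One could also note $ab^{-1}\neq1$ since $a\neq b$ by Proposition~\ref{proposition:non-trivial}, although this is already automatic from the strict inequality.
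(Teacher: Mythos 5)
Your proof is correct, and it takes a genuinely different route from the paper's. The paper's own argument for \Case{1}{1}{3} stays inside the sub-case machinery: it first invokes Lemma~\ref{case1.1:Dc} (valid throughout \Case{1}{1}{}) to get $d^{-1}c>1$, then multiplies the five positive factors $(ad^{-1})(d^{-1}c)(cb^{-1})(ca^{-1})(ad^{-1})$ and recognizes the resulting word, via Lemma~\ref{lemma:eq8} (giving $d^{-2}c^{2}=ab^{-1}$) and Corollary~\ref{corollary:eq6} (giving $a^{2}b^{-2}=dc^{-1}$), as $(a^{2}b^{-2})(cd^{-1})=1$, a contradiction; in particular it uses all three case hypotheses, including $ca^{-1}>1$. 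You instead extract the sign of $ab^{-1}$ from Lemma~\ref{lemma:eq16}, regrouped as $(ad^{-1})(ab^{-1})(cb^{-1})=1$, and then contradict the first group relation in the form (\ref{eq1:3}); your treatment of the borderline possibility $ab^{-1}=1$ (by trichotomy and Fact~\ref{1.3LO}, or by Proposition~\ref{proposition:non-trivial}) is sound. This is essentially the mechanism the paper reserves for \Case{1}{1}{4}, transplanted to \Case{1}{1}{3}, with Lemma~\ref{lemma:eq16} supplying the sign of $ba^{-1}$ that \Case{1}{1}{4} gets directly from its hypotheses. Your version is leaner: it needs only $da^{-1}<1$, $cb^{-1}>1$, and $a>1$, never touching $ca^{-1}>1$ or the \Case{1}{1}{} inequalities, so the identical computation also eliminates \Case{1}{1}{7} (which the paper dispatches separately, by a different arrangement of Lemma~\ref{lemma:eq16}) and indeed any Case-1 configuration with $ad^{-1}>1$ and $cb^{-1}>1$. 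What the paper's route buys is a single self-contained chain of positivity with no intermediate sign deduction; what yours buys is economy of hypotheses and the merging of two of the paper's sub-cases into one argument.
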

\begin{proof} Suppose that $G_n$ is left-orderable, and suppose (for contradiction) that $ca^{-1}>1$, $da^{-1}<1$ and $cb^{-1}>1$. Then by Lemma \ref{case1.1:Dc}, $d^{-1}c>1$ and:
\begin{align}
(ad^{-1})(d^{-1}c)(cb^{-1})(ca^{-1})(ad^{-1})&>1\nonumber{}\\
\Rightarrow{}(a)(d^{-2}c^{2})(b^{-1})(cd^{-1})&>1\nonumber{}\\
\Rightarrow{}(a)(ab^{-1})(b^{-1})(cd^{-1})=(a^{2}b^{-2})(cd^{-1})&>1,\label{proposition:case1.i.3:contradiction}
\end{align}
where the last implication follows from Lemma~\ref{lemma:eq8}, which tells us that $d^{2}a=c^{2}b$, implying $d^{-2}c^{2}=ab^{-1}$. Now by Corollary~\ref{corollary:eq6} we know:
\begin{align*}
a^{2}b^{-2}&=dc^{-1}\\
\Rightarrow{}(a^{2}b^{-2})(cd^{-1})&=1.
\end{align*}
This contradicts (\ref{proposition:case1.i.3:contradiction}).
\end{proof}

\begin{proposition} If $G_n$ is left-orderable, then \Case{1}{1}{4} is impossible.
\end{proposition}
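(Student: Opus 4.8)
The plan is to assume $G_n$ is left-orderable with the signs prescribed by \Case{1}{1}{4} and then exhibit a product of positive elements that equals the identity, contradicting Fact~\ref{1.3LO}. Since \Case{1}{1}{4} is a sub-case of \Case{1}{1}{}, we already have $a,b,c,d>1$, and the defining signs in Table~\ref{table:case1.i.-} read $ca^{-1}>1$, $da^{-1}<1$, and $cb^{-1}<1$, which I would immediately rewrite as $ca^{-1}>1$, $ad^{-1}>1$, and $bc^{-1}>1$.

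The crux is to extract the sign of $ab^{-1}$ from the first group relation. Recall the rearrangement (\ref{eq1:3}) of relation 1, namely $(ba^{-1})^{10n}ad^{-1}a=1$. Because $ad^{-1}>1$ and $a>1$, the tail $ad^{-1}a=(ad^{-1})a$ is positive, so $(ba^{-1})^{10n}<1$; as $n\geq 1$, Proposition~\ref{proposition:pospowers} forces $ba^{-1}$ to share this sign, giving $ba^{-1}<1$, i.e. $ab^{-1}>1$. This is the step I expect to be the main obstacle: the two ``visible'' inequalities $bc^{-1}>1$ and $ca^{-1}>1$ do not by themselves close up, and it is the extra positive element $ab^{-1}$, available here precisely because $ad^{-1}>1$, that makes the positive cone inconsistent. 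This is also what distinguishes \Case{1}{1}{4} from \Case{1}{1}{2}, where $ad^{-1}<1$ leaves $ad^{-1}a$ of unknown sign, and from \Case{1}{1}{3}, where $bc^{-1}<1$ would spoil the final product.

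Finally I would form the product $(ab^{-1})(bc^{-1})(ca^{-1})$ of the three positive elements; the $b$'s and $c$'s cancel telescopically, leaving $aa^{-1}=1$. Thus a product of positive elements equals the identity, contradicting Fact~\ref{1.3LO}, so \Case{1}{1}{4} is impossible.
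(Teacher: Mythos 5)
Your proof is correct and takes essentially the same route as the paper: the paper likewise combines the first group relation (in the rearranged form $(ba^{-1})^{10n}ad^{-1}a=1$) with the positivity of $bc^{-1}$, $ca^{-1}$, $ad^{-1}$, and $a$, writing $[(bc^{-1})(ca^{-1})]^{10n}(ad^{-1})(a)=1$ as a single product of positive elements. Your only reorganization is to first extract $ab^{-1}>1$ via Proposition~\ref{proposition:pospowers} and then conclude with the telescoping product $(ab^{-1})(bc^{-1})(ca^{-1})=1$, which is the same contradiction read off in two steps instead of one.
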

\begin{proof} Suppose $G_n$ is left-orderable, and suppose (for contradiction) that $ca^{-1}>1$, $da^{-1}<1$ and $cb^{-1}<1$, then
\begin{align*}
[(bc^{-1})(ca^{-1})]^{10n-1}(bc^{-1})(ca^{-1})(ad^{-1})(a)&>1\\
\Rightarrow{}[(ba^{-1})]^{10n-1}(bd^{-1}a)&>1\\
\Rightarrow{}aa^{-1}(ba^{-1})^{10n-1}(bd^{-1}a)&>1\\
\Rightarrow{}aa^{-1}b(a^{-1}b)^{10n-2}a^{-1}bd^{-1}a&>1\\
\Rightarrow{}a(a^{-1}b)^{10n}d^{-1}a>1.\\
\end{align*}
This contradicts the first group relation, which says $(a^{-1}b)^{10n}d^{-1}a^{2}=1$ or equivalently $a(a^{-1}b)^{10n}d^{-1}a=1$.
\end{proof}

\begin{proposition} If $G_n$ is left-orderable, then \Case{1}{1}{5} is impossible.
\end{proposition}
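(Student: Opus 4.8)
The plan is to assume, for contradiction, that $G_n$ is left-orderable with the Case 1 signs $a,b,c,d>1$ together with the subcase signs of \Case{1}{1}{5}, and then to exhibit a word that equals the identity in $G_n$ yet factors entirely into elements forced to be positive in this subcase. Recall that in \Case{1}{1}{5} we have $ca^{-1}<1$ (equivalently $ac^{-1}>1$), $da^{-1}>1$, and $cb^{-1}>1$, in addition to the inequalities from Case 1 and from \Case{1}{1}{} recorded above.

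First I would upgrade the raw hypotheses into a single convenient inequality, $dc^{-1}>1$. This is immediate from Fact~\ref{1.3LO}: since $da^{-1}>1$ and $ac^{-1}>1$ are both positive, so is their product $dc^{-1}=(da^{-1})(ac^{-1})$.

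The key step is to reuse the identity (\ref{eq3:2}), namely $c^{2}b^{-1}(dc^{-1})^{10n+3}c=1$, which was already extracted from the third group relation via Lemma~\ref{lemma:eq7}. I would regroup its left-hand side as
\begin{align*}
c^{2}b^{-1}(dc^{-1})^{10n+3}c=(c)(cb^{-1})(dc^{-1})^{10n+3}(c).
\end{align*}
In \Case{1}{1}{5} every factor on the right is positive: $c>1$ by Case 1, $cb^{-1}>1$ by hypothesis, and $(dc^{-1})^{10n+3}>1$ by the inequality $dc^{-1}>1$ established above together with Proposition~\ref{proposition:pospowers}. Hence the product is positive by Fact~\ref{1.3LO}, contradicting its equality to $1$.

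The argument is therefore very short once the correct regrouping of (\ref{eq3:2}) is in hand; the only genuine obstacle is spotting that factorization among the cyclic rearrangements of the group relations and previously derived identities. This is exactly the combinatorial search performed by the semigroup-membership (finite-automaton) procedure of the Automated Proofs subsection, which tests whether some element of the identity set $I$ lies in the semigroup generated by the known-positive elements $\{c,\,cb^{-1},\,dc^{-1},\dots\}$ of this subcase.
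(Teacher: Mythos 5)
Your proof is correct and is essentially the paper's own argument: the paper likewise derives $dc^{-1}=(da^{-1})(ac^{-1})>1$ and contradicts the identity (\ref{eq3:2}), $c^{2}b^{-1}(dc^{-1})^{10n+3}c=1$, by exhibiting it as a product of the positive elements $c$, $cb^{-1}$, and powers of $dc^{-1}$. The only difference is direction of presentation (the paper assembles the product of positives and simplifies it to the left side of (\ref{eq3:2}), while you factor that side into positives), which is immaterial.
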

\begin{proof} Suppose that $G_n$ is left-orderable, and suppose (for contradiction) that $ca^{-1}<1$, $da^{-1}>1$ and $cb^{-1}>1$, then:
\begin{align*}
(c)(cb^{-1})[(da^{-1})(ac^{-1})]^{10n+2}(d)(c^{-1}c)&>1\\
\Rightarrow{}(c^{2}b^{-1})[(dc^{-1})]^{10n+2}(dc^{-1})(c)&>1\\
\Rightarrow{}c^{2}b^{-1}(dc^{-1})^{10n+3}c&>1.
\end{align*}
This contradicts (\ref{eq3:2}), which says that
\begin{align*}
c^{2}b^{-1}(dc^{-1})^{10n+3}c&=1\qedhere
\end{align*}
\end{proof}

\begin{proposition} If $G_n$ is left-orderable, then \Case{1}{1}{6} is impossible.
\end{proposition}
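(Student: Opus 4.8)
The plan is to mimic the structure of the preceding cases in this subsubsection: assume left-orderability together with the defining sign pattern of \Case{1}{1}{6} for contradiction, then exhibit a single element of $G_n$ that is forced to be positive by the assumed signs, yet is identically equal to the identity (or to an element already known to be negative) by one of the group relations. In \Case{1}{1}{6} the assumed signs are $ca^{-1}<1$, $da^{-1}>1$, and $cb^{-1}<1$, on top of the ambient \Case{1}{1}{} hypotheses $a,b,c,d>1$, the three inequalities (\ref{case1.i:inEq:Ad})--(\ref{case1.i:inEq:Bd}), and Lemma~\ref{case1.1:Dc} giving $d^{-1}c>1$.

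First I would catalog the positive quantities available in this case: from $ca^{-1}<1$ we get $ac^{-1}>1$; from $cb^{-1}<1$ we get $bc^{-1}>1$; and we retain $da^{-1}>1$, $a^{-1}d>1$, $b^{-1}c>1$, and $d^{-1}c>1$. The governing identity should again be (\ref{eq3:2}), namely $c^{2}b^{-1}(dc^{-1})^{10n+3}c=1$, which drove \Case{1}{1}{5}; the difference is that here $da^{-1}>1$ and $cb^{-1}<1$ have the opposite signs from \Case{1}{1}{5}, so the same bracketing will not immediately give a product of positives. I would therefore re-expand the repeated factor $(dc^{-1})^{10n+3}$ by inserting $a^{-1}a$ or $b^{-1}b$ to rewrite each copy $dc^{-1}$ (or pairs $(dc^{-1})(dc^{-1})$) in terms of the building blocks $da^{-1}$, $ac^{-1}$, $bc^{-1}$ that are now positive, exactly as was done in \Case{1}{1}{5} but with the regrouping adapted to the new signs. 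The goal is to arrive at a rearrangement of (\ref{eq3:2}) all of whose factors are positive under the \Case{1}{1}{6} hypotheses, contradicting that the product equals $1$.

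The main obstacle will be choosing the conjugation/insertion pattern so that every factor lands in the positive list: since $cb^{-1}<1$ but $bc^{-1}>1$, and since $c^{2}b^{-1}$ sits at the front, I expect to peel off one $c$ to pair $cb^{-1}$ against a compensating positive factor, and to break each $dc^{-1}$ as $(da^{-1})(ac^{-1})$ so that both pieces are positive. If a direct regrouping of (\ref{eq3:2}) resists, the fallback is to first prove an auxiliary sign lemma in \Case{1}{1}{} analogous to Lemma~\ref{case1.1:Dc} (for instance establishing the sign of $d^{-1}b$ or $c^{-2}d^{2}$ from the fourth relation and Lemma~\ref{lemma:eq8}), and then feed that lemma into the expansion so the bracketed blocks are uniformly positive. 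Once such a rewriting is in hand the contradiction is immediate, and the proposition closes with \qedhere inside the final display.
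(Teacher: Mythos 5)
Your plan has the right key decomposition---in \Case{1}{1}{6} both $da^{-1}$ and $ac^{-1}$ are positive, so $dc^{-1}=(da^{-1})(ac^{-1})>1$---but you commit it to the wrong identity, and that choice is fatal rather than a technicality. The obstruction you yourself flag, the leading factor $c^{2}b^{-1}$ with $cb^{-1}<1$, cannot be circumvented: the word in (\ref{eq3:2}) contains a single $b^{-1}$, and the only positive elements available in \Case{1}{1}{6} involving $b^{-1}$ are $b^{-1}d$ and $b^{-1}c$. Pairing that $b^{-1}$ with the $d$ to its right turns (\ref{eq3:2}) cyclically into $(b^{-1}d)(c^{-1}d)^{10n+2}c^{2}=1$, which yields no contradiction because $c^{-1}d<1$ in \Case{1}{1}{} by Lemma~\ref{case1.1:Dc}; pairing it with a $c$ to its left leaves the negative factor $cb^{-1}$; and no amount of inserting cancelling pairs repairs this, since none of the known positives in this case ends in $b^{-1}$, so nothing can absorb it. Your fallback lemmas do not rescue the plan either: the sign of $d^{-1}b$ is already determined and is negative (it is the inverse of $b^{-1}d>1$, inequality (\ref{case1.i:inEq:Bd})), while $c^{-2}d^{2}=ba^{-1}$ by Lemma~\ref{lemma:eq8} and there is no route to its positivity short of the genuine contradiction.

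The missing idea is to abandon the third relation entirely and use the fourth, which contains no $b$ at all, so the troublesome sign $cb^{-1}<1$ never enters. This is exactly what the paper does: the fourth relation rearranges to $(dc^{-1})^{10n+3}d^{3}a^{-1}=1$, equation (\ref{eq4:2}), and under the \Case{1}{1}{6} hypotheses the left-hand side equals $[(da^{-1})(ac^{-1})]^{10n+3}(d^{2})(da^{-1})$, a product of positives, using only $ca^{-1}<1$, $da^{-1}>1$, and $d>1$. That single display is the whole proof; once you reach for the fourth relation there is nothing left to feed back into (\ref{eq3:2}).
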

\begin{proof} Suppose $G_n$ is left-orderable, and suppose (for contradiction) that $ca^{-1}<1$ and $da^{-1}>1$, then
\begin{align}
[(da^{-1})(ac^{-1})]^{10n+3}(d^{2})(da^{-1})&>1\nonumber{}\\
\Rightarrow{}(dc^{-1})^{10n+3}d^{3}a^{-1}&>1,\label{proposition:case1.i.6:contradiction}
\end{align}
but by the fourth group relation, we have:
\begin{align}
d^{2}a^{-1}d(c^{-1}d)^{10n+3}&=1\nonumber{}\\
\Rightarrow{}d^{2}a^{-1}(dc^{-1})^{10n+3}d&=1\nonumber{}\\
\Rightarrow{}(dc^{-1})^{10n+3}d^{3}a^{-1}&=1,\label{eq4:2}
\end{align}
this contradicts (\ref{proposition:case1.i.6:contradiction}).
\end{proof}

\begin{proposition} If $G_n$ is left-orderable, then \Case{1}{1}{7} is impossible.
\end{proposition}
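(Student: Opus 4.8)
The plan is to assume $G_n$ is left-orderable with the signs of \Case{1}{1}{7}. Thus, in addition to the standing hypotheses of \Case{1}{1}{} that $a,b,c,d>1$ and $a^{-1}d,b^{-1}d,b^{-1}c>1$, we also have $ca^{-1}<1$, $da^{-1}<1$, and $cb^{-1}>1$. I would then exhibit a product of three group elements, each forced to be positive under these assumptions, that the relations of $G_n$ collapse to the identity. Since a product of positive elements is positive (Fact~\ref{1.3LO}) while $1$ is not positive, this is the desired contradiction.

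First I would record the one sign that is not handed to us directly, namely $ab^{-1}>1$. From $da^{-1}<1$ we obtain $ad^{-1}>1$, and since $a>1$ in Case 1, the product $ad^{-1}a=(ad^{-1})a$ is positive. Substituting this into equation~(\ref{eq1:3}), which reads $(ba^{-1})^{10n}ad^{-1}a=1$, forces $(ba^{-1})^{10n}=(ad^{-1}a)^{-1}<1$; because $10n>1$ for $n\geq1$, Proposition~\ref{proposition:pospowers} then yields $ba^{-1}<1$, that is, $ab^{-1}>1$.

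With $ab^{-1}>1$ in hand, the contradiction comes from Lemma~\ref{lemma:eq16}, which states $ad^{-1}a=bc^{-1}b$. This is equivalent to the identity $(ad^{-1})(ab^{-1})(cb^{-1})=1$ in $G_n$: expanding the left side gives $ad^{-1}ab^{-1}cb^{-1}$, and moving $b^{-1}cb^{-1}$ across recovers $ad^{-1}a=bc^{-1}b$. In \Case{1}{1}{7} each of the three factors is positive: $ad^{-1}>1$ because $da^{-1}<1$, $ab^{-1}>1$ by the previous paragraph, and $cb^{-1}>1$ by the case hypothesis. Hence the left-hand side is a product of positives and cannot equal $1$, the contradiction we sought.

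I expect the main obstacle to be recognizing the correct three-term factorization of Lemma~\ref{lemma:eq16}: among the bracketings of the word $ad^{-1}ab^{-1}cb^{-1}$, only $(ad^{-1})(ab^{-1})(cb^{-1})$ lines up with elements whose signs are controlled in this sub-case, and it becomes usable only after the auxiliary fact $ab^{-1}>1$ has been extracted from the first group relation. The other natural grouping $(ad^{-1}a)(b^{-1}cb^{-1})$ splits into one positive and one negative factor and so leads nowhere.
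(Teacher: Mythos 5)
Your proof is correct, but it is not the paper's argument. The paper also reduces \Case{1}{1}{7} to Lemma~\ref{lemma:eq16}, but it does so by conjugating that identity by $c$: it observes that $(cb^{-1})(cb^{-1})(ad^{-1})(ac^{-1})=c(b^{-1}cb^{-1})(ad^{-1}a)c^{-1}=cc^{-1}=1$, and all four factors are positive directly from the case hypotheses ($cb^{-1}>1$, $ad^{-1}>1$ from $da^{-1}<1$, and $ac^{-1}>1$ from $ca^{-1}<1$), so no auxiliary sign needs to be derived. You instead factor the Lemma~\ref{lemma:eq16} identity itself as $(ad^{-1})(ab^{-1})(cb^{-1})=1$, which requires the extra step of extracting $ab^{-1}>1$ from (\ref{eq1:3}) via Proposition~\ref{proposition:pospowers} (valid since $10n>1$ for $n\geq1$, the standing assumption in these sections). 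Your route is slightly longer, but it buys something the paper's does not: you never use $ca^{-1}<1$, so your argument simultaneously eliminates \Case{1}{1}{3}, which the paper disposes of separately by a different computation involving Corollary~\ref{corollary:eq6} and Lemma~\ref{lemma:eq8}. Your closing remark that $(ad^{-1})(ab^{-1})(cb^{-1})$ is the \emph{only} usable grouping is the one overstatement--the paper's conjugated four-factor grouping works as well--but this does not affect the validity of your proof.
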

\begin{proof} Suppose that $G_n$ is left-orderable, and suppose (for contradiction) that $ca^{-1}<1$, $da^{-1}<1$, and $cb^{-1}>1$, then: 
\begin{align*}
(cb^{-1})(cb^{-1})(ad^{-1})(ac^{-1})&>1\\
\Rightarrow{}(c)(b^{-1}cb^{-1})(ad^{-1}a)(c^{-1})&>1\\
\Rightarrow{}cc^{-1}=1&>1,
\end{align*}
where the last implication follows from Lemma~\ref{lemma:eq16}.
\end{proof}


\subsubsection{\protect\Case{1}{1}{1}}

\noindent{}We now show that if $G_n$ is left-orderable, then \Case{1}{1}{1} (see Table~\ref{table:case1.i.-}) is impossible.

\begin{lemma} In \Case{1}{1}{1}, $b^{-1}cd^{-1}b>1$.\label{case1.i.1:inEq:BcDb}
\end{lemma}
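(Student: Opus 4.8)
The plan is to establish the inequality $b^{-1}cd^{-1}b > 1$ in \Case{1}{1}{1} (where $a,b,c,d>1$, and additionally $a^{-1}d>1$, $b^{-1}d>1$, $b^{-1}c>1$, $ca^{-1}>1$, $da^{-1}>1$, and $cb^{-1}>1$) by combining the group relations with the sign information peculiar to this sub-case. Since $b>1$, the element $b^{-1}cd^{-1}b$ is a conjugate of $cd^{-1}$ by $b^{-1}$, so establishing $b^{-1}cd^{-1}b>1$ is not simply a matter of quoting \Case{1}{1}{} (conjugation does not preserve sign in a general left-orderable group). Instead I would look for an identity, derivable from the four relations, that isolates $b^{-1}cd^{-1}b$ as one factor in a product of elements whose signs are all known in \Case{1}{1}{1}.

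First I would assemble the inventory of positive elements available in \Case{1}{1}{1}: the four generators $a,b,c,d$, their products such as $ca^{-1}$, $da^{-1}$, $cb^{-1}$, $a^{-1}d$, $b^{-1}d$, $b^{-1}c$, and derived positives like $d^{-1}c>1$ from Lemma~\ref{case1.1:Dc}. I would also keep Lemma~\ref{lemma:eq16} ($ad^{-1}a=bc^{-1}b$), Corollary~\ref{corollary:eq6} ($a^2b^{-2}=dc^{-1}$), and Lemma~\ref{lemma:eq8} ($d^2a=c^2b$) at hand, since these are the algebraic substitutions that previous cases exploited. The natural strategy, mirroring the proofs of \Case{1}{1}{3} and \Case{1}{1}{4}, is to write a product of manifestly positive factors, then use one of these identities to collapse it into an expression containing $b^{-1}cd^{-1}b$ times other known-positive pieces; solving for $b^{-1}cd^{-1}b$ then forces it to be positive.

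The main obstacle I anticipate is finding the right grouping: the target $b^{-1}cd^{-1}b$ contains the troublesome subword $cd^{-1}$, whose sign is not directly assumed in \Case{1}{1}{1} (we have $d^{-1}c>1$, i.e. $cd^{-1}<1$, so naively $b^{-1}cd^{-1}b$ could go either way without further leverage). This means the inequality cannot come from a crude product-of-positives argument on the generators alone, and the identity used must genuinely interleave the conjugating $b^{\pm1}$ with a relation. I would therefore focus on the second or fourth relation, since those are the ones whose rearrangements (as in (\ref{eq3:3}) and (\ref{case1:1:Dc})) naturally produce conjugates of $dc^{-1}$ by $b^{-1}$; conjugating the fourth relation $d^2a^{-1}(dc^{-1})^{10n+3}d=1$ by $b^{-1}$ and peeling off the known-positive factors $b^{-1}d^2$-type terms should leave $\bigl(b^{-1}(dc^{-1})b\bigr)^{10n+3}$, and inverting gives powers of $b^{-1}cd^{-1}b$; by Proposition~\ref{proposition:pospowers} the sign of such a power matches the sign of $b^{-1}cd^{-1}b$ itself. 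The delicate point will be verifying that every remaining factor in the rearranged relation is known to be positive in \Case{1}{1}{1}, so that the only way the product can equal the identity is for $b^{-1}cd^{-1}b$ to be positive; I expect this bookkeeping, rather than any deep idea, to be where the real work lies.
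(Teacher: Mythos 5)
Your plan is correct and takes a genuinely different route from the paper's. To confirm the bookkeeping you deferred: multiplying the rearranged fourth relation $d^{2}a^{-1}(dc^{-1})^{10n+3}d=1$ on the left by $b^{-1}$ and on the right by $b$, then inserting $bb^{-1}$ around the power, gives
\begin{align*}
(b^{-1}d^{2}a^{-1}b)(b^{-1}dc^{-1}b)^{10n+3}(b^{-1}db)=1,
\end{align*}
and both outer factors are products of known positives, namely $b^{-1}d^{2}a^{-1}b=(b^{-1}d)(da^{-1})(b)$ and $b^{-1}db=(b^{-1}d)(b)$, using $b^{-1}d>1$ in \Case{1}{1}{} (see (\ref{case1.i:inEq:Bd})), $da^{-1}>1$ in \Case{1}{1}{1}, and $b>1$ in Case 1. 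Hence $(b^{-1}dc^{-1}b)^{10n+3}<1$, so $b^{-1}dc^{-1}b<1$ by Proposition~\ref{proposition:pospowers}, and inverting gives $b^{-1}cd^{-1}b>1$, as required. The paper argues instead from the third relation, used twice: it first derives $cd^{-1}>1$ inside \Case{1}{1}{1} from $(cd^{-1})^{10n+3}(bc^{-1})c^{-2}=1$ (valid because the sub-case hypothesis $cb^{-1}>1$ gives $bc^{-1}<1$), and then rewrites the third relation as $c(cb^{-1})(cd^{-1})b(b^{-1}dc^{-1}b)^{10n+3}(b^{-1}d)=1$, isolating the same power. The two proofs share their engine---make the large power of the conjugated $dc^{-1}$ the only possibly-negative factor---but yours is shorter, uses one relation once, and consumes only $da^{-1}>1$ from the \Case{1}{1}{1} hypotheses, so it in fact proves the lemma in every sub-case of \Case{1}{1}{} with $da^{-1}>1$ (\Cases{1}{1}{1}, \case{1}{1}{2}, \case{1}{1}{5}, and \case{1}{1}{6}), whereas the paper's proof is tied to $cb^{-1}>1$. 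Incidentally, the obstacle you flagged ($cd^{-1}<1$ throughout \Case{1}{1}{}, by Lemma~\ref{case1.1:Dc}) is genuine, and the paper's intermediate step produces the opposite inequality $cd^{-1}>1$ within \Case{1}{1}{1}; those two facts together already contradict each other, so this sub-case could have been closed on the spot---a shortcut neither you nor the paper takes. Finally, one small slip that costs you nothing: the rearrangement (\ref{eq3:3}) you point to comes from the third relation, not the second (the second relation contains no $dc^{-1}$ pair at all), but since you commit to the fourth relation, your argument is unaffected.
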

\begin{proof} By the third group relation, we have:
\begin{align}
(d^{-1}c)^{10n+3}c^{-1}bc^{-2}&=1\nonumber{}\\
\Rightarrow{}(cd^{-1})^{10n+3}bc^{-3}=(cd^{-1})^{10n+3}(bc^{-1})c^{-2}&=1\nonumber{}\\
\Rightarrow{}cd^{-1}&>1,\label{case1.i.1:inEq:cD}
\end{align}
where the last implication follows from the fact that $bc^{-1}<1$ and $c^{-1}<1$ in \Case{1}{1}{1}. Now by the third group relation, we have:
\begin{align}
(d^{-1}c)^{10n+3}c^{-1}bc^{-2}&=1\nonumber{}\\
\Rightarrow{}c^{2}b^{-1}c(c^{-1}d)^{10n+3}&=1\nonumber{}\\
\Rightarrow{}c^{2}b^{-1}cd^{-1}(dc^{-1})^{10n+3}d&=1\nonumber{}\\
\Rightarrow{}c(cb^{-1})(cd^{-1})b(b^{-1}dc^{-1}b)^{10n+3}(b^{-1}d)&=1.\label{lemmaBcDb:contradiction}
\end{align}
By (\ref{case1.i:inEq:Bd}) and (\ref{case1.i.1:inEq:cD}) it is easy to see that all expressions in parentheses in (\ref{lemmaBcDb:contradiction}) are positive except for $(b^{-1}dc^{-1}b)^{10n+3}$. This tells us that:
\begin{align*}
b^{-1}dc^{-1}b&<1\\
\Rightarrow{}b^{-1}cd^{-1}b&>1.\qedhere
\end{align*}
\end{proof}

\begin{lemma} In \Case{1}{1}{1}, $d^{-1}c^{-1}d^{2}>1$.\label{lemma:inEq:DCdd}
\end{lemma}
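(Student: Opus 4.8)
The plan is to exhibit $d^{-1}c^{-1}d^{2}$ directly as a product of three factors that are each already known to be positive in \Case{1}{1}{1}, so that positivity follows immediately from Fact~\ref{1.3LO}. The enabling move is to eliminate the square $d^{2}$ using Lemma~\ref{lemma:eq7}. Rearranging that lemma gives $d^{2}=c^{2}b^{-1}cd^{-1}a$, which is exactly the identity (\ref{eq7:2}) already recorded in the proof of Lemma~\ref{lemma:inEq7}. Substituting this into the target and cancelling the leading $c^{-1}c^{2}=c$ yields $d^{-1}c^{-1}d^{2}=d^{-1}cb^{-1}cd^{-1}a$.

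First I would record that reduced identity. The decisive step is then to insert $bb^{-1}$ immediately before the trailing $a$, so as to split off a copy of the block whose sign we already control: $d^{-1}cb^{-1}cd^{-1}a=(d^{-1}c)(b^{-1}cd^{-1}b)(b^{-1}a)$. From here I would read off the sign of each parenthesized factor: $d^{-1}c>1$ by Lemma~\ref{case1.1:Dc}, $b^{-1}cd^{-1}b>1$ by Lemma~\ref{case1.i.1:inEq:BcDb}, and $b^{-1}a>1$ by the standing assumption of the argument. As a product of positives the entire expression is positive, so $d^{-1}c^{-1}d^{2}>1$, as claimed.

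The main obstacle here is not any deep argument but committing to the correct bookkeeping, since several of the most natural manipulations stall. Using $d^{2}=c^{2}ba^{-1}$ from Lemma~\ref{lemma:eq8} instead reduces the target to $d^{-1}cba^{-1}=(d^{-1}c)(ba^{-1})$, but the sign of $ba^{-1}$ is simply not available in \Case{1}{1}{1} (it is a conjugate of the positive $b^{-1}a$, and conjugation need not preserve sign in a merely left-orderable group). Likewise, the naive grouping $(d^{-1}c)(b^{-1}c)(d^{-1}a)$ of the reduced expression leaves the stray negative factor $d^{-1}a<1$. What makes the proof work is precisely the pairing of the substitution $d^{2}=c^{2}b^{-1}cd^{-1}a$ with the insertion of $bb^{-1}$: together they surface exactly the conjugated block $b^{-1}cd^{-1}b$ whose positivity was established in Lemma~\ref{case1.i.1:inEq:BcDb}, after which everything collapses to inequalities already in hand.
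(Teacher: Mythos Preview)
Your proof is correct. It uses the same underlying ingredients as the paper's argument—the identity $d^{2}=c^{2}b^{-1}cd^{-1}a$ from (\ref{eq7:2}), together with $d^{-1}c>1$ (Lemma~\ref{case1.1:Dc}) and $b^{-1}cd^{-1}b>1$ (Lemma~\ref{case1.i.1:inEq:BcDb})—but packages them more directly.

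The paper instead invokes the general inequality $d^{-2}c^{2}b^{-1}cd^{-1}b<1$ of Lemma~\ref{lemma:inEq7}, factors it as $(d^{-2}cd)(d^{-1}c)(b^{-1}cd^{-1}b)$, and cancels the two known-positive tail factors to force $d^{-2}cd<1$. Since Lemma~\ref{lemma:inEq7} is itself equivalent (via (\ref{eq7:2})) to $a^{-1}b<1$, the paper is implicitly using exactly your third factor $b^{-1}a>1$; so the two arguments are algebraically the same decomposition read from opposite ends. Your version has the virtue of exhibiting $d^{-1}c^{-1}d^{2}$ explicitly as a product of three positives without the detour through Lemma~\ref{lemma:inEq7}, while the paper's version keeps the dependence on the general lemma visible.
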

\begin{proof} $\;$ By (\ref{eq4:2}), we have:
\begin{align*}
(d^{-1}a)d^{-2}(d^{-1}c)^{10n+3}=1.
\end{align*}
However, by (\ref{case1.i:inEq:Ad}), $d^{-1}a<1$ (as is $d^{-2}$), so (\ref{eq4:2}) shows that:
\begin{align}
d^{-1}c>1.\label{case1.i:inEq:Dc}
\end{align}
Now consider:
\begin{align}
(d^{-2}cd)(d^{-1}c)(b^{-1}cd^{-1}b)&=d^{-2}c^{2}b^{-1}cd^{-1}b<1,\label{lemma:DCdd:contradiction}
\end{align}
where the last inequality follows from Lemma~\ref{lemma:inEq7}. By (\ref{case1.i:inEq:Dc}) and Lemma~\ref{case1.i.1:inEq:BcDb} we see that $(d^{-1}c)(b^{-1}cd^{-1}b)>1$, therefore (\ref{lemma:DCdd:contradiction}) shows that:
\begin{align*}
d^{-2}cd&<1\\
\Rightarrow{}d^{-1}c^{-1}d^{2}&>1.\qedhere
\end{align*}
\end{proof}

\begin{corollary} In \Case{1}{1}{1}, $d^{-1}c^{-1}dc>1$ and $b^{-1}c^{-1}d^{2}>1$.\label{corollary:inEq:DCdd}
\end{corollary}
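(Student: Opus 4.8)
The plan is to derive both inequalities directly from the two lemmas immediately preceding this corollary, namely Lemma~\ref{lemma:inEq:DCdd}, which gives $d^{-1}c^{-1}d^{2}>1$, together with the inequalities already established in \Case{1}{1}{1}. The key observation is that each of the two target expressions differs from $d^{-1}c^{-1}d^{2}$ by multiplication (on the appropriate side) by a factor already known to be positive, so each follows from Fact~\ref{1.3LO} (products of positives are positive).

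For the first inequality, $d^{-1}c^{-1}dc>1$, I would write $d^{-1}c^{-1}dc=(d^{-1}c^{-1}d^{2})(d^{-1}c)$. The first factor is positive by Lemma~\ref{lemma:inEq:DCdd}, and the second factor $d^{-1}c$ is positive by (\ref{case1.i:inEq:Dc}), which was established inside the proof of Lemma~\ref{lemma:inEq:DCdd}. Hence the product is positive. For the second inequality, $b^{-1}c^{-1}d^{2}>1$, I would write $b^{-1}c^{-1}d^{2}=(b^{-1}d)(d^{-1}c^{-1}d^{2})$; here $b^{-1}d>1$ by (\ref{case1.i:inEq:Bd}) and the remaining factor is again positive by Lemma~\ref{lemma:inEq:DCdd}, so their product is positive.

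The only real care needed is bookkeeping: verifying that the inserted factors telescope correctly (that the middle terms cancel so the product really is the claimed word) and confirming that each auxiliary inequality I invoke has indeed been proven in the current sub-case rather than in a different branch. I expect the main (and essentially only) obstacle to be choosing the correct factorization — in particular deciding on which side to append $d^{-1}c$ or $b^{-1}d$ so that the cancellation produces exactly the target word; once the grouping is fixed, the conclusion is an immediate application of Fact~\ref{1.3LO}.
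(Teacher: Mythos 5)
Your proposal is correct and is essentially identical to the paper's own proof: both inequalities follow by writing $d^{-1}c^{-1}dc=(d^{-1}c^{-1}d^{2})(d^{-1}c)$ and $b^{-1}c^{-1}d^{2}=(b^{-1}d)(d^{-1}c^{-1}d^{2})$, then invoking Lemma~\ref{lemma:inEq:DCdd} together with (\ref{case1.i:inEq:Dc}) and (\ref{case1.i:inEq:Bd}). The factorizations telescope exactly as you claim, so nothing further is needed.
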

\begin{proof} $\;$ These are immediate consequences of Lemma~\ref{lemma:inEq:DCdd} since $d^{-1}c>1$ in \Case{1}{1}{} (by (\ref{case1.i:inEq:Dc})) and $b^{-1}d>1$ in \Case{1}{1}{} (by (\ref{case1.i:inEq:Bd})).
\end{proof}

\begin{proposition} If $G_n$ is left-orderable, then \Case{1}{1}{1} ($ca^{-1}>1$, $da^{-1}>1$, and $cb^{-1}>1$) is impossible.
\end{proposition}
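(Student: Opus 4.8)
The plan is to mimic the contradiction strategy used throughout the subcases of Case 1, namely to exhibit a product of elements that are all known to be positive but that multiplies out (via the group relations) to the identity or to something manifestly negative. In Case~\case{1}{1}{1} we have at our disposal a large stock of positivity facts: the ambient signs $a,b,c,d>1$ from Case~1; the relations (\ref{case1.i:inEq:Ad}), (\ref{case1.i:inEq:Bd}) for $a^{-1}d>1$ and $b^{-1}d>1$; the subcase hypotheses $ca^{-1}>1$, $da^{-1}>1$, $cb^{-1}>1$; and the derived inequalities $cd^{-1}>1$ from (\ref{case1.i.1:inEq:cD}), $d^{-1}c>1$ from (\ref{case1.i:inEq:Dc}), Lemma~\ref{case1.i.1:inEq:BcDb} giving $b^{-1}cd^{-1}b>1$, Lemma~\ref{lemma:inEq:DCdd} giving $d^{-1}c^{-1}d^{2}>1$, and Corollary~\ref{corollary:inEq:DCdd} giving $d^{-1}c^{-1}dc>1$ and $b^{-1}c^{-1}d^{2}>1$. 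These last four are the unusual ``length-four'' positive words that were engineered precisely so that they can be concatenated to cancel the $n$-dependent part of a relation.

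First I would take one of the four group relations---most likely the third or fourth, since these carry the $(\,\cdot\,)^{10n+3}$ block and the recently proved length-four positives are tailored to it---and rewrite it, as in (\ref{lemmaBcDb:contradiction}) and (\ref{case1:1:Dc}), by inserting $d^{-1}d$ and $c^{-1}c$ pairs so that the repeated block $(d^{-1}c)^{10n+3}$ or $(c^{-1}d)^{10n+3}$ is regrouped into a power of a conjugate such as $(b^{-1}dc^{-1}b)$ or $(d^{-1}c^{-1}dc)$-type factors. The aim is to massage the relation into the schematic form
\begin{align*}
(\text{positive prefix})\cdot[\,W\,]^{10n+3}\cdot(\text{positive suffix})=1,
\end{align*}
where $W$ is exactly one of the conjugated blocks that Corollary~\ref{corollary:inEq:DCdd} or Lemma~\ref{lemma:inEq:DCdd} certifies to be positive. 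Once every visible factor---prefix, suffix, and the repeated block $W$---is positive, Fact~\ref{1.3LO} forces their product to be positive, contradicting the fact that the product equals $1$.

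The main obstacle will be the bookkeeping: choosing which relation to start from and precisely which commutator-like insertions to make so that the exponent block collapses into the specific positive word $W$ whose positivity we have already established, while simultaneously arranging that the leftover prefix and suffix decompose into already-known positives (e.g.\ $c$, $cb^{-1}$, $cd^{-1}$, $b^{-1}d$, $d^{-1}c$, $d^{-1}c^{-1}d^{2}$). This is the same kind of algebraic tuning carried out in the proofs of Lemma~\ref{case1.i.1:inEq:BcDb} and Lemma~\ref{lemma:inEq:DCdd}, and I expect the decisive identity to be a rearrangement of the third group relation into a product of the conjugate blocks $b^{-1}c^{-1}d^{2}$ and $d^{-1}c^{-1}dc$ supplied by Corollary~\ref{corollary:inEq:DCdd}, bracketed by positive boundary terms. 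Since the program described in the automated-proofs subsection is designed to search exactly for such a positive factorization, I would also treat a machine-found word as the likeliest route, and then verify by hand that each bracketed factor matches a previously proven positive element, yielding the desired contradiction.
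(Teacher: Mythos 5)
Your plan coincides with the paper's own proof: the paper starts from the third group relation and, by inserting $bb^{-1}$ and $dd^{-1}$ pairs, regroups it into $(cb^{-1})(db)(b^{-1}c^{-1}d^{2})(d^{-1}c^{-1}d^{2})^{10n}(d^{-1}c^{-1}dc)=1$, in which every parenthesized factor is positive by the subcase hypothesis $cb^{-1}>1$, the Case 1 signs ($db>1$), Lemma~\ref{lemma:inEq:DCdd}, and Corollary~\ref{corollary:inEq:DCdd}, yielding the contradiction. The decisive identity you predicted---the third relation rewritten as positive boundary terms times the blocks $b^{-1}c^{-1}d^{2}$ and $d^{-1}c^{-1}dc$, with the $n$-dependent part conjugated into a power of $d^{-1}c^{-1}d^{2}$---is precisely the paper's equation, so your proposal is the paper's argument in outline, lacking only the routine bookkeeping you flagged.
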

\begin{proof} $\;$ Suppose $G_n$ is left-orderable, and suppose (for contradiction) that $ca^{-1}>1$, $da^{-1}>1$, and $cb^{-1}>1$. By the third group relation we have:
\begin{align}
(d^{-1}c)^{10n+3}c^{-1}bc^{-2}&=1\nonumber{}\\
\Rightarrow{}cb^{-1}c(c^{-1}d)^{10n+3}c&=1\nonumber{}\\
\Rightarrow{}c(b^{-1}d)(c^{-1}d)^{10n+2}c&=1\nonumber{}\\
\Rightarrow{}(cb^{-1})d(c^{-1}d)(c^{-1}d)^{10n}(c^{-1}d)c&=1\nonumber{}\\
\Rightarrow{}(cb^{-1})d(bb^{-1})c^{-1}d(dd^{-1})(c^{-1}d)^{10n}(dd^{-1})c^{-1}dc&=1\nonumber{}\\
\Rightarrow{}(cb^{-1})(db)(b^{-1}c^{-1}d^{2})d^{-1}(c^{-1}d)^{10n}d(d^{-1}c^{-1}dc)&=1\nonumber{}\\
\Rightarrow{}(cb^{-1})(db)(b^{-1}c^{-1}d^{2})(d^{-1}c^{-1}d^{2})^{10n}(d^{-1}c^{-1}dc)&=1.\label{proposition:1.i.1:contradiction}
\end{align}
Now $cb^{-1}>1$ by assumption in \Case{1}{1}{1}. Similarly, $d>1$ and $b>1$ by assumption in Case 1, thus $db>1$. The remaining terms in parentheses in (\ref{proposition:1.i.1:contradiction}) are positive by Lemma~\ref{lemma:inEq:DCdd} and Corollary~\ref{corollary:inEq:DCdd}. We have therefore reached a contradiction, proving that if $G_n$ is left-orderable, then \Case{1}{1}{1} is impossible.
\end{proof}


\subsubsection{\Case{1}{1}{8}}

\noindent{}Next we show that if $G_n$ is left-orderable, then \Case{1}{1}{8} is impossible. After Proposition~\ref{proposition:case1.i.8}, all sub-cases of \Case{1}{1}{} will have been eliminated, showing that \Case{1}{1}{} is impossible if $G_n$ is left-orderable.

\begin{lemma} In \Case{1}{1}{8}, $ab^{-1}>1$.
\label{eq8iL}
\end{lemma}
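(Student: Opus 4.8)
The plan is to read off the sign of $ab^{-1}$ from the rearranged first group relation (\ref{eq1:3}), namely $(ba^{-1})^{10n}ad^{-1}a=1$, which was produced in the proof of Lemma~\ref{lemma:eq16} and is a valid identity throughout $G_n$. The conceptual point worth flagging is that $ab^{-1}$ is a \emph{right} quotient, so its sign is not determined by comparing $a$ and $b$ in the left-invariant order; the sign data of \Case{1}{1}{8} alone will not settle it. The identity (\ref{eq1:3}) is exactly the right tool, because it isolates a power of $ba^{-1}$ against a short tail $ad^{-1}a$ whose factors are controlled by this sub-case.

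First I would collect the signs available here: Case~1 supplies $a,b,c,d>1$, and the defining hypothesis $da^{-1}<1$ of \Case{1}{1}{8} gives $ad^{-1}>1$. Hence $ad^{-1}a=(ad^{-1})a$ is a product of positive elements, and so $ad^{-1}a>1$ by Fact~\ref{1.3LO}. Next I would rewrite (\ref{eq1:3}) as $(ba^{-1})^{10n}=(ad^{-1}a)^{-1}$; since the right-hand side is the inverse of a positive element it is negative, so $(ba^{-1})^{10n}<1$. Because we are in the range $n\geq1$, the exponent $10n$ is greater than $1$, and the generators are distinct by Proposition~\ref{proposition:non-trivial}, so $ba^{-1}\neq1$. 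Proposition~\ref{proposition:pospowers} then transfers the sign of the power to $ba^{-1}$ itself, yielding $ba^{-1}<1$, i.e.\ $ab^{-1}>1$, as desired.

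I do not anticipate a real obstacle: the argument is a single relation manipulation followed by the power proposition. The only care needed is the one noted above, namely resisting the temptation to deduce the sign of the right quotient $ab^{-1}$ directly from the left-order comparisons, which is precisely why I route through (\ref{eq1:3}); and one must remember that the applicability of Proposition~\ref{proposition:pospowers} and the positivity of $(ba^{-1})^{10n}$ both rely on $n\geq1$, consistent with the standing assumption of this part of the paper.
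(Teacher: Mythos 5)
Your proof is correct, but it routes through a different relation than the paper's. The paper's own proof uses the \emph{second} group relation, conjugated into the form $(b^{-1})(cb^{-1})(ab^{-1})^{10n}=1$; since $b^{-1}<1$ in Case 1 and $cb^{-1}<1$ is a hypothesis of \Case{1}{1}{8}, the factor $(ab^{-1})^{10n}$ cannot also be negative, whence $ab^{-1}>1$. You instead use the \emph{first} group relation in the form (\ref{eq1:3}), $(ba^{-1})^{10n}ad^{-1}a=1$, together with the other \Case{1}{1}{8} hypothesis $da^{-1}<1$ (equivalently $ad^{-1}>1$) and $a>1$, concluding $(ba^{-1})^{10n}=(ad^{-1}a)^{-1}<1$ and then $ba^{-1}<1$ via Proposition~\ref{proposition:pospowers}. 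The underlying technique is the same in both cases --- isolate a $10n$-th power of the unknown quotient against factors of known sign in a cyclically permuted relation --- but the inputs genuinely differ: the paper consumes the hypothesis $cb^{-1}<1$, while you consume $da^{-1}<1$, and your version leans explicitly on Proposition~\ref{proposition:pospowers} (valid here since $10n\geq10$ for $n\geq1$) where the paper's bookkeeping needs only Fact~\ref{1.3LO}. Neither difference costs anything; the two proofs are equally short and equally elementary. One small redundancy in yours: invoking Proposition~\ref{proposition:non-trivial} to get $ba^{-1}\neq1$ is unnecessary, since $(ba^{-1})^{10n}<1$ already rules out $ba^{-1}=1$.
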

\begin{proof} By the second group relation, we have:
\begin{align*}
b^{-2}c(b^{-1}a)^{10n}&=1\\
\Rightarrow{}(b^{-1})(cb^{-1})(ab^{-1})^{10n}&=1\\
\Rightarrow{}ab^{-1}&>1,
\end{align*}
where the last implication follows from $b^{-1}<1$ (in Case 1), and $cb^{-1}<1$ (in \Case{1}{1}{8}).
\end{proof}

\begin{proposition} If $G_n$ is left-orderable, then \Case{1}{1}{8} ($ca^{-1}<1$, $da^{-1}<1$, and $cb^{-1}<1$) is impossible.\label{proposition:case1.i.8}
\end{proposition}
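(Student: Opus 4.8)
The plan is to reach a contradiction, mirroring the structure already used for Cases~\case{1}{1}{2}--\case{1}{1}{7} and \case{1}{1}{1}: I will assemble a product of elements that are each provably positive in this sub-case and then recognize that product as an identity of $G_n$ (or as equal to $1$ by the general lemmas), yielding $1>1$. In \Case{1}{1}{8} the defining inequalities are $ca^{-1}<1$, $da^{-1}<1$, and $cb^{-1}<1$, equivalently $ac^{-1}>1$, $ad^{-1}>1$, and $bc^{-1}>1$; together with the ambient Case~1 positivity $a,b,c,d>1$, the \Case{1}{1}{} inequalities $a^{-1}d>1$, $b^{-1}d>1$, $b^{-1}c>1$, and the freshly established $ab^{-1}>1$ from Lemma~\ref{eq8iL}. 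The immediate consequences I expect to use are $d^{-1}c>1$ (established as (\ref{case1.i:inEq:Dc}) and valid throughout \Case{1}{1}{}) and $cd^{-1}>1$ (established as (\ref{case1.i.1:inEq:cD})).

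First I would look to Lemma~\ref{lemma:eq9}, which gives $c^{-3}d^{3}=b^{-1}a$, and to Lemma~\ref{lemma:eq8}, which gives $d^{2}a=c^{2}b$ and hence $d^{-2}c^{2}=ab^{-1}$. Since $ab^{-1}>1$ by Lemma~\ref{eq8iL}, this already yields $d^{-2}c^{2}>1$ in \Case{1}{1}{8}, just as in Case~8 (Lemma~\ref{lemma:inEq8.6}). The natural target is again the third group relation, manipulated as in (\ref{proposition:case8:contradiction}) into the form
\begin{align*}
(bc^{-1})\bigl([c^{-1}d][d^{-2}c^{2}]\bigr)^{10n+3}(c^{-2})=1,
\end{align*}
but here $c^{-1}d<1$ (since $d^{-1}c>1$), so that factorization does not immediately close the argument and I must regroup the relation differently. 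Instead I would aim to express the relevant identity as a product in which every parenthesized block is a positive element of the sub-case—candidate building blocks being $ac^{-1}$, $bc^{-1}$, $ab^{-1}$, $b^{-1}c$, $b^{-1}d$, $d^{-1}c$, $cd^{-1}$, and $d^{-2}c^{2}$—and then invoke one of the four group relations or Lemmas~\ref{lemma:eq5}--\ref{lemma:eq9} to equate the whole product to $1$.

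Concretely, the cleanest route is likely through Lemma~\ref{lemma:eq9} rewritten as $d^{3}a^{-1}bc^{-3}=1$ (cf. the derivation preceding (\ref{eq3:33})), or through Lemma~\ref{lemma:eq8} in the form $ab^{-1}c^{-2}d^{2}=1$ after inserting the established positive pieces; I would conjugate and insert canceling pairs $c^{-1}c$, $d^{-1}d$, $b^{-1}b$ to break the word into the positive blocks listed above, exactly as the $c^{-2}$-splitting trick was used in (\ref{lemma:inEq4.3:contradiction}) and (\ref{proposition:case8:contradiction}). The main obstacle is combinatorial rather than conceptual: because $c^{-1}d<1$ in this sub-case (the reverse of what held in Case~8), the Case~8 factorization fails, so the crux is to find the correct grouping—most plausibly one feeding the relation through $d^{-2}c^{2}>1$, $cd^{-1}>1$, and $bc^{-1}>1$ together with the $n$-independent consequences of Lemmas~\ref{lemma:eq8} and~\ref{lemma:eq9}—that renders every factor positive. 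Once such a grouping is found, the contradiction is immediate by Fact~\ref{1.3LO}, since a product of positive elements cannot equal the identity.
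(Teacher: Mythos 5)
Your proposal stalls exactly where the work is: you correctly assemble the inventory of signs available in \Case{1}{1}{8} ($ac^{-1}$, $ad^{-1}$, $bc^{-1}$, $ab^{-1}$, $d^{-1}c$, and $d^{-2}c^{2}=ab^{-1}$ all positive), but you never exhibit a factorization that produces the contradiction --- you explicitly defer ``finding the correct grouping,'' and that grouping is the entire content of the proof, so as written this is a plan rather than a proof. Moreover, one of the two inequalities you single out as the likely engine, $cd^{-1}>1$, is not available in this sub-case: (\ref{case1.i.1:inEq:cD}) is derived from $(cd^{-1})^{10n+3}(bc^{-1})c^{-2}=1$ using the hypothesis $bc^{-1}<1$ of \Case{1}{1}{1}, whereas in \Case{1}{1}{8} you have $cb^{-1}<1$, i.e.\ $bc^{-1}>1$, so that derivation collapses (and since $c^{-2}<1$ in Case 1, the sign of $(bc^{-1})c^{-2}$ is undetermined). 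Any grouping ``fed through'' $cd^{-1}>1$ is therefore unjustified.

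The paper's actual argument needs none of $cd^{-1}$, $d^{-1}c$, or $d^{-2}c^{2}$; instead it pins down the sign of the single element $(a^{-1}c)(d^{-1}a)$ in two incompatible ways. From Corollary~\ref{corollary:eq6}, the identity $a^{2}b^{-2}=dc^{-1}$ rearranges to $(b^{-1}a)(a^{-1}c)(d^{-1}a)(ab^{-1})=1$; since $b^{-1}a>1$ in general and $ab^{-1}>1$ by Lemma~\ref{eq8iL}, this forces $(a^{-1}c)(d^{-1}a)<1$. On the other hand, (\ref{eq3:2}) rewrites as $(c^{2})(b^{-1}a)(a^{-1}dc^{-1}a)^{10n+2}(a^{-1}d)=1$; since $c^{2}>1$ in Case 1, $b^{-1}a>1$, and $a^{-1}d>1$ in \Case{1}{1}{}, this forces $a^{-1}dc^{-1}a<1$, i.e.\ $(a^{-1}c)(d^{-1}a)>1$, a contradiction. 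Note the shape of this argument: rather than one monolithic product of positives equal to the identity (the pattern you were searching for, which the paper uses in Case 8 and \Cases{1}{1}{1} and \case{1}{8}{}), it is two applications of ``all other factors positive, hence the remaining factor negative'' to the same element with opposite orientations. That two-sided sign-pinning is the missing idea, and it sidesteps precisely the combinatorial obstacle you identified.
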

\begin{proof} $\;$Suppose $G_n$ is left-orderable and suppose (for contradiction), that $ca^{-1}<1$, $da^{-1}<1$, and $cb^{-1}<1$. By Corollary~\ref{corollary:eq6}, we have:
\begin{align}
a^{2}b^{-2}&=dc^{-1}\nonumber{}\\
\Rightarrow{}(b^{-1})(cd^{-1})(a^{2}b^{-1})&=1\nonumber{}\\
\Rightarrow{}(b^{-1}a)(a^{-1}c)(d^{-1}a)(ab^{-1})&=1\nonumber{}\\
\Rightarrow{}(a^{-1}c)(d^{-1}a)&<1,\label{eqproposition8I}
\end{align}
where the last implication follows from the general assumption $(b^{-1}a)>1$, and since $ab^{-1}>1$ by Lemma~\ref{eq8iL}. Nevertheless, by (\ref{eq3:2}):
\begin{align*}
(c^{2}b^{-1})(dc^{-1})^{10n+3}(c)&=1\\
\Rightarrow{}(c^{2})(b^{-1}a)(a^{-1}dc^{-1}a)^{10n+2}(a^{-1}d)(c^{-1}c)&=1\\
\Rightarrow{}(a^{-1}d)(c^{-1}a)&<1,
\end{align*}
where the last implication follows from $b^{-1}a>1$, $c^{2}>1$ (in Case 1), and $a^{-1}d>1$ (in \Case{1}{1}{}), i.e. $(a^{-1}c)(d^{-1}a)>1$, which contradicts (\ref{eqproposition8I}). Therefore if $G_n$ is left-orderable \Case{1}{1}{8} is impossible.
\end{proof}


\subsection{\Case{1}{8}{}}

\noindent{}We will now show that if $G_n$ is left-orderable, then \Case{1}{8}{} ($d^{-1}a>1$, $d^{-1}b>1$, $c^{-1}b>1$)is impossible.

\begin{lemma} In \Case{1}{8}{}, $c^{-1}d > 1$.
\label{lemma:inEq:Cd}
\end{lemma}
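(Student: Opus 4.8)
The plan is to determine the sign of $d^{-1}c$ directly, since $c^{-1}d>1$ is its negation. Rearranging the third group relation via Lemma~\ref{lemma:eq7} gives $(d^{-1}c)^{10n+3}=c^{2}b^{-1}c$, so by Proposition~\ref{proposition:pospowers} it suffices to show $c^{2}b^{-1}c<1$ (equivalently $c^{-1}bc^{-2}>1$). Unlike the analogous Lemma~\ref{case1.1:Dc} in \Case{1}{1}{}, where the prefix $d^{2}a^{-1}d$ was already a product of known positives, here the remainder $c^{2}b^{-1}c$ does not have its sign fixed by any single grouping: in \Case{1}{8}{} we have $c>1$ and $b^{-1}c<1$, so writing $c^{2}b^{-1}c=c^{2}(b^{-1}c)$ leaves a positive times a negative.

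First I would record the elements that are known to be positive in \Case{1}{8}{}: the generators $a,b,c,d$ (all positive in Case~1), the quantity $b^{-1}a$ (our standing assumption), the three defining inequalities $d^{-1}a>1$, $d^{-1}b>1$, $c^{-1}b>1$, and the immediate consequence $c^{-1}a=(c^{-1}b)(b^{-1}a)>1$. The strategy, following the computational method of the Automated Proofs subsection, is then to exhibit a single group identity which can be written as a product of these positive elements together with a clean power of $d^{-1}c$. Since a product of positive elements is positive (Fact~\ref{1.3LO}) and hence cannot equal the identity, the factor $(d^{-1}c)^{10n+3}$ must be negative; thus $d^{-1}c<1$ by Proposition~\ref{proposition:pospowers}, i.e. $c^{-1}d>1$.

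To manufacture such an identity I would begin from the third (or fourth) group relation, keep the repeated block as a clean power $(d^{-1}c)^{10n+3}$, and expand the remaining word by inserting canceling pairs $cc^{-1}$ and $dd^{-1}$ and by substituting using Lemmas~\ref{lemma:eq5},~\ref{lemma:eq8}, and~\ref{lemma:eq9}, so that each leftover factor becomes a conjugate lying in the positive set above. This interleaving is exactly the maneuver already used in the expansion of Proposition~\ref{proposition:case8} and in the \Case{1}{1}{1} argument.

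The hard part will be finding the correct interleaving, because the sign of $d^{-1}c$ is genuinely not forced by any one relation. For instance, substituting $b=ad^{-3}c^{3}$ (from Lemma~\ref{lemma:eq9}) turns the remainder into $c^{-1}bc^{-2}=(c^{-1}a)(d^{-3}c)$, which reintroduces $d^{-1}c$ and so leads in a circle rather than to a definite sign. One must therefore combine more than one relation and locate a longer word in which every factor except the $d^{-1}c$-power is one of the listed positives; this search is precisely what the program described in the introduction performs, and producing the explicit word is the crux of the proof.
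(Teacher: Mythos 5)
You have correctly identified the paper's general method --- produce a group identity that factors as a product of elements already known to be positive times a clean power of $d^{-1}c$, so that Fact~\ref{1.3LO} forces that power, and hence $d^{-1}c$ itself, to be negative --- but your proposal stops exactly at the step that constitutes the proof: the explicit identity is never produced, and you say yourself that producing it is ``the crux.'' That is a genuine gap, not a presentational one, and the two concrete choices you do make point away from the identity that works. First, your inventory of known positives ($a$, $b$, $c$, $d$, $b^{-1}a$, $d^{-1}a$, $d^{-1}b$, $c^{-1}b$, $c^{-1}a$) omits $c^{-1}ba^{-1}d>1$, which holds in every case by Lemma~\ref{lemma:inEq:CbAd} (a consequence of Lemma~\ref{lemma:eq16}, proved in Section~\ref{section:generalLemmas}); this element is precisely the hinge of the paper's argument, and without it the search you describe will not close up. Second, you propose to start from the third or fourth relation and keep $(d^{-1}c)^{10n+3}$ intact, which leaves you needing to decompose $c^{-1}bc^{-2}$ into positives; as you yourself observed, the available substitutions (Lemmas~\ref{lemma:eq5}, \ref{lemma:eq8}, \ref{lemma:eq9}) just reintroduce $d^{-1}c$ there, and nothing in your positive set absorbs the two extra factors of $c^{-1}$.

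The paper's proof instead starts from the \emph{first} group relation, cyclically rewritten as $(ba^{-1})^{10n-1}bd^{-1}a=1$ (this is (\ref{eq3:4})), splits off one factor of $ba^{-1}$, and inserts cancelling pairs $cc^{-1}$ and $dd^{-1}$ so as to regroup the whole word as
\begin{align*}
c\,\bigl([c^{-1}ba^{-1}d][d^{-1}c]\bigr)^{10n-2}\,(c^{-1}ba^{-1}d)(d^{-1}b)(d^{-1}a)=1.
\end{align*}
In \Case{1}{8}{} we have $c>1$, $d^{-1}b>1$, $d^{-1}a>1$, and $c^{-1}ba^{-1}d>1$ by Lemma~\ref{lemma:inEq:CbAd}; moreover $d^{-1}c\neq1$ by Proposition~\ref{proposition:non-trivial}. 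So if $d^{-1}c$ were positive, the displayed left-hand side would be a product of positives equal to the identity, contradicting Fact~\ref{1.3LO}; hence $d^{-1}c<1$, i.e.\ $c^{-1}d>1$. Note also that the exponent $10n-2$ is a positive integer only because $n\geq1$ --- one of the places where the general argument genuinely fails for $n=0$. To turn your proposal into a proof you would need to add Lemma~\ref{lemma:inEq:CbAd} to your positive set and run your search on the first relation rather than the third or fourth.
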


\begin{proof} Starting from the first group relation, we have:
\begin{align}
(a^{-1}b)^{10n}d^{-1}a^{2}&=1\nonumber{}\\
\Rightarrow{}a(a^{-1}b)^{10n}d^{-1}a&=1\nonumber{}\\
\Rightarrow{}(ba^{-1})^{10n-1}bd^{-1}a&=1\label{eq3:4}\\
\Rightarrow{}(ba^{-1})^{10n-2}ba^{-1}bd^{-1}a&=1\nonumber{}\\
\Rightarrow{}(cc^{-1})(ba^{-1}(dd^{-1}))^{10n-2}(cc^{-1})ba^{-1}(dd^{-1})bd^{-1}a&=1\nonumber{}\\
\Rightarrow{}c([c^{-1}ba^{-1}d][d^{-1}c])^{10n-2}(c^{-1}ba^{-1}d)(d^{-1}b)(d^{-1}a)&=1,\label{lemma:inEq:Cd:contradiction}
\end{align}
but $d^{-1}b>1$, $d^{-1}a>1$, and $c>1$ in \Case{1}{8}{}. Further, we know by Lemma~\ref{lemma:inEq:CbAd} that $c^{-1}ba^{-1}d>1$. Therefore, (\ref{lemma:inEq:Cd:contradiction}) shows that:
\begin{align*}
d^{-1}c&<1\\
\Rightarrow{}c^{-1}d&>1.\qedhere
\end{align*}
\end{proof}

\begin{lemma} In \Case{1}{8}{}, $ab^{-1} > 1$.
\label{lemma:inEq:aB}
\end{lemma}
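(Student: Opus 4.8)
The plan is to extract $ab^{-1}$ directly from the second group relation, exactly as in the proofs of Lemma~\ref{lemma:inEq8.2} and Lemma~\ref{eq8iL}. Starting from $b^{-2}c(b^{-1}a)^{10n}=1$, I would move one leading $b^{-1}$ to the back by a cyclic permutation and then use the regrouping identity $(b^{-1}a)^{10n}b^{-1}=b^{-1}(ab^{-1})^{10n}$ to reorganize the word, obtaining
\begin{align*}
b^{-2}c(b^{-1}a)^{10n}=1 \quad\Longrightarrow\quad (b^{-1})(cb^{-1})(ab^{-1})^{10n}=1.
\end{align*}
This concentrates all of the $n$-dependence into the single power $(ab^{-1})^{10n}$, whose sign is controlled by $ab^{-1}$ through Proposition~\ref{proposition:pospowers}.

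Next I would read off the signs of the two remaining factors in \Case{1}{8}{}. Since $b>1$ in Case~1, we have $b^{-1}<1$; and since $c^{-1}b>1$ in \Case{1}{8}{}, we have $c<b$, i.e. $cb^{-1}<1$. Hence $(b^{-1})(cb^{-1})$ is a product of two negative elements and is therefore negative by Fact~\ref{1.3LO}. For the full product to equal the identity, $(ab^{-1})^{10n}$ must be positive; because $a\neq b$ by Proposition~\ref{proposition:non-trivial} and the group is torsion-free (Fact~\ref{fact:torsion}), this power is nontrivial, so $(ab^{-1})^{10n}>1$, and Proposition~\ref{proposition:pospowers} yields $ab^{-1}>1$. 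Note that this argument relies on $n\geq1$, which holds throughout the $n>0$ analysis.

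The only genuinely delicate step is the cyclic regrouping $(b^{-1}a)^{10n}b^{-1}=b^{-1}(ab^{-1})^{10n}$; once that is in hand the sign bookkeeping is immediate. It is worth observing that this proof does not need the freshly established inequality $c^{-1}d>1$ of Lemma~\ref{lemma:inEq:Cd}: the defining inequality $c^{-1}b>1$ of \Case{1}{8}{} already forces $cb^{-1}<1$, which together with $b^{-1}<1$ is all that the second relation requires.
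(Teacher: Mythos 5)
There is a genuine gap, and it sits exactly where you flagged the argument as routine: the claim that $c^{-1}b>1$ ``forces'' $cb^{-1}<1$. In a left-ordered group only left multiplication preserves the order, so $c^{-1}b>1$ is equivalent to $c<b$, but $cb^{-1}<1$ is equivalent to $b^{-1}<c^{-1}$, and the implication $c<b\Rightarrow b^{-1}<c^{-1}$ requires right-invariance, i.e.\ bi-orderability. Equivalently, $cb^{-1}=c\,(b^{-1}c)\,c^{-1}$ is a conjugate of $b^{-1}c$, and conjugation need not preserve sign in a left-ordering. This is not a pedantic point: the paper's whole case architecture depends on treating such pairs as independent. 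Note that Table~\ref{table:case1.i.-} introduces $cb^{-1}$ as a \emph{new} case variable inside \Case{1}{1}{} even though the sign of $b^{-1}c$ was already fixed, and that in Case 8 the paper proves Lemma~\ref{lemma:inEq8.2} ($bc^{-1}>1$) and Lemma~\ref{lemma:inEq8.3} ($c^{-1}b>1$) as two separate lemmas with different proofs. The two proofs you modeled yours on work precisely because in those settings the sign of $cb^{-1}$ itself is available: in Lemma~\ref{lemma:inEq8.2} it is the \emph{conclusion} (derived there from $b^{-1}>1$ and $ab^{-1}>1$, which hold since $a>1$, $b<1$ in Case 8), and in Lemma~\ref{eq8iL} it is literally the hypothesis of \Case{1}{1}{8}. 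In \Case{1}{8}{} only $b^{-1}c<1$ is assumed, no sign for $cb^{-1}$ has been established, and so your use of the second relation stalls at this point. Your closing remark, that the defining inequality of \Case{1}{8}{} ``already forces'' $cb^{-1}<1$, repeats the invalid step rather than repairing it.

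The paper avoids this entirely by working from the \emph{first} group relation instead: from (\ref{eq3:4}) one gets
\begin{align*}
(ba^{-1})^{10n}\,a\,(d^{-1}a)=1,
\end{align*}
and since $a>1$ (Case 1) and $d^{-1}a>1$ (a defining assumption of \Case{1}{8}{}), the power $(ba^{-1})^{10n}$ must be negative, whence $ba^{-1}<1$ and $ab^{-1}>1$. Every sign used there is an honest hypothesis of the sub-case, with no reordering of letters inside a product. If you want to salvage your route through the second relation, you would first have to prove $cb^{-1}<1$ in \Case{1}{8}{} by some independent argument; the paper never establishes that inequality in this sub-case, and it is not available for free.
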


\begin{proof} By (\ref{eq3:4}):
\begin{align}
(ba^{-1})^{10n-1}b(a^{-1}a)(d^{-1}a)&=1\nonumber{}\\
(ba^{-1})^{10n}a(d^{-1}a)&=1,\label{lemma:inEq:aB:contradiction}
\end{align}
but $d^{-1}a>1$ in \Case{1}{8}{} and $a>1$ in Case 1, so (\ref{lemma:inEq:aB:contradiction}) shows that:
\begin{align*}
ba^{-1}&<1\\
\Rightarrow{}ab^{-1}&>1.\qedhere
\end{align*}
\end{proof}

\begin{lemma} In \Case{1}{8}{}, $a^{-1}dc^{-1}a > 1$. \label{lemma:inEq:AdCa}
\end{lemma}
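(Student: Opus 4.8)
The plan is to rewrite the conjugate $a^{-1}dc^{-1}a$ so that it becomes a product of two factors that are already known to be positive, rather than to extract a single unknown factor from a rearranged group relation as in the preceding lemmas of this subsection. The key observation is that $dc^{-1}$ has a clean closed form coming from Corollary~\ref{corollary:eq6}, namely $dc^{-1}=a^{2}b^{-2}$. Substituting this into the target word and cancelling gives
\begin{align*}
a^{-1}dc^{-1}a = a^{-1}(a^{2}b^{-2})a = ab^{-2}a = (ab^{-1})(b^{-1}a),
\end{align*}
so the entire problem reduces to showing that this product of two factors is positive.

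The second step is simply to recognise that each factor has already been handled. Indeed $ab^{-1}>1$ was established in this very sub-case by Lemma~\ref{lemma:inEq:aB}, and $b^{-1}a>1$ is the standing general assumption maintained throughout the argument (justified by Fact~\ref{fact:WLOG} together with Proposition~\ref{proposition:non-trivial}). Hence by Fact~\ref{1.3LO}, a product of positive elements is positive, giving $(ab^{-1})(b^{-1}a)>1$ and therefore $a^{-1}dc^{-1}a>1$.

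I do not anticipate any genuine obstacle here: unlike Lemma~\ref{lemma:inEq:Cd} or Lemma~\ref{lemma:inEq:aB}, which required inserting cancelling pairs into a group relation and then isolating the sign of a lone leftover term, this statement follows immediately once the substitution $dc^{-1}=a^{2}b^{-2}$ is performed. The only point of any subtlety — and the single place a reader might pause — is noticing that conjugation by $a$ collapses neatly, turning $a^{-1}(a^{2}b^{-2})a$ into the symmetric word $ab^{-2}a$, which then splits as $(ab^{-1})(b^{-1}a)$ into precisely the two positive factors already at hand. In particular I would expect this proof to use none of the other inequalities of the subsection, even though $a^{-1}dc^{-1}a>1$ will presumably be combined with them in the proposition that ultimately eliminates \Case{1}{8}{}.
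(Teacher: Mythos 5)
Your proof is correct and is essentially the paper's own argument: the paper likewise starts from Corollary~\ref{corollary:eq6}, rearranges it into the identity $(b^{-1}a)(a^{-1}cd^{-1}a)(ab^{-1})=1$, and concludes using $b^{-1}a>1$ and Lemma~\ref{lemma:inEq:aB}, which is exactly your factorization $a^{-1}dc^{-1}a=(ab^{-1})(b^{-1}a)$ read in reverse. Your direct substitution-and-factor presentation is a touch cleaner, but the ingredients and the decomposition are identical.
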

\begin{proof} By Corollary~\ref{corollary:eq6}, we have:
\begin{align}
a^{2}b^{-2}&=dc^{-1}\nonumber{}\\
\Rightarrow{}b^{-2}cd^{-1}a^{2} &= 1\nonumber{}\\
\Rightarrow{}b^{-1}cd^{-1}a^{2}b^{-1} &= 1\nonumber{}\\
\Rightarrow{}(b^{-1}a)(a^{-1}cd^{-1}a)(ab^{-1}) &= 1.\label{lemma:inEq:AdCa:contradiction}
\end{align}
But $b^{-1}a>1$ by assumption and $ab^{-1}>1$ in \Case{1}{8}{} by Lemma~\ref{lemma:inEq:aB} so (\ref{lemma:inEq:AdCa:contradiction}) shows that:
\begin{align*}
a^{-1}cd^{-1}a&<1\\
\Rightarrow{}a^{-1}dc^{-1}a&>1.\qedhere
\end{align*}
\end{proof}

\begin{lemma} In \Case{1}{8}{}, $c^{-1}d^{-2}a>1$.
\label{lemma:inEq:CDDa}
\end{lemma}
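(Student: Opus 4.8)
The plan is to realize the target as a product of two elements whose positivity has already been established, rather than extracting it from a fresh manipulation of a group relation. Write
\[
c^{-1}d^{-2}a=(c^{-1}d)(d^{-3}a).
\]
Since $c^{-1}d>1$ in \Case{1}{8}{} by Lemma~\ref{lemma:inEq:Cd}, Fact~\ref{1.3LO} reduces the whole statement to showing that the second factor satisfies $d^{-3}a>1$.

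To control $d^{-3}a$, I would start from the identity $ad^{-3}=(dc^{-1})^{10n+3}$, which appears in the proof of Corollary~\ref{corollary:non-trivial2} (equivalently, it is the inverse of (\ref{eq4:33})). The key move is to conjugate this identity by $a$. Because conjugation distributes over powers, this rewrites the $n$-dependent factor as a power of a single element whose sign we already know:
\[
d^{-3}a=a^{-1}(ad^{-3})a=a^{-1}(dc^{-1})^{10n+3}a=(a^{-1}dc^{-1}a)^{10n+3}.
\]
By Lemma~\ref{lemma:inEq:AdCa} we have $a^{-1}dc^{-1}a>1$ in \Case{1}{8}{}, so its $(10n+3)$-th power is a product of positive elements and hence positive by Fact~\ref{1.3LO} (or Proposition~\ref{proposition:pospowers}). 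This gives $d^{-3}a>1$, and combining with $c^{-1}d>1$ yields $c^{-1}d^{-2}a=(c^{-1}d)(d^{-3}a)>1$, as desired.

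I expect the only real obstacle to be recognizing the conjugation step: the factorization $c^{-1}d^{-2}a=(c^{-1}d)(d^{-3}a)$ is immediate, but $d^{-3}a$ is not one of the elements whose sign was fixed in the earlier \Case{1}{8}{} lemmas, and a naive attempt to deduce its sign from $d^{-1}a>1$ fails. Observing that $d^{-3}a$ is the $a$-conjugate of $ad^{-3}=(dc^{-1})^{10n+3}$, and therefore equals $(a^{-1}dc^{-1}a)^{10n+3}$, is what makes Lemma~\ref{lemma:inEq:AdCa} applicable and turns an $n$-dependent expression into a manifestly positive power. Once this is seen, no new relation manipulation or sub-case analysis is required.
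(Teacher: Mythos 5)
Your proof is correct and is essentially the paper's own argument in a slightly repackaged form: the paper inserts $aa^{-1}$ and $cc^{-1}$ into the fourth relation to get $(a^{-1}dc^{-1}a)^{10n+3}(a^{-1}d^{2}c)(c^{-1}d)=1$ and concludes $a^{-1}d^{2}c<1$, which is exactly your identity $c^{-1}d^{-2}a=(c^{-1}d)(a^{-1}dc^{-1}a)^{10n+3}$ read as ``two positive factors force the third factor negative.'' Both use the same ingredients---the fourth group relation, Lemma~\ref{lemma:inEq:AdCa}, and Lemma~\ref{lemma:inEq:Cd}---so your version differs only in presenting the target directly as a product of positives rather than isolating the negative factor from a relation equal to the identity.
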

\begin{proof} Starting from the fourth group relation:
\begin{align}
d^{2}a^{-1}d(c^{-1}d)^{10n+3} &= 1\nonumber{}\\
\Rightarrow{}a^{-1}d(c^{-1}d)^{10n+3}d^{2} &= 1\nonumber{}\\
\Rightarrow{}a^{-1}(dc^{-1})^{10n+3}d^{3}&=1\nonumber{}\\
\Rightarrow{}a^{-1}(dc^{-1})^{10n+3}(aa^{-1})d^{2}(cc^{-1})d&=1\nonumber{}\\
\Rightarrow{}(a^{-1}dc^{-1}a)^{10n+3}(a^{-1}d^{2}c)(c^{-1}d)&=1.\label{lemma:inEq:CDDa:contradiction}
\end{align}
But $a^{-1}dc^{-1}a>1$ in \Case{1}{8}{} by Lemma~\ref{lemma:inEq:AdCa} and $c^{-1}d>1$ in \Case{1}{8}{} by Lemma~\ref{lemma:inEq:Cd}, so (\ref{lemma:inEq:CDDa:contradiction}) shows that:
\begin{align*}
a^{-1}d^{2}c&<1\\
\Rightarrow{}c^{-1}d^{-2}a&>1.\qedhere
\end{align*}
\end{proof}

\begin{lemma} In \Case{1}{8}{}, $d^{-1}ad^{-1}>1$.
\label{lemma:inEq:DaD}
\end{lemma}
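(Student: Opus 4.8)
The plan is to reduce the claim $d^{-1}ad^{-1}>1$ to the equivalent statement $da^{-1}d<1$, using Fact~\ref{fact:inverses} together with the identity $(d^{-1}ad^{-1})^{-1}=da^{-1}d$, and then to extract $da^{-1}d$ as a single factor from the fourth group relation.

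Concretely, I would start from the fourth group relation
\begin{align*}
d^{2}a^{-1}d(c^{-1}d)^{10n+3}=1
\end{align*}
and conjugate by $d^{-1}$ (equivalently, cyclically permute the leading $d$ to the end) to obtain
\begin{align*}
(da^{-1}d)(c^{-1}d)^{10n+3}d=1.
\end{align*}
This isolates $da^{-1}d$ as the leftmost factor, with the rest of the word built entirely out of $c^{-1}d$ and $d$.

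Next I would argue that the trailing factor $(c^{-1}d)^{10n+3}d$ is positive. We have $c^{-1}d>1$ in \Case{1}{8}{} by Lemma~\ref{lemma:inEq:Cd}, so $(c^{-1}d)^{10n+3}>1$ by Proposition~\ref{proposition:pospowers}, and $d>1$ in Case~1; hence $(c^{-1}d)^{10n+3}d>1$ by Fact~\ref{1.3LO}. Since the product $(da^{-1}d)\cdot[(c^{-1}d)^{10n+3}d]$ equals the identity while the bracketed factor is positive, $da^{-1}d$ must be negative, i.e. $da^{-1}d<1$. Taking inverses via Fact~\ref{fact:inverses} then gives $d^{-1}ad^{-1}>1$, as desired.

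The only real choice here is the decision to cyclically rotate the fourth relation so that $da^{-1}d$ appears as a clean prefix and everything else collapses to a manifestly positive tail; once that rearrangement is spotted, the conclusion is immediate from $c^{-1}d>1$ and $d>1$, so I do not expect any genuine obstacle. An alternative route, should one prefer to avoid the cyclic permutation, is to substitute $d^{2}a^{-1}d=c^{2}b^{-1}c$ from Lemma~\ref{lemma:eq7} and analyze $da^{-1}d=d^{-1}c^{2}b^{-1}c$ directly, but this requires controlling the sign of $cb^{-1}c$ and is messier, so I expect the cyclic-permutation argument above to be the cleanest path.
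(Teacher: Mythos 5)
Your proof is correct and takes essentially the same route as the paper's: both cyclically rotate the fourth group relation so that $da^{-1}d$ appears as an isolated factor, then conclude it is negative because every other factor is positive, using $c^{-1}d>1$ from Lemma~\ref{lemma:inEq:Cd}. The only (cosmetic) difference is that the paper rotates the relation the other way and inserts $cc^{-1}$ to write it as $(c^{-1}d)^{10n+3}(c)(c^{-1}d)(da^{-1}d)=1$, using $c>1$ where you use $d>1$; your rotation avoids the insertion and is marginally cleaner.
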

\begin{proof}
Starting from the fourth group relation, we have:
\begin{align}
(c^{-1}d)^{10n+3}d^{2}a^{-1}d&=1\nonumber{}\\
\Rightarrow{}(c^{-1}d)^{10n+3}(cc^{-1})d^{2}a^{-1}d&=1\nonumber{}\\
\Rightarrow{}(c^{-1}d)^{10n+3}(c)(c^{-1}d)(da^{-1}d)&=1.\label{lemma:inEq:DaD:contradiction}
\end{align}
We know that $c^{-1}d>1$ in \Case{1}{8}{} by Lemma~\ref{lemma:inEq:Cd} and we know that $c>1$ in \Case{1}{8}{}, so (\ref{lemma:inEq:DaD:contradiction}) shows that:
\begin{align*}
da^{-1}d&<1\\
\Rightarrow{}d^{-1}ad^{-1}&>1.\qedhere
\end{align*}
\end{proof}

\begin{lemma} In \Case{1}{8}{}, $cba^{-1}>1$.
\label{lemma:inEq:cbA}
\end{lemma}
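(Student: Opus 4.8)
The plan is to clear the generators $a$ and $b$ out of the word $cba^{-1}$ by means of Lemma~\ref{lemma:eq8}, reducing it to a word in $c$ and $d$ alone whose sign we already control in \Case{1}{8}{}.

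First I would start from Lemma~\ref{lemma:eq8}, which asserts $d^{2}a = c^{2}b$ in $G_n$. Left-multiplying this equation by $c^{-1}$ gives $c^{-1}d^{2}a = cb$, and then right-multiplying by $a^{-1}$ gives the identity
\begin{align*}
cba^{-1} = c^{-1}d^{2}.
\end{align*}
Thus the whole lemma is reduced to showing that $c^{-1}d^{2} > 1$.

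Next I would factor $c^{-1}d^{2} = (c^{-1}d)\,d$ and read off the signs from the hypotheses in force: by Lemma~\ref{lemma:inEq:Cd} we have $c^{-1}d > 1$, and $d > 1$ since we are inside Case 1. By Fact~\ref{1.3LO} a product of positive elements is positive, so $c^{-1}d^{2} > 1$, and therefore $cba^{-1} > 1$.

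The step that requires the most care is the bookkeeping of noncommutativity: one must track the order of the factors when rearranging $d^{2}a = c^{2}b$, and one must split $c^{-1}d^{2}$ as $(c^{-1}d)\,d$ rather than as $c^{-1}(d^{2})$, since $c^{-1} < 1$ makes the latter grouping uninformative. Beyond choosing the correct grouping there is no real obstacle; the substance of the argument is simply recognizing that Lemma~\ref{lemma:eq8} is precisely the tool that eliminates $a$ and $b$ from $cba^{-1}$, after which the sign follows from facts already established in this case.
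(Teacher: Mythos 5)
Your proof is correct and is essentially the paper's own argument: the paper likewise starts from Lemma~\ref{lemma:eq8} to obtain $(c^{-1}d)\,d\,(ab^{-1}c^{-1})=1$ and concludes $ab^{-1}c^{-1}<1$ from $c^{-1}d>1$ (Lemma~\ref{lemma:inEq:Cd}) and $d>1$, which is exactly your identity $cba^{-1}=c^{-1}d^{2}=(c^{-1}d)\,d$ read in reverse. The only difference is cosmetic---you isolate $cba^{-1}$ as an explicit product of positives rather than showing its inverse is a negative residual factor.
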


\begin{proof} By Lemma~\ref{lemma:eq8}, we have:
\begin{align}
c^{2}b&=d^{2}a\nonumber{}\\
\Rightarrow{}c^{-1}d^{2}ab^{-1}c^{-1}&= 1\nonumber{}\\
\Rightarrow{}(c^{-1}d)d(ab^{-1}c^{-1})&=1.\label{lemma:inEq:cbA:contradiction}
\end{align}
We know that $c^{-1}d>1$ in \Case{1}{8}{} by Lemma~\ref{lemma:inEq:Cd} and we know that $d>1$ in \Case{1}{8}{}, so (\ref{lemma:inEq:cbA:contradiction}) shows that:
\begin{align*}
ab^{-1}c^{-1}&<1\\
\Rightarrow{}cba^{-1}&>1.\qedhere
\end{align*}
\end{proof}

\begin{lemma} In \Case{1}{8}{}, $da^{-1}bc^{-1}>1$.
\label{lemma:inEq:dAbC}
\end{lemma}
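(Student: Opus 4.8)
The plan is to observe that the target word $da^{-1}bc^{-1}$ is not really a new quantity at all: modulo the relations of $G_n$ it reduces to $ab^{-1}$, whose sign was already pinned down in Lemma~\ref{lemma:inEq:aB}. So rather than rearranging a group relation into a product of factors of known sign (the pattern used in the preceding lemmas), I would exploit the general identity $ad^{-1}a = bc^{-1}b$ of Lemma~\ref{lemma:eq16} to collapse the word directly. The key is to manufacture the substring $bc^{-1}b$ inside $da^{-1}bc^{-1}$ so that Lemma~\ref{lemma:eq16} can be applied.

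Concretely, I would first insert a trivial factor $bb^{-1}$ to exhibit that substring, and then substitute:
\begin{align*}
da^{-1}bc^{-1} = da^{-1}(bc^{-1}b)b^{-1} = da^{-1}(ad^{-1}a)b^{-1} = d(a^{-1}a)d^{-1}ab^{-1} = ab^{-1},
\end{align*}
where the last step is just the cancellations $a^{-1}a = 1$ and $dd^{-1}=1$. Since Lemma~\ref{lemma:inEq:aB} gives $ab^{-1}>1$ in \Case{1}{8}{}, this immediately yields $da^{-1}bc^{-1}>1$, as desired. Note that the identity $da^{-1}bc^{-1}=ab^{-1}$ holds in $G_n$ for every $n$; the restriction to \Case{1}{8}{} enters only through the sign input $ab^{-1}>1$.

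There is essentially no obstacle here beyond getting the cancellation bookkeeping right: the whole argument is a two-line word reduction followed by a citation of a prior inequality, so the ``hard part'' is merely confirming that the substitution of $bc^{-1}b$ by $ad^{-1}a$ leaves behind exactly $ab^{-1}$. As a safeguard before committing to the free-group manipulation, I would verify the identity in the abelianization $\phi\colon G_n \to \mathbb{Z}_{25}$ of Proposition~\ref{proposition:non-trivial}: one checks that $\phi(da^{-1}bc^{-1}) = 1-13+3-6 \equiv 10$ and $\phi(ab^{-1}) = 13-3 = 10$ agree in $\mathbb{Z}_{25}$, which is a necessary consistency check for the claimed equality $da^{-1}bc^{-1}=ab^{-1}$.
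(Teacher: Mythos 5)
Your proof is correct and is essentially the paper's own argument: both reduce $da^{-1}bc^{-1}$ to $ab^{-1}$ via the identity $ad^{-1}a=bc^{-1}b$ of Lemma~\ref{lemma:eq16} (the paper by cyclically rearranging $b^{-1}cb^{-1}ad^{-1}a=1$ into $(cb^{-1}ad^{-1})(ab^{-1})=1$, you by inserting $bb^{-1}$ and substituting), and then both conclude by citing $ab^{-1}>1$ from Lemma~\ref{lemma:inEq:aB}. The abelianization check is a harmless extra sanity verification but adds nothing logically.
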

\begin{proof} By Lemma~\ref{lemma:eq16}, we have:
\begin{align*}
b^{-1}cb^{-1}ad^{-1}a &= 1\\
\Rightarrow{}(cb^{-1}ad^{-1})(ab^{-1}) &=1\\
\Rightarrow{}ab^{-1}&=da^{-1}bc^{-1}\\
\Rightarrow{}da^{-1}bc^{-1}&>1,
\end{align*}
where the last implication follows from Lemma~\ref{lemma:inEq:aB}.
\end{proof}

\begin{lemma} In \Case{1}{8}{}, $cb^{-1}ac^{-1}>1$.
\label{lemma:inEq:cBaC}
\end{lemma}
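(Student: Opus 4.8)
The plan is to first rewrite the target into a transparent form and then pin down its sign using a defining relation. Inverting Lemma~\ref{lemma:eq16} ($ad^{-1}a=bc^{-1}b$) gives $cb^{-1}=ba^{-1}da^{-1}$, hence the clean identity $cb^{-1}a=ba^{-1}d$, so that $cb^{-1}ac^{-1}=ba^{-1}dc^{-1}$. Using Corollary~\ref{corollary:eq6} ($a^{2}b^{-2}=dc^{-1}$) this simplifies further to $cb^{-1}ac^{-1}=bab^{-2}$, and using Lemma~\ref{lemma:eq9} ($c^{-3}d^{3}=b^{-1}a$) to $cb^{-1}ac^{-1}=c^{-2}d^{3}c^{-1}$. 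Each rewriting exhibits the element as the conjugate $c(b^{-1}a)c^{-1}$ of $b^{-1}a$; and since $c>1$ and $b^{-1}a>1$ in Case~1, I note in passing that $cb^{-1}a=c(b^{-1}a)>1$.

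The essential obstacle is that these reformulations do not by themselves settle the sign: in a merely left-orderable group a conjugate of a positive element need not be positive, so positivity of $cb^{-1}ac^{-1}=c(b^{-1}a)c^{-1}$ cannot be read off from $b^{-1}a>1$, nor from the signs of the generators alone. The sign must therefore be extracted from a relation carrying the exponent $10n+3$, which is what breaks the symmetry between $c$-conjugation and positivity.

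My plan for that step mirrors the expansion used in Proposition~\ref{proposition:case1.i.8}: starting from the third relation in the rearranged form (\ref{eq3:2}), namely $c^{2}b^{-1}(dc^{-1})^{10n+3}c=1$ (or from the fourth relation in the form (\ref{eq4:2})), I would insert cancelling pairs $aa^{-1}$ to recast the block $(dc^{-1})^{10n+3}$ as a power of the established positive $a^{-1}dc^{-1}a$ (Lemma~\ref{lemma:inEq:AdCa}), then peel boundary letters and regroup so that, after cancellation, every factor except a single occurrence of $cb^{-1}ac^{-1}$ (equivalently $ba^{-1}dc^{-1}$) is one of the positives already proved in \Case{1}{8}{}: $c^{-1}d$ (Lemma~\ref{lemma:inEq:Cd}), $ab^{-1}$ (Lemma~\ref{lemma:inEq:aB}), $cba^{-1}$ (Lemma~\ref{lemma:inEq:cbA}), or $da^{-1}bc^{-1}$ (Lemma~\ref{lemma:inEq:dAbC}). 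Since the resulting word equals the identity and all of its other factors are positive, the isolated factor is forced to be positive, giving $cb^{-1}ac^{-1}>1$.

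The hard part is precisely this last bookkeeping: finding the grouping and the exact conjugating insertions that collapse the $n$-dependent relation into a product of previously known positives times a single clean copy of $cb^{-1}ac^{-1}$. This is exactly the kind of search the authors' positive-cone closure procedure automates, so in practice I would let that algorithm locate the right combination of a relation together with the established positive elements.
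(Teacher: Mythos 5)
Your preliminary rewritings are all correct ($cb^{-1}ac^{-1}=ba^{-1}dc^{-1}=bab^{-2}=c^{-2}d^{3}c^{-1}$, each just the conjugate $c(b^{-1}a)c^{-1}$), and you are right that none of them settles the sign. But the proposal fails at exactly the point you label ``the hard part'': the explicit factorization is the entire mathematical content of this lemma, and you do not produce it --- you defer it to a hypothetical run of the authors' search program. A plan plus an appeal to an unexecuted algorithm is not a proof. For comparison, the paper's argument does not use (\ref{eq3:2}) or (\ref{eq4:2}) at all (so the assertion that the sign ``must'' be extracted from a relation with exponent $10n+3$ is also mistaken): it starts from the first group relation in the form (\ref{eq3:4}), conjugates the repeated $(a^{-1}b)$-block by $c$, and inserts the cancelling pairs $cc^{-1}$, $dd^{-1}$, $ad^{-1}da^{-1}$ to reach
\begin{align*}
c(c^{-1}ba^{-1}d)(d^{-1}ad^{-1})(da^{-1}bc^{-1})(ca^{-1}bc^{-1})^{10n-1}c(d^{-1}a)=1,
\end{align*}
whose known-positive factors are $c^{-1}ba^{-1}d$ (Lemma~\ref{lemma:inEq:CbAd}), $d^{-1}ad^{-1}$ (Lemma~\ref{lemma:inEq:DaD}), $da^{-1}bc^{-1}$ (Lemma~\ref{lemma:inEq:dAbC}), $c$, and $d^{-1}a$; note that the first two of these are absent from your list of usable positives, so even a faithful execution of your plan would have had to rediscover them.

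There is also a logical error in your closing step, and it is not cosmetic. If a word equal to the identity is written as a product in which every factor but one is positive, the remaining factor is forced to be \emph{negative}, not positive: a product of positives can never equal $1$. Consequently one cannot isolate ``a single clean copy of $cb^{-1}ac^{-1}$'' among known positives and conclude $cb^{-1}ac^{-1}>1$; one must instead isolate (a positive power of) the inverse word $ca^{-1}bc^{-1}$ and conclude that \emph{it} is negative, which is precisely what the displayed identity above accomplishes: $(ca^{-1}bc^{-1})^{10n-1}$ must be negative, hence $ca^{-1}bc^{-1}<1$ and $cb^{-1}ac^{-1}>1$. As stated, your plan aims at a configuration that cannot exist in any left-ordered group, so the sign error would have derailed the search even if you had carried it out.
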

\begin{proof} By (\ref{eq3:4}), we have:
\begin{align}
(ba^{-1})^{10n}bd^{-1}a&=1\nonumber{}\\
\Rightarrow{}b(a^{-1}b)^{10n}d^{-1}a&=1\nonumber{}\\
\Rightarrow{}b(a^{-1}b)(a^{-1}b)^{10n-1}d^{-1}a&=1\nonumber{}\\
\Rightarrow{}(ba^{-1})(bc^{-1})(ca^{-1}bc^{-1})^{10n-1}cd^{-1}a&=1\nonumber{}\\
\Rightarrow{}(cc^{-1})(ba^{-1})(dd^{-1})(ad^{-1}da^{-1})(bc^{-1})(ca^{-1}bc^{-1})^{10n-1}cd^{-1}a&=1\nonumber{}\\
\Rightarrow{}c(c^{-1}ba^{-1}d)(d^{-1}ad^{-1})(da^{-1}bc^{-1})(ca^{-1}bc^{-1})^{10n-1}c(d^{-1}a)&=1.\label{lemma:inEq:cBaC:contradiction}
\end{align}
We know $c^{-1}ba^{-1}d>1$ in \Case{1}{8}{} by Lemma~\ref{lemma:inEq:CbAd}; we know $d^{-1}ad^{-1}>1$ in \Case{1}{8}{} by Lemma~\ref{lemma:inEq:DaD}; and we know $da^{-1}bc^{-1}>1$ by Lemma~\ref{lemma:inEq:dAbC}. Furthermore, we know $d^{-1}a>1$ and $c>1$ by assumption in \Case{1}{8}{}. Therefore, (\ref{lemma:inEq:cBaC:contradiction}) shows that:
\begin{align*}
ca^{-1}bc^{-1}&<1\\
\Rightarrow{}cb^{-1}ac^{-1}&>1.\qedhere
\end{align*}
\end{proof}

\begin{lemma} In \Case{1}{8}{}, $cba^{-1}c^{-1}>1$.
\label{lemma:inEq:cbAC}
\end{lemma}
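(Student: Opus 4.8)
The plan is to mirror the proof of Lemma~\ref{lemma:inEq:cBaC}, reading the target as the inverse of $cab^{-1}c^{-1}=c(ab^{-1})c^{-1}$, a $c$-conjugate of $ab^{-1}$ (which is positive in \Case{1}{8}{} by Lemma~\ref{lemma:inEq:aB}). Concretely, I aim to produce an identity in which a product of elements already known to be positive, multiplied by a power $(cab^{-1}c^{-1})^{k}$ with $k\geq 1$, equals $1$. Then $(cab^{-1}c^{-1})^{k}<1$, so $cab^{-1}c^{-1}<1$ by Proposition~\ref{proposition:pospowers}, and therefore $cba^{-1}c^{-1}>1$.

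To expose this power I would first rewrite the second group relation. From (\ref{eq2:2}) we have $(b^{-1}a)^{10n}=c^{-1}b^2$; conjugating by $b$ gives $(ab^{-1})^{10n}=bc^{-1}b$, and Lemma~\ref{lemma:eq16} ($bc^{-1}b=ad^{-1}a$) then yields the clean identity
\[(ab^{-1})^{10n}=ad^{-1}a,\qquad\text{i.e.}\qquad (ab^{-1})^{10n}a^{-1}da^{-1}=1.\]
Conjugating by $c$ converts the repeated factor into $(cab^{-1}c^{-1})^{10n}$, leaving a short tail built from $c$, $a$, and $d$. The core step is then to turn that tail, together with the connective tissue between consecutive blocks, into a product of positives: following Lemma~\ref{lemma:inEq:cBaC}, I would peel off block factors and insert trivial pairs $dd^{-1}$ (and $cc^{-1}$), regrouping so that the non-block factors become connectors already shown positive in \Case{1}{8}{}---the natural candidates being $c^{-1}ba^{-1}d$ (Lemma~\ref{lemma:inEq:CbAd}), $d^{-1}ad^{-1}$ (Lemma~\ref{lemma:inEq:DaD}), $da^{-1}bc^{-1}$ (Lemma~\ref{lemma:inEq:dAbC}), and $cba^{-1}$ (Lemma~\ref{lemma:inEq:cbA}), together with the positive generators and $d^{-1}a$. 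Once every non-block factor is positive, the sign computation above closes the argument.

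The main obstacle is exactly this bookkeeping. Conjugating $(ab^{-1})^{10n}=ad^{-1}a$ by $c$ and stopping is useless: it merely restates that $cab^{-1}c^{-1}$ and $cba^{-1}c^{-1}$ are mutually inverse, so positivity of all the glue cannot be extracted from the second relation alone. What breaks the circularity is a genuinely new, relation-4 input, namely $d^{-1}ad^{-1}>1$ (Lemma~\ref{lemma:inEq:DaD}); the delicate part is to place the inserted $dd^{-1}$ pairs so that this connector---and every other---appears with positive orientation while the repeated pair collapses cleanly to $(cab^{-1}c^{-1})^{k}$. As a cross-check, conjugating the first-relation form (\ref{eq3:4}) by $c$ instead gives the compact identity $(cba^{-1}c^{-1})^{10n-1}(cbd^{-1}ac^{-1})=1$, which reduces the lemma to the single inequality $cbd^{-1}ac^{-1}<1$; by Lemma~\ref{lemma:eq16} this tail equals $(cba^{-1})(bc^{-1})(bc^{-1})$, confirming that its sign again cannot be pinned down without precisely such a relation-4 ingredient.
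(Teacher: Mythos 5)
Your proposal has a genuine gap: it is a search plan, not a proof. The step that would actually establish the lemma---exhibiting a concrete identity in which some positive power of $cab^{-1}c^{-1}$ stands next to factors already known to be positive---is never produced; you defer it as ``bookkeeping,'' but that bookkeeping is the entire mathematical content of the lemma. The two identities you do write down are both ones you yourself recognize as inconclusive: conjugating $(ab^{-1})^{10n}=ad^{-1}a$ by $c$ is circular, and conjugating (\ref{eq3:4}) by $c$ reduces the lemma to $cbd^{-1}ac^{-1}<1$, i.e.\ $(cba^{-1})(bc^{-1})(bc^{-1})<1$, which stalls because neither the case hypotheses of \Case{1}{8}{} (which give $c^{-1}b>1$, not the sign of $bc^{-1}$) nor the established lemmas control $bc^{-1}$. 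So the proposal ends exactly where the proof has to begin, and there is no evidence that the particular vehicles you chose (relation 2, or relation 1, conjugated by $c$) can be pushed through.

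The paper's proof supplies what is missing, and it is structurally different from your plan: no $10n$-power collapse is used at all. Starting from Lemma~\ref{lemma:eq8}, the identity $c^{2}b=d^{2}a$ is rewritten as $c^{-1}d^{2}ab^{-1}c^{-1}=1$ (a $c$-conjugate of $c^{-2}d^{2}ab^{-1}=1$); then trivial pairs $a^{-1}a$, $bc^{-1}cb^{-1}$, and $c^{-1}c$ are inserted and the word is regrouped as
\begin{align*}
(c^{-1}d)(da^{-1}bc^{-1})(cb^{-1}ac^{-1})(cab^{-1}c^{-1})&=1.
\end{align*}
The first three factors are positive in \Case{1}{8}{} by Lemmas~\ref{lemma:inEq:Cd}, \ref{lemma:inEq:dAbC}, and \ref{lemma:inEq:cBaC}, so $cab^{-1}c^{-1}<1$ and hence $cba^{-1}c^{-1}>1$. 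In particular, the ``new ingredient'' you were hunting for is simply the preceding lemma, Lemma~\ref{lemma:inEq:cBaC}, used as a single multiplicative factor; the relation-4 input $d^{-1}ad^{-1}>1$ (Lemma~\ref{lemma:inEq:DaD}) that you correctly sensed was necessary enters only indirectly, through the proof of Lemma~\ref{lemma:inEq:cBaC}. The target appears to the first power, and the starting identity comes from Lemma~\ref{lemma:eq8}, not from either of the relations on which your plan is built.
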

\begin{proof} By Lemma~\ref{lemma:eq8}, we have:
\begin{align}
c^{2}b&=d^{2}a\nonumber{}\\
\Rightarrow{}c^{-1}d^{2}ab^{-1}c^{-1}&=1\nonumber{}\\
\Rightarrow{}c^{-1}d^{2}a^{-1}a^{2}b^{-1}c^{-1}&=1\nonumber{}\\
\Rightarrow{}c^{-1}d^{2}a^{-1}(bc^{-1}cb^{-1})a(c^{-1}c)ab^{-1}c^{-1}&=1\nonumber{}\\
\Rightarrow{}(c^{-1}d)(da^{-1}bc^{-1})(cb^{-1}ac^{-1})(cab^{-1}c^{-1})&=1.\label{lemma:inEq:cbAC:contradiction}
\end{align}
We know that $c^{-1}d>1$ in \Case{1}{8}{} by Lemma~\ref{lemma:inEq:Cd}; we know that $da^{-1}bc^{-1}>1$ in \Case{1}{8}{} by Lemma~\ref{lemma:inEq:dAbC}; and we know that $cb^{-1}ac^{-1}>1$ in \Case{1}{8}{} by Lemma~\ref{lemma:inEq:cBaC}. Therefore, (\ref{lemma:inEq:cbAC:contradiction}) shows that:
\begin{align*}
cab^{-1}c^{-1}&<1\\
\Rightarrow{}cba^{-1}c^{-1}&>1.\qedhere
\end{align*}
\end{proof}

\begin{proposition}
If $G_{n}$ is left-orderable then \Case{1}{8}{} ($d^{-1}a > 1$, $d^{-1}b>1$, and $c^{-1}b>1$) is not possible.
\label{proposition:case1.viii}
\end{proposition}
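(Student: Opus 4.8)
The plan is to derive a contradiction exactly as in the other cases: I will take one of the four group relations, which equals the identity, and rewrite it as a product of elements each of which has already been shown to be positive in \Case{1}{8}{}, so that a product of positives equals $1$, contradicting Fact~\ref{1.3LO}. Every lemma proved so far in this section has been building toward making a $c$-conjugate of the repeated syllable of the first relation positive; in particular the most recent Lemma~\ref{lemma:inEq:cbAC} gives $cba^{-1}c^{-1}>1$, which is the $c$-conjugate of $ba^{-1}$. This is no accident, since the first relation is, by (\ref{eq1:3}), the statement $(ba^{-1})^{10n}ad^{-1}a=1$, whose bulk is a power of $ba^{-1}$.

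First I would conjugate this relation by $c$ (multiply on the left by $c$ and on the right by $c^{-1}$) and regroup by inserting a cancelling pair $c^{-1}c$ in the middle, obtaining
\begin{align*}
(cba^{-1}c^{-1})^{10n}(cad^{-1}ac^{-1})=1.
\end{align*}
The repeated factor $cba^{-1}c^{-1}$ is positive by Lemma~\ref{lemma:inEq:cbAC}, so by Proposition~\ref{proposition:pospowers} we have $(cba^{-1}c^{-1})^{10n}>1$ (recall $n\geq1$ throughout this part of the argument). It then remains only to show that the finite tail $cad^{-1}ac^{-1}$ is also positive.

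The key computation is to split this tail, by inserting the cancelling pairs $d^{-1}d$ and $c^{-1}c$, as
\begin{align*}
cad^{-1}ac^{-1}=(ca)(d^{-1}ad^{-1})(da^{-1}bc^{-1})(cb^{-1}ac^{-1}).
\end{align*}
Here $ca>1$ since $a,c>1$ in Case 1, while $d^{-1}ad^{-1}>1$, $da^{-1}bc^{-1}>1$, and $cb^{-1}ac^{-1}>1$ by Lemmas~\ref{lemma:inEq:DaD},~\ref{lemma:inEq:dAbC}, and~\ref{lemma:inEq:cBaC} respectively. Hence $cad^{-1}ac^{-1}>1$, and the displayed relation exhibits a product of positive elements equal to the identity, the desired contradiction.

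The main obstacle is not the final assembly but recognizing the correct maneuver: conjugating the first relation by $c$ so that its bulk becomes $(cba^{-1}c^{-1})^{10n}$, together with finding the precise insertion of cancelling pairs that decomposes the leftover tail $cad^{-1}ac^{-1}$ into previously established positive words. This is exactly the bookkeeping that the chain of Lemmas~\ref{lemma:inEq:Cd}--\ref{lemma:inEq:cbAC} was designed to supply, and it is the sort of search the computer program of Section~1.3 carries out; once those positive words are available, verifying the two displayed cancellations is routine.
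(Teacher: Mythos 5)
Your proof is correct, and it rests on the same core maneuver as the paper's: convert the $(ba^{-1})$-bulk of the first group relation into the conjugate $(cba^{-1}c^{-1})^{k}$, positive by Lemma~\ref{lemma:inEq:cbAC}, and decompose the leftover tail into words already shown positive in \Case{1}{8}{}. The execution is genuinely different, though, and yours is leaner. The paper starts from (\ref{eq3:4}) (tail $bd^{-1}a$), cyclically permutes, and inserts cancelling pairs until the relation reads
\begin{align*}
c(c^{-1}ba^{-1}d)(d^{-1}b)(d^{-1}ad^{-1})(da^{-1}bc^{-1})(cb^{-1}ac^{-1})(cba^{-1}c^{-1})^{10n-1}(cba^{-1})&=1,
\end{align*}
which needs eight kinds of positive factors, including Lemma~\ref{lemma:inEq:CbAd}, Lemma~\ref{lemma:inEq:cbA}, and the case assumption $d^{-1}b>1$. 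Starting instead from (\ref{eq1:3}) (tail $ad^{-1}a$) and conjugating the whole relation by $c$, you need only $ca>1$ and Lemmas~\ref{lemma:inEq:DaD}, \ref{lemma:inEq:dAbC}, \ref{lemma:inEq:cBaC}, and \ref{lemma:inEq:cbAC}, with a single clean exponent $10n$ in place of the paper's $10n-2$/$10n-1$ bookkeeping. (Your route also sidesteps a harmless off-by-one in the paper, which quotes (\ref{eq3:4}) as $(ba^{-1})^{10n}bd^{-1}a=1$ when that equation actually reads $(ba^{-1})^{10n-1}bd^{-1}a=1$.) The saving is local to this proposition rather than global---Lemma~\ref{lemma:inEq:cBaC}, which you still use, is itself proved via Lemma~\ref{lemma:inEq:CbAd}---but as an assembly of the final contradiction your decomposition is shorter and easier to check; the product $(ca)(d^{-1}ad^{-1})(da^{-1}bc^{-1})(cb^{-1}ac^{-1})$ does telescope to $cad^{-1}ac^{-1}$, so both of your displayed identities are correct and the contradiction (a product of positives equal to the identity) goes through for every $n\geq 1$.
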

\begin{proof} By (\ref{eq3:4}), we have:
\begin{align}
(ba^{-1})^{10n}bd^{-1}a&=1\nonumber{}\\
\Rightarrow{}(ba^{-1})(bd^{-1}a)(ba^{-1})^{10n-2}(ba^{-1})&=1\nonumber{}\\
\Rightarrow{}(ba^{-1})(bd^{-1}a)(c^{-1}c)(ba^{-1})^{10n-2}(c^{-1}c)(ba^{-1})&=1\nonumber{}\\
\Rightarrow{}(ba^{-1})(bd^{-1})(ac^{-1})(cba^{-1}c^{-1})^{10n-2}(c)(ba^{-1})&=1\nonumber{}\\
\Rightarrow{}(cc^{-1})(ba^{-1})(dd^{-1})(bd^{-1})(ad^{-1}da^{-1})(bc^{-1}cb^{-1})\nonumber{}\\
(ac^{-1})(cba^{-1}c^{-1})^{10n-1}(cba^{-1})&=1\nonumber{}\\
\Rightarrow{}c(c^{-1}ba^{-1}d)(d^{-1}b)(d^{-1}ad^{-1})(da^{-1}bc^{-1})\nonumber{}\\
(cb^{-1}ac^{-1})(cba^{-1}c^{-1})^{10n-1}(cba^{-1})&=1.\label{proposition:case1.viii:contradiction}
\end{align}
We know that $c^{-1}ba^{-1}d$, $d^{-1}ad^{-1}$, $da^{-1}bc^{-1}$, $cb^{-1}ac^{-1}$, $cba^{-1}c^{-1}$, and  $cba^{-1}$ are all positive in \Case{1}{8}{} by Lemmas~\ref{lemma:inEq:CbAd}, \ref{lemma:inEq:DaD}, \ref{lemma:inEq:dAbC}, \ref{lemma:inEq:cBaC}, \ref{lemma:inEq:cbAC}, and \ref{lemma:inEq:cbA} respectively. Also, $c$ is positive in \Case{1}{8}{} by assumption. Therefore (\ref{proposition:case1.viii:contradiction}) shows that if $G_n$ is left-orderable, then \Case{1}{8}{} is not possible.
\end{proof}



\noindent{}With Proposition~\ref{proposition:case1.viii}, we have eliminated the one remaining sub-case of Case 1. Thus, we have shown that if $G_n$ is left-orderable, then the only option for the signs of the four generators is Case 16. That is, if $G_n$ is left-orderable then $a<1$, $b<1$, $c<1$, and $d<1$.

\section{Case 16}
\label{section:case16}

\noindent{}We will now show that if $G_n$ is left-orderable, then Case 16 (see Table~\ref{table:case16}) is not possible.

\begin{table}[ht]
\begin{center}
\begin{tabular}{l | l | l | l | l}
Case\hspace{10 pt} & $a$\hspace{10 pt} & $b$\hspace{10 pt} & $c$\hspace{10 pt} & $d$\hspace{10 pt} \\\hline\hline
16 & $-$ & $-$ & $-$ & $-$
\end{tabular}
\end{center}
\caption{The signs of the 4 generators in Case 16.}
\label{table:case16}
\end{table}

\noindent{}We start by proving the signs of a few key elements.

\subsection{Inequalities for Case 16}

\begin{lemma} In Case 16, $d^{-1}a>1$.
\label{lemma:case16:Da}
\end{lemma}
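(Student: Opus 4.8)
The plan is to leverage the already-established inequality $d^{-1}a^2>1$ (Lemma~\ref{lemma:inEq3}), which holds in every left-orderable case by virtue of the standing assumption $b^{-1}a>1$, and combine it with the sign of $a$ in Case 16. First I would record that in Case 16 we have $a<1$, so $a$ is negative. The key observation is the factorization
\begin{align*}
d^{-1}a^2=(d^{-1}a)\,a,
\end{align*}
which expresses the positive element $d^{-1}a^2$ as a product whose right factor $a$ is negative.

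The argument then proceeds by contradiction on the sign of $d^{-1}a$. Suppose $d^{-1}a<1$. Since $a<1$ as well, Fact~\ref{1.3LO} (products of negative elements are negative) forces $(d^{-1}a)\,a<1$, i.e. $d^{-1}a^2<1$, contradicting Lemma~\ref{lemma:inEq3}. To rule out the remaining possibility $d^{-1}a=1$, I would invoke Proposition~\ref{proposition:non-trivial}, which guarantees that the generators $a$ and $d$ are distinct, so $a\neq d$ and hence $d^{-1}a\neq1$. Since the ordering is total, the only surviving option is $d^{-1}a>1$.

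The one subtlety worth flagging, and the closest thing to an obstacle, is that a left-invariant order need not be right-invariant, so one cannot simply cancel the trailing $a$ from $d^{-1}a^2>1$ to read off the sign of $d^{-1}a$ directly. The factorization above sidesteps this by applying only Fact~\ref{1.3LO}, which concerns the sign of a product of elements of known sign and requires no cancellation. I expect the entire proof to be a few lines, consisting of the factorization, the appeal to Fact~\ref{1.3LO}, and the distinctness input from Proposition~\ref{proposition:non-trivial}.
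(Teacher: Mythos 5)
Your proposal is correct, but its logical packaging differs from the paper's. The paper argues directly rather than by contradiction: starting from (\ref{eq1:2}), which gives $d^{-1}a^{2}=(b^{-1}a)^{10n}$, it writes $d^{-1}a=(b^{-1}a)^{10n}a^{-1}$, and since $b^{-1}a>1$ by the standing assumption and $a^{-1}>1$ in Case 16, this exhibits $d^{-1}a$ as a product of positive elements, so Fact~\ref{1.3LO} finishes immediately --- no trichotomy, and no appeal to Proposition~\ref{proposition:non-trivial}. You instead treat Lemma~\ref{lemma:inEq3} ($d^{-1}a^{2}>1$) as a black box, use the factorization $d^{-1}a^{2}=(d^{-1}a)\,a$ with the negative factor $a$ on the right, and eliminate $d^{-1}a<1$ and $d^{-1}a=1$ separately. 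Both proofs rest on exactly the same identity and the same two sign inputs, so the content is the same; the paper's version buys brevity and one fewer hypothesis, while yours buys a cleanly isolated statement of the pitfall (left-invariant orders do not permit right cancellation) and works even if one only remembers the inequality $d^{-1}a^{2}>1$ rather than the explicit power expression. One small streamlining of your argument: the appeal to Proposition~\ref{proposition:non-trivial} is unnecessary, since $d^{-1}a=1$ would give $d^{-1}a^{2}=a<1$, contradicting Lemma~\ref{lemma:inEq3} by the same factorization you already use.
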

\begin{proof}
By (\ref{eq1:2}):
\begin{align*}
d^{-1}a^{2}=(b^{-1}a)^{10n} \\
\Rightarrow{}d^{-1}a=(b^{-1}a)^{10n}a^{-1}.
\end{align*}
Thus, $d^{-1}a$ is a product of positive elements.\qedhere
\end{proof}

\begin{lemma} In Case 16, $c^{-1}b>1$.
\label{lemma:case16:Cb}
\end{lemma}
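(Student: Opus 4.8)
The plan is to mirror the proof of Lemma~\ref{lemma:case16:Da} almost verbatim, simply trading the first group relation for the second. The key observation is that the two relations play symmetric roles: the derivation of $d^{-1}a>1$ used the rearrangement (\ref{eq1:2}), namely $(b^{-1}a)^{10n}=d^{-1}a^{2}$, and the second relation gives the parallel identity (\ref{eq2:2}), namely $(b^{-1}a)^{10n}=c^{-1}b^{2}$. So I would begin by recalling (\ref{eq2:2}) and peeling off a single $b$ from the right-hand side, multiplying both sides on the right by $b^{-1}$ to isolate the element of interest:
\begin{align*}
(b^{-1}a)^{10n}&=c^{-1}b^{2}\\
\Rightarrow{}c^{-1}b&=(b^{-1}a)^{10n}b^{-1}.
\end{align*}

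Next I would verify that the right-hand side is a product of positive elements. The factor $(b^{-1}a)^{10n}$ is positive because $b^{-1}a>1$ is the standing general assumption, so its powers are positive by Fact~\ref{1.3LO}. The remaining factor $b^{-1}$ is positive precisely because we are in Case 16, where $b<1$ forces $b^{-1}>1$ by Fact~\ref{fact:inverses}. Hence $c^{-1}b$ is a product of positives and is therefore itself greater than $1$, which is exactly the claim.

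I do not anticipate any genuine obstacle here; the statement is a direct analogue of Lemma~\ref{lemma:case16:Da}, and the only substantive change is that we strip off one copy of $b$ (so that $b^{-1}$ must be positive) rather than one copy of $a$. The hypothesis $b<1$ of Case 16 is doing all the work, so the argument is essentially a one-line consequence of (\ref{eq2:2}) together with the case assumption and the general assumption $b^{-1}a>1$.
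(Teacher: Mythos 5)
Your proposal is correct and is essentially identical to the paper's own proof: both rearrange (\ref{eq2:2}) to $c^{-1}b=(b^{-1}a)^{10n}b^{-1}$ and observe that the right-hand side is a product of positives, using the general assumption $b^{-1}a>1$ and the Case 16 hypothesis $b<1$.
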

\begin{proof}
By (\ref{eq2:2}):
\begin{align*}
c^{-1}b^{2}=(b^{-1}a)^{10n}\\
\Rightarrow{}c^{-1}b=(b^{-1}a)^{10n}b^{-1}.
\end{align*}
Thus, $c^{-1}b$ is a product of positive elements.
\end{proof}

\begin{lemma} In Case 16, $c^{-1}d>1$.
\label{lemma:case16:Cd}
\end{lemma}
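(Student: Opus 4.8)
The plan is to mirror the argument used for the analogous statement in Case 8 (Lemma~\ref{lemma:inEq8.4}), since the signs of $b$, $c$, and $d$ coincide in Cases 8 and 16. First I would observe that in Case 16 we have $c<1$, so $c^{-2}>1$, and that $c^{-1}b>1$ by Lemma~\ref{lemma:case16:Cb}. By Fact~\ref{1.3LO}, the product $c^{-1}bc^{-2}$ is then a product of positive elements and hence positive.

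Next I would invoke the third group relation, $(d^{-1}c)^{10n+3}c^{-1}bc^{-2}=1$. Since the factor $c^{-1}bc^{-2}$ is positive, the complementary factor $(d^{-1}c)^{10n+3}$ must be negative, that is $(d^{-1}c)^{10n+3}<1$. By Proposition~\ref{proposition:pospowers}, an element and its positive powers share the same sign, so $d^{-1}c<1$, which is equivalent to $c^{-1}d>1$, as required.

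I expect no genuine obstacle here: the argument is essentially a transcription of the Case 8 proof, relying only on the already-established sign of $c^{-1}b$. The single point requiring care is confirming that the hypothesis $c^{-1}b>1$ really does hold under the Case 16 sign assignment. This is exactly the content of Lemma~\ref{lemma:case16:Cb}, whose proof uses only the general assumption $b^{-1}a>1$ together with $b<1$ (via the rearranged second relation $c^{-1}b=(b^{-1}a)^{10n}b^{-1}$), both of which are valid in Case 16. Thus the chain of sign deductions goes through verbatim.
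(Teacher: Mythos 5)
Your proof is correct, and the citations are legitimate: Lemma~\ref{lemma:case16:Cb} appears before this lemma in the paper and its proof uses only the general assumption $b^{-1}a>1$ together with $b<1$, so there is no circularity. Your route, however, differs from the paper's own. The paper argues by contradiction: assuming $d^{-1}c>1$, it rearranges the third relation to $(d^{-1}c)^{10n+3}=c^{2}b^{-1}c$, concludes $c^{2}b^{-1}c>1$, multiplies by $c^{-1}b^{2}=(b^{-1}a)^{10n}>1$ to obtain $c^{2}b>1$, contradicting $b<1$ and $c<1$. That argument only rules out $c^{-1}d<1$, so the paper must then invoke Proposition~\ref{proposition:non-trivial} ($c\neq d$) to upgrade $c^{-1}d\geq 1$ to the strict inequality. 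Your direct version --- $c^{-1}bc^{-2}=(c^{-1}b)(c^{-2})>1$ forces $(d^{-1}c)^{10n+3}<1$, and Proposition~\ref{proposition:pospowers} transfers the sign to $d^{-1}c$ itself --- is, as you intended, a verbatim transplant of the paper's Case 8 argument (Lemma~\ref{lemma:inEq8.4}), with Lemma~\ref{lemma:case16:Cb} playing the role of Lemma~\ref{lemma:inEq8.3}. It is slightly more economical than the paper's Case 16 proof: the power-sign proposition automatically excludes $d^{-1}c=1$, so no separate non-triviality step is needed, and it makes the parallelism between Cases 8 and 16 explicit rather than incidental.
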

\begin{proof} Suppose (for contradiction) that $c^{-1}d<1$, or equivalently that $d^{-1}c>1$. By the third group relation, we have:
\begin{align}
(d^{-1}c)^{10n+3}c^{-1}bc^{-2}&=1\nonumber{}\\
\Rightarrow{}(d^{-1}c)^{10n+3}&=c^{2}b^{-1}c.\label{lemma:case16:Cd:contradiction}
\end{align}
Since we are assuming $d^{-1}c>1$, (\ref{lemma:case16:Cd:contradiction}) shows that $c^{2}b^{-1}c>1$. By (\ref{eq2:2}), $c^{-1}b^{2}=(b^{-1}a)^{10n}>1$, thus:
\begin{align*}
(c^{2}b^{-1}c)(c^{-1}b^{2})>1\\
\Rightarrow{}c^{2}b>1.
\end{align*}
This is a contradiction, since both $b$ and $c$ are negative in Case 16. Therefore, $c^{-1}d\geq{}1$. However, by Proposition~\ref{proposition:non-trivial}, $c\neq{}d$ and thus $c^{-1}d\neq{}1$. Therefore, $c^{-1}d>1$.
\end{proof}

\begin{corollary} In Case 16, $c^{-1}a>1$.
\label{lemma:case16:Ca}
\end{corollary}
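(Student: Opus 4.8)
The plan is to write $c^{-1}a$ as a product of factors whose signs have already been pinned down in the preceding lemmas, and then invoke Fact~\ref{1.3LO} that a product of positive elements is positive. The obvious decomposition is
\begin{align*}
c^{-1}a=(c^{-1}d)(d^{-1}a),
\end{align*}
which telescopes correctly since the interior $d^{-1}d$ cancels.

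First I would note that $c^{-1}d>1$ in Case 16 by Lemma~\ref{lemma:case16:Cd}, and that $d^{-1}a>1$ in Case 16 by Lemma~\ref{lemma:case16:Da}. Both of these are among the ``key elements'' whose signs were established at the start of this subsection, so no new computation with the group relations is needed. Having both factors positive, the product $c^{-1}a=(c^{-1}d)(d^{-1}a)$ is a product of positive elements, hence positive by Fact~\ref{1.3LO}, giving $c^{-1}a>1$ as claimed.

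There is essentially no obstacle here: the statement is labelled a corollary precisely because it follows immediately from the two preceding lemmas by transitivity of the ordering through the intermediate generator $d$. The only thing to double-check is that the chosen insertion point is $d$ rather than $b$; one could equally try $c^{-1}a=(c^{-1}b)(b^{-1}a)$, using $c^{-1}b>1$ from Lemma~\ref{lemma:case16:Cb} together with the standing assumption $b^{-1}a>1$, and this gives the same conclusion, so either route works.
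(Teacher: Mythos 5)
Your proof is correct and is exactly the paper's own argument: the same decomposition $c^{-1}a=(c^{-1}d)(d^{-1}a)$, justified by the same two lemmas (Lemma~\ref{lemma:case16:Cd} and Lemma~\ref{lemma:case16:Da}). The alternative route through $b$ that you mention is also valid, but your main argument matches the paper verbatim.
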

\begin{proof}
This follows from Lemma~\ref{lemma:case16:Da} and Lemma~\ref{lemma:case16:Cd} since:
\begin{align*}
c^{-1}a&=(c^{-1}d)(d^{-1}a).\qedhere
\end{align*}
\end{proof}

\begin{lemma} In Case 16, $ba^{-1}>1$.
\label{lemma:case16:bA}
\end{lemma}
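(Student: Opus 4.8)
The plan is to reduce the claim to determining the sign of a single element and then to read that sign off from a suitably rearranged group relation. By Lemma~\ref{lemma:eq8} we have $d^{2}a=c^{2}b$, which rearranges to $ba^{-1}=c^{-2}d^{2}$; equivalently, from (\ref{eq2:3}) we have $(ba^{-1})^{10n}=b^{-1}cb^{-1}$, and from (\ref{eq3:4}) we have $(ba^{-1})^{10n-1}=a^{-1}db^{-1}$. Since $n\geq1$, both exponents exceed $1$, so by Proposition~\ref{proposition:pospowers} the element $ba^{-1}$ has the same sign as $c^{-2}d^{2}$, as $b^{-1}cb^{-1}$, and as $a^{-1}db^{-1}$. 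It therefore suffices to show that any one of these is positive; I would aim for $c^{-2}d^{2}>1$, since its sign is the one most directly exposed by the defining relations.

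Next I would feed the third group relation through the same rearrangement used in Cases~4 and~8. Inserting $cc^{-1}$ and $dd^{-1}$ pairs and cyclically permuting turns $(d^{-1}c)^{10n+3}c^{-1}bc^{-2}=1$ into $(bc^{-1})\bigl([c^{-1}d][d^{-2}c^{2}]\bigr)^{10n+3}(c^{-2})=1$, where $d^{-2}c^{2}=(c^{-2}d^{2})^{-1}=(ba^{-1})^{-1}$ is exactly the reciprocal of the target. In Case~16 we already know $c^{-1}d>1$ (Lemma~\ref{lemma:case16:Cd}) and $c^{-2}>1$, so every factor of this identity except $bc^{-1}$ and the repeated block $[d^{-2}c^{2}]$ has known sign. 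I would argue by contradiction: assuming $ba^{-1}<1$ forces $d^{-2}c^{2}>1$, whence the bracketed block is positive and the identity collapses to the constraint $bc^{-1}<1$. Combining this with the established inequalities $d^{-1}a>1$, $c^{-1}b>1$, $c^{-1}a>1$ (Lemmas~\ref{lemma:case16:Da}, \ref{lemma:case16:Cb}, and Corollary~\ref{lemma:case16:Ca}) together with Corollary~\ref{corollary:eq6} should produce a product of positive elements equal to the identity, the desired contradiction.

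The hard part is precisely this last matching step. Because $ba^{-1}$ is of the mixed form $xy^{-1}$ rather than $x^{-1}y$, none of the order comparisons already recorded for Case~16 settle it directly, and---$G_n$ being only left-orderable---I cannot pass signs through conjugation to reduce $b^{-1}cb^{-1}$ or $a^{-1}da^{-1}$ to a known quantity. Consequently the proof hinges on locating the one rearrangement of a relation in which every remaining factor has a sign already pinned in Case~16; in practice this may require first establishing an auxiliary sign (for instance that of $bc^{-1}$), so that the assumption $ba^{-1}<1$ contradicts it outright. Finding this particular insertion pattern is exactly the bookkeeping that the authors' automated search is designed to carry out.
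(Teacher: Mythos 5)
Your first paragraph is sound and in fact coincides with the paper's own starting point: Lemma~\ref{lemma:eq8} gives $ba^{-1}=c^{-2}d^{2}$, so everything reduces to the sign of $c^{-2}d^{2}$. Your second step is also valid as far as it goes: the Case~4/Case~8 rearrangement of the third relation, $(bc^{-1})([c^{-1}d][d^{-2}c^{2}])^{10n+3}(c^{-2})=1$, together with Lemma~\ref{lemma:case16:Cd} and $c^{-2}>1$, does show that assuming $ba^{-1}<1$ forces $bc^{-1}<1$ (you should also rule out $ba^{-1}=1$, which follows from Proposition~\ref{proposition:non-trivial}). But the argument stops exactly where it must deliver: no contradiction is ever exhibited, and you concede as much in your closing paragraph. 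Moreover, the ingredients you name for the last step do not visibly suffice. In the identity $a^{2}b^{-2}cd^{-1}=1$ from Corollary~\ref{corollary:eq6}, the letter $c$ is cyclically adjacent only to $b^{-1}$ and $d^{-1}$, so without further insertions it can only occur as a lone $c$, as $b^{-1}c$, or as $cd^{-1}$; the first two are negative (Case~16 and Lemma~\ref{lemma:case16:Cb}), and the sign of $cd^{-1}$ is genuinely undetermined at this stage --- indeed the sign of $dc^{-1}$ is precisely what separates the two sub-cases of Case~16 treated later (Table~\ref{table:cases16b}). So the promised ``product of positive elements equal to the identity'' does not materialize from Corollary~\ref{corollary:eq6} plus the inequalities you list, and deriving $bc^{-1}<1$ turns out to be a detour.

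The missing idea is that the contradiction lives in the \emph{fourth} group relation, which your proposal never touches. The paper rewrites $d^{2}a^{-1}d(c^{-1}d)^{10n+3}=1$ as $da^{-1}d^{2}([d^{-1}c][c^{-2}d^{2}])^{10n+3}=1$ and then substitutes $c^{-2}d^{2}=ba^{-1}$ via Lemma~\ref{lemma:eq8}. In that identity every factor other than $ba^{-1}$ has a known negative sign in Case~16: $da^{-1}d^{2}=d(a^{-1}d)d<1$ because $d<1$ and $a^{-1}d<1$ by Lemma~\ref{lemma:case16:Da}, and $d^{-1}c<1$ by Lemma~\ref{lemma:case16:Cd}. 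Hence $([d^{-1}c][ba^{-1}])^{10n+3}>1$, so $[d^{-1}c][ba^{-1}]>1$ by Proposition~\ref{proposition:pospowers}, and since $d^{-1}c<1$ this forces $ba^{-1}>1$ outright --- no contradiction hypothesis, no auxiliary sign for $bc^{-1}$, and no search needed. If you want to salvage your outline, replace the appeal to Corollary~\ref{corollary:eq6} by this fourth-relation identity: under your hypothesis $ba^{-1}<1$ all of its factors become negative, and a product of negatives equal to the identity is the immediate contradiction.
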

\begin{proof}
Starting from the fourth group relation, we have:
\begin{align}
d^{2}a^{-1}d(c^{-1}d)^{10n+3}&=1\nonumber{}\\
\Rightarrow{}d^{2}a^{-1}d^{2}(d^{-1}(cc^{-1})c^{-1}d^{2})^{10n+3}d^{-1}&=1\nonumber{}\\
\Rightarrow{}da^{-1}d^{2}([d^{-1}c][c^{-2}d^{2}])^{10n+3}&=1\nonumber{}\\
\Rightarrow{}da^{-1}d^{2}([d^{-1}c][ba^{-1}])^{10n+3}&=1,\label{lemma:case16:bA:contradiction}
\end{align}
where the last implication follows from Lemma \ref{lemma:eq8}. Now $a^{-1}d<1$ by Lemma~\ref{lemma:case16:Da}, so we know $da^{-1}d^{2}<1$. Therefore, (\ref{lemma:case16:bA:contradiction}) tells us that:
\begin{align*}
([d^{-1}c][ba^{-1}])^{10n+3}>1,
\end{align*}
but $d^{-1}c<1$ by Lemma~\ref{lemma:case16:Cd}, so we must have:
\begin{align*}
ba^{-1}&>1.\qedhere
\end{align*}
\end{proof}

\subsection{Concordance of signs of a few useful elements}

Next we will show that in Case 16, $bc^{-1}$, $dc^{-1}$, $ad^{-1}$, $ac^{-1}$, $bc^{-3}$, and $ad^{-3}$ must all have the same sign. We begin by showing that all of these elements are non-trivial.

\begin{proposition}
In Case 16, $bc^{-1}\neq1$, $dc^{-1}\neq1$, $ad^{-1}\neq1$, $ac^{-1}\neq1$, $bc^{-3}\neq1$, and $ad^{-3}\neq1$.
\end{proposition}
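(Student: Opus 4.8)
The plan is to establish the non-triviality of each of the six elements $bc^{-1}$, $dc^{-1}$, $ad^{-1}$, $ac^{-1}$, $bc^{-3}$, and $ad^{-3}$ by the same elementary mechanism used throughout Section~\ref{section:identityProofs}: pushing everything through the abelianization homomorphism $\phi:G_n\to\mathbb{Z}_{25}$ from Proposition~\ref{proposition:non-trivial}, where $\phi(a)=13$, $\phi(b)=3$, $\phi(c)=6$, and $\phi(d)=1$. An element $w\in G_n$ is automatically non-trivial whenever $\phi(w)\neq 0$ in $\mathbb{Z}_{25}$, so for each of the six words I would simply compute its image and check that it is non-zero.

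First I would record the four relevant differences and confirm they are non-zero modulo $25$: $\phi(bc^{-1})=\phi(b)-\phi(c)=3-6=-3\equiv 22$, $\phi(dc^{-1})=\phi(d)-\phi(c)=1-6=-5\equiv 20$, $\phi(ad^{-1})=\phi(a)-\phi(d)=13-1=12$, and $\phi(ac^{-1})=\phi(a)-\phi(c)=13-6=7$. Since none of $22,20,12,7$ is $0$ in $\mathbb{Z}_{25}$, all four of $bc^{-1}$, $dc^{-1}$, $ad^{-1}$, and $ac^{-1}$ are non-trivial. For the last two I would compute $\phi(bc^{-3})=\phi(b)-3\phi(c)=3-18=-15\equiv 10$ and $\phi(ad^{-3})=\phi(a)-3\phi(d)=13-3=10$; both are $10\neq 0$, so $bc^{-3}$ and $ad^{-3}$ are non-trivial as well. (Alternatively, for these two one may cite Corollary~\ref{corollary:non-trivial2}, which already establishes that $bc^{-3}\neq 1$ and $ad^{-3}\neq 1$ directly from the third and fourth group relations.)

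There is no real obstacle here: the entire statement reduces to six modular arithmetic computations in $\mathbb{Z}_{25}$, each of which is immediate once the images of the generators are known. The only thing to be careful about is bookkeeping with the coefficient $-3$ in the two cubed terms and reducing correctly modulo $25$; but since the map $\phi$ is a group homomorphism, $\phi$ sends products of generators to sums of their images and inverses to negatives, so every computation is a one-line addition. Thus all six elements map to non-zero classes and are therefore non-trivial, completing the proposition and setting up the subsequent argument that these six elements must all share the same sign.
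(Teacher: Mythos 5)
Your proposal is correct, and all six computations check out ($22, 20, 12, 7, 10, 10$, all non-zero in $\mathbb{Z}_{25}$), but it takes a somewhat different route from the paper. The paper's proof is a one-line citation: the first four elements ($bc^{-1}$, $dc^{-1}$, $ad^{-1}$, $ac^{-1}$) are non-trivial because the generators are \emph{distinct} by Proposition~\ref{proposition:non-trivial}, and the last two are non-trivial by Corollary~\ref{corollary:non-trivial2}, which rearranges the third and fourth group relations to get $bc^{-3}=ad^{-3}=(dc^{-1})^{10n+3}$ and then invokes $dc^{-1}\neq 1$. Your argument instead pushes all six words through $\phi$ uniformly. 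For the first four elements this is the same mechanism merely unpacked, since the paper's distinctness claim was itself proved via $\phi$. For $bc^{-3}$ and $ad^{-3}$, however, your direct computation is genuinely different from, and arguably cleaner than, the paper's: the Corollary's step from $dc^{-1}\neq 1$ to $(dc^{-1})^{10n+3}\neq 1$ is not valid in an arbitrary group (a non-trivial element can have trivial powers) and implicitly relies on torsion-freeness, which is available here only because Case 16 sits inside the standing assumption that $G_n$ is left-orderable (Fact~\ref{fact:torsion}); your mod-$25$ computation gives $\phi(bc^{-3})=\phi(ad^{-3})=10\neq 0$ outright, with no hypothesis on $G_n$ at all. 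What the paper's route buys is brevity and reuse of established statements; what yours buys is a self-contained, unconditional argument that does not depend on any orderability assumption.
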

\begin{proof}
Consequence of Proposition~\ref{proposition:non-trivial} and Corollary~\ref{corollary:non-trivial2}.
\end{proof}

\noindent{}Now that we know that each of these elements can only be positive or negative, we proceed to show that they all have the same sign.

\begin{lemma} In Case 16, $ad^{-1}<1$ implies $dc^{-1}<1$.
\label{lemma:iffs:1}
\end{lemma}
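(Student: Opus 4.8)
My plan is to argue by contradiction. Assuming $G_n$ is left-orderable with the Case~16 signs ($a,b,c,d<1$) and $ad^{-1}<1$, I would suppose for contradiction that $dc^{-1}\geq 1$; since $c\neq d$ by Proposition~\ref{proposition:non-trivial}, this forces $dc^{-1}>1$. The first move is to push the hypothesis through Lemma~\ref{lemma:eq16}: since $ad^{-1}<1$ and $a<1$, Fact~\ref{1.3LO} gives $ad^{-1}a<1$, and the identity $ad^{-1}a=bc^{-1}b$ of Lemma~\ref{lemma:eq16} then yields $bc^{-1}b<1$. This is the one genuinely new piece of sign information the hypothesis supplies, and I expect it to be the key ingredient.

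Next I would reformulate the goal so it meshes with the sign data already available. By Corollary~\ref{corollary:eq6} we have $dc^{-1}=a^2b^{-2}$, and by Corollary~\ref{corollary:non-trivial2}, $(dc^{-1})^{10n+3}=bc^{-3}=ad^{-3}$, so by Proposition~\ref{proposition:pospowers} the sign of $dc^{-1}$ agrees with that of $bc^{-3}$ and $ad^{-3}$. It therefore suffices to contradict $dc^{-1}>1$ using $bc^{-1}b<1$ together with the Case~16 inequalities already in hand, namely $d^{-1}a>1$, $c^{-1}b>1$, $c^{-1}d>1$, and $c^{-1}a>1$ (Lemmas~\ref{lemma:case16:Da}, \ref{lemma:case16:Cb}, \ref{lemma:case16:Cd}, Corollary~\ref{lemma:case16:Ca}), and $ba^{-1}>1$ (Lemma~\ref{lemma:case16:bA}), from which Lemma~\ref{lemma:eq8} also gives $d^{-2}c^2=ab^{-1}<1$. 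Following the template of the proof of Lemma~\ref{lemma:case16:bA}, I would take a group relation carrying a $(\cdot)^{10n+3}$ factor --- the third relation in the form $c^2b^{-1}(dc^{-1})^{10n+3}c=1$ of~(\ref{eq3:2}) is the natural candidate --- and insert inverse pairs $xx^{-1}$ so as to rewrite it as a single product of parenthesized blocks, each of which is forced to be positive by the listed inequalities together with $dc^{-1}>1$ and $bc^{-1}b<1$. A product of positive elements equal to $1$ is then the desired contradiction.

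The main obstacle is the asymmetry inherent to left-orders: the established positive elements all have the form $y^{-1}x$ (for instance $c^{-1}d$, $d^{-1}a$, $c^{-1}b$), whereas the quantities we must control --- $dc^{-1}$, $ad^{-1}$, $bc^{-1}$ --- have the form $xy^{-1}$, and since $y^{-1}x$ and $xy^{-1}$ are only conjugate they need not share a sign. Consequently one cannot simply invert or right-multiply the known inequalities to read off the target, and the $(\cdot)^{10n+3}$-bearing relation is precisely what transfers information across this divide. The delicate part will be choosing the insertions of $xx^{-1}$ pairs so that each resulting block is exactly one of the controlled elements --- pairing $bc^{-1}$-type blocks against $c^{-1}d$- and $ab^{-1}$-type blocks, in the spirit of the bracketing $[c^{-1}d][d^{-2}c^2]$ that already appears in Lemma~\ref{lemma:case16:bA}. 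I expect the real work to lie not in spotting the bracketing but in verifying that it closes up into a genuinely all-positive word once $bc^{-1}b<1$ and $dc^{-1}>1$ are both imposed.
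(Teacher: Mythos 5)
Your setup is sound (arguing by contradiction from $dc^{-1}>1$, justified by non-triviality) and every auxiliary fact you derive is correct, but the proof itself is missing: you never exhibit the bracketing, and in this paper's method the bracketing \emph{is} the proof. Worse, the vehicle you commit to is almost certainly a dead end. The third relation in the form~(\ref{eq3:2}) cyclically reduces to $bc^{-3}=(dc^{-1})^{10n+3}$, so under the assumption $dc^{-1}>1$ it merely asserts $bc^{-3}>1$; nothing on your list of inequalities determines the sign of $bc^{-3}$, so no contradiction can fall out. Indeed, the third relation \emph{ties} the sign of $bc^{-3}$ to that of $dc^{-1}$ (this is exactly the content of Lemma~\ref{lemma:iffs:5}); it cannot refute either sign on its own. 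The information that kills $dc^{-1}>1$ lives in the \emph{second} relation, with its $(b^{-1}a)^{10n}$ structure. The paper rewrites $b^{-2}c(b^{-1}a)^{10n}=1$, using Corollary~\ref{corollary:eq6} ($a^{2}b^{-2}=dc^{-1}$) and Lemma~\ref{lemma:eq16} ($b^{-1}cb^{-1}=a^{-1}da^{-1}$) purely as identities, into
\begin{align*}
da^{-2}\left([dc^{-1}][ba^{-1}]\right)^{10n}=1,
\end{align*}
and then $da^{-2}=(da^{-1})(a^{-1})>1$ from the hypothesis and Case 16, $ba^{-1}>1$ from Lemma~\ref{lemma:case16:bA}, and $dc^{-1}>1$ by assumption exhibit a product of positives equal to $1$.

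There is a second, related misstep: you call $bc^{-1}b<1$ ``the one genuinely new piece of sign information the hypothesis supplies,'' and your working list of inequalities omits the hypothesis itself. But $bc^{-1}b<1$, i.e.\ $ad^{-1}a<1$, is strictly weaker than $ad^{-1}<1$: from $ad^{-1}a<1$ and $a<1$ one cannot recover the sign of $ad^{-1}$, since a negative times a positive is indeterminate. The paper's certificate never uses the sign of $bc^{-1}b$ at all --- Lemma~\ref{lemma:eq16} enters only to rewrite the relator --- and it needs the hypothesis at full strength, as $da^{-1}>1$, to make $da^{-2}$ positive. So the fix is twofold: keep $da^{-1}>1$ in your positive set rather than trading it for the weaker conjugate-shaped fact, and aim your bracketing at the second relation rather than the third; with those changes your strategy reproduces the paper's argument.
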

\begin{proof}
Starting from the second group relation, we have:
\begin{align}
b^{-2}c(b^{-1}a)^{10n}&=1\nonumber{}\\
\Rightarrow{}b^{-1}c(b^{-1}a)^{10n}b^{-1}&=1\nonumber{}\\
\Rightarrow{}b^{-1}cb^{-1}(ab^{-1})^{10n}&=1\nonumber{}\\
\Rightarrow{}b^{-1}cb^{-1}a^{-1}(a^{2}b^{-1}a^{-1})^{10n}a&=1\nonumber{}\\
\Rightarrow{}ab^{-1}cb^{-1}a^{-1}((a^{2}b^{-2})(ba^{-1}))^{10n}&=1\nonumber{}\\
\Rightarrow{}a(b^{-1}cb^{-1})a^{-1}([dc^{-1}][ba^{-1}])^{10n}&=1\nonumber{}\\
\Rightarrow{}da^{-2}([dc^{-1}][ba^{-1}])^{10n}&=1.\label{lemma:iffs:1:contradiction}
\end{align}
Where the second to last implication follows from Corollary~\ref{corollary:eq6} and last implication follows from Lemma~\ref{lemma:eq16}. Now if $ad^{-1}<1$ we have:
\begin{align*}
da^{-1}&>1\\
\Rightarrow{}da^{-2}&>1.
\end{align*}
Thus if $ad^{-1}<1$, (\ref{lemma:iffs:1:contradiction}) implies:
\begin{align*}
(dc^{-1})(ba^{-1})<1,
\end{align*}
which shows that $dc^{-1}<1$, since $ba^{-1}>1$ by Lemma~\ref{lemma:case16:bA}.
\end{proof}

\begin{lemma} In Case 16, $dc^{-1}>1$ implies $ac^{-1}>1$.
\label{lemma:iffs:2.1}
\end{lemma}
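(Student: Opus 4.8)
The plan is to read off the sign of $ac^{-1}$ by writing it as a product of two elements that are already known to be positive. The purely algebraic identity $ac^{-1}=(ad^{-1})(dc^{-1})$ holds in any group, and under the hypothesis $dc^{-1}>1$ the second factor is positive. So the entire problem reduces to establishing that the first factor $ad^{-1}$ is positive as well.

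To get $ad^{-1}>1$ I would invoke the contrapositive of Lemma~\ref{lemma:iffs:1}. That lemma says $ad^{-1}<1$ forces $dc^{-1}<1$; since we are assuming $dc^{-1}>1$, the element $ad^{-1}$ cannot be negative. By the non-triviality proposition for Case 16 we also know $ad^{-1}\neq1$, and these two facts together leave no option but $ad^{-1}>1$. With both $ad^{-1}>1$ and $dc^{-1}>1$ in hand, Fact~\ref{1.3LO} shows their product $ac^{-1}$ is positive, which is exactly the desired conclusion.

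I do not anticipate any genuine obstacle: the argument is a short assembly of previously established inequalities. The only point that requires a moment's care is the passage to the contrapositive, which is legitimate precisely because $ad^{-1}\neq1$ in Case 16 — this is what guarantees that ``not negative'' really does mean ``positive.'' Everything else is the single factorization $ac^{-1}=(ad^{-1})(dc^{-1})$ followed by closure of the positive cone under multiplication.
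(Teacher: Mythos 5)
Your proposal is correct and takes essentially the same route as the paper: both arguments obtain $ad^{-1}>1$ from the contrapositive of Lemma~\ref{lemma:iffs:1} and then conclude via the factorization $ac^{-1}=(ad^{-1})(dc^{-1})$, a product of positives. Your explicit appeal to the non-triviality of $ad^{-1}$ (to turn ``not negative'' into ``positive'') is a detail the paper leaves implicit, but the proof is otherwise identical.
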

\begin{proof}
By Lemma~\ref{lemma:iffs:1}:
\begin{align*}
dc^{-1}>1 \Rightarrow{} ad^{-1}>1.
\end{align*}
This completes the proof since:
\begin{align*}
ac^{-1}&=(ad^{-1})(dc^{-1}).\qedhere
\end{align*}
\end{proof}

\begin{lemma} In Case 16, $ad^{-1}<1$ implies $ac^{-1}<1$.
\label{lemma:iffs:2.2}
\end{lemma}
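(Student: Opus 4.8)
The plan is to reduce the statement to the ``product of negatives is negative'' principle via the same telescoping trick used throughout this section. First I would invoke Lemma~\ref{lemma:iffs:1}: since we are assuming $ad^{-1}<1$, that lemma immediately yields $dc^{-1}<1$. This is the only nontrivial input, and it is already in hand, so the hypothesis of the present lemma feeds directly into the hypothesis of Lemma~\ref{lemma:iffs:1}.

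Next I would exploit the factorization $ac^{-1}=(ad^{-1})(dc^{-1})$, which holds in any group since the middle $d^{-1}d$ cancels; this is the same cancellation that underlies Corollary~\ref{lemma:case16:Ca}. Having just established that both factors $ad^{-1}$ and $dc^{-1}$ are negative, Fact~\ref{1.3LO} tells us that their product is negative. Hence $ac^{-1}<1$, which is exactly the desired conclusion.

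I do not expect any genuine obstacle in this argument: all of the substantive work is concentrated in Lemma~\ref{lemma:iffs:1}, and once $dc^{-1}<1$ is available the conclusion is a one-line application of the factorization together with Fact~\ref{1.3LO}. The only point requiring a small amount of care is bookkeeping of signs—ensuring that the two factors are genuinely both negative (rather than merely non-positive), which is guaranteed because $ad^{-1}$ and $dc^{-1}$ are each known to be non-trivial in Case 16, so the strict inequalities are legitimate and their product is strictly less than $1$.
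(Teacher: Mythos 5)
Your proposal is correct and is essentially identical to the paper's proof: both invoke Lemma~\ref{lemma:iffs:1} to get $dc^{-1}<1$ from the hypothesis $ad^{-1}<1$, and then conclude via the factorization $ac^{-1}=(ad^{-1})(dc^{-1})$ and the fact that a product of negative elements is negative. The extra caution about strictness is unnecessary (the hypotheses are already strict inequalities in a total order) but harmless.
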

\begin{proof}
By Lemma~\ref{lemma:iffs:1}:
\begin{align*}
ad^{-1}<1 \Rightarrow{} dc^{-1}<1.
\end{align*}
This completes the proof since:
\begin{align*}
ac^{-1}&=(ad^{-1})(dc^{-1}).\qedhere
\end{align*}
\end{proof}

\begin{corollary} In Case 16, $dc^{-1}>1$ if and only if $ad^{-1}>1$ if and only if $ac^{-1}>1$.
\label{lemma:iffs:3}
\end{corollary}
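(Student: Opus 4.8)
The plan is to read the statement as the assertion that the three elements $dc^{-1}$, $ad^{-1}$, and $ac^{-1}$ share a common sign. By Proposition~\ref{proposition:non-trivial} each of these is nontrivial, hence strictly positive or strictly negative, so the corollary is precisely the claim that the only admissible sign patterns are ``all positive'' and ``all negative.'' I would assemble this from the three preceding lemmas, noting first that each of them is only one-directional, so some care is needed to close the loop.

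First I would collect the directions the three lemmas hand over for free. Read contrapositively, Lemma~\ref{lemma:iffs:1} gives $dc^{-1}>1\Rightarrow ad^{-1}>1$, and together with Lemma~\ref{lemma:iffs:2.1} ($dc^{-1}>1\Rightarrow ac^{-1}>1$) this shows that as soon as $dc^{-1}>1$ all three elements are positive. Dually, reading Lemmas~\ref{lemma:iffs:1} and~\ref{lemma:iffs:2.2} directly, $ad^{-1}<1$ forces both $dc^{-1}<1$ and $ac^{-1}<1$, so as soon as $ad^{-1}<1$ all three are negative. Hence the desired equivalence already holds in every configuration except the mixed one $ad^{-1}>1$ with $dc^{-1}<1$; in that configuration Lemmas~\ref{lemma:iffs:2.1}--\ref{lemma:iffs:2.2} leave the sign of $ac^{-1}$ entirely undetermined, so nothing handed over so far excludes it.

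The crux, and what I expect to be the main obstacle, is therefore to rule out this last configuration, i.e.\ to establish the converse of Lemma~\ref{lemma:iffs:1}: that $ad^{-1}>1\Rightarrow dc^{-1}>1$. This cannot be extracted by sign-arithmetic from the factorization $ac^{-1}=(ad^{-1})(dc^{-1})$, because in a left-ordered group the product of a positive and a negative element has indeterminate sign; it genuinely requires input from the relations. I would obtain it by a dedicated computation in the spirit of Lemma~\ref{lemma:iffs:1}: starting from the identity $da^{-2}\bigl([dc^{-1}][ba^{-1}]\bigr)^{10n}=1$ derived there, and feeding in the already-established Case~16 signs $ba^{-1}>1$ (Lemma~\ref{lemma:case16:bA}) and $c^{-1}d>1$ (Lemma~\ref{lemma:case16:Cd}) together with the substitution $ba^{-1}=c^{-2}d^{2}$ coming from Lemma~\ref{lemma:eq8}, I would rearrange one of the four relations into a word in which $dc^{-1}$ is surrounded only by cofactors of known positive sign, so that assuming $ad^{-1}>1$ (equivalently $da^{-1}<1$) forces $dc^{-1}>1$; alternatively, this single missing implication is exactly the kind of statement the automated semigroup search described in the introduction is designed to locate. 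Once $ad^{-1}>1\Rightarrow dc^{-1}>1$ is in hand, combining it with the free directions above closes all three equivalences simultaneously, and everything else is bookkeeping.
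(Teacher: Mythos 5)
Your reduction of the corollary is correct and matches the paper's own bookkeeping: granting nontriviality, the three preceding lemmas supply $dc^{-1}>1\Rightarrow ad^{-1}>1$ (contrapositive of Lemma~\ref{lemma:iffs:1}), $dc^{-1}>1\Rightarrow ac^{-1}>1$ (Lemma~\ref{lemma:iffs:2.1}), and $ad^{-1}<1\Rightarrow dc^{-1}<1,\ ac^{-1}<1$ (Lemmas~\ref{lemma:iffs:1} and~\ref{lemma:iffs:2.2}), so everything hinges on the single implication $ad^{-1}>1\Rightarrow dc^{-1}>1$. That implication is exactly where your proposal stops being a proof: you describe the kind of word you would look for and defer to the automated search, but never exhibit it, and an appeal to the search procedure is not an argument. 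Moreover, the concrete ingredients you name cannot deliver it. Starting from $da^{-2}\bigl([dc^{-1}][ba^{-1}]\bigr)^{10n}=1$ (the identity from Lemma~\ref{lemma:iffs:1}) and assuming $da^{-1}<1$ together with, for contradiction, $dc^{-1}<1$, every available factor has indeterminate sign: $da^{-2}=(da^{-1})(a^{-1})$ mixes a negative with a positive, as does $[dc^{-1}][ba^{-1}]$ with $ba^{-1}>1$, and the same indeterminacy persists after substituting $ba^{-1}=c^{-2}d^{2}$, since $dc^{-3}d^{2}$ again mixes signs. Nor does $c^{-1}d>1$ (Lemma~\ref{lemma:case16:Cd}) help, because $dc^{-1}=d(c^{-1}d)d^{-1}$ and conjugation need not preserve sign in a left-order. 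So sign-arithmetic on the identity you chose is simply compatible with the configuration you must exclude.

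What actually closes the gap, and what the paper does, is a one-line computation with the \emph{fourth} group relation, which you never touch: cyclically rearranging $d^{2}a^{-1}d(c^{-1}d)^{10n+3}=1$ gives $d^{2}(da^{-1})(dc^{-1})^{10n+3}=1$. In Case 16 we have $d^{2}<1$, and the hypothesis $ad^{-1}>1$ gives $da^{-1}<1$, so the prefix $d^{2}(da^{-1})$ is negative; hence $(dc^{-1})^{10n+3}>1$, and therefore $dc^{-1}>1$ by Proposition~\ref{proposition:pospowers}. With that line in hand, your assembly of the three lemmas finishes the corollary exactly as the paper does. In short: correct skeleton, correct identification of the crux, but the crux itself is missing, and the route you sketch toward it would not have worked.
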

\begin{proof}
Starting from the fourth group relation, we have:
\begin{align}
d^{2}a^{-1}d(c^{-1}d)^{10n+3}&=1\nonumber{}\\
\Rightarrow{}a^{-1}d(c^{-1}d)^{10n+3}d^{2}&=1\nonumber{}\\
\Rightarrow{}a^{-1}(dc^{-1})^{10n+3}d^{3}&=1\nonumber{}\\
\Rightarrow{}d^{2}(da^{-1})(dc^{-1})^{10n+3}&=1.\label{lemma:iffs:3:contradiction}
\end{align}
Now $d<1$ in Case 16, so (\ref{lemma:iffs:3:contradiction}) shows that:
\begin{align*}
ad^{-1}>1\Rightarrow{}da^{-1}<1\Rightarrow{}dc^{-1}>1.
\end{align*}
In conjunction with Lemma~\ref{lemma:iffs:1}, this shows that $dc^{-1}>1$ if and only if $ad^{-1}>1$. Now by Lemma~\ref{lemma:iffs:2.1} we have:
\begin{align*}
ad^{-1}>1\Rightarrow{}dc^{-1}>1\Rightarrow{}ac^{-1}>1,
\end{align*}
and by Lemma~\ref{lemma:iffs:2.2} we have:
\begin{align*}
ad^{-1}<1\Rightarrow{}ac^{-1}<1.
\end{align*}
Thus, $ad^{-1}>1$ if and only if $ac^{-1}>1$.
\end{proof}

\begin{lemma} In Case 16, $dc^{-1}>1$ if and only if $bc^{-1}>1$.
\label{lemma:iffs:4}
\end{lemma}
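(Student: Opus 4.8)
The plan is to relate the element $bc^{-1}$ to $dc^{-1}$ through a group relation, using the already-established sign facts for Case 16. Looking at the pattern of the preceding lemmas, I would try to isolate $bc^{-1}$ (or its inverse $cb^{-1}$) in an identity alongside $dc^{-1}$ and terms whose signs are known. The second group relation is the natural candidate, since it couples $b$ and $c$ directly, but I expect the fourth relation or a combination (as in Corollary~\ref{lemma:iffs:3}) may be needed to bring in $dc^{-1}$.

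First I would take a relation and rearrange it into the schematic form $(\text{known sign})\cdot(dc^{-1})^{k}\cdot(\text{known sign})\cdot(bc^{-1})^{\ell}=1$, exploiting Lemma~\ref{lemma:case16:bA} ($ba^{-1}>1$), Lemma~\ref{lemma:case16:Cd} ($c^{-1}d>1$, i.e. $dc^{-1}<1$), Corollary~\ref{lemma:iffs:3}, and the Case 16 assumptions that each generator is negative. A promising route is to manipulate the second relation $b^{-2}c(b^{-1}a)^{10n}=1$ as in Lemma~\ref{lemma:iffs:1}, where we already derived $da^{-2}([dc^{-1}][ba^{-1}])^{10n}=1$; substituting $ba^{-1}$ (positive) and isolating $dc^{-1}$ there intertwines $b$, $c$, $d$ in a way that should also expose $bc^{-1}$ after inserting $bb^{-1}$ factors. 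The forward direction ($dc^{-1}>1 \Rightarrow bc^{-1}>1$) and the converse would each follow by reading off which single unknown-sign factor is forced.

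Concretely, I would aim to produce a conjugation-style identity of the form $w(bc^{-1})^{\pm 1}w^{-1} = (dc^{-1})\cdot(\text{product of positives})$, after which Fact~\ref{1.3LO} and the sign assumptions immediately give the biconditional. By Corollary~\ref{lemma:iffs:3}, $dc^{-1}>1 \iff ad^{-1}>1 \iff ac^{-1}>1$, so I am free to replace $dc^{-1}$ by whichever of these is most convenient for the algebra; the equivalence lets me work with $ac^{-1}>1$ (established in Corollary~\ref{lemma:case16:Ca} whenever $c^{-1}a>1$) if that simplifies matters.

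The hard part will be the bookkeeping of inserting trivial factors $cc^{-1}$, $dd^{-1}$, $bb^{-1}$ in exactly the right places so that every parenthesized block after rearrangement is one of the already-signed quantities ($ba^{-1}$, $dc^{-1}$, $c^{-1}d$, the negative generators), leaving $bc^{-1}$ as the sole factor whose sign is being determined—mirroring the delicate rewriting in Lemma~\ref{lemma:iffs:1} and Lemma~\ref{lemma:case16:bA}. Getting the exponent of the repeated block to collapse cleanly (so no residual $n$-dependence obstructs the sign argument) is the step most likely to require care, and I would lean on the identities $d^{2}a=c^{2}b$ (Lemma~\ref{lemma:eq8}) and $a^{2}b^{-2}=dc^{-1}$ (Corollary~\ref{corollary:eq6}) to absorb leftover generator powers.
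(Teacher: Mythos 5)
Your proposal rests on a conceptual error that appears twice: you treat conjugate elements as if they were inverses. From Lemma~\ref{lemma:case16:Cd} ($c^{-1}d>1$) you conclude ``i.e.\ $dc^{-1}<1$,'' and from Corollary~\ref{lemma:case16:Ca} ($c^{-1}a>1$) you say that $ac^{-1}>1$ is ``established.'' Neither inference is valid in a left-ordered group: the only sign rule available is $g>1\Leftrightarrow g^{-1}<1$, and the inverse of $c^{-1}d$ is $d^{-1}c$, not $dc^{-1}$ (which is the conjugate $c(c^{-1}d)c^{-1}$); left-invariant orders are not conjugation-invariant. The signs of $dc^{-1}$ and $ac^{-1}$ are exactly what is \emph{not} determined in Case 16 --- that is the entire point of Proposition~\ref{proposition:iffs} and the subsequent split into \Case{16}{1}{} and \Case{16}{2}{}. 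Indeed your two claims contradict each other: $ac^{-1}>1$ would force $dc^{-1}>1$ by Corollary~\ref{lemma:iffs:3}, while your first claim asserts $dc^{-1}<1$; and either claim, if true, would make one of the paper's two sub-cases vacuous. Any ``schematic'' rearrangement that uses $dc^{-1}<1$ or $ac^{-1}>1$ as a known sign therefore cannot prove this biconditional.

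Beyond this, the proposal is a plan rather than a proof: no identity is actually produced, and the relations you point to are not the ones that work. The ingredients you list do contain the paper's forward direction --- by Corollary~\ref{lemma:iffs:3}, $dc^{-1}>1$ implies $ac^{-1}>1$ (conditionally, not unconditionally), and then $bc^{-1}=(ba^{-1})(ac^{-1})>1$ by Lemma~\ref{lemma:case16:bA}; no insertion of trivial factors and no control of $n$-dependent exponents is needed. For the reverse direction the paper uses neither the second nor the fourth relation: it rearranges the \emph{third} relation to $(cd^{-1})^{10n+3}(bc^{-2})=c<1$, so that $dc^{-1}<1$ gives $cd^{-1}>1$ (a genuine inverse, so the sign flip is legitimate), forcing $bc^{-2}<1$ and hence $bc^{-1}=(bc^{-2})c<1$. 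Your suggested starting point, the identity $da^{-2}([dc^{-1}][ba^{-1}])^{10n}=1$ from Lemma~\ref{lemma:iffs:1}, relates $dc^{-1}$ to $ad^{-1}$ but contains no occurrence of $bc^{-1}$, so it cannot by itself yield the equivalence you need.
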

\begin{proof}[Proof of reverse direction]
Starting from the third group relation, we have:
\begin{align*}
(d^{-1}c)^{10n+3}c^{-1}bc^{-2}&=1\\
\Rightarrow{}c^{-1}(cd^{-1})^{10n+3}bc^{-2}&=1\\
\Rightarrow{}(cd^{-1})^{10n+3}(bc^{-2})&=c<1,
\end{align*}
and so:
\begin{align*}
dc^{-1}<1\Rightarrow{}cd^{-1}>1\Rightarrow{}bc^{-2}&<1\Rightarrow{}bc^{-1}<1. \qedhere
\end{align*}
\end{proof}

\begin{proof}[Proof of forward direction]
First note that:
\begin{align*}
bc^{-1}=(ba^{-1})(ac^{-1}).
\end{align*}
But $ba^{-1}>1$ by Lemma~\ref{lemma:case16:bA} so this shows that:
\begin{align*}
dc^{-1}>1\Rightarrow{}ac^{-1}>1\Rightarrow{}bc^{-1}>1,
\end{align*}
where the first implication follows from Corollary~\ref{lemma:iffs:3}.
\end{proof}

\begin{lemma} In Case 16, $dc^{-1}>1$ if and only if $bc^{-3}>1$ if and only if $ad^{-3}>1$.
\label{lemma:iffs:5}
\end{lemma}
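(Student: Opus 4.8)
The plan is to reduce all three elements to powers of the single element $dc^{-1}$. Recall from the proof of Corollary~\ref{corollary:non-trivial2} that the third group relation rearranges to $bc^{-3}=(dc^{-1})^{10n+3}$ and the fourth group relation rearranges to $ad^{-3}=(dc^{-1})^{10n+3}$. These two identities hold in $G_n$ independently of the signs of the generators, so I may use them freely within Case 16.

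Next I would invoke Proposition~\ref{proposition:pospowers}. Since $n\geq1$ throughout this portion of the paper, the exponent satisfies $10n+3>1$, and $dc^{-1}\neq1$ by Proposition~\ref{proposition:non-trivial} (because $c\neq d$). Hence $(dc^{-1})^{10n+3}$ has the same sign as $dc^{-1}$. Combining this with the two identities above, it follows that $bc^{-3}$ and $ad^{-3}$ each share the sign of $dc^{-1}$.

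From this the triple equivalence is immediate: $dc^{-1}>1$ holds if and only if $(dc^{-1})^{10n+3}>1$, which by the first identity is equivalent to $bc^{-3}>1$ and by the second identity is equivalent to $ad^{-3}>1$. This establishes the claim.

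I do not expect a genuine obstacle here, as the argument is essentially bookkeeping once the two identities are recalled. The only point requiring care is verifying the hypotheses of Proposition~\ref{proposition:pospowers}, namely that the exponent $10n+3$ exceeds $1$ (which holds since $n\geq1$) and that the base $dc^{-1}$ is nontrivial (which holds by Proposition~\ref{proposition:non-trivial}); notably, no case-specific sign information from Case 16 is needed beyond what fixes the common sign.
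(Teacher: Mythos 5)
Your proof is correct and matches the paper's own argument: the paper likewise rearranges the third and fourth group relations into $bc^{-3}=(dc^{-1})^{10n+3}$ and $ad^{-3}=(dc^{-1})^{10n+3}$ (exactly the identities from Corollary~\ref{corollary:non-trivial2}) and concludes the sign equivalences via the same power-sign principle (Proposition~\ref{proposition:pospowers}). The only cosmetic difference is that the paper re-derives the identities inline rather than citing the corollary, and your remark that $n\geq1$ is needed is superfluous since $10n+3>1$ already holds for all $n\geq0$.
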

\begin{proof}
By the third group relation, we have:
\begin{align*}
(d^{-1}c)^{10n+3}c^{-1}bc^{-2}&=1\\
\Rightarrow{}(cd^{-1})^{10n+3}bc^{-3}&=1\\
\Rightarrow{}bc^{-3}&=(dc^{-1})^{10n+3}.
\end{align*}
Thus $bc^{-3}>1$ must have the same sign as $dc^{-1}$. By the fourth group relation, we have:
\begin{align*}
(d^{-1}c)^{10n+3}d^{-1}ad^{-2}&=1\\
\Rightarrow{}(cd^{-1})^{10n+3}ad^{-3}&=1\\
\Rightarrow{}ad^{-3}&=(dc^{-1})^{10n+3}.
\end{align*}
Thus $ad^{-3}>1$ must have the same sign as $dc^{-1}$.
\end{proof}

\begin{proposition} In Case 16, the following elements all have the same sign: $dc^{-1}$, $ad^{-1}$, $ac^{-1}$, $bc^{-1}$, $bc^{-3}$, and $ad^{-3}$.
\label{proposition:iffs}
\end{proposition}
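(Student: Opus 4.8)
The plan is to treat this proposition as a consolidation of the biconditional lemmas already established, using $dc^{-1}$ as a pivot element to which every other element is tied. First I would invoke the immediately preceding proposition to record that each of the six elements $dc^{-1}$, $ad^{-1}$, $ac^{-1}$, $bc^{-1}$, $bc^{-3}$, and $ad^{-3}$ is non-trivial, so that in the assumed left-ordering each one is either strictly positive or strictly negative, with no third possibility.

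Next I would assemble the chain of equivalences already available. Corollary~\ref{lemma:iffs:3} gives $dc^{-1}>1$ if and only if $ad^{-1}>1$ if and only if $ac^{-1}>1$; Lemma~\ref{lemma:iffs:4} gives $dc^{-1}>1$ if and only if $bc^{-1}>1$; and Lemma~\ref{lemma:iffs:5} gives $dc^{-1}>1$ if and only if $bc^{-3}>1$ if and only if $ad^{-3}>1$. Reading these three statements together, the single condition $dc^{-1}>1$ is logically equivalent to the positivity of each of the other five elements.

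From here the conclusion is immediate by a case split on the sign of $dc^{-1}$. If $dc^{-1}>1$, then each of the six elements is positive. If instead $dc^{-1}<1$, which is the only remaining option by non-triviality, then the failure of $dc^{-1}>1$ forces, through the same equivalences, the failure of positivity for each of the other elements; since none of them can equal $1$, each must then be negative. In either case all six elements share one common sign, which is exactly the assertion of the proposition.

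I do not expect any genuine obstacle in this final step: all of the analytic content already resides in Lemma~\ref{lemma:iffs:1}, Lemma~\ref{lemma:iffs:2.1}, Lemma~\ref{lemma:iffs:2.2}, Corollary~\ref{lemma:iffs:3}, Lemma~\ref{lemma:iffs:4}, and Lemma~\ref{lemma:iffs:5}, which were obtained by manipulating the group relations under the Case~16 sign hypotheses together with the general lemmas (most notably Lemma~\ref{lemma:eq8}, Lemma~\ref{lemma:eq16}, and Corollary~\ref{corollary:eq6}). The remaining work is purely the propositional step of threading these equivalences through the pivot $dc^{-1}$, so the only point demanding care is to confirm that the cited results collectively reach all six elements with no gap, which indeed they do.
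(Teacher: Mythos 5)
Your proposal is correct and is essentially the paper's own proof: the paper disposes of the proposition in one line by combining Corollary~\ref{lemma:iffs:3}, Lemma~\ref{lemma:iffs:4}, and Lemma~\ref{lemma:iffs:5}, relying on the preceding non-triviality proposition exactly as you do. You have merely spelled out the propositional glue (the pivot on the sign of $dc^{-1}$ and the case split) that the paper leaves implicit as ``evident.''
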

\begin{proof}
The proposition is evident by combining Corollary~\ref{lemma:iffs:3}, Lemma~\ref{lemma:iffs:4}, and Lemma~\ref{lemma:iffs:5}.
\end{proof}

\noindent{}In order to show Case 16 ($a,b,c,d<1$) is not possible if $G_n$ is left-orderable, we consider sub-cases (see Table~\ref{table:cases16b}). Because of Proposition~\ref{proposition:iffs} , it is easy to see that there are only two possible sub-cases of Case 16 considering the signs of $dc^{-1}$, $ad^{-1}$, $ac^{-1}$, $bc^{-1}$, $bc^{-3}$, and $ad^{-3}$

\begin{table}[ht]
\begin{center}
\begin{tabular}{l | l | l | l | l | l | l}
Case \hspace{10 pt} & $dc^{-1}$\hspace{10 pt} & $ad^{-1}$\hspace{10 pt} & $ac^{-1}$\hspace{10 pt} & $bc^{-1}$\hspace{10 pt} & $bc^{-3}$\hspace{10 pt} & $ad^{-3}$\hspace{10 pt}  \\\hline\hline
\case{16}{1}{} & $+$ & $+$ & $+$ & $+$ & $+$ & $+$ \\\hline
\case{16}{2}{} & $-$ & $-$ & $-$ & $-$ & $-$ & $-$
\end{tabular}
\end{center}
\caption{The two possible sub-cases of Case 16 considering the signs of $dc^{-1}$, $ad^{-1}$, $ac^{-1}$, $bc^{-1}$, $bc^{-3}$, and $ad^{-3}$}
\label{table:cases16b}
\end{table}

\subsection{\Case{16}{1}{}}

As a reminder, since we are working in a sub-case of Case 16, we know $a<1$, $b<1$, $c<1$, and $d<1$.

\begin{lemma} In \Case{16}{1}{}, $a^{-1}dc^{-1}a>1$.
\label{lemma:case16.A:AdCa}
\end{lemma}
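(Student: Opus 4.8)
The plan is to determine the sign of $a^{-1}dc^{-1}a$ by conjugating a high power of $dc^{-1}$ by $a$, using the fourth group relation. A naive approach using Corollary~\ref{corollary:eq6} rewrites $a^{-1}dc^{-1}a=ab^{-2}a$, but with $a<1$ and $b^{-2}>1$ this product has no obvious sign, so I would instead exploit the power structure of relation four. Starting from the already-derived identity (\ref{eq4:2}), namely $(dc^{-1})^{10n+3}d^{3}a^{-1}=1$, I would rearrange it to $(dc^{-1})^{10n+3}=ad^{-3}$. This form is chosen precisely because raising $a^{-1}dc^{-1}a$ to the power $10n+3$ conjugates $(dc^{-1})^{10n+3}$ by $a$, and the right-hand side $ad^{-3}$ is tailor-made to collapse under that conjugation.

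Concretely, I would compute
\begin{align*}
(a^{-1}dc^{-1}a)^{10n+3}=a^{-1}(dc^{-1})^{10n+3}a=a^{-1}(ad^{-3})a=d^{-3}a,
\end{align*}
where the first equality holds because conjugation by $a$ is a homomorphism and the second substitutes (\ref{eq4:2}). It then remains to pin down the sign of $d^{-3}a$. Here I would factor $d^{-3}a=d^{-2}(d^{-1}a)$: since $d<1$ in Case 16 we have $d^{-2}>1$, and $d^{-1}a>1$ by Lemma~\ref{lemma:case16:Da}, so $d^{-3}a$ is a product of positive elements and hence positive.

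Putting these together, $(a^{-1}dc^{-1}a)^{10n+3}=d^{-3}a>1$, and since $10n+3>1$, Proposition~\ref{proposition:pospowers} forces $a^{-1}dc^{-1}a>1$, as desired. The only real subtlety is spotting the correct rearrangement of the fourth relation: once $(dc^{-1})^{10n+3}$ is written as $ad^{-3}$, the conjugation by $a$ telescopes cleanly to $d^{-3}a$, whose sign follows from the previously established Case 16 inequality $d^{-1}a>1$ together with $d^{-2}>1$. I do not expect any genuine obstacle beyond recognizing this conjugation trick; note in particular that the argument uses only Case 16 facts rather than the finer sub-case hypotheses, so it applies verbatim to \Case{16}{1}{}.
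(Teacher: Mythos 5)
Your proof is correct, and it takes a genuinely different route from the paper's. The paper argues from the \emph{third} group relation: inserting $aa^{-1}$ pairs, it rewrites that relation as $(c^{-1}a)(a^{-1}cd^{-1}a)^{10n+3}(a^{-1})(ba^{-1})(ac^{-1})(a^{-1})(ac^{-1})=1$ and uses positivity of $c^{-1}a$ (Corollary~\ref{lemma:case16:Ca}), $a^{-1}$, $ba^{-1}$ (Lemma~\ref{lemma:case16:bA}), and the sub-case hypothesis $ac^{-1}>1$ to conclude that $(a^{-1}cd^{-1}a)^{10n+3}$, and hence $a^{-1}cd^{-1}a$, is negative. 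You instead start from the \emph{fourth} relation in the form (\ref{eq4:2}), rearranged to $(dc^{-1})^{10n+3}=ad^{-3}$ (the same identity that appears in Corollary~\ref{corollary:non-trivial2} and Lemma~\ref{lemma:iffs:5}), and exploit the exact conjugation identity $(a^{-1}dc^{-1}a)^{10n+3}=a^{-1}(ad^{-3})a=d^{-3}a=d^{-2}(d^{-1}a)>1$, where $d^{-2}>1$ holds in Case 16 and $d^{-1}a>1$ is Lemma~\ref{lemma:case16:Da}; Proposition~\ref{proposition:pospowers} then forces $a^{-1}dc^{-1}a>1$. Every step checks out, and since (\ref{eq4:2}) is a consequence of the group relations alone, citing it outside Section~\ref{section:case1} is legitimate. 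Notably, your argument uses only Case 16 facts (Lemma~\ref{lemma:case16:Da} rests on the standing assumption $b^{-1}a>1$), so it proves the inequality in all of Case 16, not just \Case{16}{1}{}; in particular it subsumes the paper's Lemma~\ref{lemma:case16:AdCa}, which establishes the same inequality in \Case{16}{2}{} by a separate sign-bookkeeping argument. What the paper's version buys is uniformity of template---decompose a relation into factors of known sign---used throughout Sections~\ref{section:case1} and~\ref{section:case16}; what yours buys is brevity and generality, collapsing two of the paper's lemmas into one.
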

\begin{proof} By the third group relation, we have:
\begin{align}
(d^{-1}c)^{10n+3}c^{-1}bc^{-2}&=1\nonumber{}\\
\Rightarrow{}c^{-1}(cd^{-1})^{10n+3}bc^{-2}&=1\nonumber{}\\
\Rightarrow{}c^{-1}(aa^{-1})(cd^{-1})^{10n+3}(aa^{-1})b(a^{-1}a)c^{-1}(a^{-1}a)c^{-1}&=1\nonumber{}\\
\Rightarrow{}(c^{-1}a)(a^{-1}cd^{-1}a)^{10n+3}(a^{-1})(ba^{-1})(ac^{-1})(a^{-1})(ac^{-1})&=1.\label{16.A:AdCa}\
\end{align}
But $c^{-1}a>1$ by Corollary \ref{lemma:case16:Ca} , $a^{-1}>1$ in Case 16, $ba^{-1}>1$ by Lemma \ref{lemma:case16:bA} and $ac^{-1}>1$ in \Case{16}{1}{} by assumption, so (\ref{16.A:AdCa}) shows that:
\begin{align*}
a^{-1}cd^{-1}a&<1\\
\Rightarrow{}a^{-1}dc^{-1}a&>1.\qedhere
\end{align*}
\end{proof}

\begin{lemma} In \Case{16}{1}{}, $ab^{-2}a>1$.
\label{lemma:case16.A:aBBa}
\end{lemma}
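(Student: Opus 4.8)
The target is to show that in \Case{16}{1}{} we have $ab^{-2}a>1$. Since \Case{16}{1}{} is a sub-case of Case 16, all four generators are negative, so $b^{-1}>1$ and $a<1$; this means the individual factors in $ab^{-2}a$ do not have an obvious sign and we cannot simply multiply. The plan is therefore to find a relation that equates $ab^{-2}a$ (or a cyclic/inverse rearrangement of it) to a product of elements whose signs we have already pinned down in Case 16 and \Case{16}{1}{}.

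First I would collect the inequalities already available to us. From the general lemmas and the Case 16 inequalities we know $b^{-1}a>1$ (general assumption), $d^{-1}a>1$ (Lemma~\ref{lemma:case16:Da}), $c^{-1}b>1$ (Lemma~\ref{lemma:case16:Cb}), $c^{-1}d>1$ (Lemma~\ref{lemma:case16:Cd}), $c^{-1}a>1$ (Corollary~\ref{lemma:case16:Ca}), $ba^{-1}>1$ (Lemma~\ref{lemma:case16:bA}), and the positivity of the six concordant elements $dc^{-1},ad^{-1},ac^{-1},bc^{-1},bc^{-3},ad^{-3}$ in \Case{16}{1}{} (Proposition~\ref{proposition:iffs}). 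I also have the freshly proved $a^{-1}dc^{-1}a>1$ from Lemma~\ref{lemma:case16.A:AdCa}.

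The key step is to start from a group relation — most naturally the second relation $b^{-2}c(b^{-1}a)^{10n}=1$ or Corollary~\ref{corollary:eq6} ($a^2b^{-2}=dc^{-1}$) — and isolate $ab^{-2}a$. For instance, from $a^2b^{-2}=dc^{-1}$ one gets $ab^{-2}a = a^{-1}(a^2b^{-2})a = a^{-1}(dc^{-1})a = (a^{-1}d)(c^{-1}a)$. Both factors $a^{-1}d$ and $c^{-1}a$ are positive (the former is the inverse of $d^{-1}a$, wait — that is negative). So more carefully I would conjugate in the direction that yields positive factors: writing $ab^{-2}a$ as a product like $(ac^{-1})(\,\cdot\,)$ or inserting $c^{-1}c$ and $d^{-1}d$ strategically so that each resulting bracketed factor matches one of the positive elements listed above (e.g. $ac^{-1}$, $c^{-1}d$, $c^{-1}a$, or $a^{-1}dc^{-1}a$). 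The mechanical task is to choose the insertion points so that every factor lands on a known-positive element, then invoke Fact~\ref{1.3LO} to conclude the product is positive.

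The main obstacle will be the bookkeeping: because $a$ and $b$ are individually negative while the \emph{ratios} are positive, I must rewrite $ab^{-2}a$ entirely in terms of ratio-type elements with no ``bare'' negative generator left over, and there may be several failed groupings before the factors all align with established positives. I expect the cleanest route is to piggyback on the computation in Lemma~\ref{lemma:case16.A:AdCa}, reusing the expansion of the third group relation there and peeling off the same $a^{-1}\,b\,a^{-1}$ or $a^2b^{-2}$ pattern, so that the sign of $ab^{-2}a$ falls out from $a^{-1}dc^{-1}a>1$ together with $ac^{-1}>1$ and $ba^{-1}>1$.
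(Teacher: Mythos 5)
You have already proved this --- you just failed to notice. The computation in your proposal, $ab^{-2}a=a^{-1}(a^{2}b^{-2})a=a^{-1}(dc^{-1})a$ (using Corollary~\ref{corollary:eq6}), exhibits $ab^{-2}a$ as precisely the element $a^{-1}dc^{-1}a$, which you listed two paragraphs earlier as a known positive (Lemma~\ref{lemma:case16.A:AdCa}). So the proof ends there: $ab^{-2}a=a^{-1}dc^{-1}a>1$. Your only misstep was to instantly regroup this conjugate as $(a^{-1}d)(c^{-1}a)$ --- that particular split is indeed inconclusive, since $a^{-1}d<1$ by Lemma~\ref{lemma:case16:Da} --- and then, instead of simply leaving the word unsplit, you drifted into a hunt for clever insertions of $c^{-1}c$ and $d^{-1}d$ and a plan to re-run the expansion inside the proof of Lemma~\ref{lemma:case16.A:AdCa}. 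None of that is needed: you only need the \emph{statement} of that lemma, not its proof, and no further bookkeeping or ``failed groupings'' arise. This is in substance identical to the paper's argument, which starts from Lemma~\ref{lemma:eq5} (the same identity as Corollary~\ref{corollary:eq6}) in the form $dc^{-1}b^{2}a^{-2}=1$, conjugates by $a^{-1}$ and inserts $aa^{-1}$ to obtain $(a^{-1}dc^{-1}a)(a^{-1}b^{2}a^{-1})=1$, and then concludes from Lemma~\ref{lemma:case16.A:AdCa} that $a^{-1}b^{2}a^{-1}<1$, i.e.\ $ab^{-2}a>1$.
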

\begin{proof} By Lemma \ref{lemma:eq5}, we have:
\begin{align}
dc^{-1}b^{2}a^{-2}=1\nonumber{}\\
\Rightarrow{}a^{-1}dc^{-1}b^{2}a^{-1}=1\nonumber{}\\
\Rightarrow{}a^{-1}dc^{-1}(aa^{-1})b^{2}a^{-1}=1\nonumber{}\\
\Rightarrow{}(a^{-1}dc^{-1}a)(a^{-1}b^{2}a^{-1})=1.\label{16.A:aBBa}
\end{align}
But $a^{-1}dc^{-1}a>1$ by Lemma \ref{lemma:case16.A:AdCa}, so (\ref{16.A:aBBa}) shows that:
\begin{align*}
a^{-1}b^{2}a^{-1}&<1\\
\Rightarrow{}ab^{-2}a&>1.\qedhere
\end{align*}
\end{proof}

\begin{lemma} In \Case{16}{1}{}, $d^{-2}cb>1$.
\label{lemma:case16.A:DDcb}
\end{lemma}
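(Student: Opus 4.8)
The plan is to exhibit $d^{-2}cb$ as a product of two elements whose positivity has already been established in \Case{16}{1}{}, namely $ab^{-2}a$ (positive by Lemma~\ref{lemma:case16.A:aBBa}) and $d^{-1}a$ (positive by Lemma~\ref{lemma:case16:Da}). Concretely, I would prove the identity
\begin{align*}
d^{-2}cb = (ab^{-2}a)(d^{-1}a),
\end{align*}
after which Fact~\ref{1.3LO} immediately yields $d^{-2}cb>1$, since a product of positive elements is positive.

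To establish this identity I would first insert a factor of $bb^{-1}$ to write $d^{-2}cb = (d^{-2}c^{2}b)(b^{-1}c^{-1}b)$, which is a purely formal rearrangement. The left factor simplifies via Lemma~\ref{lemma:eq8}: from $d^{2}a=c^{2}b$ we obtain $d^{-2}c^{2}b=a$. For the right factor I would write $b^{-1}c^{-1}b = b^{-2}(bc^{-1}b)$ and then invoke Lemma~\ref{lemma:eq16}, which gives $bc^{-1}b = ad^{-1}a$. Substituting both simplifications produces
\begin{align*}
d^{-2}cb = a\,b^{-2}(ad^{-1}a) = (ab^{-2}a)(d^{-1}a),
\end{align*}
as desired. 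Each equality here is an elementary substitution of a previously proven general relation, so no manipulation of the $n$-dependent powers is needed.

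The computation is routine once the target factorization is in hand, so the only genuine obstacle is guessing the decomposition $(ab^{-2}a)(d^{-1}a)$ in the first place—that is, recognizing in advance that the two already-available positive quantities $ab^{-2}a$ and $d^{-1}a$ multiply to exactly the short word $d^{-2}cb$. By hand, the natural way to find this is to work backwards from the two positive elements at our disposal and check, using Lemmas~\ref{lemma:eq8} and~\ref{lemma:eq16}, that their product collapses to $d^{-2}cb$; this is precisely the sort of bookkeeping the authors' automated search is designed to uncover.
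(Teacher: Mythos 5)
Your proof is correct and is essentially the paper's own argument: the paper also derives the factorization $d^{-2}cb=(ab^{-2}a)(d^{-1}a)$ by combining Lemma~\ref{lemma:eq8} (giving $d^{-2}c^{2}=ab^{-1}$) with Lemma~\ref{lemma:eq16} (giving $bc^{-1}b=ad^{-1}a$), and then concludes positivity from Lemmas~\ref{lemma:case16.A:aBBa} and~\ref{lemma:case16:Da}. The only difference is the cosmetic choice of where the trivial factors $cc^{-1}$, $bb^{-1}$ are inserted before applying the two relations.
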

\begin{proof} By Lemma \ref{lemma:eq8} and Lemma \ref{lemma:eq16}:
\begin{align}
d^{-2}cb=&d^{-2}c(cb^{-1}bc^{-1})b\nonumber{}\\
\Rightarrow{}d^{-2}cb=&(d^{-2}c^{2})b^{-1}(bc^{-1}b)\nonumber{}\\
\Rightarrow{}d^{-2}cb=&(ab^{-1})b^{-1}(ad^{-1}a)\nonumber{}\\
\Rightarrow{}d^{-2}cb=&(ab^{-2}a)(d^{-1}a).
\label{16.A:DDcb}
\end{align}
But $ab^{-2}a>1$ by Lemma \ref{lemma:case16.A:aBBa}, and $d^{-1}a>1$ by Lemma \ref{lemma:case16:Da}, so (\ref{16.A:DDcb}) shows that:
\begin{align*}
(ab^{-2}a)(d^{-1}a)&>1\\
\Rightarrow{}d^{-2}cb&>1.\qedhere
\end{align*}
\end{proof}

\begin{lemma} In \Case{16}{1}{}, $a^{-1}d^{-2}c^{2}>1$.
\label{lemma:case16.A:ADDcc}
\end{lemma}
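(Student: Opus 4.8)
The plan is to collapse the word $a^{-1}d^{-2}c^{2}$ to a single generator using Lemma~\ref{lemma:eq8} and then read off its sign from Case~16. First I would rearrange the identity $d^{2}a=c^{2}b$ of Lemma~\ref{lemma:eq8} into $d^{-2}c^{2}=ab^{-1}$ (left-multiply by $d^{-2}$, right-multiply by $b^{-1}$). This is precisely the substitution already exploited in Lemma~\ref{lemma:case16:bA} and Lemma~\ref{lemma:case16.A:DDcb}, so no fresh manipulation of the relators is required.

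Substituting this identity gives $a^{-1}d^{-2}c^{2}=a^{-1}(ab^{-1})=b^{-1}$, an equality that in fact holds throughout $G_n$ (not merely in the present sub-case). Since we are in Case~16 we have $b<1$, hence $b^{-1}>1$ by Fact~\ref{fact:inverses}, and therefore $a^{-1}d^{-2}c^{2}>1$, as desired. As a sanity check, the abelianization of Proposition~\ref{proposition:non-trivial} sends $a^{-1}d^{-2}c^{2}$ to $-13-2+12\equiv-3$ and $b^{-1}$ to $-3$ in $\mathbb{Z}_{25}$, confirming that the two words coincide in $G_n$.

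I expect essentially no obstacle: the entire content is recognizing that the block $d^{-2}c^{2}$ equals $ab^{-1}$, after which the leading $a^{-1}$ cancels the $a$ and leaves only the inverted negative generator $b$. The one point to keep in mind is that an inequality may be propagated only by \emph{left}-multiplication, but this never intervenes here because the argument rests on an exact identity in $G_n$ rather than on manipulating an inequality. By contrast, trying to build the result out of the positive element $d^{-2}cb>1$ of Lemma~\ref{lemma:case16.A:DDcb} would require right-multiplying by $b^{-1}c$, which is negative in Case~16 (as $c^{-1}b>1$ by Lemma~\ref{lemma:case16:Cb}); that route does not close cleanly, so I would favor the direct simplification through Lemma~\ref{lemma:eq8}.
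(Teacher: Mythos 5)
Your proof is correct, and it rests on the same key input as the paper's — Lemma~\ref{lemma:eq8} rearranged into $b^{-1}c^{-2}d^{2}a=1$ — but you read the conclusion off in a cleaner way. The paper inserts $aa^{-1}$ to factor this identity as $(b^{-1}a)(a^{-1})(c^{-2}d^{2}a)=1$ and then argues by signs: since $b^{-1}a>1$ (the standing assumption) and $a^{-1}>1$ (Case 16), the remaining factor $c^{-2}d^{2}a$ must be negative, so its inverse $a^{-1}d^{-2}c^{2}$ is positive. You instead collapse the word completely, obtaining the exact identity $a^{-1}d^{-2}c^{2}=b^{-1}$ valid in all of $G_n$, and conclude from $b<1$ alone. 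The two computations are literally the same group-theoretic manipulation — the paper's leading factor $(b^{-1}a)(a^{-1})$ \emph{is} $b^{-1}$, merely written as a product of two elements already known to be positive — so the difference lies only in which positivity facts are cited: yours needs just the sign of $b$ from Case 16, while the paper's uses $b^{-1}a>1$ together with $a^{-1}>1$. Your version is marginally stronger as an intermediate statement (an equality rather than an inequality, independent of the sub-case) and slightly more economical; your cautionary remark about right-multiplication is apt but, as you note, never comes into play. Either argument is a complete proof.
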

\begin{proof} By Lemma \ref{lemma:eq8}:
\begin{align}
b^{-1}c^{-2}d^{2}a=1\nonumber{}\\
\Rightarrow{}b^{-1}(aa^{-1})c^{-2}d^{2}a=1\nonumber{}\\
\Rightarrow{}(b^{-1}a)(a^{-1})(c^{-2}d^{2}a)=1.
\label{16.A:ADDcc}
\end{align}
But $b^{-1}a>1$ in general, and $a^{-1}>1$ in case 16, so (\ref{16.A:ADDcc}) shows that:
\begin{align*}
c^{-2}d^{2}a&<1\\
\Rightarrow{}a^{-1}d^{-2}c^{2}&>1.\qedhere
\end{align*}
\end{proof}

\begin{corollary} In \Case{16}{1}{} $a^{-1}d^{-2}c>1$.
\label{corollary:case16.A:ADDc}
\end{corollary}
\begin{proof} We know $c^{-1}>1$ in Case 16, and by Lemma~\ref{lemma:case16.A:ADDcc}, we know $a^{-1}d^{-2}c^{2}>1$ in \Case{16}{1}{}. Therefore,
\begin{align*}
 (a^{-1}d^{-2}c^{2})(c^{-1})&>1\\
 \Rightarrow{}a^{-1}d^{-2}c&>1.\qedhere
 \end{align*}
 \end{proof}
 
\begin{lemma} In \Case{16}{1}{}, $a^{-1}c^{-1}da > 1$.
\label{lemma:case16:A:ACda}
\end{lemma}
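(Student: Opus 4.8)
The plan is to mimic the conjugation trick that produced Lemma~\ref{lemma:case16.A:AdCa}, but applied to the \emph{fourth} group relation and conjugated by $a$, so that the repeated block $(c^{-1}d)^{10n+3}$ is turned into a power of the very element whose sign we want. First I would start from $d^{2}a^{-1}d(c^{-1}d)^{10n+3}=1$, multiply on the left by $a^{-1}$ and on the right by $a$, and use the identity $a^{-1}(c^{-1}d)^{10n+3}a=(a^{-1}c^{-1}da)^{10n+3}$ to rewrite the relation as
\begin{align*}
(a^{-1}d^{2}a^{-1}da)(a^{-1}c^{-1}da)^{10n+3}=1.
\end{align*}
This reduces the whole problem to determining the sign of the leftover ``head'' $a^{-1}d^{2}a^{-1}da$.

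Next I would show that this head is negative. The key observation is the factorization $a^{-1}d^{2}a^{-1}da=(a^{-1}d)(d)(a^{-1}d)(a)$, whose four factors are each already known to be negative: $a^{-1}d<1$ because $d^{-1}a>1$ by Lemma~\ref{lemma:case16:Da} together with Fact~\ref{fact:inverses}, while $d<1$ and $a<1$ are the defining sign assumptions of Case 16. By Fact~\ref{1.3LO} a product of negatives is negative, so the head is $<1$.

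Finally, since the head and the block $(a^{-1}c^{-1}da)^{10n+3}$ multiply to the identity, the block is the inverse of a negative element and is therefore positive. As $10n+3>1$, Proposition~\ref{proposition:pospowers} then yields $a^{-1}c^{-1}da>1$, as desired. Note that this argument uses only Case~16 data, so it holds a fortiori in \Case{16}{1}{}.

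I expect the only real obstacle to be the bookkeeping in the first step, namely choosing exactly the right relation and the right conjugating generator so that the surviving head disentangles into factors of already-known sign. The fourth relation is the fortunate choice because its non-block part $d^{2}a^{-1}d$ becomes the purely-$a$-and-$d$ word $a^{-1}d^{2}a^{-1}da$, which is manifestly negative in Case 16; by contrast, conjugating the third relation by $a$ leaves a head of the form $a^{-1}c^{-1}bc^{-2}a$ that does not split cleanly into known-sign pieces. Once the setup is fixed, everything that remains is a short sign count.
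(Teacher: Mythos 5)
Your proof is correct, and it takes a genuinely different route from the paper's. The paper derives this lemma from the \emph{third} group relation: after cyclic permutation and insertion of $aa^{-1}$ pairs it reaches
\begin{align*}
(a^{-1})(ad^{-1})(ba^{-1})(ad^{-1})(dc^{-1})(c^{-1}a)(a^{-1}d^{-1}ca)^{10n+2}(a^{-1})=1,
\end{align*}
whose sign count requires the sub-case assumptions of \Case{16}{1}{} (namely $ad^{-1}>1$ and $dc^{-1}>1$) in addition to Lemma~\ref{lemma:case16:bA} and Corollary~\ref{lemma:case16:Ca}; it then concludes $a^{-1}d^{-1}ca<1$ and inverts. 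You instead conjugate the \emph{fourth} relation by $a$ to get $(a^{-1}d^{2}a^{-1}da)(a^{-1}c^{-1}da)^{10n+3}=1$, and your factorization of the head as $(a^{-1}d)(d)(a^{-1}d)(a)$ exhibits it as a product of negatives using only Case-16-wide facts: $a<1$, $d<1$, and $d^{-1}a>1$ from Lemma~\ref{lemma:case16:Da}. Each step checks out---the conjugation identity, the factorization, the sign count via Facts~\ref{fact:inverses} and~\ref{1.3LO}, and the final passage from $(a^{-1}c^{-1}da)^{10n+3}>1$ to $a^{-1}c^{-1}da>1$ via Proposition~\ref{proposition:pospowers}. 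What your route buys is economy and generality: since no sub-case hypotheses are used, you have actually proved $a^{-1}c^{-1}da>1$ in all of Case 16, including \Case{16}{2}{}, whereas the paper's argument establishes it only in \Case{16}{1}{}. Conversely, the paper's proof shows that your pessimism about the third relation is slightly overstated: its head can be tamed, but only at the price of the extra sub-case inequalities---precisely the cost your choice of the fourth relation avoids.
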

\begin{proof} By the third group relation, we have:
\begin{align}
(d^{-1}c)^{10n+3}c^{-1}bc^{-2} =1\nonumber{}\\
\Rightarrow{}(d^{-1}c)^{10n+2}d^{-1}bc^{-2} =1\nonumber{}\\
\Rightarrow{}d^{-1}bc^{-2}(d^{-1}c)^{10n+2}=1\nonumber{}\\
\Rightarrow{}(a^{-1}a)d^{-1}b(a^{-1}ad^{-1}d)c^{-2}(aa^{-1})(d^{-1}c)^{10n+2}(aa^{-1})=1\nonumber{}\\
\Rightarrow{}(a^{-1})(ad^{-1})(ba^{-1})(ad^{-1})(dc^{-1})(c^{-1}a)(a^{-1}d^{-1}ca)^{10n+2}(a^{-1})=1.
\label{16.A:ACda}
\end{align} 
But $a^{-1}>1$ in Case 16, $ad^{-1}>1$ in \Case{16}{1}{}, $ba^{-1}>1$ by Lemma \ref{lemma:case16:bA}, $dc^{-1}>1$ in \Case{16}{1}{}, and $c^{-1}a>1$ by Lemma \ref{lemma:case16:Ca}, so (\ref{16.A:ACda}) shows that:
\begin{align*}
a^{-1}d^{-1}ca&<1\\
\Rightarrow{}a^{-1}c^{-1}da&>1.\qedhere
\end{align*}
\end{proof}

\begin{lemma} In \Case{16}{1}{}, $ac^{-1}da^{-1}>1$.
\label{lemma:case16.A:aCdA}
\end{lemma}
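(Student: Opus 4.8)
The plan is to conjugate the fourth group relation by $a$ and read off the sign of $ac^{-1}da^{-1}$ from the resulting power. Starting from $d^{2}a^{-1}d(c^{-1}d)^{10n+3}=1$ and multiplying on the left by $a$ and on the right by $a^{-1}$, I would use the telescoping identity $(c^{-1}d)^{10n+3}a^{-1}=a^{-1}(ac^{-1}da^{-1})^{10n+3}$ (each interior $a^{-1}a$ cancels) to rewrite the relation as
\[
(ad^{2}a^{-1}da^{-1})(ac^{-1}da^{-1})^{10n+3}=1.
\]
It then suffices to show that the leading factor $ad^{2}a^{-1}da^{-1}$ is negative: for then the power $(ac^{-1}da^{-1})^{10n+3}$ must be positive, and since $10n+3>1$, Proposition~\ref{proposition:pospowers} forces $ac^{-1}da^{-1}>1$.

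The key step is therefore the sign of $ad^{2}a^{-1}da^{-1}$. I would factor it as $ad^{2}a^{-1}da^{-1}=(ada^{-1})^{2}(da^{-1})$, which one checks by the cancellation $(ada^{-1})^{2}=ad^{2}a^{-1}$. Now $ad^{-1}>1$ in \Case{16}{1}{} and $a^{-1}>1$ in Case 16, so $ad^{-1}a^{-1}=(ad^{-1})(a^{-1})$ is a product of positives and hence positive by Fact~\ref{1.3LO}; its inverse $ada^{-1}$ is therefore negative. Consequently $(ada^{-1})^{2}$ is negative by Proposition~\ref{proposition:pospowers}, and $da^{-1}$ is negative because $ad^{-1}>1$. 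A product of negative elements is negative (Fact~\ref{1.3LO}), so $ad^{2}a^{-1}da^{-1}<1$, which closes the argument.

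The only points requiring care are the telescoping rearrangement of the power term and the bookkeeping that $(ada^{-1})^{2}(da^{-1})$ genuinely equals $ad^{2}a^{-1}da^{-1}$; both are routine once the cancellations are tracked. I do not anticipate a real obstacle here: unlike several neighboring lemmas, this one needs none of the structural identities of Section~\ref{section:generalLemmas}, relying only on the conjugation trick together with the generator signs and the sign of $ad^{-1}$ already recorded for \Case{16}{1}{}.
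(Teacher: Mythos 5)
Your proof is correct, and it takes a genuinely different route from the paper's. The paper argues from the \emph{third} group relation: inserting $a^{-1}a$ throughout, it derives
\[
(a^{-1})(ad^{-1}ca^{-1})^{10n+2}(ad^{-1})(ba^{-1})(ac^{-1})(a^{-1})(ac^{-1})=1,
\]
notes that every factor outside the power is positive, and concludes $ad^{-1}ca^{-1}<1$, i.e.\ that the \emph{inverse} of the target is negative. That route needs $ba^{-1}>1$ (Lemma~\ref{lemma:case16:bA}, which itself rests on Lemma~\ref{lemma:eq8}) and the \Case{16}{1}{} assumption $ac^{-1}>1$, on top of $a^{-1}>1$ and $ad^{-1}>1$. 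You instead conjugate the \emph{fourth} relation by $a$ so that the repeated power becomes the target itself, $(ac^{-1}da^{-1})^{10n+3}$, leaving the single factor $ad^{2}a^{-1}da^{-1}=(ada^{-1})^{2}(da^{-1})$, whose negativity follows from just $ad^{-1}>1$ and $a<1$. Both the telescoping identity $(c^{-1}d)^{10n+3}a^{-1}=a^{-1}(ac^{-1}da^{-1})^{10n+3}$ and the factorization check out, and your final appeal to Proposition~\ref{proposition:pospowers} to pass from $(ac^{-1}da^{-1})^{10n+3}>1$ to $ac^{-1}da^{-1}>1$ is exactly how the paper itself handles powers. What your approach buys is economy and a slightly stronger statement: it avoids Lemma~\ref{lemma:case16:bA} and the $ac^{-1}>1$ hypothesis altogether, so it actually shows $ac^{-1}da^{-1}>1$ whenever $a<1$ and $ad^{-1}>1$, independently of the remaining \Case{16}{1}{} signs. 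What the paper's version buys is mainly uniformity of style with the neighboring lemmas, which follow the same insert-$a^{-1}a$-into-the-third-relation template.
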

\begin{proof} By the third group relation, we have:
\begin{align}
(d^{-1}c)^{10n+3}c^{-1}bc^{-2}=1\nonumber{}\\
\Rightarrow{}(a^{-1}a)(d^{-1}c)^{10n+2}(a^{-1}a)(d^{-1}c)c^{-1}b(a^{-1}a)c^{-1}(a^{-1}a)c^{-1}=1\nonumber{}\\
\Rightarrow{}(a^{-1})(ad^{-1}ca^{-1})^{10n+2}(ad^{-1})(cc^{-1})(ba^{-1})(ac^{-1})(a^{-1})(ac^{-1})=1\nonumber{}\\
\Rightarrow{}(a^{-1})(ad^{-1}ca^{-1})^{10n+2}(ad^{-1})(ba^{-1})(ac^{-1})(a^{-1})(ac^{-1})=1.
\label{16.A:aCdA}
\end{align}
But $ba^{-1}>1$ by Lemma \ref{lemma:case16:bA}, $a^{-1}>1$ in Case 16, $ad^{-1}>1$ in \Case{16}{1}{}, , and $ac^{-1}>1$ in \Case{16}{1}{}, so (\ref{16.A:aCdA}) shows that:
\begin{align*}
ad^{-1}ca^{-1}&<1\\
\Rightarrow{}ac^{-1}da^{-1}&>1.\qedhere
\end{align*}
\end{proof}

\begin{lemma} In \Case{16}{1}{}, $b^{-1}cd^{-1}a>1$.
\label{lemma:case16.A:BcDa}
\end{lemma}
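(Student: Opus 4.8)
The plan is to sidestep the group-relation manipulations used in the neighboring lemmas and instead exploit a direct algebraic cancellation. The key observation is that the substring $cd^{-1}$ sits inside the target word $b^{-1}cd^{-1}a$, and Corollary~\ref{corollary:eq6} lets us replace it by an expression in $a$ and $b$ alone that telescopes against the outer factors.

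First I would record the consequence of Corollary~\ref{corollary:eq6}. That corollary gives $a^{2}b^{-2}=dc^{-1}$ in $G_n$, so inverting both sides yields $cd^{-1}=b^{2}a^{-2}$. This is a genuine identity in $G_n$, independent of any sign assumptions, so it is available in \Case{16}{1}{}. Next I would substitute this identity into the target word, replacing the central $cd^{-1}$:
\[
b^{-1}cd^{-1}a = b^{-1}\bigl(b^{2}a^{-2}\bigr)a = ba^{-1},
\]
where the outer factors collapse completely. Finally, I would invoke Lemma~\ref{lemma:case16:bA}, which asserts $ba^{-1}>1$ throughout Case 16 (and hence in the sub-case \Case{16}{1}{}), to conclude that $b^{-1}cd^{-1}a>1$.

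There is essentially no analytic obstacle here: the only thing to spot is that, after the Corollary~\ref{corollary:eq6} substitution, $b^{-1}cd^{-1}a$ is literally equal to $ba^{-1}$, whose sign is already known. The ``hard part'' is purely a matter of recognizing the right rewriting; no conjugation, no case split, and no appeal to the $(\,\cdot\,)^{10n+3}$ structure of the relations is needed, in contrast to the surrounding lemmas. I would note in passing that this argument uses only the ambient Case~16 hypothesis (through Lemma~\ref{lemma:case16:bA}) and not the finer \Case{16}{1}{} sign data, which is a useful sanity check that the identity step is doing the real work.
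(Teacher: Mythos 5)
Your proof is correct and is essentially the paper's own argument: the paper likewise derives from Lemma~\ref{lemma:eq5} (equivalently Corollary~\ref{corollary:eq6}) the identity $(a^{-1}dc^{-1}b)(ba^{-1})=1$, which is exactly your equality $b^{-1}cd^{-1}a=ba^{-1}$, and then concludes with Lemma~\ref{lemma:case16:bA}. The only cosmetic difference is that you substitute $cd^{-1}=b^{2}a^{-2}$ directly into the target word, whereas the paper phrases the same computation as a product of two factors equaling the identity with one factor known to be positive.
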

\begin{proof} By Lemma \ref{lemma:eq5}, we have:
\begin{align}
dc^{-1}b^{2}a^{-2}=1\nonumber{}\\
\Rightarrow{}a^{-1}dc^{-1}b^{2}a^{-1}=1\nonumber{}\\
\Rightarrow{}(a^{-1}dc^{-1}b)(ba^{-1})=1.
\label{16.A:BcDa}
\end{align}
But $ba^{-1}>1$ by Lemma \ref{lemma:case16:bA}, so (\ref{16.A:BcDa}) shows that:
\begin{align*}
a^{-1}dc^{-1}b&<1\\
\Rightarrow{}b^{-1}cd^{-1}a&>1.\qedhere
\end{align*}
\end{proof}

\begin{lemma} In \Case{16}{1}{}, $ad^{-2}c>1$.
\label{lemma:16:A:aDDc}
\end{lemma}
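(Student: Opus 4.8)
The plan is to rewrite $ad^{-2}c$ as a product of two elements already known to be positive in \Case{16}{1}{}, namely $dc^{-1}$ and $bc^{-1}$ (both positive here by the defining assumptions of \Case{16}{1}{} recorded in Table~\ref{table:cases16b}). Concretely, I aim to establish the identity $ad^{-2}c = (dc^{-1})(bc^{-1})$ in $G_n$, from which positivity follows at once by Fact~\ref{1.3LO}.

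To produce this identity I would use the two relation-consequences already in hand. Lemma~\ref{lemma:eq8} gives $d^{2}a = c^{2}b$, which I rearrange to $d^{-2} = ab^{-1}c^{-2}$. Substituting and cancelling one power of $c$ yields
\begin{align*}
ad^{-2}c = a(ab^{-1}c^{-2})c = a^{2}b^{-1}c^{-1} = (a^{2}b^{-2})(bc^{-1}).
\end{align*}
Finally, Corollary~\ref{corollary:eq6} identifies $a^{2}b^{-2} = dc^{-1}$, giving $ad^{-2}c = (dc^{-1})(bc^{-1})$ as wanted.

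With the identity in place the conclusion is immediate: in \Case{16}{1}{} both $dc^{-1}$ and $bc^{-1}$ are positive, so their product $ad^{-2}c$ is positive by Fact~\ref{1.3LO}. There is no real obstacle in this argument; the only point requiring a little care is putting Lemma~\ref{lemma:eq8} into the form $d^{-2} = ab^{-1}c^{-2}$ and tracking the single cancellation of $c$, after which the result drops out from one application of Corollary~\ref{corollary:eq6}.
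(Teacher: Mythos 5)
Your proof is correct, but it takes a genuinely different route from the paper. You derive the explicit factorization $ad^{-2}c=(dc^{-1})(bc^{-1})$: from Lemma~\ref{lemma:eq8}, $d^{2}a=c^{2}b$ gives $d^{-2}=ab^{-1}c^{-2}$, hence $ad^{-2}c=a^{2}b^{-1}c^{-1}=(a^{2}b^{-2})(bc^{-1})=(dc^{-1})(bc^{-1})$ by Corollary~\ref{corollary:eq6}, and positivity is immediate from Fact~\ref{1.3LO} since both $dc^{-1}$ and $bc^{-1}$ are positive by the defining assumptions of \Case{16}{1}{} (Table~\ref{table:cases16b}). The paper instead argues indirectly: it starts from Lemma~\ref{lemma:eq7} ($d^{2}a^{-1}d=c^{2}b^{-1}c$), inserts trivial factors to rewrite the resulting identity as
\begin{align*}
(ba^{-1})(ad^{-1})(dc^{-1})(c^{-1}d^{2}a^{-1})(dc^{-1})=1,
\end{align*}
and concludes that since $ba^{-1}$ (Lemma~\ref{lemma:case16:bA}), $ad^{-1}$, and $dc^{-1}$ are positive, the remaining factor $c^{-1}d^{2}a^{-1}$ must be negative, i.e.\ $ad^{-2}c>1$. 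Your argument is more economical: it is a direct product-of-positives decomposition rather than a ``product equals identity forces the last factor negative'' argument, and it needs fewer inputs --- only the two sign assumptions $dc^{-1}>1$ and $bc^{-1}>1$ from \Case{16}{1}{}, with no appeal to Lemma~\ref{lemma:case16:bA} or to the sign of $ad^{-1}$. What the paper's version buys is stylistic uniformity with the surrounding case analysis (nearly all lemmas in Sections~\ref{section:case1} and~\ref{section:case16} follow the insert-trivial-factors template), which is natural output for their automated search; your factorization is the kind of shortcut that template tends to miss.
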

\begin{proof} By Lemma~\ref{lemma:eq7}, we have:
\begin{align}
d^{2}a^{-1}d&=c^{2}b^{-1}c\nonumber{}\\
\Rightarrow{}bc^{-2}d^{2}a^{-1}dc^{-1}&=1\nonumber{}\\
\Rightarrow{}b(a^{-1}a)(d^{-1}d)c^{-2}d^{2}a^{-1}dc^{-1}&=1\nonumber{}\\
\Rightarrow{}(ba^{-1})(ad^{-1})(dc^{-1})(c^{-1}d^{2}a^{-1})(dc^{-1})&=1.\label{lemma:16:A:aDDc:contradiction}
\end{align}
But $ba^{-1}>1$ in Case 16 by Lemma~\ref{lemma:case16:bA}, $ad^{-1}>1$ in \Case{16}{1}{} by assumption, and $dc^{-1}>1$ in \Case{16}{1}{} by assumption. Therefore, (\ref{lemma:16:A:aDDc:contradiction}) shows that:
\begin{align*}
c^{-1}d^{2}a^{-1}&<1\\
\Rightarrow{}ad^{-2}c&>1.\qedhere
\end{align*}
\end{proof}

\begin{lemma} In \Case{16}{1}{}, $a^{-1}d^{-1}cd^{-1}c>1$.
\label{lemma:16:A:ADcDc}
\end{lemma}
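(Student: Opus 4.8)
The plan is to reformulate the goal in its inverse form and realize that inverse as a sandwiched factor in an identity whose remaining factors are already known to be positive. Since $a^{-1}d^{-1}cd^{-1}c>1$ is equivalent to $(a^{-1}d^{-1}cd^{-1}c)^{-1}=c^{-1}dc^{-1}da=(c^{-1}d)^{2}a<1$, it suffices to produce an identity of $G_n$ that, after cyclic permutation and the insertion of cancelling pairs $aa^{-1}$, $cc^{-1}$, $dd^{-1}$, reads
\[
P_1\cdots P_j\,\big[(c^{-1}d)^{2}a\big]\,Q_1\cdots Q_m=1,
\]
in which every $P_i$ and every $Q_k$ is an element already shown to be positive in \Case{16}{1}{}. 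The underlying mechanism is the sandwiching principle used throughout this section: if $P_1\cdots P_j\,X\,Q_1\cdots Q_m=1$ with all $P_i,Q_k>1$, then $X=P_j^{-1}\cdots P_1^{-1}Q_m^{-1}\cdots Q_1^{-1}$ is a product of negative elements, so $X<1$ by Fact~\ref{1.3LO}. Applying this with $X=(c^{-1}d)^{2}a$ yields exactly $a^{-1}d^{-1}cd^{-1}c>1$.

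First I would start from the fourth group relation, since it already supplies the block $(c^{-1}d)^{10n+3}$, from which the factor $(c^{-1}d)^{2}$ can be split off by writing $(c^{-1}d)^{10n+3}=(c^{-1}d)^{10n+1}(c^{-1}d)^{2}$. A naive split of the fourth relation alone would be circular, merely re-expressing $(c^{-1}d)^{2}a$ through $d^{2}a^{-1}d=(d^{-1}c)^{10n+3}$, so the genuine content must come from feeding in independent inequalities. The natural inputs are the facts just established for \Case{16}{1}{}: $ad^{-2}c>1$ (Lemma~\ref{lemma:16:A:aDDc}) and $a^{-1}d^{-2}c>1$ (Corollary~\ref{corollary:case16.A:ADDc}), the conjugate estimates $a^{-1}c^{-1}da>1$ (Lemma~\ref{lemma:case16:A:ACda}) and $ac^{-1}da^{-1}>1$ (Lemma~\ref{lemma:case16.A:aCdA}), and the basic positives $c^{-1}d>1$ (Lemma~\ref{lemma:case16:Cd}), $dc^{-1}>1$, $ad^{-1}>1$, $ba^{-1}>1$ (Lemma~\ref{lemma:case16:bA}), $d^{-1}a>1$ (Lemma~\ref{lemma:case16:Da}), and $c^{-1}a>1$ (Corollary~\ref{lemma:case16:Ca}). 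The idea is to conjugate the surviving $(c^{-1}d)^{10n+1}$ block by $a$ so that each of its factors becomes $a^{-1}c^{-1}da$, positive by Lemma~\ref{lemma:case16:A:ACda}, and to absorb the leftover $d^{2}a^{-1}d$ tail into a product of the positive blocks above, arranging the insertions so that the single residual factor is precisely $(c^{-1}d)^{2}a$.

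The hard part will be the combinatorial bookkeeping of the insertions: determining which cancelling pairs to insert, and in which order, so that every factor other than $(c^{-1}d)^{2}a$ lands in the list of known positives and the word telescopes to the identity. This is exactly the search performed by the automaton-based program described in the introduction. Beyond the clerical handling of the $10n+1$ repeated block, which is managed uniformly by conjugating it into $(a^{-1}c^{-1}da)^{10n+1}$, the delicate point is to keep the derivation non-circular by genuinely invoking the independent inequalities of Lemmas~\ref{lemma:16:A:aDDc} and~\ref{lemma:case16:A:ACda} rather than only the fourth relation. Once a factorization of the required sandwich form is exhibited, the conclusion $a^{-1}d^{-1}cd^{-1}c>1$ follows immediately from the sandwiching principle.
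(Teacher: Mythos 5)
Your reformulation of the goal as $c^{-1}dc^{-1}da=(c^{-1}d)^{2}a<1$ and your identification of the sandwiching mechanism are both correct, and that is indeed the engine of the paper's argument. But there is a genuine gap: you never exhibit the factorization, and the factorization \emph{is} the lemma --- everything else is routine. Worse, the specific route you sketch does not close. Starting from the fourth relation, splitting off $(c^{-1}d)^{2}$, cyclically permuting, and inserting $aa^{-1}$ gives
\begin{align*}
\bigl[(c^{-1}d)^{2}a\bigr]\,\bigl[a^{-1}d^{2}a^{-1}d(c^{-1}d)^{10n+1}\bigr]=1,
\end{align*}
so your task becomes showing that the second bracket is positive; given this identity, that is \emph{exactly equivalent} to the lemma, so nothing has yet been gained. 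After your proposed conjugation $(c^{-1}d)^{10n+1}=a(a^{-1}c^{-1}da)^{10n+1}a^{-1}$, the bracket becomes $(a^{-1}d^{2}a^{-1}da)(a^{-1}c^{-1}da)^{10n+1}a^{-1}$, and since only the last two factors are known positives, you are left needing $a^{-1}d^{2}a^{-1}da>1$. That element is not among the positives established for \Case{16}{1}{}, none of the lemmas you cite yields it, and it does not even follow from the truth of the lemma itself (a product with one factor of unknown sign is undetermined), so you have reduced the statement to a strictly stronger unproven claim. The deeper problem is that relation 4 alone only recycles the equation $(c^{-1}d)^{10n+3}=d^{-1}ad^{-2}$, so manipulations confined to it, with or without conjugation, stay circular.

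The paper escapes this by starting not from relation 4 but from Lemma~\ref{lemma:eq7}, $d^{2}a^{-1}d=c^{2}b^{-1}c$ (a combination of relations 3 and 4), which injects the generator $b$ into the word. Rearranged to $bc^{-2}d^{2}a^{-1}dc^{-1}=1$ and padded with cancelling pairs, it factors as
\begin{align*}
(ba^{-1})(ac^{-1}da^{-1})(ad^{-2}c)(c^{-1}dc^{-1}da)(a^{-1}da^{-1})(dc^{-1})=1,
\end{align*}
in which every factor except the target $c^{-1}dc^{-1}da$ is a known positive: $ba^{-1}$ by Lemma~\ref{lemma:case16:bA}, $ac^{-1}da^{-1}$ by Lemma~\ref{lemma:case16.A:aCdA}, $ad^{-2}c$ by Lemma~\ref{lemma:16:A:aDDc}, $a^{-1}da^{-1}=(ba^{-1})^{10n}$ by (\ref{eq1:3}), and $dc^{-1}$ by the \Case{16}{1}{} assumption. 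The $b$-involving positives are precisely what make the sandwich close, and they are inaccessible to the $b$-free identity your plan is built on. To repair your proof you would need either to produce an explicit factorization of your bracket (presumably by reinserting $b$ via Lemma~\ref{lemma:eq7} or Lemma~\ref{lemma:eq16}, at which point you have essentially rediscovered the paper's proof) or to switch starting identities.
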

\begin{proof} By Lemma~\ref{lemma:eq7}, we have:
\begin{align}
d^{2}a^{-1}d&=c^{2}b^{-1}c\nonumber{}\\
\Rightarrow{}bc^{-2}d^{2}a^{-1}dc^{-1}&=1\nonumber{}\\
\Rightarrow{}b(a^{-1}a)c^{-1}(da^{-1}ad^{-1})(d^{-1}cc^{-1}d)c^{-1}d(aa^{-1})da^{-1}dc^{-1}&=1\nonumber{}\\
\Rightarrow{}(ba^{-1})(ac^{-1}da^{-1})(ad^{-2}c)(c^{-1}dc^{-1}da)(a^{-1}da^{-1})(dc^{-1})&=1.\label{lemma:16:A:ADcDc:contradiction}
\end{align}
But $ba^{-1}>1$ in Case 16 by Lemma~\ref{lemma:case16:bA}, $ac^{-1}da^{-1}>1$ by Lemma~\ref{lemma:case16.A:aCdA}, $ad^{-2}c>1$ in \Case{16}{1}{} by Lemma~\ref{lemma:16:A:aDDc}, $a^{-1}da^{-1}>1$ in Case 16 since $a^{-1}da^{-1}=(ba^{-1})^{10n}$ (see (\ref{eq1:3})) which is positive by Lemma~\ref{lemma:case16:bA}, and $dc^{-1}>1$ in \Case{16}{1}{} by assumption. Therefore, (\ref{lemma:16:A:ADcDc:contradiction}) shows that:
\begin{align*}
c^{-1}dc^{-1}da&<1\\
\Rightarrow{}a^{-1}d^{-1}cd^{-1}c&>1.\qedhere
\end{align*}
\end{proof}

\begin{lemma} In \Case{16}{1}{}, $a^{-1}c^{-2}dca>1$.
\label{lemma:case16:A:ACCdca}
\end{lemma}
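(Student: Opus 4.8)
The plan is to stay inside the template that drives all of Section~\ref{section:case16}: take an identity that already holds in $G_n$, insert cancelling pairs $aa^{-1}$, $cc^{-1}$, $dd^{-1}$ so that it becomes a product of factors, and arrange the grouping so that every factor except one is already known to be positive in \Case{16}{1}{}. Reading off the sign of the leftover factor then settles the claim. Since the target is $a^{-1}c^{-2}dca$, I would first record its inverse $a^{-1}c^{-1}d^{-1}c^{2}a$, so that the goal becomes isolating one of these two words against a string of positives.

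The most economical input is Lemma~\ref{lemma:eq8}. From $c^{2}b=d^{2}a$ I get $c^{2}=d^{2}ab^{-1}$, hence $c^{-2}=ba^{-1}d^{-2}$, and substituting this into $c^{-2}dc$ collapses the square:
\begin{align*}
a^{-1}c^{-2}dca=a^{-1}\bigl(ba^{-1}d^{-1}c\bigr)a=a^{-1}ba^{-1}d^{-1}ca=(a^{-1}b)(a^{-1}d^{-1}ca).
\end{align*}
This reduces the entire question to the signs of the two visible factors, with no dependence on $n$ and no $(d^{-1}c)^{10n+3}$ power to carry: $a^{-1}b$ is governed directly by the standing assumption $b^{-1}a>1$, and $a^{-1}d^{-1}ca$ is precisely the inverse of the element treated in Lemma~\ref{lemma:case16:A:ACda}. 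Both are therefore already pinned down by results in hand, which is exactly the kind of clean two-factor reduction I would hope for.

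As a backup, because the target carries a $c^{-2}$ and the third group relation $(d^{-1}c)^{10n+3}c^{-1}bc^{-2}$ ends in $c^{-2}$, I would keep in reserve the conjugation trick of Lemmas~\ref{lemma:case16:A:ACda} and~\ref{lemma:case16.A:aCdA}: conjugate the whole relation by $a$, telescope the power into $(a^{-1}d^{-1}ca)^{10n+3}$ using Proposition~\ref{proposition:pospowers}, and peel off the tail so that the desired word sits between a product of the positives $a^{-1}$, $ad^{-1}$, $ba^{-1}$, $dc^{-1}$, and $c^{-1}a$. Either route is only a short pair-insertion computation once the grouping is chosen, and Corollary~\ref{corollary:eq6} and Lemmas~\ref{lemma:case16:Da}--\ref{lemma:case16:bA} supply the remaining positive building blocks. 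The genuine obstacle is not any single algebraic step but selecting the grouping: one must place the cancelling pairs so that every factor other than $a^{-1}c^{-2}dca$ coincides with an element whose sign is already known in \Case{16}{1}{}. This is exactly the combinatorial bookkeeping that our automated search handles, and finding such a factorization by hand is the only real difficulty.
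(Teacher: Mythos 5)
Your main route contains a fatal sign error, and it is not fixable. The algebra itself is correct: by Lemma~\ref{lemma:eq8} one indeed has $a^{-1}c^{-2}dca=(a^{-1}b)(a^{-1}d^{-1}ca)$ in $G_n$. But both factors are \emph{negative}, not positive, under the assumptions in force: the standing assumption is $b^{-1}a>1$, so $a^{-1}b<1$ by Fact~\ref{fact:inverses}; and Lemma~\ref{lemma:case16:A:ACda} states $a^{-1}c^{-1}da>1$, so its inverse $a^{-1}d^{-1}ca<1$. Since a product of negatives is negative (Fact~\ref{1.3LO}), your decomposition proves $a^{-1}c^{-2}dca<1$ --- the opposite of the statement. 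This is not an accident: your identity, rewritten as $(b^{-1}a)(a^{-1}c^{-2}dca)(a^{-1}c^{-1}da)=1$, is exactly the identity the paper uses \emph{after} this lemma, in Proposition~\ref{proposition:case16.A}, where it is played off against the lemma's conclusion to produce the final contradiction that eliminates \Case{16}{1}{}. Consequently the lemma cannot be derived from that identity together with the already-known signs; any such derivation necessarily outputs the reverse inequality, and the lemma must instead come from an independent source (here, the third group relation).

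That is what the paper's proof does: it cyclically permutes the third relation to $d^{-1}bc^{-2}(d^{-1}c)^{10n+2}=1$ and inserts cancelling pairs so that the power telescopes into $(a^{-1}c^{-1}d^{-1}c^{2}a)^{10n-2}$, surrounded by the factors $d^{-1}$, $bc^{-2}$, $c^{-1}a$, $a^{-1}c^{-1}da$, $a^{-1}d^{-2}c^{2}$, $c^{-1}d$, $d^{-2}cb$, $b^{-1}a$, and $a^{-1}d^{-1}cd^{-1}c$, each positive by the earlier results of the subsection; this forces $a^{-1}c^{-1}d^{-1}c^{2}a<1$, i.e.\ $a^{-1}c^{-2}dca>1$. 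Your backup sketch gestures in this direction but is underspecified and, as described, reproduces the wrong lemma: conjugating by $a$ and telescoping into $(a^{-1}d^{-1}ca)^{10n+3}$ against the positives $a^{-1}$, $ad^{-1}$, $ba^{-1}$, $dc^{-1}$, $c^{-1}a$ is precisely the proof of Lemma~\ref{lemma:case16:A:ACda}, not of this statement. For the present lemma the conjugation must produce $c^{-1}d^{-1}c^{2}$ (not $d^{-1}c$) inside the power, and one additionally needs the positivity supplied by Lemmas~\ref{lemma:case16.A:ADDcc}, \ref{lemma:case16.A:DDcb}, \ref{lemma:case16:Cd}, and~\ref{lemma:16:A:ADcDc}, none of which appear in your list. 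So neither of your routes, as written, establishes the lemma.
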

\begin{proof} Suppose that $G_n$ is left-orderable, and suppose (for contradiction), that $a<1$, $b<1$, $c<1$, and $d<1$. By the third group relation, we have: 
\begin{align}
(d^{-1}c)^{10n+3}c^{-1}bc^{-2}&=1\nonumber{}\\
\Rightarrow{}(d^{-1}c)^{10n+2}d^{-1}bc^{-2}&=1\nonumber{}\\
\Rightarrow{}d^{-1}bc^{-2}(d^{-1}c)^{10n+2}&=1\nonumber{}\\
\Rightarrow{}(d^{-1}bc^{-2})(c^{-1}aa^{-1}c^{-1})(d^{-1}c)^{10n-2}(caa^{-1}c^{-1})&\nonumber{}\\
(daa^{-1}d^{-1})(d^{-1}c)(cc^{-1}dd^{-1})(d^{-1}c)(bb^{-1}aa^{-1})(d^{-1}cd^{-1}c)&=1\nonumber{}\\
\Rightarrow{}(d^{-1})(bc^{-2})(c^{-1}a)(a^{-1}c^{-1}d^{-1}cca)^{10n-2}(a^{-1}c^{-1}da)&\nonumber{}\\
(a^{-1}d^{-1}d^{-1}cc)(c^{-1}d)(d^{-1}d^{-1}cb)(b^{-1}a)(a^{-1}d^{-1}cd^{-1}c)&=1\nonumber{}\\
\Rightarrow{}(d^{-1})(bc^{-2})(c^{-1}a)(a^{-1}c^{-1}d^{-1}c^{2}a)^{10n-2}(a^{-1}c^{-1}da)&\nonumber{}\\
(a^{-1}d^{-2}c^{2})(c^{-1}d)(d^{-2}cb)(b^{-1}a)(a^{-1}d^{-1}cd^{-1}c)&=1.
\label{16.A:ACCdca}
\end{align}
But $d^{-1}>1$ by Case 16, $bc^{-2}>1$ by \Case{16}{1}{}, $c^{-1}a>1$ by Lemma \ref{lemma:case16:Ca}, $a^{-1}c^{-1}da>1$ by Lemma \ref{lemma:case16:A:ACda}, $a^{-1}d^{-2}c^{2}>1$ by Lemma \ref{lemma:case16.A:ADDcc}, $c^{-1}d>1$ by Lemma \ref{lemma:case16:Cd}, $d^{-2}cb>1$ by Lemma \ref{lemma:case16.A:DDcb}, $b^{-1}a>1$ by general assumption, and $a^{-1}d^{-1}cd^{-1}c>1$ by Lemma \ref{lemma:16:A:ADcDc}, so (\ref{16.A:ACCdca}) shows that:
\begin{align*}
a^{-1}c^{-1}d^{-1}c^{2}a&<1\\
\Rightarrow{}a^{-1}c^{-2}dca&>1.\qedhere
\end{align*}
\end{proof}

\begin{proposition}  If $G_n$ is left-orderable, then \Case{16}{1}{} ($a<1$, $b<1$, $c<1$, and $d<1$) is impossible.
\label{proposition:case16.A}
\end{proposition}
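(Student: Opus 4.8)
The plan is to argue by contradiction in the style already used throughout this section: assume $G_n$ is left-orderable with the signs of \Case{16}{1}{}, take one of the four group relations, insert identity factors $x x^{-1}$ between consecutive letters so as to regroup the relator word as a product of blocks each of which has already been proved positive in this subsection, and conclude that a product of positive elements equals $1$ --- a contradiction with Fact~\ref{1.3LO}.

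Concretely, I would start from the fourth group relation $d^{2}a^{-1}d(c^{-1}d)^{10n+3}=1$, because it carries the $n$-dependent power $(c^{-1}d)^{10n+3}$ that must be neutralized and because this power is the one that matches our strongest accumulated inequality. The key observation is that $a^{-1}c^{-2}dca=(ca)^{-1}(c^{-1}d)(ca)$, so conjugating the relation by $ca$ (equivalently, inserting $ca(ca)^{-1}$ around each factor of the power) turns $(c^{-1}d)^{10n+3}$ into $(a^{-1}c^{-2}dca)^{10n+3}$, which is positive by Lemma~\ref{lemma:case16:A:ACCdca} together with Proposition~\ref{proposition:pospowers}. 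The finitely many boundary letters coming from $d^{2}a^{-1}d$ and from the conjugator must then be regrouped, via further $xx^{-1}$ insertions, into the positive blocks already on hand: $a^{-1}c^{-1}da$ (Lemma~\ref{lemma:case16:A:ACda}), $a^{-1}da^{-1}=(ba^{-1})^{10n}$ (positive by Lemma~\ref{lemma:case16:bA}), $ba^{-1}$ (Lemma~\ref{lemma:case16:bA}), $c^{-1}a$ (Corollary~\ref{lemma:case16:Ca}), $d^{-1}a$ (Lemma~\ref{lemma:case16:Da}), $c^{-1}d$ (Lemma~\ref{lemma:case16:Cd}), $ad^{-2}c$ (Lemma~\ref{lemma:16:A:aDDc}), $d^{-2}cb$ (Lemma~\ref{lemma:case16.A:DDcb}), $ac^{-1}da^{-1}$ (Lemma~\ref{lemma:case16.A:aCdA}), $b^{-1}cd^{-1}a$ (Lemma~\ref{lemma:case16.A:BcDa}), and $a^{-1}d^{-1}cd^{-1}c$ (Lemma~\ref{lemma:16:A:ADcDc}), supplemented by the negative generators (so that $a^{-1},b^{-1},c^{-1},d^{-1}>1$) and the \Case{16}{1}{} signs $dc^{-1},ad^{-1},ac^{-1},bc^{-1}>1$. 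If the third relation turns out to regroup more cleanly, the same scheme applies with $(d^{-1}c)^{10n+3}$ conjugated to $(a^{-1}c^{-1}d^{-1}c^{2}a)^{10n+3}<1$ and the inequalities read off for a product of negatives instead.

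Once every block in the rewritten word is shown positive, its product is positive and hence cannot equal $1$, giving the contradiction. This eliminates \Case{16}{1}{}; with \Case{16}{2}{} treated symmetrically (and all other sub-cases of Case~16 ruled out by Proposition~\ref{proposition:iffs}), Case~16 is impossible, which together with the exclusion of Cases~1--15 completes the proof that $G_n$ is not left-orderable.

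The hard part is purely combinatorial bookkeeping, not any new group-theoretic input: one must find the precise sequence of $xx^{-1}$ insertions that simultaneously conjugates all $10n+3$ copies of $c^{-1}d$ into a single repeating positive block and leaves a boundary that factors \emph{exactly} --- as an identity in the free group, not merely in total exponent --- into the positive elements listed above, with no leftover letter of the wrong sign. In practice this is done by peeling a bounded number of copies off the power to absorb the boundary corrections while conjugating the bulk by a fixed short word; verifying the resulting grouping is an exact equality is the delicate step, and it is exactly the kind of search the automated procedure described in Section~1.3 is built to perform.
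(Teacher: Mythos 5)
Your proposal has a genuine gap, and its central route, taken literally, provably cannot work. The gap first: in this style of argument the entire mathematical content is the exhibited factorization of an identity into blocks of known sign, and you never produce one --- you explicitly defer ``the delicate step'' to bookkeeping or to the automated search. More seriously, the specific plan you describe is sign-consistent and therefore can yield no contradiction. Conjugating the fourth relation by $ca$ gives
\begin{align*}
(a^{-1}c^{-1}d^{2}a^{-1}dca)\,(a^{-1}c^{-2}dca)^{10n+3}=1,
\end{align*}
and while the power is indeed positive by Lemma~\ref{lemma:case16:A:ACCdca}, the boundary word is provably \emph{negative}: by Lemma~\ref{lemma:eq7}, $d^{2}a^{-1}d=c^{2}b^{-1}c$, so
\begin{align*}
a^{-1}c^{-1}d^{2}a^{-1}dca=a^{-1}cb^{-1}c^{2}a=(a^{-1}c)(b^{-1}c)(ca),
\end{align*}
a product of three negatives (the inverses of Corollary~\ref{lemma:case16:Ca} and Lemma~\ref{lemma:case16:Cb}, plus the Case 16 signs). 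Thus the conjugated relation reads (negative)$\cdot$(positive)$=1$, which is perfectly consistent, and no regrouping of that word into your list of positive blocks can exist. Your third-relation fallback fails the same way: $(ca)^{-1}c^{-1}bc^{-2}(ca)=(a^{-1}c^{-1})(c^{-1}b)(c^{-1}a)$ is a product of positives, so that conjugated relation reads (negative power)$\cdot$(positive)$=1$, again consistent. The only escape you leave open is the unspecified ``peeling'' scheme, which is precisely the part you have not carried out.

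For comparison, the paper avoids the $n$-dependent power entirely at this final stage: all the $n$-dependence has already been absorbed into the lemmas (chiefly Lemma~\ref{lemma:case16:A:ACCdca}), and the contradiction comes from the $n$-independent identity of Lemma~\ref{lemma:eq8}, namely $b^{-1}c^{-2}d^{2}a=1$, which after two insertions factors exactly as
\begin{align*}
(b^{-1}a)(a^{-1}c^{-2}dca)(a^{-1}c^{-1}da)=1,
\end{align*}
a product of three elements already known to be positive (the general assumption, Lemma~\ref{lemma:case16:A:ACCdca}, and Lemma~\ref{lemma:case16:A:ACda}). You correctly isolated the key inequality $a^{-1}c^{-2}dca>1$ and its interpretation as the conjugate $(ca)^{-1}(c^{-1}d)(ca)$, but you fed it back into one of the relations carrying the $10n+3$ power, where its sign is consistent rather than contradictory; the short identity from Lemma~\ref{lemma:eq8} is what makes the contradiction immediate.
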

\begin{proof} By Lemma \ref{lemma:eq8}, we have:
\begin{align}
b^{-1}c^{-2}d^{2}a&=1\nonumber{}\\
\Rightarrow{}b^{-1}(aa^{-1})c^{-2}d(caa^{-1}c^{-1})da&=1\nonumber{}\\
\Rightarrow{}(b^{-1}a)(a^{-1}c^{-2}dca)(a^{-1}c^{-1}da)&=1.
\label{proposition:case16.A:contradiction}
\end{align}
But $b^{-1}a>1$ by general assumption, $a^{-1}c^{-2}dca>1$ by Lemma \ref{lemma:case16:A:ACCdca}, and $a^{-1}c^{-1}da>1$ by Lemma \ref{lemma:case16:A:ACda}. Therefore, (\ref{proposition:case16.A:contradiction}) states that a product of positives is the identity, a contradiction.
\end{proof}

\subsection{\Case{16}{2}{}}

\noindent{} As a reminder, since we are working in a sub-case of Case 16, we know $a<1$, $b<1$, $c<1$, and $d<1$.

\begin{lemma} In \Case{16}{2}{}, $a^{-1}bc^{-1}b>1$.
\label{lemma:case16:AbCb}
\end{lemma}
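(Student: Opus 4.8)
The plan is to reduce the word $a^{-1}bc^{-1}b$ to one whose sign is already known, rather than to manipulate a group relation directly. The key observation is that Lemma~\ref{lemma:eq16} provides the identity $ad^{-1}a = bc^{-1}b$ in $G_n$, which rewrites the palindromic sub-word $bc^{-1}b$ entirely in terms of $a$ and $d$. This is exactly the sub-word appearing in the expression we must sign, so the identity is tailor-made for this situation.

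First I would substitute this identity into the expression of interest. Using $bc^{-1}b = ad^{-1}a$, the word collapses by a one-line cancellation:
\begin{align*}
a^{-1}bc^{-1}b = a^{-1}(bc^{-1}b) = a^{-1}(ad^{-1}a) = d^{-1}a.
\end{align*}
Thus the quantity we must sign reduces to $d^{-1}a$. Finally, I would invoke Lemma~\ref{lemma:case16:Da}, which already established $d^{-1}a > 1$ throughout Case 16 (there it follows from $d^{-1}a = (b^{-1}a)^{10n}a^{-1}$, a product of positive elements). Chaining the two steps yields $a^{-1}bc^{-1}b = d^{-1}a > 1$, as desired.

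The main thing to observe is that there is essentially no obstacle here: the sub-case hypotheses of \Case{16}{2}{} concerning the signs of $dc^{-1}$, $ad^{-1}$, $ac^{-1}$, and so on are never used, and the statement in fact holds throughout all of Case 16. The only insight required is to recognize that $bc^{-1}b$ is precisely the left-hand side (after reflection) of the relation packaged in Lemma~\ref{lemma:eq16}; once that is spotted, the proof is immediate and no case-specific positivity facts are needed.
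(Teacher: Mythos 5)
Your proof is correct and is essentially identical to the paper's own argument: both substitute $bc^{-1}b=ad^{-1}a$ from Lemma~\ref{lemma:eq16}, cancel to get $d^{-1}a$, and conclude via Lemma~\ref{lemma:case16:Da}. Your observation that the conclusion holds throughout Case 16 (not just in \Case{16}{2}{}) is accurate and also true of the paper's proof, which likewise uses no sub-case hypotheses.
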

\begin{proof}By Lemma~\ref{lemma:eq16}, we have:
\begin{align*}
bc^{-1}b=ad^{-1}a.
\end{align*}
Thus:
\begin{align*}
a^{-1}(bc^{-1}b)=a^{-1}(ad^{-1}a)=(d^{-1}a).
\end{align*}
But $d^{-1}a>1$ in Case 16 by Lemma~\ref{lemma:case16:Da}; therefore, we know:
\begin{align*}
a^{-1}bc^{-1}b=d^{-1}a&>1.\qedhere
\end{align*}
\end{proof}

\begin{corollary} In \Case{16}{2}{}, $a^{-1}bc^{-1}a>1$ and $a^{-1}bc^{-1}>1$.
\label{corollary:case16:AbC}
\label{corollary:case16:AbCa}
\end{corollary}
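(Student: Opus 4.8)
The plan is to obtain both inequalities directly from the preceding lemma by appending known positive factors and telescoping. Lemma~\ref{lemma:case16:AbCb} already gives $a^{-1}bc^{-1}b>1$ in \Case{16}{2}{}, so the only work is to convert the trailing $b$ first into an $a$ and then into nothing, each time by multiplying on the right by an element already known to be positive, and then invoking Fact~\ref{1.3LO}.

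First I would establish $a^{-1}bc^{-1}a>1$. The key observation is that $(a^{-1}bc^{-1}b)(b^{-1}a)=a^{-1}bc^{-1}a$, since the adjacent $bb^{-1}$ cancels. The factor $a^{-1}bc^{-1}b$ is positive by Lemma~\ref{lemma:case16:AbCb}, and $b^{-1}a>1$ is the standing assumption maintained throughout the entire argument. Hence $a^{-1}bc^{-1}a$ is a product of two positive elements, and is therefore positive by Fact~\ref{1.3LO}.

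Next I would establish $a^{-1}bc^{-1}>1$ in the same spirit, writing $a^{-1}bc^{-1}=(a^{-1}bc^{-1}a)(a^{-1})$, where the adjacent $aa^{-1}$ now cancels. The first factor is positive by the step just completed, and $a^{-1}>1$ because $a<1$ in Case 16 (equivalently, by Fact~\ref{fact:inverses}). Thus $a^{-1}bc^{-1}$ is again a product of positives and is positive.

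There is essentially no obstacle here: both claims are immediate telescoping consequences of Lemma~\ref{lemma:case16:AbCb} together with the two facts $b^{-1}a>1$ and $a<1$ that are already in force in \Case{16}{2}{}. The only thing to get right is the choice of positive word to append on the right---namely $b^{-1}a$ and then $a^{-1}$---so that the cancellation leaves precisely the desired expression.
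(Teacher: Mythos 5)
Your proposal is correct and follows essentially the same route as the paper: both arguments start from Lemma~\ref{lemma:case16:AbCb} and telescope by right-multiplying with known positive elements. The only (immaterial) difference is that for $a^{-1}bc^{-1}>1$ the paper appends $b^{-1}>1$ to $a^{-1}bc^{-1}b$, whereas you append $a^{-1}>1$ to the already-established $a^{-1}bc^{-1}a$; both factors are positive in Case 16, so the two derivations are interchangeable.
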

\begin{proof}By Lemma \ref{lemma:case16:AbCb}, we have
\begin{align*}
a^{-1}bc^{-1}b>1.
\end{align*}
Since $b^{-1}a>1$, we have:
\begin{align*}
b^{-1}a&>1\\
\Rightarrow{}a^{-1}bc^{-1}b(b^{-1}a)>a^{-1}bc^{-1}b&>1\\
\Rightarrow{}a^{-1}bc^{-1}a>a^{-1}bc^{-1}b&>1.
\end{align*}
By $b^{-1}>1$, we have:
\begin{align*}
b^{-1}&>1\\
\Rightarrow{}a^{-1}bc^{-1}b(b^{-1})>a^{-1}bc^{-1}b&>1\\
\Rightarrow{}a^{-1}bc^{-1}>a^{-1}bc^{-1}b&>1.\qedhere
\end{align*}
\end{proof}

\begin{lemma} In \Case{16}{2}{}, $c^{-1}d^{-1}c > 1 $.
\label{lemma:case16:CDc}
\end{lemma}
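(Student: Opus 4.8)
The plan is to first rephrase the target: by left-invariance, $c^{-1}d^{-1}c>1$ is equivalent to $c^{-1}dc<1$, so it suffices to produce a single identity in $G_n$ that equals $1$ and in which $c^{-1}dc$ occurs exactly once, every other factor having a sign already pinned down in \Case{16}{2}{}. The ambient sign data is plentiful: $c^{-1}d>1$ (Lemma~\ref{lemma:case16:Cd}), $d^{-1}a>1$ (Lemma~\ref{lemma:case16:Da}), $c^{-1}b>1$ (Lemma~\ref{lemma:case16:Cb}), $c^{-1}a>1$ (Corollary~\ref{lemma:case16:Ca}), $ba^{-1}>1$ (Lemma~\ref{lemma:case16:bA}), the sub-case assumptions $dc^{-1},ad^{-1},ac^{-1},bc^{-1}<1$, and the positive elements $a^{-1}bc^{-1}b$, $a^{-1}bc^{-1}a$, $a^{-1}bc^{-1}$ coming from Lemma~\ref{lemma:case16:AbCb} and its corollary.

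The natural source is the fourth group relation, since its prefix $d^{2}a^{-1}d$ carries the only isolated occurrences of $d$ that can be wrapped into a conjugate $c^{-1}dc$, while the block $(c^{-1}d)^{10n+3}$ is already a \emph{positive} block because $c^{-1}d>1$. Conjugating $d^{2}a^{-1}d(c^{-1}d)^{10n+3}=1$ by $c$ and applying the elementary identities $d(c^{-1}d)^{k}=(dc^{-1})^{k}d$ and $(dc^{-1})^{k}=c(c^{-1}d)^{k}c^{-1}$ to reinstate the block, I would obtain
\[(c^{-1}d^{2}a^{-1}c)\,(c^{-1}d)^{10n+3}\,(c^{-1}dc)=1.\]
The middle factor is positive, so the entire question reduces to controlling the front factor $c^{-1}d^{2}a^{-1}c$ and the way it interacts with the ends of the block.

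The difficulty is that $c^{-1}d^{2}a^{-1}c$ is \emph{not} directly positive: rewriting it via Lemma~\ref{lemma:eq7} (which gives $d^{2}a^{-1}=c^{2}b^{-1}cd^{-1}$) yields $c^{-1}d^{2}a^{-1}c=cb^{-1}cd^{-1}c$, and grouped as $c\,(b^{-1}c)\,(d^{-1}c)$ this is a product of three negative elements. Consequently the three-factor form above degenerates into a mixed-sign product from which no single factor can be isolated, and this is exactly where the real work lies. The crux of the proof is to regroup across the boundary of the block—merging the cancelling juncture $d^{-1}c\cdot c^{-1}d$, and splicing in factors $xx^{-1}$ to recast the leftover letters as the already-established positive elements $c^{-1}b$, $c^{-1}d$, $c^{-1}a$, $ba^{-1}$, $a^{-1}bc^{-1}$ (calling on Lemma~\ref{lemma:eq5}, Lemma~\ref{lemma:eq8}, or Lemma~\ref{lemma:eq16} to trade a stray $c$ for $a,b,d$ letters where needed)—until the identity reads as a product of elements each known to be positive in \Case{16}{2}{}, multiplied by a single occurrence of $c^{-1}dc$. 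At that point left-orderability forces $c^{-1}dc<1$, hence $c^{-1}d^{-1}c>1$. I expect the bookkeeping of this regrouping, and in particular the choice of which general identity to feed in so that precisely one undetermined factor survives, to be the main obstacle.
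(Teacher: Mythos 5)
Your proposal has a genuine gap: it is a plan, not a proof. The reduction to showing $c^{-1}dc<1$ is fine, and your conjugated form of the fourth relation, $(c^{-1}d^{2}a^{-1}c)(c^{-1}d)^{10n+3}(c^{-1}dc)=1$, is a correct identity; but, as you yourself observe, the front factor $c^{-1}d^{2}a^{-1}c=c\,(b^{-1}c)\,(d^{-1}c)$ is a product of \emph{negative} elements in \Case{16}{2}{}, so this identity is a mixed-sign product and yields nothing. Everything after that point---``regroup across the boundary of the block,'' ``splice in factors $xx^{-1}$,'' ``calling on Lemma~\ref{lemma:eq5}, Lemma~\ref{lemma:eq8}, or Lemma~\ref{lemma:eq16} where needed''---describes what a proof would have to accomplish without producing it. No explicit identity with the required sign structure is ever exhibited, and you concede that the ``main obstacle'' still lies ahead; the argument therefore cannot be accepted as written.

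The paper's actual proof avoids the $n$-dependent block entirely and also drops your self-imposed requirement that $c^{-1}dc$ occur exactly once. Starting from Lemma~\ref{lemma:eq8} in the form $b^{-1}c^{-2}d^{2}a=1$ and inserting trivial pairs $aa^{-1}$, $bb^{-1}$, $cc^{-1}$, one obtains
\begin{align*}
(b^{-1})(b^{-1}a)(a^{-1}bc^{-1})(c^{-1}dc)(c^{-1}dc)(c^{-1}a)=1,
\end{align*}
where $b^{-1}>1$ and $b^{-1}a>1$ by the general assumptions, $a^{-1}bc^{-1}>1$ by Corollary~\ref{corollary:case16:AbC}, and $c^{-1}a>1$ by Corollary~\ref{lemma:case16:Ca}. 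If $c^{-1}dc$ were trivial or positive, this would exhibit a product of positive elements equal to the identity (triviality is excluded since $d\neq 1$), so $c^{-1}dc<1$ and hence $c^{-1}d^{-1}c>1$. Note that permitting \emph{two} occurrences of the undetermined factor is precisely what makes this short argument work; insisting on a single occurrence is what steered you toward the fourth relation and into the dead end you describe.
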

\begin{proof} By Lemma~\ref{lemma:eq8}, we have:
\begin{align}
b^{-1}c^{-2}d^{2}a &= 1\nonumber{}\\
\Rightarrow{}b^{-1}(b^{-1}b)c^{-2}d(cc^{-1})d(cc^{-1})a &= 1\nonumber{}\\
\Rightarrow{}b^{-1}b^{-1}(aa^{-1})bc^{-2}d(cc^{-1})d(cc^{-1})a &= 1\nonumber{}\\
\Rightarrow{}(b^{-1})(b^{-1}a)(a^{-1}bc^{-1})(c^{-1}dc)(c^{-1}dc)(c^{-1}a) &=1.\label{lemma:case16:CDc:contradiction}
\end{align}
Now $b^{-1}>1$ and $b^{-1}a>1$ by general assumption, $a^{-1}bc^{-1}>1$ in \Case{16}{2}{} by Corollary~\ref{corollary:case16:AbC}, and $c^{-1}a>1$ in Case 16 by Corollary~\ref{lemma:case16:Ca}. Therefore, (\ref{lemma:case16:CDc:contradiction}) shows that
\begin{align*}
c^{-1}dc&<1\\
\Rightarrow{}c^{-1}d^{-1}c&>1.\qedhere
\end{align*}
\end{proof}

\begin{lemma} In \Case{16}{2}{}, $b^{-1}d^{-1}b>1$.
\label{lemma:case16:BDb}
\end{lemma}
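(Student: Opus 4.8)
The plan is to prove the equivalent inequality $b^{-1}db<1$ (recall $b^{-1}d^{-1}b=(b^{-1}db)^{-1}$, so this is equivalent by Fact~\ref{fact:inverses}), imitating the argument of Lemma~\ref{lemma:case16:CDc} that produced $c^{-1}d^{-1}c>1$. There, the identity $d^2a=c^2b$ of Lemma~\ref{lemma:eq8}, rewritten as $b^{-1}c^{-2}d^2a=1$, was reorganized by inserting trivial pairs $xx^{-1}$ so that the two copies of $d$ each became the conjugate $c^{-1}dc$, while every surviving factor ($b^{-1}$, $b^{-1}a$, $a^{-1}bc^{-1}$, $c^{-1}a$) was already known to be positive in Case 16.b; Fact~\ref{1.3LO} then forced $(c^{-1}dc)^2<1$, hence $c^{-1}dc<1$. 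I would run the same machine, but conjugating $d$ by $b$ rather than by $c$.

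Concretely, I would begin from a relation that places $b$ adjacent to $d$ — the natural candidates being the first group relation (the unique relation containing both $b$ and $d$ with $d$ to a single power) or Lemma~\ref{lemma:eq8}/Lemma~\ref{lemma:eq7} — and insert pairs $bb^{-1}$ around the $d$ (together with $aa^{-1}$ and $cc^{-1}$ elsewhere) so as to exhibit the relation in the form $w_1w_2\cdots w_k=1$, where one distinguished factor is the conjugate $b^{-1}db$ (or its square) and each remaining $w_i$ lies among the elements already shown positive, for $n\geq1$, in Case 16 and Case 16.b: namely $ba^{-1}$ (Lemma~\ref{lemma:case16:bA}), $b^{-1}a$ (the standing assumption), $c^{-1}b$ (Lemma~\ref{lemma:case16:Cb}), $c^{-1}a$ (Corollary~\ref{lemma:case16:Ca}), $d^{-1}a$ (Lemma~\ref{lemma:case16:Da}), $a^{-1}bc^{-1}$ (Corollary~\ref{corollary:case16:AbC}), and $c^{-1}d^{-1}c$ (Lemma~\ref{lemma:case16:CDc}), supplemented by the Case 16.b data $da^{-1},ca^{-1},cb^{-1}>1$. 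Since a product of positives is positive, hence nontrivial, by Fact~\ref{1.3LO}, the distinguished factor cannot be positive; and because $b^{-1}db$ is a conjugate of the nontrivial element $d$ (nontrivial by Proposition~\ref{proposition:non-trivial}) it is itself nontrivial, so $b^{-1}db<1$, i.e. $b^{-1}d^{-1}b>1$.

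The hard part is entirely combinatorial: selecting the precise sequence of insertions that keeps the conjugation by $b$ clean while forcing every leftover subword to coincide with one of the established positive elements. The obstacle is visible already in the first relation, where naively writing $d^{-1}=b(b^{-1}d^{-1}b)b^{-1}$ leaves an unabsorbed $b^2$ that does not by itself group into a known positive; circumventing this will likely require either a preliminary rearrangement or the use of a two-occurrence relation (such as Lemma~\ref{lemma:eq8}, which yields $(b^{-1}db)^2$ flanked by factors like $b^{-1}a$ and conjugates of $c$) so that the flanking terms collapse as cleanly as $b^{-1}c^{-1}=(b^{-1})(b^{-1}a)(a^{-1}bc^{-1})$ did before. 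This bookkeeping is exactly what the authors' search program automates, and I would expect the resulting displayed identity to run to several lines, in the style of Lemma~\ref{lemma:case16:A:ACCdca}.
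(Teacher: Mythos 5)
Your plan is, in essence, the paper's own proof: the paper starts exactly where your analysis ended up --- Lemma~\ref{lemma:eq8} written as $b^{-1}c^{-2}d^{2}a=1$ --- and conjugates both copies of $d$ by $b$, producing $(b^{-1}db)^{2}$ flanked by known positives. The only thing you left undone, the ``hard combinatorial part,'' is not a multi-line search but a one-line insertion:
\begin{align*}
1=b^{-1}c^{-2}d^{2}a=b^{-1}c^{-2}(bb^{-1})d(bb^{-1})d(bb^{-1})a=(b^{-1})(c^{-1})(c^{-1}b)(b^{-1}db)(b^{-1}db)(b^{-1}a).
\end{align*}
The four flanking factors are all on your own list of available positives: $b^{-1}>1$, $c^{-1}>1$, and $b^{-1}a>1$ hold in Case 16, and $c^{-1}b>1$ is Lemma~\ref{lemma:case16:Cb}. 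Hence $(b^{-1}db)^{2}<1$, so $b^{-1}db<1$ by Proposition~\ref{proposition:pospowers}, i.e.\ $b^{-1}d^{-1}b>1$. One further observation: none of the \Case{16}{2}{} data ($da^{-1},ca^{-1},cb^{-1}>1$) is ever needed, so --- exactly as in the paper --- the statement actually holds throughout Case 16, not just in this sub-case; your instinct that a two-occurrence relation was required (and that the first group relation leaves an unabsorbable $b^{2}$) was correct and is precisely why the paper reaches for Lemma~\ref{lemma:eq8} here.
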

\begin{proof} By Lemma~\ref{lemma:eq8}, we have:
\begin{align}
d^{2}a&=c^{2}b\nonumber{}\\
\Rightarrow{}b^{-1}c^{-2}d^{2}a&=1\nonumber{}\\
\Rightarrow{}b^{-1}c^{-2}(bb^{-1})d(bb^{-1})d(bb^{-1})a&=1\nonumber{}\\
\Rightarrow{}(b^{-1})(c^{-1})(c^{-1}b)(b^{-1}db)(b^{-1}db)(b^{-1}a)&=1.\label{lemma:case16:BDb:contradiction}
\end{align}
Now $b^{-1}$, $c^{-1}$, and $b^{-1}a$ are positive by assumption in Case 16, and $c^{-1}b>1$ by Lemma~\ref{lemma:case16:Cb}. Therefore, (\ref{lemma:case16:BDb:contradiction}) shows that
\begin{align*}
b^{-1}db&<1\\
\Rightarrow{}b^{-1}d^{-1}b&>1.\qedhere
\end{align*}
\end{proof}

\begin{lemma} In \Case{16}{2}{}, $a^{-1}dc^{-1}a>1$.
\label{lemma:case16:AdCa}
\end{lemma}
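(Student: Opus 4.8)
The plan is to derive the inequality directly from the third group relation, following the same template as the proof of Lemma~\ref{lemma:case16.A:AdCa} in \Case{16}{1}{}, but grouping the tail of the resulting identity differently so as to rely only on facts available in \Case{16}{2}{}. First I would rewrite $(d^{-1}c)^{10n+3}c^{-1}bc^{-2}=1$ using the purely formal identity $(d^{-1}c)^{10n+3}c^{-1}=c^{-1}(cd^{-1})^{10n+3}$ (both sides equal $d^{-1}(cd^{-1})^{10n+2}$), and then insert $aa^{-1}$ around the repeated factor to conjugate $(cd^{-1})^{10n+3}$ into $(a^{-1}cd^{-1}a)^{10n+3}$. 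This yields
\[(c^{-1}a)(a^{-1}cd^{-1}a)^{10n+3}(a^{-1}bc^{-2})=1.\]

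The key observation, and the only place where \Case{16}{2}{} enters, is that here \emph{both} outer factors are positive. Indeed $c^{-1}a>1$ by Corollary~\ref{lemma:case16:Ca}, and for the tail I would write $a^{-1}bc^{-2}=(a^{-1}bc^{-1})(c^{-1})$, which is a product of positives since $a^{-1}bc^{-1}>1$ by Corollary~\ref{corollary:case16:AbCa} and $c^{-1}>1$ because $c<1$ in Case 16. This grouping is exactly what replaces the factorization $(a^{-1})(ba^{-1})(ac^{-1})(a^{-1})(ac^{-1})$ used in \Case{16}{1}{}, which is unavailable here because $ac^{-1}<1$ in \Case{16}{2}{}.

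With both outer factors positive, the displayed identity forces $(a^{-1}cd^{-1}a)^{10n+3}<1$; by Proposition~\ref{proposition:pospowers} this gives $a^{-1}cd^{-1}a<1$, and taking inverses yields $a^{-1}dc^{-1}a>1$, as required. I expect no serious obstacle: the only delicate point is the exponent bookkeeping in rewriting the relation and conjugating the power, but this is already carried out in Lemma~\ref{lemma:case16.A:AdCa}, so the whole argument reduces to spotting that $a^{-1}bc^{-2}$ factors as $(a^{-1}bc^{-1})(c^{-1})$ and is therefore positive in \Case{16}{2}{}.
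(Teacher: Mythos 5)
Your proposal is correct and matches the paper's own proof essentially step for step: the paper also rewrites the third relation as $c^{-1}(cd^{-1})^{10n+3}bc^{-2}=1$, conjugates by inserting $aa^{-1}$ to get $(c^{-1}a)(a^{-1}cd^{-1}a)^{10n+3}(a^{-1}bc^{-1})(c^{-1})=1$, and concludes from $c^{-1}a>1$ (Corollary~\ref{lemma:case16:Ca}), $a^{-1}bc^{-1}>1$ (Corollary~\ref{corollary:case16:AbC}), and $c^{-1}>1$ that $a^{-1}cd^{-1}a<1$. Your grouping of the tail as $(a^{-1}bc^{-1})(c^{-1})$ is exactly the factorization the paper uses, so there is nothing to add.
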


\begin{proof} Starting from the third group relation, we have:
\begin{align}
(d^{-1}c)^{10n+3}c^{-1}bc^{-2}&=1\nonumber{}\\
\Rightarrow{}c^{-1}(cd^{-1})^{10n+3}bc^{-2}&=1\nonumber{}\\
\Rightarrow{}(c^{-1}a)(a^{-1}cd^{-1}a)^{10n+3}(a^{-1}bc^{-1})(c^{-1})&=1.\label{lemma:case16:AdCa:contradiction}
\end{align}
Now $c^{-1}>1$ in Case 16 by assumption, $c^{-1}a>1$ in Case 16 by Corollary~\ref{lemma:case16:Ca}, and $a^{-1}bc^{-1}>1$ in \Case{16}{2}{} by Corollary~\ref{corollary:case16:AbC}. Therefore, (\ref{lemma:case16:AdCa:contradiction}) shows that:
\begin{align*}
a^{-1}cd^{-1}a&<1\\
\Rightarrow{}a^{-1}dc^{-1}a&>1.\qedhere
\end{align*}
\end{proof}

\begin{lemma} In \Case{16}{2}{}, $d^{-2}cb>1$.
\label{lemma:case16:DDcb}
\end{lemma}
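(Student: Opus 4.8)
The plan is to mimic, essentially verbatim, the proof of the corresponding statement in the other sub-case, namely Lemma~\ref{lemma:case16.A:DDcb} (``In \Case{16}{1}{}, $d^{-2}cb>1$''). The crucial observation is that the algebraic identity underlying that proof,
\[
d^{-2}cb=(ab^{-2}a)(d^{-1}a),
\]
is derived purely from Lemma~\ref{lemma:eq8} and Lemma~\ref{lemma:eq16} and therefore holds in all of $G_n$, independent of the sub-case. Concretely, inserting $1=cb^{-1}bc^{-1}$ gives $d^{-2}cb=(d^{-2}c^{2})b^{-1}(bc^{-1}b)$; then Lemma~\ref{lemma:eq8} yields $d^{-2}c^{2}=ab^{-1}$ and Lemma~\ref{lemma:eq16} yields $bc^{-1}b=ad^{-1}a$, so that $d^{-2}cb=(ab^{-1})b^{-1}(ad^{-1}a)=(ab^{-2}a)(d^{-1}a)$, exactly as in display~(\ref{16.A:DDcb}). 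Once this identity is in hand, it suffices to show that both factors $ab^{-2}a$ and $d^{-1}a$ are positive in \Case{16}{2}{}.

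The factor $d^{-1}a>1$ is immediate from Lemma~\ref{lemma:case16:Da}, which holds throughout Case 16. For the factor $ab^{-2}a$, I would reprove $ab^{-2}a>1$ in \Case{16}{2}{} along the same lines as Lemma~\ref{lemma:case16.A:aBBa}: starting from Lemma~\ref{lemma:eq5} in the form $dc^{-1}b^{2}a^{-2}=1$, conjugating by $a$ rewrites this as $(a^{-1}dc^{-1}a)(a^{-1}b^{2}a^{-1})=1$. The only input that differs between the two sub-cases is the sign of $a^{-1}dc^{-1}a$, and this is supplied for the present sub-case by Lemma~\ref{lemma:case16:AdCa}, which already gives $a^{-1}dc^{-1}a>1$ in \Case{16}{2}{}. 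Hence $a^{-1}b^{2}a^{-1}<1$, i.e. $ab^{-2}a>1$.

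Combining the two sign facts, $d^{-2}cb=(ab^{-2}a)(d^{-1}a)$ is a product of positive elements and is therefore positive, as desired. I do not expect a genuine obstacle here: the entire argument is parallel to the already-established \Case{16}{1}{} version, and the single sign input that distinguishes the cases (positivity of $a^{-1}dc^{-1}a$) has already been proven for \Case{16}{2}{} in Lemma~\ref{lemma:case16:AdCa}. The only mild subtlety is bookkeeping—remembering that $ab^{-2}a>1$ must be re-derived for this sub-case rather than quoted from the \Case{16}{1}{} lemma—but since its proof depends only on Lemma~\ref{lemma:eq5} and Lemma~\ref{lemma:case16:AdCa}, the derivation transfers without change.
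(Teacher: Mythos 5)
Your proof is correct and contains no circularity: Lemma~\ref{lemma:case16:AdCa} precedes Lemma~\ref{lemma:case16:DDcb} in the paper and depends only on the third group relation, Corollary~\ref{lemma:case16:Ca}, and Corollary~\ref{corollary:case16:AbC}, so you may cite it, and Lemma~\ref{lemma:case16:Da} indeed holds throughout Case 16. The paper's route looks different on its face: it combines Lemma~\ref{lemma:eq8} with Corollary~\ref{corollary:eq6} to obtain the identity $(a^{-1}dc^{-1}a)(a^{-1}bc^{-1}b)(b^{-1}c^{-1}d^{2})=1$ and then quotes Lemma~\ref{lemma:case16:AdCa} and Lemma~\ref{lemma:case16:AbCb} to force $b^{-1}c^{-1}d^{2}<1$, i.e. $d^{-2}cb>1$. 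However, the two arguments are the same factorization in disguise: by Lemma~\ref{lemma:eq5} one has $b^{2}=cd^{-1}a^{2}$, so your factor $ab^{-2}a$ equals $a^{-1}dc^{-1}a$, and by Lemma~\ref{lemma:eq16} the paper's factor $a^{-1}bc^{-1}b$ equals $d^{-1}a$ (which is exactly how Lemma~\ref{lemma:case16:AbCb} is proved). Thus both proofs write $d^{-2}cb$ as a product of the same two positive group elements and both hinge on the same sub-case-specific input, Lemma~\ref{lemma:case16:AdCa}. What your packaging buys is uniformity with \Case{16}{1}{} --- a single case-independent identity $d^{-2}cb=(ab^{-2}a)(d^{-1}a)$ serving both sub-cases, with only the positivity of $ab^{-2}a$ re-derived per sub-case --- at the cost of that extra re-derivation; the paper instead gets everything in one chain of substitutions and cites two previously established \Case{16}{2}{} lemmas directly.
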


\begin{proof} By Lemma~\ref{lemma:eq8}, we have:
\begin{align}
c^{2}b&=d^{2}a\nonumber{}\\
\Rightarrow{}d^{-2}c^{2}&=ab^{-1}.\label{eq8:4}
\end{align}
By Corollary~\ref{corollary:eq6}, we have:
\begin{align}
a^{2}b^{-2}&=dc^{-1}\nonumber{}\\
\Rightarrow{}a^{2}b^{-2}cd^{-1}&=1.\label{eq6:4}
\end{align}
Combining (\ref{eq8:4}) and (\ref{eq6:4}), we find:
\begin{align}
ad^{-2}c^{2}b^{-1}cd^{-1}&=1\label{eq18}\\
\Rightarrow{}dc^{-1}bc^{-2}d^{2}a^{-1}&=1\nonumber{}\\
\Rightarrow{}a^{-1}dc^{-1}bc^{-2}d^{2}&=1\nonumber{}\\
\Rightarrow{}a^{-1}dc^{-1}(aa^{-1})bc^{-1}(bb^{-1})c^{-1}d^{2}&=1\nonumber{}\\
\Rightarrow{}(a^{-1}dc^{-1}a)(a^{-1}bc^{-1}b)(b^{-1}c^{-1}d^{2})&=1.\label{lemma:case16:DDcb:contradiction}
\end{align}
Now $a^{-1}dc^{-1}a>1$ in \Case{16}{2}{} by Lemma~\ref{lemma:case16:AdCa} and $a^{-1}bc^{-1}b>1$ in \Case{16}{2}{} by Lemma~\ref{lemma:case16:AbCb}. Therefore, (\ref{lemma:case16:DDcb:contradiction}) shows that
\begin{align*}
b^{-1}c^{-1}d^{2}&<1\\
\Rightarrow{}d^{-2}cb&>1.\qedhere
\end{align*}
\end{proof}

\begin{lemma} In \Case{16}{2}{}, $dc^{2}a^{-1}>1$.
\label{lemma:case16:dccA}
\end{lemma}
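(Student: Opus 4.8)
The plan is to extract this inequality directly from the third group relation, following the same conjugate-and-shuffle strategy that produced the earlier \Case{16}{2}{} inequalities. First I would pass to the inverse of the third relation, namely $c^{2}b^{-1}c(c^{-1}d)^{10n+3}=1$, so that the positive powers of $c$ matching the target are exposed. Conjugating this identity by $d$ and absorbing one factor of $c^{-1}d$ into the trailing $d^{-1}$ rewrites $(c^{-1}d)^{10n+3}d^{-1}$ as $(c^{-1}d)^{10n+2}c^{-1}$; then the elementary conjugation identity $c(c^{-1}d)^{10n+2}c^{-1}=(dc^{-1})^{10n+2}$ collapses the surrounding $c$'s, leaving the clean identity $dc^{2}b^{-1}(dc^{-1})^{10n+2}=1$.

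Next I would peel off the target by writing $dc^{2}b^{-1}=(dc^{2}a^{-1})(ab^{-1})$, which recasts the identity as
\[
(dc^{2}a^{-1})(ab^{-1})(dc^{-1})^{10n+2}=1.
\]
It then only remains to fix the sign of the trailing word $(ab^{-1})(dc^{-1})^{10n+2}$. Here I would invoke $ba^{-1}>1$ (Lemma~\ref{lemma:case16:bA}), so that $ab^{-1}<1$, together with the defining assumption of \Case{16}{2}{} that $dc^{-1}<1$, whence $(dc^{-1})^{10n+2}<1$ by Proposition~\ref{proposition:pospowers}. A product of two negative elements is negative (Fact~\ref{1.3LO}), so the trailing word is negative, and therefore $dc^{2}a^{-1}$ must be positive, as claimed.

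The hard part is really the bookkeeping in the first step: one must route the conjugation so that the leftover factor is \emph{exactly} a product of two elements already known to be negative in \Case{16}{2}{} (here $ab^{-1}$ and a power of $dc^{-1}$), rather than a mixed product of indeterminate sign. Several naive regroupings of $dc^{2}a^{-1}$ — for instance $(da^{-1})(ac^{2}a^{-1})$, or $(d^{3}a^{-1})(a^{2}b^{-1}a^{-1})$ obtained from Lemmas~\ref{lemma:eq8} and~\ref{corollary:eq6} — stall precisely because they each leave one factor whose sign cannot be decided from the hypotheses. The payoff of starting from the full relation, rather than from the derived identities of Section~\ref{section:generalLemmas}, is that its $(c^{-1}d)^{10n+3}$ block supplies the genuinely negative factor $(dc^{-1})^{10n+2}$ that those $n$-independent consequences can no longer provide.
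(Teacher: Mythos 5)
Your proof is correct and is essentially the paper's own argument: the identity you derive, $(dc^{2}a^{-1})(ab^{-1})(dc^{-1})^{10n+2}=1$, is exactly the inverse of the identity $(cd^{-1})^{10n+2}(ba^{-1})(ac^{-2}d^{-1})=1$ that the paper obtains from the third group relation by cyclic permutation and insertion of $a^{-1}a$. Both arguments then close with the same two sign inputs, $ba^{-1}>1$ (Lemma~\ref{lemma:case16:bA}) and $dc^{-1}<1$ (the \Case{16}{2}{} assumption), so the difference is purely one of reading the relation or its inverse.
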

\begin{proof} By the third group relation, we have:
\begin{align*}
(d^{-1}c)^{10n+3}c^{-1}bc^{-2}=1\\
\Rightarrow{}d^{-1}c(d^{-1}c)^{10n+2}c^{-1}b(a^{-1}a)c^{-2}=1\\
\Rightarrow{}(cd^{-1})^{10n+2}(cc^{-1})(ba^{-1})(ac^{-2}d^{-1})=1\\
\Rightarrow{}(cd^{-1})^{10n+2}(ba^{-1})(ac^{-2}d^{-1})=1,
\end{align*}
where the last equality implies $ac^{-2}d^{-1}<1$, since $cd^{-1}>1$ in \Case{16}{2}{} by assumption, and $ba^{-1}>1$ by Lemma~\ref{lemma:case16:bA}. Therefore, we know $dc^{2}a^{-1}>1$.
\end{proof}

\begin{lemma} In \Case{16}{2}{}, $dab^{-1}a^{-1}>1$.
\label{lemma:case16:daBA}
\end{lemma}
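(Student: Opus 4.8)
The plan is to express the target word $dab^{-1}a^{-1}$ in terms of elements whose signs are already known, using the relation supplied by Lemma~\ref{lemma:eq8}. That lemma gives $d^{2}a=c^{2}b$, which rearranges to $b^{-1}=a^{-1}d^{-2}c^{2}$. Substituting this for $b^{-1}$ in $dab^{-1}a^{-1}$ and cancelling the adjacent $aa^{-1}$, the word collapses to $d^{-1}c^{2}a^{-1}$. Hence it suffices to show that $d^{-1}c^{2}a^{-1}>1$.

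To establish this, I would regroup $d^{-1}c^{2}a^{-1}=(d^{-2})(dc^{2}a^{-1})$, exhibiting it as a product of two factors. The first factor $d^{-2}$ is positive because $d<1$ in Case~16, so $d^{-1}>1$ and hence $d^{-2}>1$ by Proposition~\ref{proposition:pospowers}. The second factor $dc^{2}a^{-1}$ is positive by Lemma~\ref{lemma:case16:dccA}, which is precisely the statement that $dc^{2}a^{-1}>1$ in \Case{16}{2}{}. Since a product of positive elements is positive by Fact~\ref{1.3LO}, we conclude $d^{-1}c^{2}a^{-1}>1$, and therefore $dab^{-1}a^{-1}>1$.

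The only step requiring genuine care is the algebraic simplification: one must apply Lemma~\ref{lemma:eq8} to eliminate $b^{-1}$ and verify that the cancellation yields $d^{-1}c^{2}a^{-1}$ rather than $dc^{2}a^{-1}$. The distinction matters because the surviving power of $d$ is negative, and it is exactly the extra $d^{-2}$ that allows the reduction to the already-proven Lemma~\ref{lemma:case16:dccA}. Once that regrouping is spotted, there is no real obstacle, as everything else is a direct appeal to known signs. An alternative in the style of Lemma~\ref{lemma:case16:dccA} itself would be to insert $aa^{-1}$ and $cc^{-1}$ pairs into the third group relation and read off the sign directly, but the substitution from Lemma~\ref{lemma:eq8} is shorter and avoids the bookkeeping with powers of $n$.
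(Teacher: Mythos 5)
Your proposal is correct and is essentially the paper's own argument: the paper factors the relation from Lemma~\ref{lemma:eq8} as $(d^{-2})(dc^{2}a^{-1})(aba^{-1}d^{-1})=1$ and concludes $aba^{-1}d^{-1}<1$, which is exactly your identity $dab^{-1}a^{-1}=(d^{-2})(dc^{2}a^{-1})$ stated in the ``product of positives equals the identity'' style rather than by direct substitution. Both versions rest on the same two inputs, namely $d^{-2}>1$ in Case 16 and Lemma~\ref{lemma:case16:dccA}.
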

\begin{proof} By Lemma~\ref{lemma:eq8}, we have:
\begin{align*}
c^{2}b&=d^{2}a\\
c^{2}ba^{-1}d^{-2}&=1\\
\Rightarrow{}d^{-1}c^{2}ba^{-1}d^{-1}&=1\\
\Rightarrow{}d^{-1}(d^{-1}d)c^{2}(a^{-1}a)ba^{-1}d^{-1}&=1\\
\Rightarrow{}(d^{-2})(dc^{2}a^{-1})(aba^{-1}d^{-1})&=1,
\end{align*}
where the last equality implies $aba^{-1}d^{-1}<1$, since $d^{-1}>1$ in Case 16 by assumption, and $dc^{2}a^{-1}>1$ in \Case{16}{2}{} by Lemma \ref{lemma:case16:dccA}. Therefore, we know $dab^{-1}a^{-1}>1$.
\end{proof}

\begin{lemma} In \Case{16}{2}{}, $b^{-1}cd^{-1}a>1$.
\label{lemma:case16:BcDa}
\end{lemma}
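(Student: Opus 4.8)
The plan is to obtain this inequality by exactly the same computation used for its Case~16.a counterpart in Lemma~\ref{lemma:case16.A:BcDa}. The crucial observation is that that proof never used any sign specific to the 16.a/16.b split: the only ordering input it required was $ba^{-1}>1$, which holds throughout Case~16 by Lemma~\ref{lemma:case16:bA}. So the same argument transfers verbatim. Concretely, I would start from Corollary~\ref{corollary:eq6} (equivalently Lemma~\ref{lemma:eq5}), which gives $a^{2}b^{-2}=dc^{-1}$, and rewrite this as the identity $dc^{-1}b^{2}a^{-2}=1$.

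Next I would conjugate: multiplying on the left by $a^{-1}$ and on the right by $a$ turns $dc^{-1}b^{2}a^{-2}=1$ into $a^{-1}dc^{-1}b^{2}a^{-1}=1$, which factors as $(a^{-1}dc^{-1}b)(ba^{-1})=1$. Since $ba^{-1}>1$ by Lemma~\ref{lemma:case16:bA}, a product equal to the identity with one positive factor forces the other factor to be negative, i.e. $a^{-1}dc^{-1}b<1$. Inverting then yields $b^{-1}cd^{-1}a>1$, which is precisely the claim.

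I do not expect a real obstacle here, since the whole derivation is a two-line manipulation insensitive to the sub-case distinction; the only point requiring care is the bookkeeping of the conjugation step and the correct regrouping $a^{-1}dc^{-1}b^{2}a^{-1}=(a^{-1}dc^{-1}b)(ba^{-1})$. Once that factorization is in place, Lemma~\ref{lemma:case16:bA} closes the argument immediately, so the result follows without invoking any of the 16.b-specific signs such as $dc^{-1}<1$ or $ad^{-1}<1$.
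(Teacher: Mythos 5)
Your proposal is correct and is essentially identical to the paper's own proof: the paper likewise starts from Lemma~\ref{lemma:eq5} to get $dc^{-1}b^{2}a^{-2}=1$, conjugates by $a$ to obtain $(a^{-1}dc^{-1}b)(ba^{-1})=1$, and concludes from $ba^{-1}>1$ (Lemma~\ref{lemma:case16:bA}). Your side observation is also accurate---the argument uses no sub-case-specific signs, which is why the paper's proofs of Lemma~\ref{lemma:case16.A:BcDa} and Lemma~\ref{lemma:case16:BcDa} are word-for-word the same computation.
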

\begin{proof} By Lemma~\ref{lemma:eq5}, we have:
\begin{align*}
d^{-1}a^{2}&=c^{-1}b^{2}\\
\Rightarrow{}dc^{-1}b^{2}a^{-2}&=1\\
\Rightarrow{}a^{-1}dc^{-1}b^{2}a^{-1}&=1\\
\Rightarrow{}(a^{-1}dc^{-1}b)(ba^{-1})&=1,
\end{align*}
where the last equality implies $a^{-1}dc^{-1}b<1$, since $ba^{-1}>1$ in Case 16 by Lemma~\ref{lemma:case16:bA}. Therefore, we know $b^{-1}cd^{-1}a>1$.
\end{proof}

\begin{lemma} In \Case{16}{2}{}, $ab^{-1}d^{-1}a>1$.
\label{lemma:case16:aBDa}
\end{lemma}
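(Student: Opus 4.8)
The plan is to recognize $ab^{-1}d^{-1}a$ as a product of two elements already known to be positive, by splitting off a conjugate. First I would insert a cancelling pair $a^{-1}a$ in the middle to obtain the factorization $ab^{-1}d^{-1}a = (ab^{-1}a^{-1})(ad^{-1}a)$, and then invoke Lemma~\ref{lemma:eq16}, which gives $ad^{-1}a = bc^{-1}b$, to rewrite the second factor as $ab^{-1}d^{-1}a = (ab^{-1}a^{-1})(bc^{-1}b)$. This reduces the goal to showing that both $ab^{-1}a^{-1}$ and $bc^{-1}b$ are positive.

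For the first factor, I would appeal to Lemma~\ref{lemma:case16:daBA}, which asserts $dab^{-1}a^{-1}>1$; left-multiplying by $d^{-1}$ gives $ab^{-1}a^{-1}>d^{-1}$, and since $d<1$ in Case~16 we have $d^{-1}>1$, so transitivity yields $ab^{-1}a^{-1}>1$. For the second factor, I would write $bc^{-1}b = (bc^{-1})b$ and observe that $bc^{-1}<1$ is one of the defining signs of \Case{16}{2}{} while $b<1$ in Case~16, so the product of two negatives is positive by Fact~\ref{1.3LO}, giving $bc^{-1}b>1$. The desired conclusion $ab^{-1}d^{-1}a>1$ then follows immediately, being a product of two positive elements.

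The only genuine step is spotting the factorization $ab^{-1}d^{-1}a = (ab^{-1}a^{-1})(ad^{-1}a)$ together with the substitution from Lemma~\ref{lemma:eq16}; once that conjugate is isolated, everything reduces to routine sign-bookkeeping using facts already established for \Case{16}{2}{}. I do not expect any serious obstacle here, and in particular no group relation need be manipulated directly, since the statement is a short consequence of Lemma~\ref{lemma:eq16} and Lemma~\ref{lemma:case16:daBA}.
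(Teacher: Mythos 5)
Your factorization $ab^{-1}d^{-1}a=(ab^{-1}a^{-1})(ad^{-1}a)$ is algebraically correct, and your handling of the first factor is sound: Lemma~\ref{lemma:case16:daBA} comes before this lemma in the paper (so there is no circularity), and $dab^{-1}a^{-1}>1$ left-multiplied by $d^{-1}$ gives $ab^{-1}a^{-1}>d^{-1}>1$. The proof breaks at the second factor. You claim $bc^{-1}b>1$ on the grounds that $bc^{-1}<1$ and $b<1$ and ``the product of two negatives is positive by Fact~\ref{1.3LO}.'' That is the opposite of what Fact~\ref{1.3LO} says: in a left-ordered group, products of negative elements are \emph{negative} (if $g<1$ and $h<1$, then $gh<g<1$ by left-invariance; signs compose like addition of signed numbers, not like multiplication of real numbers). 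So the \Case{16}{2}{} hypotheses force $bc^{-1}b<1$, hence $ad^{-1}a<1$ via Lemma~\ref{lemma:eq16}, and your decomposition exhibits $ab^{-1}d^{-1}a$ as a positive element times a negative element, from which nothing follows. This is not a fixable slip within your scheme: the factor $ad^{-1}a$ genuinely is negative under the case assumptions, so any proof must avoid isolating it as a standalone factor.

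For comparison, the paper's proof takes a different route entirely: starting from Lemma~\ref{lemma:eq5} it writes $dc^{-1}b^{2}a^{-2}=1$, inserts cancelling pairs to obtain
\begin{align*}
(a^{-1}dc^{-1}a)(a^{-1}bc^{-1}b)(b^{-1}cd^{-1}a)(a^{-1}dba^{-1})&=1,
\end{align*}
and then uses the previously established positivity of the first three factors (Lemmas~\ref{lemma:case16:AdCa}, \ref{lemma:case16:AbCb}, and~\ref{lemma:case16:BcDa}) to conclude that the fourth factor $a^{-1}dba^{-1}$ is negative; its inverse is exactly $ab^{-1}d^{-1}a$. Note in particular that the paper never needs the sign of $ad^{-1}a$ itself --- that element is kept conjugated/combined with other letters precisely because its raw sign works against you.
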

\begin{proof} By Lemma~\ref{lemma:eq5}, we have:
\begin{align*}
d^{-1}a^{2}&=c^{-1}b^{2}\\
\Rightarrow{}dc^{-1}b^{2}a^{-2}&=1\\
\Rightarrow{}a^{-1}dc^{-1}(aa^{-1})b(c^{-1}bb^{-1}c)(d^{-1}aa^{-1}d)ba^{-1}&=1\\
\Rightarrow{}(a^{-1}dc^{-1}a)(a^{-1}bc^{-1}b)(b^{-1}cd^{-1}a)(a^{-1}dba^{-1})&=1,
\end{align*}
where the last equality implies $a^{-1}dba^{-1}<1$, since $a^{-1}dc^{-1}a>1$ in \Case{16}{2}{} by Lemma \ref{lemma:case16:AdCa}, $a^{-1}bc^{-1}b>1$ in \Case{16}{2}{} by Lemma \ref{lemma:case16:AbCb}, and $b^{-1}cd^{-1}a>1$ in \Case{16}{2}{} by Lemma \ref{lemma:case16:BcDa}. Therefore, we know $ab^{-1}d^{-1}a>1$.
\end{proof}

\begin{lemma} In \Case{16}{2}{}, $cda^{-1}>1$.
\label{lemma:case16:cdA}
\end{lemma}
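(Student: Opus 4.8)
The plan is to prove $cda^{-1}>1$ directly, exhibiting it as a product of positive elements using only the sign data already available in \Case{16}{2}{} together with Fact~\ref{1.3LO}; no fresh manipulation of a group relation should be needed. The guiding observation is the telescoping factorization
\[
cda^{-1} = (cd^{-1})(d^{2}a^{-1}),
\]
which reduces the problem to showing that each of the two factors is positive.

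First I would dispose of the factor $cd^{-1}$: since $dc^{-1}<1$ is part of the definition of \Case{16}{2}{} (see Table~\ref{table:cases16b}), Fact~\ref{fact:inverses} gives $cd^{-1}>1$ at once.

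Next I would handle $d^{2}a^{-1}$ by a second, similar factorization,
\[
d^{2}a^{-1} = (d^{-1})(d^{3}a^{-1}).
\]
Here $d^{-1}>1$ because $d<1$ in Case 16, while $d^{3}a^{-1}>1$ because $ad^{-3}<1$ in \Case{16}{2}{}; recall that $ad^{-3}$ is one of the six elements whose sign is pinned down by Proposition~\ref{proposition:iffs}, and in \Case{16}{2}{} that common sign is negative. By Fact~\ref{1.3LO} the product of these two positives is positive, so $d^{2}a^{-1}>1$.

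Finally, applying Fact~\ref{1.3LO} once more to $cda^{-1}=(cd^{-1})(d^{2}a^{-1})$ yields $cda^{-1}>1$. The only real content is spotting the two factorizations, which rewrite $cda^{-1}$ entirely in terms of elements whose signs are already fixed by the sub-case assumptions (in particular $ad^{-3}$, via Proposition~\ref{proposition:iffs}); once these are in hand the argument is nothing more than closure of the positive cone under multiplication, so there is no substantive obstacle to overcome here.
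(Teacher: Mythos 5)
Your proof is correct and is essentially the paper's own argument in direct rather than contrapositive form: the paper writes $dc^{-1}=(d^{-1})(d^{3}a^{-1})(ad^{-1}c^{-1})$ and deduces $ad^{-1}c^{-1}<1$ from $dc^{-1}<1$, which is exactly your identity $cda^{-1}=(cd^{-1})(d^{-1})(d^{3}a^{-1})$ inverted, using the same three sign facts ($dc^{-1}<1$, $d<1$, $ad^{-3}<1$). No substantive difference.
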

\begin{proof} In \Case{16}{2}{}, $1>dc^{-1}$ by assumption, and so we have:
\begin{align*}
1>dc^{-1}&=(d^{-1}d)d(da^{-1}ad^{-1})c^{-1}\\
\Rightarrow{}1>dc^{-1}&=(d^{-1})(d^{3}a^{-1})(ad^{-1}c^{-1}),
\end{align*}
where the last equality implies $ad^{-1}c^{-1}<1$, since $d^{-1}>1$ and $d^{3}a^{-1}>1$ in \Case{16}{2}{} by assumption.
\end{proof}

\begin{lemma} In \Case{16}{2}{}, $a^{-1}dc^{-1}>1$.
\label{lemma:case16:AdC}
\end{lemma}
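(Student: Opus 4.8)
The plan is to obtain $a^{-1}dc^{-1}$ as a product of two elements that have already been shown to be positive, mirroring exactly the trick used in Corollary~\ref{corollary:case16:AbCa}, where $a^{-1}bc^{-1}$ was extracted from $a^{-1}bc^{-1}b$ by peeling off a positive factor. The key algebraic observation is simply that
\begin{align*}
a^{-1}dc^{-1}=(a^{-1}dc^{-1}a)(a^{-1}),
\end{align*}
so it suffices to check that both factors on the right are positive.

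For the first factor, I would invoke Lemma~\ref{lemma:case16:AdCa}, which has already established that $a^{-1}dc^{-1}a>1$ in \Case{16}{2}{}. For the second factor, recall that we are working in a sub-case of Case 16, so $a<1$; by Fact~\ref{fact:inverses} this gives $a^{-1}>1$. Then by Fact~\ref{1.3LO}, the product of the two positive elements $a^{-1}dc^{-1}a$ and $a^{-1}$ is again positive, which is precisely the claim $a^{-1}dc^{-1}>1$.

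There is essentially no obstacle here: the whole content is the earlier Lemma~\ref{lemma:case16:AdCa} together with the standing sign assumption on $a$. The only thing worth double-checking is that Lemma~\ref{lemma:case16:AdCa} genuinely precedes this statement in \Case{16}{2}{} (it does), so that we may use it freely, and that the cancellation $a\cdot a^{-1}=1$ in the displayed identity is performed before appealing to Fact~\ref{1.3LO}, since the positivity argument applies to the two-term product $(a^{-1}dc^{-1}a)(a^{-1})$ rather than to the reduced word $a^{-1}dc^{-1}$ directly.
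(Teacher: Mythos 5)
Your proposal is correct, and the algebra checks out: $(a^{-1}dc^{-1}a)(a^{-1})=a^{-1}dc^{-1}$ as group elements, Lemma~\ref{lemma:case16:AdCa} does precede this lemma in \Case{16}{2}{} and does not depend on it (so there is no circularity), and $a^{-1}>1$ holds in Case 16 by Fact~\ref{fact:inverses}, so Fact~\ref{1.3LO} finishes the argument. However, this is a genuinely different route from the paper's. The paper makes another pass through the third group relation, rewriting $(d^{-1}c)^{10n+3}c^{-1}bc^{-2}=1$ as $c^{-1}(cd^{-1})^{10n+2}(cd^{-1}a)(a^{-1}bc^{-1})c^{-1}=1$ and then using $c^{-1}>1$ (Case 16), $cd^{-1}>1$ (the \Case{16}{2}{} sign assumption), and $a^{-1}bc^{-1}>1$ (Corollary~\ref{corollary:case16:AbC}) to force $cd^{-1}a<1$, which is the claim. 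Your argument is shorter: it reuses Lemma~\ref{lemma:case16:AdCa} in exactly the way the paper itself extracts Corollary~\ref{corollary:case16:AbCa} from Lemma~\ref{lemma:case16:AbCb}, by peeling off a positive factor. The trade-off is only in what the two proofs consume: the paper's derivation invokes the \Case{16}{2}{} assumption $cd^{-1}>1$ directly and is structurally parallel to the surrounding relation-based lemmas, whereas yours inherits its case hypotheses entirely through Lemma~\ref{lemma:case16:AdCa} and makes the statement an immediate corollary of already-proven material rather than requiring a fresh manipulation of the relation. Both are logically sound.
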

\begin{proof}
Starting from the third group relation, we have:
\begin{align}
(d^{-1}c)^{10n+3}c^{-1}bc^{-2}&=1\nonumber{}\\
\Rightarrow{}c^{-1}(cd^{-1})^{10n+3}bc^{-2}&=1\nonumber{}\\
\Rightarrow{}c^{-1}(cd^{-1})^{10n+2}(cd^{-1})(aa^{-1})(bc^{-1})c^{-1}&=1\nonumber{}\\
\Rightarrow{}c^{-1}(cd^{-1})^{10n+2}(cd^{-1}a)(a^{-1}bc^{-1})c^{-1}&=1.\label{lemma:case16:AdC:contradiction}
\end{align}
Now $c^{-1}>1$ in Case 16, $cd^{-1}>1$ in \Case{16}{2}{} by assumption, and $a^{-1}bc^{-1}>1$ in \Case{16}{2}{} by Corollary~\ref{corollary:case16:AbC}. Therefore, (\ref{lemma:case16:AdC:contradiction}) shows that:
\begin{align*}
cd^{-1}a&<1\\
\Rightarrow{}a^{-1}dc^{-1}&>1.\qedhere
\end{align*}
\end{proof}

\begin{lemma} In \Case{16}{2}{}, $ca^{-1}bc^{-1}>1$.
\label{lemma:case16:cAbC}
\end{lemma}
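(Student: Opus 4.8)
The plan is to not try to assemble $ca^{-1}bc^{-1}$ directly out of the many positive words already recorded in \Case{16}{2}{}, but instead to recognize it as a conjugate of $a^{-1}b$ and exploit the first group relation. Concretely, I would conjugate the first relation $(a^{-1}b)^{10n}d^{-1}a^{2}=1$ by $c$. Since conjugation is an automorphism and fixes the identity, this gives
\begin{align*}
(ca^{-1}bc^{-1})^{10n}(cd^{-1}a^{2}c^{-1})=1,
\end{align*}
and rearranging isolates the power we care about:
\begin{align*}
(ca^{-1}bc^{-1})^{10n}=ca^{-2}dc^{-1}.
\end{align*}
Equivalently, one can take inverses in (\ref{eq1:2}) to obtain $(a^{-1}b)^{10n}=a^{-2}d$ and then conjugate by $c$; either route produces the same closed form.

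The second step is to show the right-hand side is positive by splitting it into a product of two elements already known to be positive in \Case{16}{2}{}:
\begin{align*}
ca^{-2}dc^{-1}=(ca^{-1})(a^{-1}dc^{-1}).
\end{align*}
Here $ca^{-1}>1$ because $ac^{-1}<1$ by the defining assumption of \Case{16}{2}{}, and $a^{-1}dc^{-1}>1$ by Lemma~\ref{lemma:case16:AdC}. By Fact~\ref{1.3LO} the product of two positive elements is positive, so $ca^{-2}dc^{-1}>1$ and hence $(ca^{-1}bc^{-1})^{10n}>1$.

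Finally, since we are in the regime $n\geq1$ we have $10n>1$, so Proposition~\ref{proposition:pospowers} tells us that $ca^{-1}bc^{-1}$ has the same sign as $(ca^{-1}bc^{-1})^{10n}$, giving $ca^{-1}bc^{-1}>1$ as desired. The one point that requires care, and the reason the argument must be routed through the explicit $10n$-th power rather than a one-line conjugation remark, is that conjugation does not preserve sign in a merely left-orderable group: it is \emph{not} legitimate to argue that $ca^{-1}bc^{-1}$ is positive simply because it is conjugate to $a^{-1}b$. Passing through the closed form together with Proposition~\ref{proposition:pospowers} is exactly what sidesteps this. Granting that closed form, the only genuinely creative step is the factorization into $(ca^{-1})(a^{-1}dc^{-1})$, and even that is essentially forced, since $a^{-1}dc^{-1}$ was just established positive in the immediately preceding lemma.
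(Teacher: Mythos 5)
Your proof is correct and is essentially the paper's own argument in mirrored form: the paper cyclically permutes the first relation and inserts $c^{-1}c$ pairs to obtain $(a^{-1}dc^{-1})(cb^{-1}ac^{-1})^{10n-1}(cb^{-1})=1$, then concludes $cb^{-1}ac^{-1}<1$ from Lemma~\ref{lemma:case16:AdC} together with a \Case{16}{2}{} sign assumption ($cb^{-1}>1$ there, $ca^{-1}>1$ in yours) --- exactly the ingredients you use. Your packaging via conjugation by $c$ and Proposition~\ref{proposition:pospowers} is a slightly cleaner presentation of the same route, not a genuinely different one.
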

\begin{proof} By the first group relation, we have:
\begin{align*}
(b^{-1}a)^{10n}a^{-2}d&=1\\
\Rightarrow{}a^{-2}d(b^{-1}a)^{10n}&=1\\
\Rightarrow{}a^{-2}d(c^{-1}c)(b^{-1}a)^{10n-1}(c^{-1}c)(b^{-1}a)&=1\\
\Rightarrow{}(a^{-1}dc^{-1})(cb^{-1}ac^{-1})^{10n-1}(cb^{-1})&=1,
\end{align*}
where the last equality implies $cb^{-1}ac^{-1}<1$, since $a^{-1}dc^{-1}>1$ in \Case{16}{2}{} by Lemma \ref{lemma:case16:AdC}, and $cb^{-1}>1$ in \Case{16}{2}{} by assumption. Therefore, we know $ca^{-1}bc^{-1}>1$.
\end{proof}

\begin{lemma} In \Case{16}{2}{}, $b^{-1}c^{-1}db>1$.
\label{lemma:case16:BCdb}
\end{lemma}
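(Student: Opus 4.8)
The plan is to proceed exactly as in the other Case 16 lemmas: start from a group relation, conjugate its $n$-dependent block so that it becomes a power of a single conjugate, and then read off the sign of that conjugate from the fact that everything else in the relation is already known to be positive. The key observation is that the inverse of the target $b^{-1}c^{-1}db$ is $b^{-1}d^{-1}cb = b^{-1}(d^{-1}c)b$, i.e. the conjugate by $b$ of the repeated block $d^{-1}c$ occurring in the third group relation. So it suffices to show that this conjugate is negative.

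First I would simplify the third relation via $(d^{-1}c)^{10n+3}c^{-1}=(d^{-1}c)^{10n+2}d^{-1}$, obtaining $(d^{-1}c)^{10n+2}d^{-1}bc^{-2}=1$ (the same reduction already used in Lemma~\ref{lemma:case16:A:ACda} and Lemma~\ref{lemma:case16.A:aCdA}). Then I would conjugate the whole relation by $b$ (multiply on the left by $b^{-1}$ and on the right by $b$) and insert a cancelling pair $bb^{-1}$ between the block $(d^{-1}c)^{10n+2}$ and the trailing $d^{-1}$. This converts $(d^{-1}c)^{10n+2}$ into $(b^{-1}d^{-1}cb)^{10n+2}$ and regroups the tail $b^{-1}d^{-1}bc^{-2}b$ as $(b^{-1}d^{-1}b)(c^{-1})(c^{-1}b)$, yielding
\[
(b^{-1}d^{-1}cb)^{10n+2}(b^{-1}d^{-1}b)(c^{-1})(c^{-1}b)=1.
\]

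Now $b^{-1}d^{-1}b>1$ by Lemma~\ref{lemma:case16:BDb}, $c^{-1}>1$ since $c<1$ in Case 16, and $c^{-1}b>1$ by Lemma~\ref{lemma:case16:Cb}. Hence the product of the last three factors is positive, which forces $(b^{-1}d^{-1}cb)^{10n+2}<1$; by Proposition~\ref{proposition:pospowers} (valid since $10n+2>1$) this gives $b^{-1}d^{-1}cb<1$, and taking inverses via Fact~\ref{fact:inverses} yields $b^{-1}c^{-1}db>1$, as claimed.

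The only genuine subtlety, and the step I would verify most carefully, is the bookkeeping of the unbalanced lone $b$ that the third relation carries (its total $b$-exponent is $+1$). The point that makes everything close is to absorb this leftover $b$ into the positive factors by writing $c^{-2}b=(c^{-1})(c^{-1}b)$, rather than leaving a bare $b<1$, which would otherwise render the sign of the tail indeterminate. Everything else is routine cancellation.
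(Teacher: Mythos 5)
Your proof is correct and essentially identical to the paper's: both reduce the third relation to the factorization $(c^{-1})(c^{-1}b)(b^{-1}d^{-1}cb)^{10n+2}(b^{-1}d^{-1}b)=1$ (yours is the same identity up to cyclic permutation, obtained by conjugating by $b$ instead of cyclically permuting $c^{-2}$ to the front) and then invoke Lemma~\ref{lemma:case16:BDb}, Lemma~\ref{lemma:case16:Cb}, and $c<1$ to force $b^{-1}d^{-1}cb<1$. The bookkeeping step you flag, absorbing the lone $b$ via $c^{-2}b=(c^{-1})(c^{-1}b)$, is exactly what the paper does.
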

\begin{proof} Starting from the third group relation, we have:
\begin{align}
(d^{-1}c)^{10n+3}c^{-1}bc^{-2}&=1\nonumber{}\\
\Rightarrow{}c^{-2}(d^{-1}c)^{10n+3}c^{-1}b&=1\nonumber{}\\
\Rightarrow{}c^{-2}(d^{-1}c)^{10n+2}d^{-1}b&=1\nonumber{}\\
\Rightarrow{}(c^{-1})(c^{-1}b)(b^{-1}d^{-1}cb)^{10n+2}(b^{-1}d^{-1}b)&=1.\label{lemma:case16:BCdb:contradiction}
\end{align}
Now $c^{-1}>1$ in Case 16 by assumption, $c^{-1}b>1$ in Case 16 by Lemma~\ref{lemma:case16:Cb}, and $b^{-1}d^{-1}b>1$ in \Case{16}{2}{} by Lemma~\ref{lemma:case16:BDb}. Therefore, (\ref{lemma:case16:BCdb:contradiction}) shows that:
\begin{align*}
b^{-1}d^{-1}cb&<1\\
\Rightarrow{}b^{-1}c^{-1}db&>1.\qedhere
\end{align*}
\end{proof}

\begin{lemma} In \Case{16}{2}{}, $c^{-2}dc>1$.
\label{lemma:case16:CCdc}
\end{lemma}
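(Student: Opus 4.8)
The plan is to start from the third group relation and massage it, using the standard device of conjugating by a single generator to absorb the $n$-dependent factor, until the relation reads as a single identity in which every factor except a power of the target element $c^{-2}dc$ has a known sign.

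Concretely, I would first rewrite the third relation $(d^{-1}c)^{10n+3}c^{-1}bc^{-2}=1$ in its inverse form
\[
c^{2}b^{-1}c(c^{-1}d)^{10n+3}=1,
\]
a shape already used elsewhere in this section. The decisive step is then to conjugate this identity by $c$, i.e.\ to replace the left-hand word $W$ by $c^{-1}Wc$ (still the identity). Since conjugation distributes over powers, $c^{-1}(c^{-1}d)^{10n+3}c=(c^{-2}dc)^{10n+3}$, and a one-line computation turns the relation into
\[
cb^{-1}c^{2}(c^{-2}dc)^{10n+3}=1.
\]
This is exactly the desired form: the target element now occurs raised to the positive power $10n+3$, so by Proposition~\ref{proposition:pospowers} the sign of $c^{-2}dc$ equals the sign of $(c^{-2}dc)^{10n+3}$, and that sign is forced by the sign of the single remaining factor $cb^{-1}c^{2}$.

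It then remains only to determine the sign of $cb^{-1}c^{2}$, which I would do by inspecting its inverse. The element $c^{-2}bc^{-1}$ factors as $(c^{-1})(c^{-1}b)(c^{-1})$, a product of three positive elements: $c^{-1}>1$ because $c<1$ in Case~16, and $c^{-1}b>1$ by Lemma~\ref{lemma:case16:Cb}. Hence $c^{-2}bc^{-1}>1$ and therefore $cb^{-1}c^{2}<1$. Substituting this into the displayed identity forces $(c^{-2}dc)^{10n+3}>1$, and thus $c^{-2}dc>1$ by Proposition~\ref{proposition:pospowers}, as claimed.

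I expect the only real obstacle to be locating the right conjugation: the key observation is that conjugating by $c$ (rather than inserting the assorted $xx^{-1}$ pairs used in neighbouring lemmas) is precisely what converts the $(c^{-1}d)$-power into a power of the target $c^{-2}dc$. Once that is in hand the sign analysis is immediate, since $cb^{-1}c^{2}$ splits into already-established positive pieces; notably the argument uses only facts valid throughout Case~16, so no sub-case-specific input of \Case{16}{2}{} is needed.
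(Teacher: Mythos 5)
Your proof is correct, and it takes a genuinely different---and considerably slicker---route than the paper. The paper's proof never conjugates: it cyclically permutes the third relation, splits $(d^{-1}c)^{10n+3}$ into $(d^{-1}c)^{10n}(d^{-1}c)^{3}$, and inserts cancelling pairs until the relation factors as $(d^{-2}cb)(b^{-1}c^{-1}db)(b^{-1}cd^{-1}a)(a^{-1}bc^{-1})(c^{-1}d^{-1}c^{2})^{10n}(c^{-1}d^{-1}c)=1$, at which point five previously established \Case{16}{2}{}-specific inequalities (Lemmas~\ref{lemma:case16:DDcb}, \ref{lemma:case16:BCdb}, \ref{lemma:case16:BcDa}, \ref{lemma:case16:CDc}, and Corollary~\ref{corollary:case16:AbC}) force $c^{-1}d^{-1}c^{2}<1$. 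Your conjugation by $c$ collapses all of this: $c^{-1}\bigl[c^{2}b^{-1}c(c^{-1}d)^{10n+3}\bigr]c=cb^{-1}c^{2}(c^{-2}dc)^{10n+3}$, hence $(c^{-2}dc)^{10n+3}=c^{-2}bc^{-1}=(c^{-1})(c^{-1}b)(c^{-1})>1$ using only $c^{-1}>1$ and Lemma~\ref{lemma:case16:Cb}, and Proposition~\ref{proposition:pospowers} finishes the argument (your identity is precisely the conjugate by $c^{2}$ of $bc^{-3}=(dc^{-1})^{10n+3}$ from Corollary~\ref{corollary:non-trivial2}). The comparison favors your version on essentially every count: it is shorter; it proves the strictly stronger statement that $c^{-2}dc>1$ throughout Case 16, with no \Case{16}{2}{} hypotheses at all; and it renders Lemmas~\ref{lemma:case16:BDb}, \ref{lemma:case16:CDc}, \ref{lemma:case16:DDcb}, and~\ref{lemma:case16:BCdb} unnecessary, since in the paper those four exist only to feed this one proof. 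What the paper's route reflects is uniformity rather than economy: it is the same insert-cancelling-pairs-and-factor-into-certified-positives template used throughout Sections~\ref{section:case1} and~\ref{section:case16}, well suited to the authors' automated search but requiring a long chain of auxiliary positives, whereas your argument requires the one non-mechanical observation that conjugating by $c$ turns the $n$-dependent block into a power of the target element.
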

\begin{proof}
Starting from the third group relation, we have:
\begin{align}
(d^{-1}c)^{10n+3}c^{-1}bc^{-2}&=1\nonumber{}\\
\Rightarrow{}c^{-1}bc^{-2}(d^{-1}c)^{10n+3}&=1\nonumber{}\\
\Rightarrow{}c^{-1}bc^{-2}(d^{-1}c)^{10n}(d^{-1}c)^{3}&=1\nonumber{}\\
\Rightarrow{}(d^{-1}c)^{2}c^{-1}bc^{-2}(d^{-1}c)^{10n}(d^{-1}c)&=1\nonumber{}\\
\Rightarrow{}d^{-1}cd^{-1}bc^{-2}(d^{-1}c)^{10n}(d^{-1}c)&=1\nonumber{}\\
\Rightarrow{}d^{-1}cd^{-1}bc^{-1}(c^{-1}d^{-1}c^{2})^{10n}(c^{-1}d^{-1}c)&=1\nonumber{}\\
\Rightarrow{}d^{-1}(d^{-1}cbb^{-1}c^{-1}d)(bb^{-1})cd^{-1}(aa^{-1})bc^{-1}(c^{-1}d^{-1}c^{2})^{10n}(c^{-1}d^{-1}c)&=1\nonumber{}\\
\Rightarrow{}(d^{-2}cb)(b^{-1}c^{-1}db)(b^{-1}cd^{-1}a)(a^{-1}bc^{-1})(c^{-1}d^{-1}c^{2})^{10n}(c^{-1}d^{-1}c)&=1.\label{lemma:case16:CCdc:contradiction}
\end{align}
Now $d^{-2}cb>1$ in \Case{16}{2}{} by Lemma~\ref{lemma:case16:DDcb}, $b^{-1}c^{-1}db>1$ in \Case{16}{2}{} by Lemma~\ref{lemma:case16:BCdb}, $b^{-1}cd^{-1}a>1$ in \Case{16}{2}{} by Lemma~\ref{lemma:case16:BcDa}, $a^{-1}bc^{-1}>1$ in \Case{16}{2}{} by Corollary~\ref{corollary:case16:AbC}, and $c^{-1}d^{-1}c>1$ in \Case{16}{2}{} by Lemma~\ref{lemma:case16:CDc}. Therefore, (\ref{lemma:case16:CCdc:contradiction}) shows that
\begin{align*}
c^{-1}d^{-1}c^{2}&<1\\
\Rightarrow{}c^{-2}dc&>1.\qedhere
\end{align*}
\end{proof}

\begin{lemma} In \Case{16}{2}{}, $cab^{-1}c^{-1}>1$.
\label{lemma:case16:caBC}
\end{lemma}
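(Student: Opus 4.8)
The plan is to isolate $cab^{-1}c^{-1}$ as the single factor of unknown sign in a product of previously-verified positive elements, exactly as in the preceding Case 16.b lemmas. A useful first observation is that Lemma~\ref{lemma:eq8} gives $ab^{-1}=d^{-2}c^{2}$, so $cab^{-1}c^{-1}=cd^{-2}c$ and, equivalently, $cba^{-1}c^{-1}=c^{-1}d^{2}c^{-1}$; proving the lemma therefore amounts to showing $cd^{-2}c>1$. I would first record why the cheapest idea falls short: squaring Lemma~\ref{lemma:case16:CDc} (which says $c^{-1}d^{-1}c>1$) yields only $c^{-1}d^{-2}c>1$, and since $c<1$ the extra factor $c^{2}<1$ needed to pass from $c^{-1}d^{-2}c$ to $cd^{-2}c$ spoils the sign. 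So a genuine word manipulation is required rather than a one-line multiplicative argument.

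First I would choose a group relation as the backbone---most likely the third relation $(d^{-1}c)^{10n+3}c^{-1}bc^{-2}=1$, since it already carries the $c\cdots b\cdots c^{-1}$ shape the target needs---and introduce the generator $a$ through Lemma~\ref{lemma:eq8} or Lemma~\ref{lemma:eq16}, just as Lemmas~\ref{lemma:case16:DDcb} and~\ref{lemma:case16:dccA} do. Then, following the template of Lemmas~\ref{lemma:case16:cAbC} and~\ref{lemma:case16:CCdc}, I would insert cancelling pairs $aa^{-1}$, $bb^{-1}$, $cc^{-1}$, $dd^{-1}$ so that the resulting word factors into conjugate-type blocks, each coinciding with an element already shown positive in Case 16.b. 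Natural candidates are $a^{-1}dc^{-1}$ (Lemma~\ref{lemma:case16:AdC}), $ca^{-1}bc^{-1}$ (Lemma~\ref{lemma:case16:cAbC}), $d^{-2}cb$ (Lemma~\ref{lemma:case16:DDcb}), $c^{-1}d^{-1}c$ (Lemma~\ref{lemma:case16:CDc}), $b^{-1}cd^{-1}a$ (Lemma~\ref{lemma:case16:BcDa}), $a^{-1}bc^{-1}$ (Corollary~\ref{corollary:case16:AbC}), and $c^{-1}a$ (Corollary~\ref{lemma:case16:Ca}), together with the single remaining factor $cba^{-1}c^{-1}$. The $n$-dependence would be removed exactly as in the earlier cases, by arranging the $(10n)$- or $(10n+3)$-fold repeated block to be a conjugate of one of these positive elements (for instance $c^{-1}d^{-1}c$), so that the entire repeated power is positive.

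Once such a decomposition is in hand, the conclusion is immediate from the positive-cone closure (Proposition~\ref{proposition:poscone} and Fact~\ref{1.3LO}): if every factor except $cba^{-1}c^{-1}$ is positive and the product equals $1$, then $cba^{-1}c^{-1}<1$, hence $cab^{-1}c^{-1}>1$. The main obstacle is purely combinatorial---finding the precise sequence of insertions that makes every other factor land on a previously established positive element---which is exactly the bookkeeping performed by the search procedure described in the introduction. I therefore expect the finished proof to read as one long chain of equalities beginning with the third group relation and ending in a product of the positive elements listed above times $cba^{-1}c^{-1}$.
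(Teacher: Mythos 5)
Your proposal correctly identifies the general method used throughout \Case{16}{2}{} (exhibit a word identity all of whose factors are previously-established positive elements except for $cba^{-1}c^{-1}$, then invoke closure of the positive cone), and your preliminary observation that Lemma~\ref{lemma:eq8} gives $ab^{-1}=d^{-2}c^{2}$, hence $cab^{-1}c^{-1}=cd^{-2}c$, is correct. But the proposal stops exactly where the proof has to begin: you never exhibit the word identity, and you say so yourself (``the main obstacle is purely combinatorial''). Since the entire content of this lemma \emph{is} that decomposition, what you have written is a plan rather than a proof. Moreover, your candidate list of positive factors omits one of the only two inputs the argument actually needs, namely $c^{-2}dc$ from Lemma~\ref{lemma:case16:CCdc}; you cite that lemma only as a stylistic ``template,'' not as an ingredient, so even a diligent execution of your outline as stated would not be guaranteed to close.

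The backbone you propose is also not the one that works most cleanly. Rather than starting from the third group relation and taming the $(10n+3)$-fold block by conjugation, the paper eliminates the $n$-dependence before it ever appears, by combining two $n$-free consequences of the relations: Lemma~\ref{lemma:eq8} gives $d^{3}=dc^{2}ba^{-1}$ and Lemma~\ref{lemma:eq9} gives $d^{3}=c^{3}b^{-1}a$, whence $c^{3}b^{-1}a=dc^{2}ba^{-1}$, i.e. $cba^{-2}bc^{-3}dc=1$. Inserting a single $c^{-1}c$ factors this as
\begin{align*}
(cba^{-1}c^{-1})(ca^{-1}bc^{-1})(c^{-2}dc)&=1,
\end{align*}
and since $ca^{-1}bc^{-1}>1$ (Lemma~\ref{lemma:case16:cAbC}) and $c^{-2}dc>1$ (Lemma~\ref{lemma:case16:CCdc}), it follows that $cba^{-1}c^{-1}<1$, i.e. $cab^{-1}c^{-1}>1$. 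Until you produce an identity of this kind, the proposal has a genuine gap: the decomposition is asserted to exist, not shown to exist.
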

\begin{proof}
By Lemma~\ref{lemma:eq8} we have:
\begin{align}
d^{2}a&=c^{2}b\nonumber{}\\
\Rightarrow{}d^{2}&=c^{2}ba^{-1}\nonumber{}\\
\Rightarrow{}d^{3}&=dc^{2}ba^{-1}.\label{eq8:5}
\end{align}
By Lemma~\ref{lemma:eq9} we have:
\begin{align}
c^{-3}d^{3}&=b^{-1}a\nonumber{}\\
\Rightarrow{}d^{3}&=c^{3}b^{-1}a.\label{eq9:2}
\end{align}
Combining (\ref{eq8:5}) and (\ref{eq9:2}), we find:
\begin{align}
c^{3}b^{-1}a&=dc^{2}ba^{-1}\nonumber{}\\
\Rightarrow{}c^{-1}d^{-1}c^{3}b^{-1}a^{2}b^{-1}c^{-1}&=1\nonumber{}\\
\Rightarrow{}cba^{-2}bc^{-3}dc&=1\label{eq12}\\
\Rightarrow{}cba^{-1}(c^{-1}c)a^{-1}bc^{-1}c^{-2}dc&=1\nonumber{}\\
\Rightarrow{}(cba^{-1}c^{-1})(ca^{-1}bc^{-1})(c^{-2}dc)&=1.\label{lemma:case16:caBC:contradiction}
\end{align}
Now $ca^{-1}bc^{-1}>1$ in \Case{16}{2}{} by Lemma~\ref{lemma:case16:cAbC} and $c^{-2}dc>1$ in \Case{16}{2}{} by Lemma~\ref{lemma:case16:CCdc}. Therefore, (\ref{lemma:case16:caBC:contradiction}) shows that:
\begin{align*}
cba^{-1}c^{-1}&<1\\
\Rightarrow{}cab^{-1}c^{-1}&>1.\qedhere
\end{align*}
\end{proof}

\begin{proposition} If $G_n$ is left-orderable, then \Case{16}{2}{} is impossible.
\label{proposition:case16.B}
\end{proposition}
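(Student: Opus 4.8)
The plan is to obtain a contradiction in the same spirit as Proposition~\ref{proposition:case16.A}: produce a single word $w$ that equals the identity in $G_n$ and that can be rewritten, by inserting cancelling pairs $gg^{-1}$, as a product $w=p_1p_2\cdots p_k$ in which every factor $p_i>1$ has already been shown positive in \Case{16}{2}{}. Since a product of positive elements is positive (Fact~\ref{1.3LO}), the equation $p_1\cdots p_k=1$ is the desired contradiction. The natural candidates for $w$ are the relators already carrying a long $(a^{-1}b)^{10n}$ or $(d^{-1}c)^{10n+3}$ block, together with the identity $b^{-1}c^{-2}d^{2}a=1$ of Lemma~\ref{lemma:eq8} that was used for \Case{16}{1}{}.

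First I would assemble the toolkit of positive elements proved in this subsection, the crucial ones being the four-letter conjugate blocks $c^{-2}dc$, $b^{-1}c^{-1}db$, $cab^{-1}c^{-1}$, $a^{-1}dc^{-1}a$, $c^{-1}d^{-1}c$, and $b^{-1}d^{-1}b$ (Lemmas~\ref{lemma:case16:CCdc},~\ref{lemma:case16:BCdb},~\ref{lemma:case16:caBC},~\ref{lemma:case16:AdCa},~\ref{lemma:case16:CDc},~\ref{lemma:case16:BDb}), the shorter blocks $d^{-2}cb$ and $b^{-1}cd^{-1}a$ (Lemmas~\ref{lemma:case16:DDcb},~\ref{lemma:case16:BcDa}), and the basic positives $b^{-1}a$, $c^{-1}b$, $d^{-1}a$, and $a^{-1}bc^{-1}$ (general assumption, Lemmas~\ref{lemma:case16:Cb},~\ref{lemma:case16:Da}, Corollary~\ref{corollary:case16:AbC}). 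Next I would take the chosen relator, conjugate it so that its repeated block is turned into one of these known positive blocks, and then splice in further $gg^{-1}$ pairs so that the remaining ``tail'' also breaks up into listed positive factors; finally I would read off the signs and invoke Fact~\ref{1.3LO}.

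The step I expect to be the main obstacle is precisely finding the conjugation and insertion pattern that makes \emph{every} factor positive simultaneously, because the obvious moves fail. For instance, conjugating the first relation $(a^{-1}b)^{10n}d^{-1}a^{2}=1$ by $c$ gives
\begin{align*}
(ca^{-1}bc^{-1})^{10n}(cd^{-1}a)(ac^{-1})=1,
\end{align*}
where the power term is positive (since $ca^{-1}bc^{-1}>1$ by Lemma~\ref{lemma:case16:cAbC}), but the tail $(cd^{-1}a)(ac^{-1})$ is a product of two negatives---$cd^{-1}a<1$ is the inverse of $a^{-1}dc^{-1}>1$ (Lemma~\ref{lemma:case16:AdC}) and $ac^{-1}<1$ by the defining assumption of \Case{16}{2}{}---and hence is itself negative by Fact~\ref{1.3LO}; the equation is then merely consistent and yields no contradiction. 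So the delicate point is to choose the relator and the conjugating element so that the $10n$-power block lands on a positive conjugate \emph{and} the tail decomposes into positives. This is exactly the reason the long chain of preliminary inequalities \ref{lemma:case16:AbCb}--\ref{lemma:case16:caBC} is built up, each contributing one more guaranteed-positive block, and it is the kind of combinatorial search that the automated procedure described in the introduction is designed to carry out.
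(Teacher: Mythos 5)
Your proposal correctly identifies the strategy --- and it is indeed the paper's strategy: rewrite a relator, via conjugation and insertion of cancelling pairs, as a product of factors already proven positive in \Case{16}{2}{}, then invoke Fact~\ref{1.3LO}. But the proposal stops exactly where the proof has to begin. Everything after ``Next I would take the chosen relator\dots'' describes a search rather than reporting its outcome: you never exhibit the relator, the conjugation, or the insertion pattern, and you explicitly concede that the obvious moves fail and that finding a working pattern is ``the main obstacle.'' Since all the preliminary lemmas are already in place, the explicit decomposition \emph{is} the entire mathematical content of this proposition; a plan with an acknowledged hole at that point is a genuine gap, not a proof.

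For comparison, the paper's decomposition is not obtained by conjugating the $(a^{-1}b)^{10n}$ block as in your failed attempt. Instead it inverts the first relation so that the repeated block becomes $ab^{-1}$, absorbs and peels copies of that block, and conjugates the remaining power by $c$: from $(a^{-1}b)^{10n}d^{-1}a^{2}=1$ one passes to $db^{-1}(ab^{-1})^{10n-2}ab^{-1}a^{-1}=1$ and then, after inserting $a^{-1}a$, $d^{-1}aa^{-1}d$, $c^{-1}\cdots c$, and $d^{-1}d$ in the right places, to
\begin{align*}
(da^{-1})(ab^{-1}d^{-1}a)(a^{-1}dc^{-1})(cab^{-1}c^{-1})^{10n-2}(cd^{-1})(dab^{-1}a^{-1})&=1,
\end{align*}
in which every factor is positive: $da^{-1}$ and $cd^{-1}$ by the defining assumptions of \Case{16}{2}{}, $cab^{-1}c^{-1}$ by Lemma~\ref{lemma:case16:caBC}, $a^{-1}dc^{-1}$ by Lemma~\ref{lemma:case16:AdC}, and $ab^{-1}d^{-1}a$ and $dab^{-1}a^{-1}$ by Lemmas~\ref{lemma:case16:aBDa} and~\ref{lemma:case16:daBA}. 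Note that these last two factors never appear anywhere in your proposal, and their lemmas are missing from your stated toolkit; so even granting the search you describe, the search space you set up would not have closed the argument. This confirms that the gap is substantive: the choice of \emph{which} positive blocks to manufacture (hence which preliminary lemmas to prove) is inseparable from finding the decomposition itself.
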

\begin{proof} Suppose $G_n$ is left-orderable, and suppose (for contradiction) that the signs of elements are as in \Case{16}{2}{}. Starting from the first group relation, we have:
\begin{align*}
(a^{-1}b)^{10n}d^{-1}a^{2}&=1\\
\Rightarrow{}a^{-2}d(b^{-1}a)^{10n}&=1\\
\Rightarrow{}a^{-2}da^{-1}(ab^{-1})^{10n}a&=1\\
\Rightarrow{}da^{-1}(ab^{-1})^{10n}a^{-1}&=1\\
\Rightarrow{}db^{-1}(ab^{-1})^{10n-1}a^{-1}&=1\\
\Rightarrow{}db^{-1}(ab^{-1})^{10n-2}ab^{-1}a^{-1}&=1\\
\Rightarrow{}db^{-1}c^{-1}(cab^{-1}c^{-1})^{10n-2}cab^{-1}a^{-1}&=1\\
\Rightarrow{}d(a^{-1}a)b^{-1}(d^{-1}aa^{-1}d)c^{-1}(cab^{-1}c^{-1})^{10n-2}c(d^{-1}d)ab^{-1}a^{-1}&=1\\
\Rightarrow{}(da^{-1})(ab^{-1}d^{-1}a)(a^{-1}dc^{-1})(cab^{-1}c^{-1})^{10n-2}(cd^{-1})(dab^{-1}a^{-1})&=1.
\end{align*}
This is a contradiction, since $da^{-1}>1$ in \Case{16}{2}{} by assumption, $ab^{-1}d^{-1}a>1$ in \Case{16}{2}{} by Lemma~\ref{lemma:case16:aBDa}, $a^{-1}dc^{-1}>1$ in \Case{16}{2}{} by Lemma~\ref{lemma:case16:AdC}, $cab^{-1}c^{-1}>1$ in \Case{16}{2}{} by Lemma~\ref{lemma:case16:caBC}, $cd^{-1}>1$ in \Case{16}{2}{} by assumption, and $dab^{-1}a^{-1}>1$ in \Case{16}{2}{} by Lemma~\ref{lemma:case16:daBA}.
\end{proof}

\noindent{}With Proposition~\ref{proposition:case16.B}, we have eliminated the final sub-case of Case 16. We have therefore shown that $G_n$ is not left-orderable.

$\;$\\
\noindent{}{\sc{Department of Mathematics, Columbia University, 2990 Broadway, New York, NY 10027.}}\\
\emph{E-mail address:} {\texttt{fsd2108@columbia.edu}}
\\\\
\noindent{}{\sc{Department of Mathematics, Columbia University, 2990 Broadway, New York, NY 10027.}}\\
\emph{E-mail address:} {\texttt{mvj2110@columbia.edu}}
\\\\
\noindent{}{\sc{Department of Mathematics, Columbia University, 2990 Broadway, New York, NY 10027.}}\\
\emph{E-mail address:} {\texttt{jar2262@columbia.edu}}
\\\\
\noindent{}{\sc{Department of Mathematics, Barnard College, 333 Milbank Hall, New York, NY 10027.}}\\
\emph{E-mail address:} {\texttt{hlz2103@barnard.edu}}

\end{document}